\begin{document}

\title{Characters and transfer maps via categorified traces}
\author{Shachar Carmeli, Bastiaan Cnossen, Maxime Ramzi and Lior Yanovski}
\maketitle

\begin{abstract}
We develop a theory of generalized characters of local systems in $\infty$-categories, which extends classical character theory for group representations and, in particular, the induced character formula. A key aspect of our approach is that we utilize the interaction between traces and their categorifications.
We apply this theory to reprove and refine various results on the composability of Becker-Gottlieb transfers, the Hochschild homology of Thom spectra, and the additivity of traces in stable $\infty$-categories.
\end{abstract}

\vspace{40pt}
\begin{figure}[H]
\centering{}\includegraphics[scale=0.7]{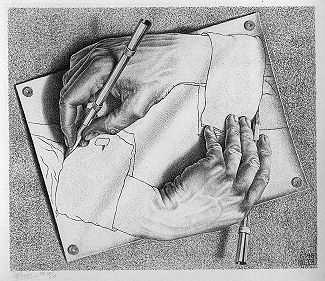}\caption*{\textit{Drawing Hands} by M. C. Esher. Taken from \textit{The Magic of M. C. Escher} by J. L. Locher, Harry N. Abrams, Inc. ISBN 0-8109-6720-0.}
\end{figure}

\newpage
\tableofcontents
\section{Introduction}
\subsection{Overview}
\subsubsection{Traces and characters}
In linear algebra, an important numerical invariant one can assign to an endomorphism $f\colon V \to V$ of a finite-dimensional $\field$-vector space $V$ is its \textit{trace} $\gentr{f}{V} \in \field$. For example, the trace of the identity map on $V$ is the dimension of $V$, regarded as a scalar in $\field$. By combining various traces together, one obtains the notion of a \textit{character} in representation theory: if $V$ is a finite-dimensional linear representation of a finite group $G$, its character $\chi_V\colon G\to \field$ is the conjugation-invariant $\field$-valued function on $G$ defined by $\chi_V(g) = \gentr{g}{V}$.
This character encodes useful information about $V$, especially when $|G|$ is invertible in $\field$. For instance, there is a simple formula for the dimension of the space of co-invariants $V_G$ of the representation in terms of its character:
\[
    \dim_\field(V_G) = 
    \frac{1}{|G|}\sum_{g\in G} \gentr{g}{V} =
    \sum_{[g]\in G/\mathrm{conj}} \frac{1}{|C(g)|} \chi_V([g]) \qin \field.
\]
This formula is a special case of a more general \emph{induced character formula}: given a (not-necessarily injective) group morphism $f\colon G \to H$, the induced character formula expresses the character $\chi_{\Ind_G^HV}$ of the induced representation $\Ind_G^HV$ in terms of the character $\chi_V$. The special case $H=\{1\}$ recovers the dimension formula above.

The goal of this article is to generalize this story in several directions. To begin with, note that a finite-dimensional vector space is a dualizable object in the symmetric monoidal category $\Vect_\field$ of vector spaces over $\field$. We will study more generally dimensions, traces and characters of dualizable objects in an arbitrary presentably symmetric monoidal $\infty$-category $\cC$, encompassing for example the notions of Euler characteristics and Lefschetz numbers in homotopy theory. Next, observe that a representation $V$ of the group $G$ can be encoded via a functor $BG\to \Vect_\field$ from the classifying space of $G$ to the category of vector spaces over $\field$. We will consider more generally functors $A\to \cC$ from an arbitrary space $A$, also known as $\cC$-valued \textit{local systems} over $A$; in the case $\Cc = \Sp$ these are known as \textit{parametrized spectra} over $A$. If $A$ is connected, a local system over $A$ can also be thought of as a $\Cc$-valued representation of the (not necessarily discrete) loop group $\Omega_a A$ with respect to some basepoint $a$.

Note that conjugacy classes of elements of $G$ can be identified with connected components of the free loop space $LBG$ of its classifying space $BG$. Accordingly, the conjugation-invariant character $\chi_V\colon G \to k$ of some $G$-representation $V$ may be identified with a locally constant function $LBG\to \field$. Generalizing this, we will assign to every local system of dualizable objects $V\in \cC^A$ a \emph{character}
\[
    \chi_V\colon LA\too \End(\one_\cC),
\] 
which is a map from the free loop space $LA = \Map(S^1,A)$ of $A$ to the endomorphism space of the unit object of $\cC$. This character encodes the traces of the ``monodromy actions'' of free loops in $A$ on the fibers of the local system $V$. Here, the endomorphism space $\End(\one_\cC)$ plays the role of the set of scalars $\field$.

The induced character formula, and in particular the formula above for the dimension of the coinvariants $V_G$, involves division by $|G|$, and hence is applicable only if $|G|$ is invertible in $\field$. This problem becomes more severe in the derived setting: if $V$ is a dualizable object in the derived $\infty$-category $D(\field)$ of $\field$ equipped with a $G$-action, its object of coinvariants $V_{hG}$ might not even be dualizable anymore if $|G|$ is not invertible in $\field$. To deal with this problem for general $\cC$, we introduce the notion of a \emph{$\cC$-adjointable} maps of spaces $f \colon A \to B$, see \Cref{def:CAdjointability}. This property in particular ensures that for a pointwise dualizable local system $V\colon A \to \cC$, its left Kan extension $f_!V \colon B \to \cC$ along $f$ (playing the role of the induced representation) is pointwise dualizable as well, so that we may form its character. In such a situation, in the spirit of \cite{PontoShulman}, we will provide an induced character formula which expresses the character of the left Kan extension $f_!V\colon B \to \cC$ in terms of the character of the original local system $V\colon A \to \cC$.

Finally, the theory of characters can be generalized in a third direction, which is needed in some of the applications. The definition of the trace can naturally be extended to \textit{generalized endomorphisms} of the form $f\colon X\to X\otimes Y$, which results in a \textit{generalized trace map} $\gentr{f}{X}\colon \one\to Y$. Given a space $A$ and an $A$-indexed family of generalized endomorphisms $f_a\colon X_a \to X_a\otimes Y$, we may assign to it a map $\chi_f\colon LA\to \Map(\one,Y)$, called the \textit{generalized character map} of this family. We will develop our induced character formula in this generality.

\subsubsection{Transfers}
One of the applications of our generalized character theory is to the study of \emph{transfer maps} in topology. Recall that a spectrum $E$ represents a homology theory $E_*(-)$ as well as a cohomology theory $E^*(-)$. In particular, we have for any map of spaces $f\colon A \to B$ induced maps on $E$-homology and $E$-cohomology
\[
    E_*(f) \colon E_*(A) \too 
    E_*(B), \qquad \qquad E^*(f) \colon E^*(B) \too E^*(A).
\]

If $f$ has suitably small fibers, we also have a \textit{wrong-way} or \textit{transfer} maps
\[
    f^! \colon E_*(B) \too E_*(A), \qquad \qquad \quad f_!\colon E^*(A) \too 
    E^*(B),
\]
going in the opposite direction. The map $f_!\colon E^*(A) \to E^*(B)$ on $E$-cohomology can be understood as a form of ``integration along the fibers''. These transfer maps can be canonically refined to maps of spectra $f^! \colon E\otimes B\to E\otimes A$ resp.\ $f_!\colon E^A \to E^B$. Transfer maps have a long history in algebraic topology, as described for example in \cite{history}. 
A trace-theoretic perspective on these transfer maps is developed by several authors, including \cite{DoldPuppe,PSLinearity,PSMultiplicativity,BenZviNadler}.

In this article, we study transfer maps in an arbitrary presentably symmetric monoidal $\infty$-category $\cC$. Recall that $\Cc$ admits a unique symmetric monoidal colimit-preserving functor $\unit_{\Cc}[-]\colon \Spc \to \Cc$, denoted $A \mapsto \unit_{\Cc}[A]$, which sends a space $A$ to the colimit over $A$ of the constant $A$-indexed diagram on $\unit_{\Cc}$. For example, for the $\infty$-category of spectra this is the unreduced suspension spectrum functor $\Sigma^{\infty}_+\colon \Spc \to \Sp$, while for the derived $\infty$-category of a ring $R$ this is the singular chain functor $C_{\bullet}(-;R)\colon \Spc \to D(R)$. Given a $\cC$-adjointable map of spaces $f\colon A\to B$, we will define a transfer map of the form $f^! \colon \one_\cC[B]\to \one_\cC[A]$. This transfer $f^!$ will by construction be a $B$-shaped colimit of traces of generalized endomorphisms, and hence can be studied using the induced character formula. This recovers a description of the $\Cc$-linear Becker-Gottlieb transfer $f^!$ in terms of the $\Cc$-linear free loop space transfer of $f$, given for $\cC = \Sp$ by \cite{LM}.

Using the interpretation of the transfer maps in terms of characters, we address the problem of \emph{composability} of transfer maps. We show by an example that, for two composable $\cC$-adjointable maps $f$ and $g$, the relation $f^!g^! \simeq (gf)^!$ does \textit{not} hold in full generality. We identify various conditions under which this relation \textit{does} hold, recovering and refining some of the results of \cite{LMS} and \cite{kleinMalkiewich2022Corrigendum}, and some of the related results of \cite{LM}. Despite various attempts to answer it, the question of composability of the transfers for maps of spaces with compact (sometimes called `finitely dominated') fibers in the case of $\cC=\Sp$ is still open. We hope that our methods and results could shed some new light on this problem.
For example, Klein, Malkiewich and the third author \cite{KMR} use it to prove functoriality of the Becker-Gottlieb transfers at the level of $\pi_0$.

\subsubsection{Categorified traces}

For an $\EE_\infty$-ring spectrum $E$, the $E$-cohomology $E^A$ of a space $A$ admits a natural categorification, namely the $\infty$-category $(\Mod_E)^A$ of local systems of $E$-modules on $A$. This categorification of cohomology also admits natural right and wrong-way maps: for a map of spaces $g\colon A \to B$, we have the restriction functor
\[
    g^* \colon (\Mod_E)^B \too (\Mod_E)^A,
\]
along with its right adjoint
\[
    g_*\colon (\Mod_E)^A \too (\Mod_E)^B,
\]
given by right Kan extension along $g$. Objects $M \in (\Mod_E)^A$ are local systems of $E$-modules on $A$, and we think of the right Kan extension $g_*M \in (\Mod_E)^B$ as fiberwise cohomology with local coefficients, i.e.\ as some sort of ``categorified integration along the fibers''.
When $B = \pt$, the object $g_*g^*E = E^A$ is precisely the $E$-cohomology of $A$. 
To summarize, cohomology both \emph{admits} wrong-way maps and can be \emph{seen} as a wrong-way map. 

In a similar way, the key tool in studying our generalized characters and associated induced character formulas is the interaction between traces and categorification. The generality of the formalism of symmetric monoidal traces allows us to also apply it to symmetric monoidal $(\infty,2)$-categories, such as the $(\infty,2)$-category of $E$-linear presentable $\infty$-categories. While in symmetric monoidal $(\infty,1)$-categories, traces can only be functorial in equivalences, in $(\infty,2)$-categories they are functorial more generally in \emph{left adjoints}. 
We will exploit this extra functoriality of categorified traces to understand the interaction of traces at a lower categorical level with colimits.

In the case of the symmetric monoidal $(\infty,2)$-category $\PrL_\st$ of presentable stable $\infty$-categories the dimension of dualizable objects recovers a well-known invariant: topological Hochschild homology, classically defined in terms of the cyclic bar construction.
The interpretation of topological Hochschild homology as a trace, when combined with the categorification of the induced character formula, allows us to compute the topological Hochschild homology of Thom spectra, recovering and generalizing the main result of \cite{BCS2010THHofThomSpectra}.

\subsection{Main results}
We shall now explain our approach to generalized character theory and its applications in more detail. For the remainder of this subsection, we fix a presentably symmetric monoidal $\infty$-category $\cC$. We use the following notational convention to distinguish traces in $\Cc$ from their categorifications:
\begin{itemize}
    \item Given an endomorphism $f$ of a dualizable object $X$ in $\Cc$, we denote its trace in $\Cc$ by $\tr(f) \in \End_{\Cc}(\unit_{\Cc})$, and we write $\dim(X) := \tr(\id_X)$ for its dimension;
    \item Given a $\Cc$-linear endomorphism $F\colon \cD \to \cD$ of a dualizable object $\cD$ in the symmetric monoidal $(\infty,2)$-category $\Mod_{\cC}$ of $\cC$-linear presentable $\infty$-categories, we denote its trace in $\Mod_{\Cc}$, called the \textit{$\Cc$-linear trace}, by $\Tr_{\Cc}(F) \in \Fun_{\Cc}(\Cc,\Cc) \simeq \Cc$. When $F = \id_{\cD}$ we will alternatively write $\Tr_{\cC}(\cD)$ for $\Tr_{\Cc}(\id_{\cD})$.
\end{itemize}

Consider a space $A$ and let $V\colon A \to \Cc$ be a pointwise dualizable local system, in the sense that $V_a \in \Cc$ is dualizable for all $a \in A$. Every free loop $\gamma \in LA := \Map(S^1,A)$ induces an automorphism $\gamma\colon V_{\gamma(0)} \to V_{\gamma(0)}$ in $\Cc$, and the traces of all these automorphisms assemble into the \textit{character} of $V$:
\[
    \chi_V\colon LA \to \End(\unit); \qquad \gamma \mapsto \gentr{\gamma}{V}.
\]
Our first main result provides a description of this map via categorified traces. The $\infty$-category of local systems $\cC^A$ is a dualizable object in $\Mod_{\cC}$, and its $\cC$-linear trace is given by 
\[
    \Tr_{\Cc}(\cC^A) \simeq \one[LA]
    \qin \cC,
\]
where $\unit[LA] = \colim_{LA} \unit$. The local system $V\colon A \to \cC$ determines a $\Cc$-linear map $\cC^A \to \cC$ by $\cC$-linear Yoneda extension, which by pointwise dualizability of $V$ is a left adjoint in the $(\infty,2)$-category $\Mod_{\Cc}$. Since the $\Cc$-linear trace is functorial in such left adjoints, this produces a map $\one[LA] \to \one$, or equivalently a map of spaces 
\(
    \chi_V \colon LA \to \End(\one).
\) 
We show that this map is precisely the character of $V$.

In fact, we prove a somewhat more general claim, which is required for some of the applications. A pointwise dualizable local system $V$ equipped with a generalized endomorphism
\[
    f\colon V\to V\otimes A^*Y
\]
determines a morphism $(\cC^A,\id_{\cC^A}) \to (\cC,Y)$ in the $\infty$-category of \emph{traceable endomorphisms} in $\Mod_{\cC}$, see \Cref{def:Dbl_Trl}. Upon applying the $\cC$-linear trace functor, this induces a map $\one[LA]\to Y$, or equivalently a map of spaces $\chi_f \colon LA\to \Map(\one,Y)$.

\begin{thmx}[\Cref{prop:Character_Formula_Coherent}]
Let $V\colon A\to \cC$ be a local system of dualizable objects, and $f\colon V\to V\otimes A^*Y$ a generalized endomorphism, where $Y\in\cC$. 
The map 
\[ 
    \chi_f\colon LA\too \Map(\one,Y)
\]
is the character of $f$, in the sense that $\chi_f(\gamma) \simeq \tr(f\circ \overline{\gamma}\mid V)$, naturally in the loop $\gamma$.\footnote{The appearance of the inverse loop $\overline\gamma$ as opposed to $\gamma$ is a by-product of the implicit identification $A\simeq A\catop$ for $\infty$-groupoids $A$. This can be resolved by working with the free $\cC$-linear $\infty$-category $\cC[A]$ instead of the $\infty$-category of local systems $\cC^A$, as is done in the main text.}
\end{thmx}

While the character $\chi_f$ encodes  $(\infty,1)$-categorical traces, the above description in terms of categorified traces allows us to study it using the calculus of $(\infty,2)$-categorical traces. In particular, it allows us to study the interaction of traces with colimits via a generalized induced character formula, as we will now explain.

Let $g\colon A \to B$ be a \emph{$\cC$-adjointable} map of spaces, in the sense that the restriction functor $g^*\colon \cC^B \to \cC^A$ is a left adjoint in the $(\infty,2)$-category $\Mod_{\cC}$. Given a pointwise dualizable local system $V\colon A \to \cC$ with character $\chi_V \colon LA \to \End(\one)$, the left Kan extension $g_!V\colon B \to \cC$ of $V$ is again pointwise dualizable (\Cref{prop:ColimIsDbl}), and hence also admits a character $\chi_{g_!V}\colon LB \to \End(\one)$. Our induced character formula then says that the character $\chi_{g_!V}$ can be expressed in terms of the character $\chi_V$ as the following composite:
\[
    \chi_{g_!V}\colon \one[LB] \oto{\TrMod_{\cC}(g^*)} 
    \one[LA] \oto{\quad\chi_V\quad} 
    \one.
\]
Here the map $\Tr_\cC(g^*)\colon \one[LB] \too \one[LA]$ is called the \textit{free loop transfer} of $g$, and is obtained by applying the functoriality of $\Cc$-linear traces to the left adjoint $g^*$ in $\Mod_{\Cc}$; see for example \cite{LM} for an alternative description of this transfer map in the case $\cC = \Sp$.

More generally, if the local system $V \in \cC^A$ comes equipped with a generalized endomorphism $f\colon V\to V\otimes A^*Y$, we can induce it along $g$ to get a generalized endomorphism 
\[
    \Ind_g(f)\colon g_!V\to g_!V\otimes B^*Y.
\] 
We obtain a similar expression of the induced character $\chi_{\Ind_g(f)}$ in terms of $\chi_f$:
\begin{thmx}[Induced character formula, \Cref{Ind_Char}]
Let $V\colon A\to \cC$ be a local system of dualizable objects, and let $f\colon V\to V\otimes A^*Y$ be a generalized endomorphism, where $Y\in\cC$. For a $\cC$-adjointable map of spaces $g\colon A\to B$, the induced character $\chi_{\Ind_g(f)}$ is given by the composition
\[
    \one[LB] \oto{\TrMod_{\cC}(g^*)} 
    \one[LA] \oto{\quad\chi_f\quad} 
    Y.
\]
\end{thmx}

An important special case of this theorem is the case where $B$ is a point and $Y$ is the monoidal unit: we may compute the trace of the induced map $A_!f \colon A_!V \to A_!V$ on colimits as the composite
\[
    \tr(A_!f)\colon   
    \unit \xrightarrow{\TrMod_{\cC}(A^*)} 
    \unit[LA] \xrightarrow{\quad\chi_f\quad} \unit.
\]
In particular, this gives a formula of the dimension of a colimit of dualizable objects over a $\cC$-adjointable space.

The induced character formula can itself be categorified. This categorification allows us to recover a result of Blumberg-Cohen-Schlichtkrull on the topological Hochschild homology of Thom spectra \cite{BCS2010THHofThomSpectra}, and extend it from $\Sp$ to an arbitrary presentably symmetric monoidal $\infty$-category $\cC$:
\begin{thmx}[\Cref{thm:BCS_THH_of_Thom}, cf.\ {\cite[Theorem~1]{BCS2010THHofThomSpectra}}]
    Let $G$ be an $\EE_1$-group in $\Spc$, and let $\xi\colon G\to \Pic(\cC)$ be an $\EE_1$-group map. Then the Hochschild homology of the Thom object $\Th_\cC(\xi)$ in $\Alg(\cC)$ is the Thom object of the following composite: 
    \[
        LBG \oto{LB\xi} 
        LB\Pic(\cC) \simeq 
        B\Pic(\cC)\times \Pic(\cC) \oto{\eta + \id} \Pic(\cC),
    \]
    where $\eta \colon B\Pic(\cC) \to \Pic(\cC)$ is the Hopf map.
\end{thmx}

The proof of this theorem uses the following result, which is of independent interest and has previously been sketched by Douglas \cite{Douglas} in the case of $\cC= \Sp$:

\begin{thmx}[{\Cref{thm:Thom_as_colim}, cf.\ \cite[Proposition 3.13]{Douglas}}]
 Let $A$ be a pointed connected space and let $\zeta\colon A \to \Mod_\cC$ be a pointed map. Then there is an equivalence
    \[
        \RMod_{\Th(\Omega \zeta)} \:\simeq\:
        \colim_A \zeta
        \qin \Mod_\cC,
    \]
    where $\Omega\zeta\colon \Omega A\to \Pic(\cC)$ is the $\EE_1$-group map induced by $\zeta$.
\end{thmx}

Another application of our methods is to study Becker-Gottlieb transfers. 
First, using our induced character formula we reprove a result of Lind and Malkiewich \cite{LM}, expressing the Becker-Gottlieb transfer in terms of the free loop transfer, and again extend it from $\Sp$ to arbitrary $\Cc$:

\begin{thmx}[{\Cref{thm:LM}, cf.\ \cite[Theorem 1.2]{LM}}]
For a $\cC$-adjointable map of spaces $f\colon A \to B$, the Becker-Gottlieb transfer $f^!\colon \unit[B] \to \unit[A]$ is given by the following composite:
\[
    \unit[B] \oto{c} \unit[LB] \oto{\Tr_{\cC}(f^*)} \unit[LA] \xrightarrow{e} \unit[A].
\]
Here $c\colon B \to LB$ is the inclusion of $B$ into $LB$ as the constant loops and $e\colon LA \to A$ is the evaluation at the basepoint of $S^1$.
\end{thmx}

Second, we address the problem of \emph{composability} of the Becker-Gottlieb transfer, namely the question of whether there is a homotopy $(gf)^!\simeq f^!g^!$ for two composable $\cC$-adjointable maps $f$ and $g$. While we observe that for a general symmetric monoidal $\infty$-category $\cC$ the answer is negative, we discuss a number of cases where the answer is positive, see \Cref{thm:compBG}. 

As a final application of the calculus of categorified traces, we reprove the additivity of traces in symmetric monoidal stable $\infty$-categories. 

\begin{thmx}[\Cref{thm:Additivity_Traces}, cf.\ \cite{may2001additivity,PSLinearity,ramzi2021additivity}]
    Let $\cC$ be a stable presentably symmetric monoidal $\infty$-category, and let $X_1 \xrightarrow{\varphi} X_2 \xrightarrow{\psi} X_3$ be a bifiber sequence of dualizable objects in $\cC$. Let $f_i\colon Z \tensor X_i \to X_i \tensor Y$ be morphisms fitting in a bifiber sequence in $\cC^{[1]}$ of the form\footnote{Note that formally this bifiber sequence is a cube $[1]^3 \to \Cc$, encoding compatibility with the null-homotopies.}
    \[
    \begin{tikzcd}
        Z \tensor X_1 \dar[swap]{f_1} \rar{1_Z \tensor \varphi} & Z \tensor X_2 \dar{f_2} \rar{1_Z \tensor \psi} & Z \tensor X_3 \dar{f_3} \\
        X_1 \tensor Y \rar{\varphi \tensor 1_Y} & X_2 \tensor Y \rar{\psi \tensor 1_Y} & X_3 \tensor Y,
    \end{tikzcd}
    \]
    where the top and bottom sequences are obtained from the original bifiber sequence by tensoring with $Z$ resp.\ $Y$. Then, there is a homotopy 
    \[
        \gentr{f_2}{X_2} \simeq \gentr{f_1}{X_1} + \gentr{f_3}{X_3} \qin \Map_{\cC}(Z,Y).
    \]
\end{thmx}

\subsection{Relation to other work}
Our treatment of higher categorical traces follows \cite{HSS}, which builds on foundations from \cite{JFS2017}.

Our induced character formula is largely inspired by the work of Ponto and Shulman \cite{PSLinearity}. Beyond the difference in implementation (derivators versus $\infty$-categories), the main conceptual difference between their work and ours is that we consider indexing diagrams which are $\infty$-groupoids, rather than $1$-categories. In particular, while \cite[Lemma 10.4]{PSLinearity} can be used to obtain an analogue of the above identification $\chi_f(\gamma) \simeq \gentr{f \circ \gamma}{V}$ for every individual free loop $\gamma \in LA$, we obtain this identification \emph{naturally} in $\gamma$. This naturality is necessary to study families of generalized endomorphisms indexed over spaces, which is where our main applications lie. Compared to \cite{PSLinearity}, a limitation of the current work is that our indexing diagrams are $\infty$-groupoids as opposed to categories, or even weights. An extension of this work to the common generalization of $\infty$-categorical weights is the subject of future work. 

Our work is also related to the work of Hoyois, Safronov, Scherotzke and Sibilla \cite{GRR}. Their result \cite[Corollary 5.3]{GRR} looks formally analogous to our induced character formula, though it does not seem to apply to our situation, as the relevant symmetric monoidal functors are typically not rigid (cf.\ \cite[Definition 2.15]{GRR}), nor the relevant adjunctions ambidextrous (cf.\ \cite[Definition 2.1]{GRR}). It would be interesting to find a common framework which unifies both results. 

\subsection{Organization}
This article is structured as follows. Sections \ref{sec:Traces}, \ref{sec:tracesInCorr} and \ref{sec:CLinearCategories} are mostly expository. In \Cref{sec:Traces} we recall the definition of generalized traces and of higher categorical traces following \cite{HSS}. In Section \ref{sec:tracesInCorr}, we specialize the discussion to the symmetric monoidal $(\infty,2)$-category $\Span(\cD)$ of spans and provide a detailed proof of the fact that dimensions in span categories can be \emph{naturally} identified with free loop spaces. In \Cref{sec:CLinearCategories}, we specialize to the symmetric monoidal $(\infty,2)$-category $\Mod_{\cC}(\PrL)$ of $\cC$-linear $\infty$-categories for a presentably symmetric monoidal $\infty$-category $\cC$. There, we study $\cC$-linear $\infty$-categories freely and cofreely generated by spaces, as well as their traces, and we introduce the key notion of $\cC$-adjointable spaces (resp.\ maps). We also recall the trace interpretation of Hochschild homology. 

Section \ref{sec:GeneralizedCharacters} forms the heart of this article. It is there that we introduce \emph{generalized characters}, prove their description in terms of categorified traces, and deduce the induced character formula. It is also in this section that we prove the additivity of generalized traces. 

Sections \ref{sec:BeckerGottlieb} and \ref{sec:ThomSpectra} contain two applications of the theory. In Section \ref{sec:BeckerGottlieb}, we discuss the Becker-Gottlieb transfer. We start by recalling its definition and basic properties, and we then express it in terms of the free loop transfer. We also address the question of its composability: although the equivalence $(gf)^!\simeq f^!g^!$ fails in full generality, we prove that it holds in a number of cases. Finally, in Section \ref{sec:ThomSpectra}, we study Thom objects and their module categories, and we recover and generalize the main result of \cite{BCS2010THHofThomSpectra}, computing the topological Hochschild homology of Thom spectra.

\subsection{Conventions}

We work in the context of $\infty$-categories, defined as quasicategories, and will generally follow Lurie's conventions from \cite{htt} and \cite{HA}. In particular, all categorical notions appearing here, such as (co)limits and adjoints, are meant to be interpreted in the $\infty$-categorical sense. For $(\infty,n)$-categories, we follow \cite{HSS} and work in the setting of Barwick's $n$-fold complete Segal spaces \cite{barwick2021unicity}.\footnote{For $n = 1$, recall that complete Segal spaces are a model for $\infty$-categories, see \cite{JoyalTierney,HebestreitSteinebrunner}.}
\begin{enumerate}
    \item We denote mapping spaces in an $\infty$-category $\cC$ by $\Map_{\cC}(-,-)$ or just by $\Map(-,-)$ if $\cC$ is clear from context. The notation $\Fun(-,-)$ is used for functor $\infty$-categories and $\FunL$ for the full subcategory of $\Fun$ spanned by colimit-preserving functors;
    \item We write $\Hom_{\cE}(-,-)$ for the mapping $\infty$-category in an $(\infty,2)$-category $\cE$;
    \item We let $\Spc$, $\Sp$, $\Cat_\infty$, $\widehat{\Cat}_\infty$ and $\PrL$ denote, respectively, the $\infty$-category of spaces, of spectra, of small $\infty$-categories, of possibly large $\infty$-categories and of presentable $\infty$-categories;
    \item We denote by $(-)^\simeq\colon \Cat_\infty\to \Spc$ the maximal subgroupoid functor. For an $(\infty,n)$-category $\cE$, we let $\iota_k\cE$ denote its maximal sub-$(\infty,k)$-category; in particular, $\iota_0$ and $(-)^\simeq$ agree when restricted to $(\infty,1)$-categories; 
    \item Restriction along a functor $f$ is denoted by $f^*$, and left and right Kan extensions along $f$ are denoted by $f_!$ and $f_*$ respectively;
    \item For a space $A$, we abuse notation and denote by $A$ also the unique map $A\to \pt$, so that for example the functor $A_!\colon \cC^A \to \cC$ denotes $\colim_A$;
    \item For an object $X\in \cC$, we will denote by $X[A]$ the object $A_!A^*X$, i.e.\ the $A$-indexed colimit of the constant diagram $X$; 
    \item Equivalences are denoted by the symbol $\simeq$;
    \item For an $\infty$-category $\Cc$ with finite products, a \textit{commutative monoid} in $\Cc$ is a functor $M\colon \Fin_* \to \Cc$ satisfying the Segal condition $M(n_+) \iso M(1_+)^n$.
    \item We define a \textit{symmetric monoidal $\infty$-category} as a commutative monoid in $\Cat_{\infty}$; by \cite[Remark~2.4.2.6]{HA} this agrees with Lurie's definition. More generally, we define a \textit{symmetric monoidal $(\infty,n)$-category} as a commutative monoid in $\Cat_{(\infty,n)}$.
    \item Given a symmetric monoidal $\infty$-category $\cC$, we denote its monoidal unit by $\one_\cC$, or by $\one$ if there is no ambiguity. We occasionally use $\one_\cD$ to denote the pointing of an $\EE_0$-monoidal category. We write $\pt$ for terminal objects when they exist;
    \item We typically denote tensor products in (symmetric) monoidal $\infty$-categories by $\otimes$ if there is no ambiguity; 
    \item We write $\Alg(\cC)$, $\CAlg(\cC)$ and $\Alg_n(\cC)$ for the $\infty$-categories of algebras, resp.\ commutative algebras, resp.\ $\EE_n$-algebras in a symmetric monoidal $\infty$-category $\cC$. Note that a commutative monoid in an $\infty$-category $\Cc$ with finite products is the same as a commutative algebra in the cartesian monoidal structure on $\Cc$, see \cite[Proposition~2.4.2.5]{HA}.
    \item We write $\RMod_R(\cC)$ and $\LMod_R(\cC)$ for the $\infty$-categories of right resp.\ left modules over an algebra object $R$ in $\cC$. When $R$ is a commutative algebra, we write $\Mod_R(\cC)$.
\end{enumerate}

\subsection{Acknowledgements}
We thank the anonymous referee for their valuable suggestions and corrections. We thank Merlin Christ for pointing out an oversight in our proof of \Cref{Cat_Grp_Alg_Mod}. We would like to thank Alice Hedenlund for helpful discussions related to this work, as well as Cary Malkiewich and John Klein for valuable discussions related to the composability of Becker-Gottlieb transfers. We also thank Andrea Bianchi and Shai Keidar for many helpful comments on an earlier version of this article. S.C. and M.R. are supported by the Danish National Research Foundation through the Copenhagen Centre for Geometry and Topology (DNRF151). At the time of writing, B.C. was supported by the Max Planck Institute for Mathematics (MPIM) in Bonn. L.Y. wishes to thank the MPIM for its hospitality; part of this work was conducted during his stay there.

\section{Higher categorical traces}
\label{sec:Traces}
Let $\cC = (\cC,\tensor,\unit)$ be a symmetric monoidal $\infty$-category. Recall that an object $X$ of $\cC$ is called \textit{dualizable} if there is another object $X^\vee$ and maps $\ev_X\colon X^\vee \tensor X \to \unit$ and $\coev_X\colon \unit \to X \tensor X^\vee$ that satisfy the triangle identities: the composites
\[
    X \xrightarrow{\coev_X \tensor \id_X} X \tensor X^\vee \tensor X \xrightarrow{\id_X \tensor \ev_X} X \qquad \text{ and } \qquad X^\vee \xrightarrow{\id_X \tensor \coev_X} X^\vee \tensor X \tensor X^\vee  \xrightarrow{\ev_X \tensor \id_X} X^\vee
\]
are homotopic to the respective identities. Given an endomorphism $f\colon X\to X$ of a dualizable object $X$, we define its \textit{trace} $\tr(f)\in \Map_{\cC}(\unit,\unit)$ to be the following composition:
\[
    \tr(f)\colon \one \oto{\coev_X}
    X\otimes X^\vee  \oto{\:f\otimes \id\:}
    X \otimes X^\vee\simeq X^\vee\otimes X \oto{\:\ev_X\:}
    \one.
\]
The trace construction admits a certain functoriality, which we will outline first and make precise below. Let us call an endomorphism $f\colon X \to X$ \textit{traceable} if the object $X$ is dualizable in $\cC$, and let $\cC^{\trl} \subseteq \Map(B\NN, \cC)$ denote the subspace of traceable endomorphisms. Then the assignment $f \mapsto \tr(f)$ can be enhanced to a map of spaces
\[
    \tr\colon \cC^{\trl} \to \Map_{\cC}(\unit,\unit) \qin \Spc;
\]
the details for this will be recalled in \Cref{subsec:FunctorialityTraces}. In the case where $\cC$ is a symmetric monoidal $(\infty,2)$-category, the trace construction admits additional functoriality. In this case, the mapping space $\Map_{\cC}(\unit,\unit)$ upgrades to an $\infty$-category, which will be denoted by $\Omega \cC$. Also the space $\cC^{\trl}$ can be upgraded to an $\infty$-category, again denoted by $\cC^{\trl}$, whose morphisms are suitable morphisms between traceable endomorphisms. To describe these, recall that a morphism $\varphi\colon X \to Y$ in $\cC$ is called a \textit{left adjoint in $\cC$} if there exists a morphism $\varphi^r\colon Y \to X$ together with 2-morphisms $\epsilon\colon \varphi \circ \varphi^r \Rightarrow \id_Y$ and $\eta\colon \id_X \Rightarrow \varphi^r \circ \varphi$ satisfying the triangle identities: the composites
\[
    \varphi \xRightarrow{\id \circ \eta} \varphi \circ \varphi^r \circ \varphi \xRightarrow{\epsilon \circ \id} \varphi \qquad \text{ and } \qquad \varphi^r \xRightarrow{\eta \circ \id} \varphi^r \circ \varphi \circ \varphi^r \xRightarrow{\id \circ \epsilon} \varphi^r
\]
are equivalent to the respective identity 2-morphisms. The morphisms in the $\infty$-category $\cC^{\trl}$ between two traceable endomorphisms $f\colon X \to X$ and $g\colon Y \to Y$ are pairs $(\varphi,\alpha)$ consisting of a left adjoint morphism $\varphi\colon X \to Y$ in $\cC$ together with a 2-morphism $\alpha\colon \varphi f \Rightarrow g \varphi$:
\begin{equation}
\label{eq:TraceableMorphism}
\begin{tikzcd}
    X \dar[swap]{f} \rar{\varphi} & Y \dar{g} \\
    X \rar[swap]{\varphi} \urar[Rightarrow, shorten <=4pt, shorten >=4pt]{\alpha} & Y.
\end{tikzcd}
\end{equation}
Having these two $\infty$-categories $\cC^{\trl}$ and $\Omega \cC$, the trace map can be upgraded to a functor of $\infty$-categories
\[
    \tr\colon \cC^{\trl} \to \Omega \cC \qin \Cat_{\infty}.
\]
On objects, it still sends a traceable endomorphism $f$ to its trace $\tr(f)$. On morphisms, it sends a morphism $(\varphi,\alpha)\colon (X,f) \to (Y,g)$ in $\cC^{\trl}$ to the morphism $\tr(\alpha)\colon \tr(f) \to \tr(g)$ in $\Omega \cC$ obtained by considering the total composite in the following diagram:
\begin{equation}
\label{eq:2FunctorialityTrace}
\begin{tikzcd}
	\unit && {X\otimes X^\vee} && {X\otimes X^\vee} \\
	\\
	{Y\otimes Y^\vee} && {X\otimes Y^\vee} && {Y\otimes X^\vee} && {X\otimes X^\vee} \\
	\\
	&& {Y\otimes Y^\vee} && {Y\otimes Y^\vee} && \unit.
	\arrow["{\coev_Y}"', from=1-1, to=3-1]
	\arrow["{\coev_X}", from=1-1, to=1-3]
	\arrow["{\varphi^r\otimes 1}"{description}, from=3-1, to=3-3]
	\arrow["{1\otimes (\varphi^r)^\vee}"{description}, from=1-3, to=3-3]
	\arrow[""{name=0, anchor=center, inner sep=0}, "{\varphi\otimes 1 }"{description}, from=3-3, to=5-3]
	\arrow[""{name=1, anchor=center, inner sep=0}, "{\id}"', curve={height=18pt}, from=3-1, to=5-3]
	\arrow["{\varphi^r\otimes 1}"{description}, from=3-5, to=3-7]
	\arrow["{\ev_X}", from=3-7, to=5-7]
	\arrow[""{name=2, anchor=center, inner sep=0}, "{\id}", curve={height=-18pt}, from=1-5, to=3-7]
	\arrow[""{name=3, anchor=center, inner sep=0}, "{\varphi\otimes 1}"{description}, from=1-5, to=3-5]
	\arrow[equal, shorten <=16pt, shorten >=16pt, from=1-3, to=3-1]
	\arrow[equal, shorten <=16pt, shorten >=16pt, from=5-5, to=3-7]
	\arrow["{f\otimes 1}", from=1-3, to=1-5]
	\arrow["{\ev_Y}"', from=5-5, to=5-7]
	\arrow["{1\otimes(\varphi^r)^\vee}"{description}, from=3-5, to=5-5]
	\arrow["g\otimes1"', from=5-3, to=5-5]
	\arrow["c\otimes1"{description}, shorten <=9pt, shorten >=9pt, Rightarrow, from=3-3, to=1]
	\arrow["u\otimes1"{description}, shorten <=9pt, shorten >=9pt, Rightarrow, from=2, to=3-5]
	\arrow["{\alpha\otimes(\varphi^r)^\vee}"{description}, shorten <=19pt, shorten >=19pt, Rightarrow, from=3, to=0]
\end{tikzcd}
\end{equation}
Here the morphism $(\varphi^r)^{\vee}\colon X^{\vee} \to Y^{\vee}$ is the dual of the morphism $\varphi^r\colon Y \to X$, making the top-left and bottom-right squares commute.

The precise construction of this functoriality was worked out in \cite[Section~2]{HSS}. In fact, they construct more generally for any $n \geq 1$ and any symmetric monoidal $(\infty,n)$-category $\cC$ an $(n-1)$-functor $\tr_{\cC}\colon \cC^{\trl} \to \Omega \cC$.\footnote{The $(\infty,n-1)$-category $\cC^{\trl}$ of traceable endomorphisms in $\cC$ is denoted $\End(\cC)$ in \cite{HSS}.}

The goal of this section is to give a detailed exposition of this $(n-1)$-functor, following \cite[Section~2]{HSS}. We will start in \Cref{subsec:Generalized_Traces} with a brief recollection of \textit{generalized} traces. In \Cref{subsec:FunctorialityTraces}, we will discuss the functoriality of traces in the $(\infty,1)$-categorical setting, which will be bootstrapped up to the $(\infty,n)$-categorical setting in \Cref{subsec:HigherCategoricalTraces}. In the bootstrapping process, we make use of the notion of an \textit{(op)lax natural transformation} between $n$-functors, which will be recalled in \Cref{subsec:LaxAndOplaxTransformations}.

\subsection{Generalized traces}
\label{subsec:Generalized_Traces}
Let $\cC$ be a symmetric monoidal $\infty$-category, and let $X \in \cC$ is a dualizable object, as defined in the introduction of this section. We will be interested in the \textit{generalized endomorphisms} of $X$, that is, morphisms in $\Cc$ of the form
\[
    Z\otimes X \oto{\: f\:} X\otimes Y,
\]
where $Y$ and $Z$ are objects of $\cC$.

\begin{defn}[Generalized trace]
\label{def:GeneralizedTrace}
Given a generalized endomorphism $f\colon Z \otimes X \to X \otimes Y$ in $\cC$, its \textit{generalized trace map}
    \[
        \gentr{f}{X} \colon Z \too Y \qin \cC
    \]
is defined as the following composition:
    \[
        Z \oto{\id\otimes \coev_X} 
        Z\otimes X \otimes X^\vee \oto{f\otimes \id} 
        X\otimes Y \otimes X^\vee \simeq
        X^\vee \otimes X \otimes Y \oto{\ev_X \otimes \id}
        Y.
    \]
If $Y$ and $Z$ are the monoidal unit $\unit$, the map $\tr(f) = \gentr{f}{X}\colon \unit \to \unit$ is called the \textit{trace} of the endomorphism $f\colon X \to X$. If $f$ is the identity on $X$, we will call its trace the \textit{dimension}\footnote{Also known as `Euler characteristic'.} of $X$:
    \[
        \dim(X) := \tr(\id_X) \qin \Map_{\cC}(\unit,\unit).
    \]
\end{defn}
 \begin{rmk}
    For simplicity, we will sometimes suppress the twist equivalence from the notation when writing down a (generalized) trace.
\end{rmk}
We now give several examples of dimensions and traces. For an extensive discussion of symmetric monoidal traces, including more examples, we refer the reader to \cite{PontoShulman}.

\begin{example}
\label{ex:Generalized_Trace_Unit}
    When $X=\one$, the trace of $f\colon Z \to Y$ is simply $f$ itself:
    \[
        \gentr{f}{\one} = f \qin \Map_{\cC}(Z,Y).
    \]
\end{example}

 \begin{example} 
 When $\cC = \cD(R)$ is the derived $\infty$-category of a commutative ring $R$, the notions of dimension and trace reduce to the notions of \textit{Euler characteristic} and \textit{Lefschetz number} of endomorphisms of perfect complexes. Similarly, when $\Cc = \Sp$ is the $\infty$-category of spectra and $X = \SS[A]$ is the suspension spectrum of a finite CW-complex $A$, one recovers the Euler characteristic of $A$ and the Lefschetz numbers of endomorphisms of $A$.
\end{example}
\begin{example}[Field trace]
    Consider a finite field extension $K \to L$. The generalized trace of the multiplication map $L \tensor_K L \to L$ is a $K$-linear map
    \[
        \tr_{L/K}\colon L \to K,
    \]
    known as the \textit{field trace} of $L$ over $K$. It sends an element $\alpha \in L$ to the trace of the endomorphism $m_{\alpha}\colon L \to L$ given by multiplication by $\alpha$.
    
    More generally, if $k$ is a commutative ring and $R$ is a dualizable $k$-algebra, applying the generalized trace construction to the multiplication map $R \tensor_k R \to R$ gives a $k$-linear trace map $\tr_{R/k}\colon R \to k$.
\end{example}

\begin{example}[Becker-Gottlieb transfer]
    Let $A$ be a compact space, so that its suspension spectrum $\SS[A] \in \Sp$ is dualizable. The generalized trace of the diagonal $\Delta\colon \SS[A] \to \SS[A \times A] \simeq \SS[A] \tensor \SS[A]$ is called the \textit{Becker-Gottlieb transfer of $A$}. We will study this map in more detail in \Cref{sec:BeckerGottlieb}. 
\end{example}

\begin{rmk}
\label{rmk:GeneralizedTraceFunctorial}
    The formation of generalized traces is functorial in $\cC$: if $F\colon \cC \to \cD$ is a symmetric monoidal functor, then $F$ preserves dualizable objects and it is easy to see that $\gentr{F(f)}{F(X)} \simeq F(\gentr{f}{X})$ for every map $f\colon Z \tensor X \to X \tensor Y$ in $\cC$.
\end{rmk}

\begin{rmk}
\label{Trace_Functorial}
The generalized trace is functorial in $Y$ and $Z$ in the following sense: if $a\colon Z' \to Z$ and $b\colon Y \to Y'$ are morphisms in $\cC$, then the generalized trace of the composite
\[
    Z' \otimes X \xrightarrow{a \otimes 1} Z \otimes X \xrightarrow{f} X \otimes Y \xrightarrow{1 \otimes b} X \otimes Y'
\]
is naturally equivalent to the composite
\[
    Z' \xrightarrow{a} Z \xrightarrow{\gentr{f}{X}} Y \xrightarrow{b} Y'.
\]
\end{rmk}

\begin{obs}[Trace is symmetric monoidal]
    Let $f\colon X \to X$ and $g\colon Y \to Y$ be two traceable endomorphisms in $\cC$, i.e.\ $X$ and $Y$ are dualizable objects of $\cC$. The tensor product $X \tensor Y$ is again dualizable, with evaluation and coevaluation given by the tensor product of those for $X$ and $Y$. A simple computation shows that the trace of the endomorphism $f \tensor g\colon X \tensor Y \to X \tensor Y$ is equivalent to the tensor product of the traces of $f$ and $g$:
    \[
    \tr(f\otimes g) \simeq \tr(f)\otimes \tr(g).
    \]
    In \Cref{cor:TraceSymmetricMonoidal}, we will see that these equivalences can be made fully coherent.
\end{obs}

\subsection{Functoriality of traces}
\label{subsec:FunctorialityTraces}
Let $\cC$ be a symmetric monoidal $\infty$-category. If $X$ and $X'$ are dualizable objects in $\cC$, then any equivalence $X \simeq X'$ in $\cC$ can be used to transfer the duality data from $X$ to duality data on $X'$, giving rise to a homotopy $\dim(X) \simeq \dim(X')$ in $\Map_{\cC}(\unit,\unit)$ between the dimensions of $X$ and $X'$. It is possible to choose these homotopies in a fully coherent fashion. For a precise formulation of this claim, we need the following definitions:

\begin{defn}
    Let $\Cc \in \CMon(\Cat_{\infty})$ be a symmetric monoidal $\infty$-category.
    \begin{enumerate}[(1)]
        \item We denote by $\Cc^{\rig} \subseteq \Cc$ the full subcategory spanned by the dualizable (a.k.a.\ `rigid') objects;
        \item We write $\Cc^{\dbl} \subseteq \Cc^{\simeq}$ for the full subgroupoid of dualizable objects and equivalences in $\Cc$, so that we have $\Cc^{\dbl} = (\Cc^{\rig})^{\simeq}$;
        \item We denote by $\Cc^{\trl} \subseteq \Fun(B\NN,\cC)^{\simeq}$ the full subgroupoid spanned by those endomorphisms $f\colon X \to X$ where $X$ is dualizable in $\cC$. We refer to such endomorphisms as the \textit{traceable} endomorphisms.
        \item We write $\Omega\cC := \Map_{\cC}(\unit,\unit)$ for the space of endomorphisms of the monoidal unit $\one$ of $\cC$.
    \end{enumerate}
\end{defn}

As we are about to make precise, the assignment $X \mapsto \dim(X)$ can be promoted to a map of spaces
\[
    \dim \colon \cC^\dbl \too \Omega \cC
    \qin \Spc,
\]
naturally in $\cC$. Similarly, one can promote the assignment $(X,f) \mapsto \gentr{f}{X}$ to a map of spaces
\[
    \tr \colon \cC^\trl \too \Omega \cC
    \qin \Spc,
\]
naturally in $\cC$. A convenient way of obtaining these two maps, as observed by \cite{TV2015caracteres}, is to use the fact that the functors $\cC \mapsto \cC^{\dbl}$ and $\cC \mapsto \cC^{\trl}$ from symmetric monoidal $\infty$-categories to spaces are corepresented by certain symmetric monoidal $\infty$-categories $\Fr^{\rig}(\pt)$ and $\Fr^{\rig}(B\NN)$. The main input for this is the following observation.

\begin{lemma}
The functor $(-)^{\rig}\colon \CAlg(\Cat_\infty) \too \Cat_{\infty}$ admits a left adjoint
\[
    \Fr^{\rig}\colon \Cat_{\infty} \too \CAlg(\Cat_\infty).
\]
\end{lemma}
\begin{proof}
We will show that the assignment $\Cc \mapsto \Cc^{\rig}$ preserves limits and filtered colimits, so that it admits a left adjoint by the adjoint functor theorem. Recall that both the forgetful functor $\CAlg(\Cat_{\infty}) \to \Cat_{\infty}$ and the groupoid core functor $(-)^{\simeq} \colon \Cat_{\infty} \to \Spc$ preserve limits and filtered colimits, and that fully faithful inclusions are closed under both limits and filtered colimits in $\Cat^{[1]}$. Since the inclusion $\cC^{\rig} \subseteq \cC$ is a full subcategory for every $\Cc$, we conclude that for every functor $I \to \Cat_{\infty}$ the canonical comparison maps $\lim_i \Cc^{\rig}_i \hookrightarrow \lim_i \Cc_i$ and, in case $I$ is filtered, $\colim_i \Cc_i^{\rig} \hookrightarrow \colim_i \Cc_i$ are fully faithful. All in all, we see that it remains to show that the functor $\CAlg(\Cat_\infty) \to \Spc$ sending $\Cc$ to $\Cc^{\dbl}$ preserves limits and filtered colimits.

By \cite[Lemma 4.6.1.10]{HA}, the $\infty$-groupoid $\cC^{\dbl}$ is equivalent to the $\infty$-category $\DDat(\cC)$ of duality data in $\cC$, cf.\ \cite[Notation 4.6.1.8]{HA}. But the assignment $\cC \mapsto \DDat(\cC)$ is easily seen to commute with limits and filtered colimits, finishing the proof.
\end{proof}

For an $\infty$-category $I$, the symmetric monoidal $\infty$-category $\Fr^{\rig}(I)$ comes equipped with a unit map $I \to \Fr^{\rig}(I)$ which lands in dualizable objects. Restriction along this map induces for every symmetric monoidal $\infty$-category $\cC$ an equivalence of spaces
\[
    \Map^\otimes(\Fr^\rig(I), \cC) \iso \Map(I,\cC^{\rig}).
\]
In particular, specializing to $I = \pt$ and $I = B\NN$ gives equivalences of spaces
\[
    \Map^\otimes(\Fr^\rig(\pt),\cC) \simeq \cC^\dbl \qquad \text{ and } \qquad \Map^\otimes(\Fr^\rig(B\NN),\cC) \simeq \cC^{\trl},
\]
showing that the functors $(-)^{\dbl}, (-)^{\trl}\colon \CAlg(\Cat_{\infty}) \to \Spc$ are corepresentable. 

In the case of $B\NN$, we will denote the unit map by $\gamma_{\univ}\colon B\NN \to \Fr^{\rig}(B\NN)$, and refer to it as the `universal traceable endomorphism'. Forming its trace produces an element $\tr(\gamma_{\univ}) \in \Omega \Fr^{\rig}(B\NN)$, thought of as the `universal trace'. By the Yoneda lemma, it determines for every $\cC \in \CAlg(\Cat_{\infty})$ a natural map of spaces $\tr\colon \cC^{\trl} \to \Omega \cC$. More explicitly:
\begin{defn}
\label{def:trace}
    Given a symmetric monoidal $\infty$-category $\cC$, we define the \textit{trace map} $\tr\colon \cC^{\trl} \to \Omega \cC$ as the composite
    \[
        \cC^\trl \simeq 
        \Map^\otimes(\Fr^\rig(B\NN),\cC) \oto{\:\:\Omega\:\:} 
        \Map(\Omega\Fr^\rig(B\NN),\Omega\cC) \oto{\ev_{\tr(\gamma_{\univ})}} 
        \Omega\cC.
    \]
\end{defn}
By \Cref{rmk:GeneralizedTraceFunctorial}, this functor agrees on objects with the formula for the trace given in the beginning of this section. 

The map $\dim\colon \cC^{\dbl} \to \Omega \cC$ may be produced in a similar way from the Yoneda lemma by using instead the `universal dimension' $\dim(\pt_{\univ}) \in \Omega \Fr^{\rig}(\pt)$, where $\pt_{\univ} \in \Fr^{\rig}(\pt)$ is the `universal dualizable object'. Alternatively, one can obtain the map $\dim\colon \cC^{\dbl} \to \Omega \cC$ from the map $\tr\colon \cC^{\trl} \to \Omega \cC$ by precomposing with the map $\cC^{\dbl} \to \cC^{\trl}\colon X \mapsto (X,\id)$.

\subsection{Lax and oplax transformations}
\label{subsec:LaxAndOplaxTransformations}
When $\cC \in \CMon(\Cat_{(\infty,n)})$ is a symmetric monoidal $(\infty,n)$-category, then the traceable endomorphisms $f\colon X \to X$ in $\cC$ naturally form an $(\infty,n-1)$-category $\cC^{\trl}$ rather than just a space. The construction of this $(\infty,n-1)$-category proceeds by realizing it as a subcategory of the $(\infty,n)$-category of functors $B\NN \to \cC$ and symmetric monoidal \textit{oplax} natural transformations between them, satisfying certain dualizability and adjointability constraints. In this subsection, we will recall the necessary background on lax and oplax transformations.

Following \cite{HSS}, we will model $(\infty, n)$-categories by Barwick's $n$-fold complete Segal spaces $\cC\colon (\Delta\catop)^{\times n} \to \Spc$ \cite[\S 14]{barwick2021unicity}. For an $(\infty,n)$-category $\cC$ and a vector $\vec{k} = (k_1,\dots,k_n)\in \NN^n$, we denote by $\cC_{\vec{k}}$ the value of $\cC$, considered as an $n$-fold complete Segal space, at $([k_1],\dots,[k_n])\in \Delta^n$.  
The collection of $(\infty,n)$-categories assembles into an $\infty$-category that we denote by $\Cat_{(\infty,n)}$. By a \textit{symmetric monoidal $(\infty,n)$-category} we mean a commutative monoid in the $\infty$-category $\Cat_{(\infty,n)}$.

For each vector $\vec{k}\in \NN^n$, the functor $\Cat_{(\infty,n)} \to \Spc$ given by $\cC\mapsto \cC_{\vec{k}}$ is corepresentable by the so called `walking $\vec{k}$-tuple' $\theta^{\vec{k}}\in \Cat_{(\infty,n)}$ (see \cite[Definition 5.1, Remark 5.4]{JFS2017}). Namely, we have a natural equivalence of spaces 
\[
\Map(\theta^{\vec{k}},\cC) \simeq \cC_{\vec{k}}.
\]
The $(\infty,n)$-categories $\theta^{\vec{k}}$ generate $\Cat_{(\infty,n)}$ under colimits. For $n = 1$, the $\infty$-category $\theta^{(k)}$ is just the $k$-simplex $[k] \in \Cat_{(\infty,1)}$. The following are some examples for $n = 2$:
\[
    \theta^{(0,0)} = \bullet, \qquad \qquad \qquad \qquad \theta^{(1,0)} = \begin{tikzcd} \bullet \rar & \bullet, \end{tikzcd} \qquad \qquad \theta^{(2,0)} = \begin{tikzcd} \bullet \rar & \bullet \rar & \bullet, \end{tikzcd}
\]
\[
    \theta^{(1,1)} =     \begin{tikzcd}
    	\bullet & \bullet,
    	\arrow[""{name=0, anchor=center, inner sep=0}, curve={height=-12pt}, from=1-1, to=1-2]
    	\arrow[""{name=1, anchor=center, inner sep=0}, curve={height=12pt}, from=1-1, to=1-2]
    	\arrow[shorten <=3pt, shorten >=3pt, Rightarrow, from=0, to=1]
    \end{tikzcd}
    \qquad \qquad
    \theta^{(1,2)} = 
    \begin{tikzcd}
    	\phantom{,} \bullet & \bullet,
    	\arrow[""{name=0, anchor=center, inner sep=0}, curve={height=-18pt}, from=1-1, to=1-2]
    	\arrow[""{name=1, anchor=center, inner sep=0}, from=1-1, to=1-2]
    	\arrow[""{name=2, anchor=center, inner sep=0}, curve={height=18pt}, from=1-1, to=1-2]
    	\arrow[shorten <=2pt, shorten >=2pt, Rightarrow, from=0, to=1]
    	\arrow[shorten <=2pt, shorten >=2pt, Rightarrow, from=1, to=2]
    \end{tikzcd}
    \qquad \qquad 
    \theta^{(2,1)} = 
\begin{tikzcd}
	\bullet & \bullet & \bullet.
	\arrow[""{name=0, anchor=center, inner sep=0}, curve={height=-12pt}, from=1-1, to=1-2]
	\arrow[""{name=1, anchor=center, inner sep=0}, curve={height=12pt}, from=1-1, to=1-2]
	\arrow[""{name=2, anchor=center, inner sep=0}, curve={height=-12pt}, from=1-2, to=1-3]
	\arrow[""{name=3, anchor=center, inner sep=0}, curve={height=12pt}, from=1-2, to=1-3]
	\arrow[shorten <=3pt, shorten >=3pt, Rightarrow, from=0, to=1]
	\arrow[shorten <=3pt, shorten >=3pt, Rightarrow, from=2, to=3]
\end{tikzcd}
\]

Given two $(\infty,n)$-categories $\cC$ and $\cD$, one can form two new $(\infty,n)$-categories
\[
    \Fun_{\lax}(\cC,\cD) \qquad \text{ and } \qquad 
    \Fun_{\oplax}(\cC,\cD),
\]
whose objects are the $n$-functors $\cC \to \cD$ and whose higher morphisms are given by certain \textit{(op)lax transformations} (see \cite[\S 2.2]{HSS}). We emphasize that there is no laxness in the objects of $\Fun_{\lax}(\cC,\cD)$ and $\Fun_{\oplax}(\cC,\cD)$, only in the higher morphisms. These two $(\infty,n)$-categories are most easily described in terms of a certain conjectural monoidal structure $\times^{\lax}$ on $\Cat_{(\infty,n)}$, called the \textit{Gray product}: for $(\infty,n)$-categories $\cC$ and $\cD$, we would like to define the $(\infty,n)$-categories $\Fun_{\lax}(\cC,\cD)$ and $\Fun_{\oplax}(\cC,\cD)$ via the  natural isomorphisms
\begin{align*}
\begin{split}
    \Map(\cE, \Fun_\lax(\cC,\cD)) &\simeq 
    \Map(\cE \times^{\lax} \cC, \cD) \\
    \Map(\cE, \Fun_\oplax(\cC,\cD)) &\simeq 
    \Map(\cC \times^{\lax} \cE, \cD)
\end{split}
\end{align*}
for $\cE \in \Cat_{(\infty,n)}$. Unfortunately, as far as we know the construction of the Gray product of $(\infty,n)$-categories with all of its expected properties is not yet fully furnished in the literature. 

To circumvent this technical difficulty, \cite{JFS2017} observed that in order to define $\Fun_\lax(-,-)$ and $\Fun_\oplax(-,-)$ one only needs the Gray products of the walking $\vec{k}$-tuples $\theta^{\vec{k}}$. In \cite[Definition 5.7]{JFS2017}, a combinatorial model is given for a Gray product $\theta^{\vec{k}}\times^\lax \theta^{\vec{\ell}}$, which we denote by $\theta^{\vec{k},\vec{\ell}}$. We have the following low-dimensional examples, cf.\ \cite[Example~4.2,4.3]{JFS2017}:
\[
\theta^{(1),(0)} = 
\begin{tikzcd}
	\bullet \rar & \bullet
\end{tikzcd},
\qquad \qquad
\theta^{(1),(1)} = 
\begin{tikzcd}
	\bullet & \bullet \\
	\bullet & \bullet
	\arrow[from=1-1, to=1-2]
	\arrow[from=2-1, to=2-2]
	\arrow[from=1-1, to=2-1]
	\arrow[from=1-2, to=2-2]
	\arrow[Rightarrow, shorten >=4, shorten <=4, from=2-1, to=1-2]
\end{tikzcd},
\qquad \qquad
\theta^{(1),(1,1)} = 
\begin{tikzcd}
	\bullet && \bullet \\
	\\
	\bullet && \bullet
	\arrow[""{name=0, anchor=center, inner sep=0}, curve={height=-12pt}, from=1-1, to=3-1]
	\arrow[""{name=1, anchor=center, inner sep=0}, curve={height=12pt}, from=1-1, to=3-1]
	\arrow[""{name=2, anchor=center, inner sep=0}, curve={height=-12pt}, from=1-3, to=3-3]
	\arrow[""{name=8, anchor=center, inner sep=0}, shift left=2, curve={height=-6pt}, shorten <=7pt, shorten >=13pt, Rightarrow, from=0, to=2]
	\arrow[""{name=3, anchor=center, inner sep=0}, curve={height=12pt}, from=1-3, to=3-3,crossing over]
	\arrow[""{name=9, anchor=center, inner sep=0}, shift right=2, curve={height=6pt}, shorten <=13pt, shorten >=7pt, Rightarrow, from=1, to=3,crossing over]
	\arrow[from=1-1, to=1-3]
	\arrow[from=3-1, to=3-3]
	\arrow[""{name=4, anchor=center, inner sep=0}, draw=none, from=3-1, to=1]
	\arrow[""{name=5, anchor=center, inner sep=0}, draw=none, from=1-3, to=2]
	\arrow[""{name=6, anchor=center, inner sep=0}, draw=none, from=3, to=3-3]
	\arrow[""{name=7, anchor=center, inner sep=0}, draw=none, from=1-1, to=0]
	\arrow[shorten <=4pt, shorten >=4pt, Rightarrow, from=5, to=6]
	\arrow[shorten <=2pt, shorten >=4pt, Rightarrow, from=7, to=4]
	\arrow[shorten <=5pt, shorten >=5pt, -, preaction={draw,double distance=3pt,>={Implies},->},from=9, to=8]
\end{tikzcd}.
\]
The construction of $\theta^{\vec{k},\vec{l}}$ is functorial in $\vec{k}, \vec{l} \in \Delta^{n}$, and satisfies $\theta^{\vec{k},\vec{0}} \simeq \theta^{\vec{k}} \simeq \theta^{\vec{0},\vec{k}}$. In particular, it comes equipped with a natural map $\theta^{\vec{k},\vec{l}} \to \theta^{\vec{k}} \times \theta^{\vec{l}}$ to the cartesian product. Using the $(\infty,n)$-categories $\theta^{\vec{k},\vec{l}}$, one now defines $\Fun_{\lax}(-,-)$ and $\Fun_{\oplax}(-,-)$ by the following two-step procedure:

\begin{defn}
\label{def:fun_lax_walking}
    For $\cC \in \Cat_{(\infty,n)}$ and $\vec{k}\in \NN^n$, we define the following $n$-fold simplicial spaces:
    \[
        \Fun_\lax(\theta^{\vec{k}},\cC)_{\vec{\ell}} := \Map(\theta^{\vec{k},\vec{\ell}},\cC) 
        \qquad
        \text{ and }
        \qquad
        \Fun_\oplax(\theta^{\vec{k}},\cC)_{\vec{\ell}} := \Map(\theta^{\vec{\ell},\vec{k}},\cC).
    \] 
    We then define $n$-fold simplicial spaces $\Fun_\lax(\cE,\cC)$ and $\Fun_{\oplax}(\cE,\cD)$ by:
    \begin{align*}
        \Fun_\lax(\cE,\cC)_{\vec{k}}&:= \Map(\cE,\Fun_\oplax(\theta^{\vec{k}},\cC)),\\
        \Fun_\oplax(\cE,\cC)_{\vec{k}}&:= \Map(\cE,\Fun_\lax(\theta^{\vec{k}},\cC)).
    \end{align*}
    These are $n$-fold complete Segal spaces by \cite[Corollary 5.19]{JFS2017} and these constructions assemble into functors 
    \[
        \Fun_\lax(-,-), \; \Fun_\oplax(-,-)\colon \Cat_{(\infty,n)}^\op \times \Cat_{(\infty,n)} \to \Cat_{(\infty,n)}.
    \]
    It is immediate from the construction that these functors preserve limits in both variables. Furthermore, they come equipped with a natural map from the ordinary functor category: by precomposition with the maps $\theta^{\vec{k},\vec{l}} \to \theta^{\vec{k}} \times \theta^{\vec{l}}$ one obtains maps
    \[
        \Fun_{\lax}(\cC,\cD) \longleftarrow \Fun(\cC,\cD) \longrightarrow \Fun_{\oplax}(\cC,\cD).
    \]
\end{defn}

\begin{example}
    The low-dimensional simplices in the $(\infty,n)$-category $\Fun_\lax([1], \cC)$ may be described as follows:
    \begin{enumerate}
        \item[(0)] Objects are morphisms $X_0 \to X_1$ in $\cC$;
        
        \item Morphisms are lax commuting squares
        \[\begin{tikzcd}
        	{X_0} & {X_1} \\
        	{Y_0} & {Y_1;}
        	\arrow[from=1-1, to=1-2]
        	\arrow[from=2-1, to=2-2]
        	\arrow[color={rgb,255:red,214;green,92;blue,92}, from=1-1, to=2-1]
        	\arrow[color={rgb,255:red,214;green,92;blue,92}, from=1-2, to=2-2]
        	\arrow[color={rgb,255:red,214;green,92;blue,92}, Rightarrow, shorten >=4, shorten <=4, from=2-1, to=1-2]
        \end{tikzcd}\]
            
        \item 2-Morphisms are $3$-dimensional diagrams of the following shape:
        \[
    \begin{tikzcd}
    	X_0 && X_1 \\
    	\\
    	Y_0 && Y_1.
    	\arrow[""{name=0, anchor=center, inner sep=0}, curve={height=-12pt}, from=1-1, to=3-1]
    	\arrow[""{name=1, anchor=center, inner sep=0}, curve={height=12pt}, from=1-1, to=3-1]
    	\arrow[""{name=2, anchor=center, inner sep=0}, curve={height=-12pt}, from=1-3, to=3-3]
    	\arrow[""{name=8, anchor=center, inner sep=0}, shift left=2, curve={height=-6pt}, shorten <=7pt, shorten >=13pt, Rightarrow, from=0, to=2]
    	\arrow[""{name=3, anchor=center, inner sep=0}, curve={height=12pt}, from=1-3, to=3-3,crossing over]
    	\arrow[""{name=9, anchor=center, inner sep=0}, shift right=2, curve={height=6pt}, shorten <=13pt, shorten >=7pt, Rightarrow, from=1, to=3,crossing over]
    	\arrow[from=1-1, to=1-3]
    	\arrow[from=3-1, to=3-3]
    	\arrow[""{name=4, anchor=center, inner sep=0}, draw=none, from=3-1, to=1]
    	\arrow[""{name=5, anchor=center, inner sep=0}, draw=none, from=1-3, to=2]
    	\arrow[""{name=6, anchor=center, inner sep=0}, draw=none, from=3, to=3-3]
    	\arrow[""{name=7, anchor=center, inner sep=0}, draw=none, from=1-1, to=0]
    	\arrow[draw={rgb,255:red,214;green,92;blue,92}, shorten <=4pt, shorten >=4pt, Rightarrow, from=5, to=6]
    	\arrow[draw={rgb,255:red,214;green,92;blue,92}, shorten <=2pt, shorten >=4pt, Rightarrow, from=7, to=4]
    	\arrow[draw={rgb,255:red,214;green,92;blue,92}, shorten <=5pt, shorten >=5pt, -, preaction={draw={rgb,255:red,214;green,92;blue,92},double distance=3pt,>={Implies},->},from=9, to=8]
    \end{tikzcd}
        \]
    \end{enumerate}
\end{example}

\begin{rem}
    In the case $n = 2$, a full definition of the Gray product is given by \cite{GHL2021Gray} in terms of scaled simplicial sets: by \cite[Corollary~2.15]{GHL2021Gray}, their construction determines a presentably (non-symmetric) monoidal structure
    \[
        -\times^\lax - \colon \Cat_{(\infty,2)} \times \Cat_{(\infty,2)} \too \Cat_{(\infty,2)}
    \]
    on the $\infty$-category $\Cat_{(\infty,2)}$. In \cite[Proposition~5.1.9]{HHLN2022}, it was computed that this lax product takes the expected values on pairs of simplices $[k] \in \Cat_{(\infty,1)}$ (as in the example $\theta^{(1),(1)}$ above). Consequently, for $\cC,\cD \in \Cat_{(\infty,1)}$ and $\cE \in \Cat_{(\infty,2)}$, we have
    \[
        \Map(\cC , \Fun_{\lax}(\cD,\cE)) \simeq 
        \Map(\cC \times^\lax \cD, \cE).
    \]
    In particular, the underlying $(\infty,1)$-category of $\Fun_\lax(\cD,\cE)$ can be described in terms of $\times^\lax$.
\end{rem}

Finally, we discuss the interaction of the constructions $\Fun_\lax(-,-)$ and $\Fun_{\oplax}(-,-)$ with symmetric monoidal structures. 

\begin{obs}
\label{obs:TrickSymmetricMonoidality}
    If $\cD$ is a symmetric monoidal $(\infty,n)$-category, i.e.\ a commutative monoid in $\Cat_{(\infty,n)}$, then $\Fun_{\lax}(\cC,\cD)$ and $\Fun_{\oplax}(\cC,\cD)$ inherit canonical structures of symmetric monoidal $(\infty,n)$-categories as well. Indeed, since the functors $\Fun_{\lax}(\cC,-)$ and $\Fun_{\oplax}(\cC,-)$ preserve limits, and therefore in particular finite products, they preserve commutative monoids in $\Cat_{(\infty,n)}$.
\end{obs}

If both $\cC$ and $\cD$ have symmetric monoidal structures, one can define the following variants of $\Fun_{\lax}(\cC,\cD)$ and $\Fun_{\oplax}(\cC,\cD)$ in which the functors and (op)lax natural transformations are symmetric monoidal:

\begin{defn}
    Assume $\cC$ and $\cD$ are symmetric monoidal $(\infty,n)$-categories. We define $(\infty,n)$-categories $\Fun_{\lax}^{\tensor}(\cC,\cD)$ and $\Fun_{\oplax}^{\tensor}(\cC,\cD)$ by the following representability conditions: for $\cE \in \Cat_{(\infty,n)}$ we have natural equivalences
    \begin{align*}
          \Map(\cE, \Fun_\lax^{\tensor}(\cC,\cD)) &\simeq 
        \Map^{\tensor}(\cC, \Fun_{\oplax}(\cE,\cD)) \\
        \Map(\cE, \Fun_\oplax^{\tensor}(\cC,\cD)) &\simeq 
        \Map^{\tensor}(\cC, \Fun_{\lax}(\cE,\cD)).
    \end{align*}
\end{defn}

\subsection{Higher categorical traces}
\label{subsec:HigherCategoricalTraces}
In this section, we will recall from \cite{HSS} the definition of the symmetric monoidal $(n-1)$-functor $\tr\colon \cC^{\trl} \to \Omega \cC$ for a symmetric monoidal $(\infty,n)$-category $\cC$. We will start by formally defining its source and target.

\begin{defn}\label{def:Dbl_Trl}
    Let $\cC \in \CAlg(\Cat_{(\infty,n)})$ be a symmetric monoidal $(\infty,n)$-category. We define the \textit{$(\infty,n-1)$-category $\cC^{\dbl}$ of dualizable objects in $\cC$} as
    \[
        \cC^{\dbl} := \iota_{n-1}\Fun^{\tensor}_{\oplax}(\Fr^{\rig}(\pt),\cC).
    \]
    Similarly, we define the \textit{$(\infty,n-1)$-category $\cC^{\trl}$ of traceable endomorphisms in $\cC$} as
    \[
        \cC^{\trl} := \iota_{n-1}\Fun^{\tensor}_{\oplax}(\Fr^{\rig}(B\NN),\cC).
    \]
    Precomposition with the symmetric monoidal functor $\Fr^\rig(B\NN) \to \Fr^\rig(\pt)$ induced by the map $B\NN \to \pt$ gives an $(n-1)$-functor $\cC^{\dbl} \to \cC^{\trl}$.
\end{defn}

\begin{rem}
    The application of the underlying $(\infty,n-1)$-category functor $\iota_{n-1}$ in the definition of $\cC^\dbl$ and $\cC^\trl$ is actually redundant as the $(\infty,n)$-categories to which it is applied are already $(\infty,n-1)$-categories, cf.\ \cite[Remark~2.3]{HSS}.
\end{rem}

For $n = 1$, there is an equivalence of $\infty$-groupoids $\iota_0 \Fun^{\tensor}_{\oplax}(-,-) \simeq \Map^{\tensor}(-,-)$, and thus by corepresentability the definitions of the $\infty$-groupoids $\cC^{\dbl}$ and $\cC^{\trl}$ agree with the ones from the previous subsection. For larger $n$, the constructions of $\cC^{\dbl}$ and $\cC^{\trl}$ are compatible with the inclusions $\Cat_{(\infty,n)} \hookrightarrow \Cat_{(\infty,n+1)}$, in the sense that the following squares commute:
\[\begin{tikzcd}
	{\CAlg(\Cat_{(\infty,n)})} && {\Cat_{(\infty,n-1)}} \\
	{\CAlg(\Cat_{(\infty,n+1)})} && {\Cat_{(\infty,n)},}
	\arrow["{(-)^{\dbl}}", from=1-1, to=1-3]
	\arrow[hook, from=1-1, to=2-1]
	\arrow["{(-)^{\dbl}}", from=2-1, to=2-3]
	\arrow[hook, from=1-3, to=2-3]
\end{tikzcd}
\qquad \qquad
\begin{tikzcd}
	{\CAlg(\Cat_{(\infty,n)})} && {\Cat_{(\infty,n-1)}} \\
	{\CAlg(\Cat_{(\infty,n+1)})} && {\Cat_{(\infty,n)}.}
	\arrow["{(-)^{\trl}}", from=1-1, to=1-3]
	\arrow[hook, from=1-1, to=2-1]
	\arrow["{(-)^{\trl}}", from=2-1, to=2-3]
	\arrow[hook, from=1-3, to=2-3]
\end{tikzcd}
\]
Moreover, these squares are vertically right adjointable: for $1 \leq k \leq n$ there are equivalences
\[
    (\iota_k \cC)^{\dbl} \simeq \iota_{k-1}\cC^{\dbl} \qquad \qquad \text{ and } \qquad \qquad (\iota_k \cC)^{\trl} \simeq \iota_{k-1}\cC^{\trl}.
\]

It follows that the space of objects of the $(\infty,n-1)$-category $\cC^{\dbl}$ is the space of dualizable objects in $\cC$, while the space of objects of $\cC^{\trl}$ is the space of traceable endomorphisms in $\cC$.

We will next show that also the morphisms in $\cC^{\dbl}$ and $\cC^{\trl}$ are as claimed in the introduction of this section.
\begin{lem}
\label{lem:DblFun}
    Let $\cC$ be a symmetric monoidal $(\infty,n)$-category. For any $(\infty,n-1)$-category $\cE$ there are equivalences of spaces
    \begin{align*}
        \Map(\cE,\cC^{\dbl}) &\simeq (\iota_1\Fun_{\lax}(\cE,\cC))^{\dbl},\\
        \Map(\cE,\cC^{\trl}) &\simeq (\iota_1\Fun_{\lax}(\cE,\cC))^{\trl}.
    \end{align*}
\end{lem}
\begin{proof}
    For the second equation, there are equivalences
    \begin{align*}
        \Map(\cE,\cC^{\trl}) &\simeq \Map^{\tensor}(\Fr^{\rig}(B\NN),\Fun_{\lax}(\cE,\cC)) \\
        &\simeq \Map^{\tensor}(\Fr^{\rig}(B\NN),\iota_1\Fun_{\lax}(\cE,\cC))\\
        &\simeq  (\iota_1\Fun_{\lax}(\cE,\cC))^{\trl},
    \end{align*}
    where the first equivalence is immediate from the definitions, the second equivalence holds as $\Fr^{\rig}(B\NN)$ is an $(\infty,1)$-category, and the third is the defining property of $\Fr^{\rig}(B\NN)$. The computation for the first equation is analogous, using $\Fr^{\rig}(\pt)$ instead.
\end{proof}

\begin{cor}[cf.\ {\cite[Lemma~2.4]{HSS}}]
\label{cor:Morphisms_In_Dbl}
Let $\cC$ be an $(\infty,n)$-category. There are natural monomorphisms of $(\infty,n)$-categories
\[
    \cC^{\dbl} \hookrightarrow \cC \qquad \text{ and } \qquad \cC^{\trl} \hookrightarrow \Fun_{\oplax}(B\NN,\cC).
\]
An object $X \in \cC$ is in $\cC^{\dbl}$ if and only if it is dualizable, and a morphism $f\colon X \to Y$ between dualizable objects is in $\cC^{\dbl}$ if and only if it is a left adjoint morphism. Similarly, an endomorphism $f\colon X \to X$ is an object in $\cC^{\trl}$ if and only if $X$ is dualizable, and a lax square
\[\begin{tikzcd}
    X \dar[swap]{f} \rar{\varphi} & Y \dar{g} \\
    X \rar[swap]{\varphi} \urar[Rightarrow, shorten <=4pt, shorten >=4pt]{\alpha} & Y
\end{tikzcd}\]
is a morphism in $\cC^{\trl}$ if and only if $X$ and $Y$ are dualizable and $\varphi\colon X \to Y$ is a left adjoint in $\cC$.
\end{cor}
\begin{proof}
We start by producing the two monomorphisms. We will do this for $\cC^{\trl}$, and leave the case for $\cC^{\dbl}$ to the reader. For every $(\infty,n-1)$-category $\cE$, we consider the following natural composite map of spaces:
\[
    \Map(\cE,\cC^{\trl}) \overset{\ref{lem:DblFun}}{\simeq} (\iota_1 \Fun_{\lax}(\cE,\cC))^{\trl} \hookrightarrow \Map(B\NN,\Fun_{\lax}(\cE,\cC)) \simeq \Map(\cE,\Fun_{\oplax}(B\NN,\cC)).
\]
Note that the middle map is an inclusion of path components by definition of $(-)^{\trl}$. By the Yoneda lemma, we obtain the desired monomorphism $\cC^{\trl} \hookrightarrow \Fun_{\oplax}(B\NN,\cC)$.

We now prove the descriptions of the objects and morphisms of $\cC^{\dbl}$ and $\cC^{\trl}$. As mentioned earlier, the statement on objects is clear from the equivalences $\iota_0\cC^{\dbl} \simeq (\iota_1\cC)^{\dbl}$ and $\iota_0\cC^{\trl} \simeq (\iota_1\cC)^{\trl}$. For the statement on morphisms, we apply \Cref{lem:DblFun} to $\cE = [1]$ to reduce to a statement about the dualizable objects of $\Fun_{\lax}([1],\cC)$. This then becomes an instance of Lemma 2.4 in \cite{HSS}, which says that an object $(\varphi\colon X \to Y)$ of $\Fun_{\lax}([1],\cC)$ is dualizable if and only if $X$ and $Y$ are dualizable and $\varphi\colon X \to Y$ is a left adjoint in $\cC$.
\end{proof}

The target $\Omega \cC$ of the higher trace functor is the $(\infty,n-1)$-category of endomorphisms of the monoidal unit, defined formally as follows:

\begin{defn}
\label{def:Omega}
The $(\infty,n-1)$-category $\Omega \cC$ is defined as the following pullback\footnote{Here we think of the $(\infty,n)$-category $\cC$ as a complete Segal $(\infty,n-1)$-category $\cC\colon \Delta\catop \to \Cat_{(\infty,n-1)}$ whose $(\infty,n-1)$-category of objects $\cC_0$ is an $\infty$-groupoid.} in $\Cat_{(\infty,n-1)}$:
\[\begin{tikzcd}
    \Omega \cC \dar \rar \drar[pullback] & \cC_1 \dar{(d_1,d_0)} \\
    \{(\unit,\unit)\} \rar[hookrightarrow] & \cC_0 \times \cC_0.
\end{tikzcd}\]
\end{defn}

That is, the objects of $\Omega \cC$ are given by 1-morphisms $f\colon \unit \to \unit$ in $\cC$, the morphisms are given by 2-morphisms 
\[\begin{tikzcd} 
\unit && \unit \arrow[""{name=0, anchor=center, inner sep=0}, "f", curve={height=-12pt}, from=1-1, to=1-3]
    	\arrow[""{name=1, anchor=center, inner sep=0}, "g"', curve={height=12pt}, from=1-1, to=1-3]
    	\arrow["\alpha", shorten <=3pt, shorten >=3pt, Rightarrow, from=0, to=1]
\end{tikzcd},\]
and so forth. If $\cC$ is symmetric monoidal, then so is $\Omega\cC$, as all the functors in the diagram of which $\Omega \cC$ is defined to be the limit are manifestly symmetric monoidal. 

\begin{prop}\label{Omega_Fun_Lax}
    For every $(\infty,n-1)$-category $\cE$ the canonical map 
    \[
        \Fun_{\lax}(\cE,\Omega \cC) \to \Omega \Fun_{\lax}(\cE,\cC)
    \]
    is an equivalence. In particular, there is an equivalence
    \[
        \Omega (\iota_1 \Fun_{\lax}(\cE,\cC)) \simeq \Map(\cE,\Omega \cC).
    \]
\end{prop}
\begin{proof}
    In the special case $\cE = \theta^{\vec{k}}$, the first statement is \cite[Proposition~2.6]{HSS}. The general case follows from the fact that the $(\infty,n-1)$-categories $\theta^{\vec{k}}$ generate $\Cat_{(\infty,n-1)}$ under colimits and both sides take colimits in the variable $\cE$ to limits.
    
    For the second statement, we observe that $\Omega \iota_{k} \cC \simeq \iota_{k-1} \Omega \cC$ for $1 \leq k \leq n-1$, so that 
    \[
        \Omega \iota_1 \Fun_{\lax}(\cE,\cC) \simeq \iota_0 \Omega \Fun_{\lax}(\cE,\cC) \simeq \iota_0 \Fun_{\lax}(\cE,\Omega \cC) \simeq \Map(\cE,\Omega \cC),
    \]
    proving the claim.
\end{proof}

We can now bootstrap the $(\infty,1)$-categorical trace map from \Cref{def:trace} to the  $(\infty,n)$-categorical setting. 
\begin{defn}
\label{def:Higher_Categorical_Trace_Functor}
    Let $\cC$ be a symmetric monoidal $(\infty,n)$-category.  We define the \textit{trace functor} 
    \[
    \tr\colon \cC^{\trl} \to \Omega \cC
    \]
    as the $(n-1)$-functor inducing the following natural map of spaces for every $\cE \in \Cat_{(\infty,n-1)}$: 
    \[
        \Map(\cE,\cC^{\trl}) \simeq (\iota_1\Fun_{\lax}(\cE,\cC))^{\trl} \xrightarrow{\tr} \Omega(\iota_1 \Fun_{\lax}(\cE,\cC)) \simeq \Map(\cE,\Omega \cC).
    \]
    Here the first and last equivalences are \Cref{lem:DblFun} and \Cref{Omega_Fun_Lax} respectively and the middle map is the trace map of spaces for the symmetric monoidal $(\infty,1)$-category $\iota_1 \Fun_{\lax}(\cE,\cC)$, as defined in \Cref{def:trace}.

    We define the \textit{dimension functor} $\dim\colon \cC^{\dbl} \to \Omega \cC$ as the $(n-1)$-functor obtained by precomposing $\tr\colon \cC^{\trl} \to \Omega \cC$ with the map $\cC^{\dbl} \to \cC^{\trl}$ from \Cref{def:Dbl_Trl}.
\end{defn}

By definition, the $(n-1)$-functor $\tr\colon \cC^{\trl} \to \Omega \cC$ is on groupoid cores simply given by the trace map of the $(\infty,1)$-category $\iota_1 \cC$, and thus it has the correct behavior on objects. The following lemma shows that also its behavior on morphisms is as described in the introduction of this section:

\begin{lem}[cf.\ {\cite[Lemma~2.4]{HSS}}]
    Let $f\colon X \to X$ and $g\colon Y \to Y$ be traceable endomorphisms in $\cC$, and let $(\varphi,\alpha)\colon (X,f) \to (Y,g)$ be a morphism in $\cC^{\trl}$ as in \eqref{eq:TraceableMorphism}, cf.\ \Cref{cor:Morphisms_In_Dbl}. Then applying $\tr\colon \cC^{\trl} \to \Omega \cC$ gives the morphism $\tr(\alpha)\colon \tr(f) \to \tr(g)$ in $\Omega \cC$ given by \eqref{eq:2FunctorialityTrace}.
\end{lem}
\begin{proof}
    Under the identification $\Map([1],\cC^{\trl}) \simeq (\iota_1\Fun_{\lax}([1],\cC))^{\trl}$, the morphism $(\varphi,\alpha)\colon (X,f) \to (Y,g)$ in $\cC^{\trl}$ corresponds to an endomorphism $(f,g,\alpha)$ of the dualizable object $(\varphi\colon X \to Y)$ in the symmetric monoidal $\infty$-category $\Fun_{\lax}([1],\cC)$, and the map $\tr(\alpha)$ is by definition the trace of this endomorphism:
    \[
        \id_{\unit} \xrightarrow{\coev_{\varphi}} \varphi \tensor \varphi^{\vee} \xrightarrow{\alpha \tensor \id} \varphi \tensor \varphi^\vee \xrightarrow{\ev_{\varphi}} \id_{\unit}.
    \]
    In the proof of \cite[Lemma~2.4]{HSS}, the authors write down explicit duality data of $\varphi$ as an object of $\Fun_{\lax}([1],\cC)$, and plugging this in gives the explicit description of the map $\tr(\alpha)\colon \tr(f) \to \tr(g)$ as given in \eqref{eq:2FunctorialityTrace}.
\end{proof}

We finish the section by showing that the trace functor $\tr\colon \cC^{\trl} \to \Omega \cC$ admits a canonical enhancement to a symmetric monoidal $(n-1)$-functor.

\begin{lem}
\label{lem:OmegaLimitPreserving}
    The functors
    \begin{align*}
        (-)^{\trl}\colon \CAlg(\Cat_{(\infty,n)}) &\to \Cat_{(\infty,n-1)} \\
        \Omega\colon \CAlg(\Cat_{(\infty,n)}) &\to \Cat_{(\infty,n-1)}
    \end{align*}
    preserve limits.
\end{lem}
\begin{proof}
    The functor $\Omega(-)$ is the pullback of the limit-preserving functors sending $\cC$ to $\{(\one,\one)\}$, $\cC_0 \times \cC_0$ and $\cC_1$, and thus also $\Omega$ preserves limits. For $(-)^{\trl}$, this follows from the fact that for every $\cE \in \Cat_{(\infty,n-1)}$ and $\cC \in \CAlg(\Cat_{(\infty,n)})$ there is an equivalence
    \[
        \Map(\cE,\cC^{\trl}) \simeq \Map^{\tensor}(\Fr^{\rig}(B\NN),\Fun_{\lax}(\cE,\cC)),
    \]
    where the right-hand side preserves limits in the variable $\cC \in \CAlg(\Cat_{(\infty,n)})$.
\end{proof}

Since the functors $(-)^{\trl}$ and $\Omega(-)$ in particular preserve finite products, they induce functors on commutative algebra objects. Employing the equivalence $\CAlg(\Cat_{(\infty,n)}) \simeq \CAlg(\CAlg(\Cat_{(\infty,n)}))$, we may regard every symmetric monoidal $(\infty,n)$-category as a commutative algebra in the $\infty$-category $\CAlg(\Cat_{(\infty,n)})$. This leads to a symmetric monoidal enhancement of the trace functor.

\begin{cor}[{cf.\ \cite[Definition~2.11]{HSS}}]
\label{cor:TraceSymmetricMonoidal}
    For every symmetric monoidal $(\infty,n)$-category $\cC$, the $(\infty,n-1)$-categories $\cC^{\trl}$ and $\Omega \cC$ admit canonical enhancements to symmetric monoidal $(n-1)$-categories, and the trace functor $\tr\colon \cC^{\trl} \to \Omega \cC$ admits a canonical enhancement to a symmetric monoidal $(n-1)$-functor.
\end{cor}

\section{Traces in the \texorpdfstring{$(\infty,2)$}{2}-category of spans}
\label{sec:tracesInCorr}

Consider an $\infty$-category $\cD$ which admits finite limits. In this section, we will compute dimensions and traces in a certain $(\infty,2)$-category\footnote{The notation $\Span(\cD)$ is usually used for the $(\infty,1)$-category of spans. Since we will only ever use its $(\infty,2)$-categorical enhancement, we give it the same name to avoid cluttering of notation.} $\Span(\cD)$ of \textit{spans} (or \textit{correspondences}) in $\cD$, to be recalled below. The objects of $\Span(\cD)$ are given by the objects the objects of $\cD$, and the $\infty$-category of morphisms from $X$ to $Y$ is equivalent to the $\infty$-category $\cD_{/(X \times Y)}$ of \textit{spans} $X \leftarrow W \rightarrow Y$. Composition is informally given by forming the pullback:
\[
    (Y \leftarrow V \rightarrow Z) \circ (X \leftarrow W \rightarrow Y) = (X \leftarrow W \times_Y V \rightarrow Z).
\]
The $(\infty,2)$-category $\Span(\cD)$ inherits a symmetric monoidal structure from the cartesian product on $\cD$, and applying the constructions of \Cref{subsec:HigherCategoricalTraces} thus gives a trace functor
\[
    \tr_{\Span(\cD)}\colon \Span(\cD)^\trl \to \Omega \Span(\cD) \simeq \cD.
\]
The primary reason we are interested in traces in $\Span(\cD)$ is the fact that, as a symmetric monoidal $(\infty,2)$-category, $\Span(\cD)$ admits a universal property: it is the target of the universal symmetric monoidal \emph{bivariant theory} $\cD\to\Span(\cD)$, in the sense of \Cref{defn:symmonbiv}. In particular, any other symmetric monoidal bivariant theory $F\colon \cD \to \cE$ factors uniquely through a symmetric monoidal 2-functor $\Span(\cD) \to \cE$, which allows us to reduce computations of traces in $\cE$ to computations of traces in $\Span(\cD)$.

A direct computation of traces in $\Span(\cD)$ can be obtained from the description of composition in $\Span(\cD)$ given above. Observe first that every object in $\Span(\cD)$ is dualizable: the evaluation and coevaluation maps for an object $X \in \cD$ are given by the spans
\[
    X \times X \xleftarrow{\;\Delta\;} X \longrightarrow \pt \qquad\qquad \text{ and } \qquad\qquad \pt \longleftarrow X \xrightarrow{\;\Delta\;} X \times X
\]
where $\Delta$ denotes the diagonal of $X$. It follows that the dimension of $X$ in $\Span(\cD)$ is equivalent to the free loop space $LX$, defined as the cotensoring $X^{S^1}$ of $X$ by the circle $S^1$. Furthermore, if we consider an endomorphism of $X$ in $\Span(\cD)$ given by a span $X \xleftarrow{f} Z \xrightarrow{g} X$, then the trace of this endomorphism in $\Span(\cD)$ is the \textit{equalizer} $\Eq(f,g)$ of $f$ and $g$ in $\cD$, see \Cref{def:Equalizer} and \Cref{lem:TracesInCorr}. 

The goal of this section is to give a careful account of (the functoriality of) these computations. In \Cref{subsec:DefinitionCorr}, we will recall the definition of the $(\infty,2)$-category $\Span(\cD)$, its universal property, and some of its features. In \Cref{subsec:DimensionsInCorr}, we will show that the trace of the span $X \xleftarrow{f} Z \xrightarrow{g} X$, viewed as an endomorphism in $\Span(\cD)$, is the equalizer $\Eq(f,g)$ of $f$ and $g$, and in particular that there is an equivalence $\dim_{\Span(\cD)}(X) \simeq LX$. Moreover, we will make the latter identification natural in $X \in \cD$.

\subsection{The \texorpdfstring{$(\infty,2)$}{2}-category of spans}
\label{subsec:DefinitionCorr}
We will start by introducing the $(\infty,2)$-category $\Span(\cD)$ of spans associated with an $\infty$-category $\cD$ which admits finite limits. There are various ways to define $\Span(\cD)$: explicitly as a 2-fold complete Segal space \cite{HaugsengSpans, GR} generalizing Barwick's $(\infty,1)$-categorical construction \cite{barwick2013QConstruction,barwick2017spectral}, via bivariant fibrations \cite{stefanich2020higher}, or via bivariant theories \cite{Macpherson}. All these $(\infty,2)$-categories are equivalent because they satisfy a universal property: they come equipped with a functor $\cD \to \Span(\cD)$ which is universal among bivariant theories from $\cD$ into some $(\infty,2)$-category $\cE$, cf.\ \Cref{def:bivariantTheory}. For the purposes of this article, it is convenient to simply \textit{define} $\Span(\cD)$ via its universal property.

\begin{defn}
An $\infty$-category is called \textit{left exact} if it has finite limits, and a functor is called left exact if it preserves finite limits. We let $\Cat^{\lex}_{\infty}$ denote the $\infty$-category of (small) left exact $\infty$-categories and left exact functors.
\end{defn}

Recall that a commutative square
\[
\begin{tikzcd}
    X' \dar[swap]{b} \rar{a} & X \dar{c} \\
    Y' \rar{d} & Y
\end{tikzcd}
\]
in an $(\infty,2)$-category $\cE$ is called \textit{vertically right adjointable} if the morphisms $b$ and $c$ admit right adjoints $b^r$ and $c^r$ in $\cE$, and the Beck-Chevalley map
\[
    ab^r \xrightarrow{\eta_c} c^rcab^r \simeq c^rdbb^r \xrightarrow{\epsilon_b} c^rd
\]
is an equivalence in $\cE$.

\begin{defn}[{\cite[Section~3.2]{Macpherson}}]
\label{def:bivariantTheory}
Let $\cD$ be a left exact $\infty$-category and let $\cE$ be an $(\infty,2)$-category. A functor $F\colon \cD \to \cE$ is called a \textit{bivariant theory} if the following two conditions are satisfied:
\begin{enumerate}[(1)]
    \item For every morphism $f\colon A \to B$ in $\cD$, the morphism $F(f)\colon F(A) \to F(B)$ admits a right adjoint $F(f)^r$ in $\cE$;
    \item For every pullback square
    \[
    \begin{tikzcd}
        A' \dar[swap]{f'} \rar{g} \drar[pullback] & A \dar{f} \\
        B' \rar{h} & B
    \end{tikzcd}
    \]
    in $\cD$, the induced square
    \[
    \begin{tikzcd}
        F(B) \dar[swap]{h^*} \rar{f^*} & F(A) \dar{g^*} \\
        F(B') \rar{{f'}^*} & F(A')
    \end{tikzcd}
    \]
    in $\cE$ is verticall right adjointable.
\end{enumerate}

Given another bivariant theory $F'\colon \cC \to \cE$, a transformation $\alpha\colon F \Rightarrow F'$ is called \textit{bivariant} if it commutes with the right adjoints of $F(f)$:
for every morphism $f\colon A \to B$ in $\cD$, the resulting naturality square
\[
\begin{tikzcd}
    F(A) \rar{\alpha_A} \dar[swap]{F(f)} & F'(A) \dar{F'(f)} \\
    F(B) \rar{\alpha_B} & F'(B)
\end{tikzcd}
\]
is vertically right adjointable.

We let $\Biv(\cD, \cE) \subseteq \Fun(\cD,\cE)$ denote the $(\infty,2)$-category of bivariant theories $F\colon \cD \to \cE$, bivariant transformations and arbitrary 3-transformations.
\end{defn}

We may now introduce $\Span(\cD)$ as the $(\infty,2)$-category equipped with the universal bivariant theory $h_{\cD}\colon \cD \to \Span(\cD)$.

\begin{prop}[{\cite[Theorem~4.2.6]{Macpherson}}] 
\label{prop:UniversalPropertyCorr}
Let $\cD$ be a left exact $\infty$-category. There exists an $(\infty,2)$-category $\Span(\cD)$ equipped with a bivariant theory $h_{\cD}\colon \cD \to \Span(\cD)$ satisfying the following property: for any other $(\infty,2)$-category $\cE$, composition with $h_{\cD}$ induces an equivalence of $(\infty,2)$-categories
\begin{align*}
    \Fun(\Span(\cD),\cE) \iso \Biv(\cD,\cE).
\end{align*}
The construction $\cD \mapsto \Span(\cD)$ and the maps $h_{\cD}\colon \cD \to \Span(\cD)$ uniquely assemble into a functor $\Span\colon \Cat^{\lex}_{\infty} \to \Cat_{(\infty,2)}$ equipped with a natural transformation $h$ from the forgetful functor $\Cat^{\lex}_{\infty} \to \Cat_{\infty} \hookrightarrow \Cat_{(\infty,2)}$.
\end{prop}

\begin{rem}
    MacPherson used the notation $\Corr(\cD)$ instead of $\Span(\cD)$ and referred to it as the $(\infty,2)$-category of \textit{correspondences} in $\cD$.
\end{rem}

The above universal property of $\Span(\cD)$ admits a symmetric monoidal variant:

\begin{defn}\label{defn:symmonbiv}
Let $\cD$ be a left exact $\infty$-category, equipped with the cartesian monoidal structure, and let $\cE^{\tensor}$ be a symmetric monoidal $(\infty,2)$-category. A symmetric monoidal functor $F\colon \cD^{\times} \to \cE^{\tensor}$ is called a \textit{symmetric monoidal bivariant theory} if its underlying functor $\cD \to \cE$ is a bivariant theory.

We let $\Biv^{\tensor}(\cD, \cE) \subseteq \Fun^{\tensor}(\cD,\cE)$ denote the $(\infty,2)$-category of symmetric monoidal bivariant theories $F\colon \cD \to \cE$, symmetric monoidal bivariant transformations and arbitrary 3-transformations.
\end{defn}

\begin{prop}[{\cite[4.4.6 Theorem]{Macpherson}}]
Let $\cD$ be a left exact $\infty$-category. There is a unique enhancement of $\Span(\cD)$ to a symmetric monoidal $(\infty,2)$-category together with an enhancement of the bivariant theory $h_{\cD}\colon \cD \to \Span(\cD)$ to a symmetric monoidal bivariant theory. It satisfies the following property: for any other symmetric monoidal $(\infty,2)$-category $\cE$, composition with $h_{\cD}$ induces an equivalence of $(\infty,2)$-categories
\begin{align*}
    \Fun^{\tensor}(\Span(\cD),\cE) \iso \Biv^{\tensor}(\cD,\cE).
\end{align*}
The construction $\cD \mapsto \Span(\cD)$ and the maps $h_{\cD}\colon \cD \to \Span(\cD)$ uniquely assemble into a functor $\Span\colon \Cat^{\lex}_{\infty} \to \CAlg(\Cat_{(\infty,2)})$ equipped with a natural transformation $h$ from the forgetful functor $\Cat^{\lex}_{\infty} \to \CAlg(\Cat_{\infty}) \hookrightarrow \CAlg(\Cat_{(\infty,2)})$.
\end{prop}

In \cite[Section~3.1]{stefanich2020higher}, Stefanich introduces for every left exact $\infty$-category $\cD$ an explicit 2-fold complete Segal space $2\Corr(\cD)$ equipped with a functor $\cD \to 2\Corr(\cD)$ satisfying the universal property of $h_{\cD}\colon \cD \to \Span(\cD)$, cf.\ \cite[Theorem 3.4.18]{stefanich2020higher}. In \cite[Remark 3.1.10]{stefanich2020higher}, the mapping spaces in $2\Corr(\cD)$ are computed to be
\[
    \Map_{2\Corr(\cD)}(d,d') \simeq \cD_{/d \times d'}
\]
for $d,d'\in \cD$. One observes from his computation (cf.\ \cite[Notation 3.1.4, Notation 3.1.9, Remark 3.1.8]{stefanich2020higher}) that this identification is natural in the triple $(\cD,d,d')$. In particular, letting both $d$ and $d'$ be the final object of $\cD$, we obtain the following result:

\begin{prop}[{\cite{stefanich2020higher}}]
\label{prop:OmegaCorr}
Let $\cD$ be a left exact $\infty$-category. Then there is a functorial equivalence
\[
    i_{\cD}\colon \cD \iso \Omega \Span(\cD).
\]
\end{prop}

We finish this subsection by recalling from \cite{HaugsengSpans,stefanich2020higher} that all objects in $\Span(\cD)$ are dualizable and that every morphism $f$ in $\cD$ gives rise to an adjunction in $\Span(\cD)$ between the associated left- and right-pointing morphisms.

\begin{lem}[{\cite[Theorem~1.4]{HaugsengSpans}, \cite[Proposition~3.3.3]{stefanich2020higher}}]
\label{lem:DblCorr}
For a left exact $\infty$-category $\cD$ and an object $X \in \cD$, the span
\[
    X \times X \xleftarrow{\;\Delta\;} X \xrightarrow{\;r\;} \pt
\]
is part of a duality datum exhibiting $X$ as self-dual in $\Span(\cD)$. The coevaluation is given by the span $\pt \xleftarrow{\;r\;} X \xrightarrow{\;\Delta\;} X \times X$. In particular, every object in $\Span(\cD)$ is dualizable.
\end{lem}

\begin{lemma}[{\cite[Proposition~3.3.1]{stefanich2020higher}, see also \cite[Lemma~12.3]{HaugsengSpans}}]
\label{lem:AdjointInCorr}
For a morphism $f\colon X \to Y$ in $\cD$, the morphisms of spans
\[
\begin{tikzcd}
    & X \dar["f"{description}] \ar[bend right=20, swap]{ddl}{f} \ar[bend left=20]{ddr}{f} \\
    & Y \dlar[equal] \drar[equal] \\
    Y && Y
\end{tikzcd}
\qquad \text{ and } \qquad
\begin{tikzcd}
    & X  \dar["{\Delta_f}"{description}] 
    \ar[bend right=20, equal]{ddl} \ar[bend left=20, equal]{ddr} \\
    & X \times_Y X \dlar{\pr_1} \drar[swap]{\pr_2} \\
    X && X.
\end{tikzcd}
\]
are the counit resp.\ unit of an adjunction in $\Span(\cD)$ between the span $h_{\cD}(f) = (\!\!\!\begin{tikzcd}
X & X \lar[swap]{\id} \rar{f} & Y \end{tikzcd}\!\!\!)$ and the span $f^r = (\!\!\!\begin{tikzcd}
Y  & X \lar[swap]{f} \rar{\id} & X\end{tikzcd}\!\!\!)$.
\end{lemma}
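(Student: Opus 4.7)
The plan is a direct verification of the triangle identities, supplemented by an appeal to coherence in the $(\infty,2)$-categorical setting.

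First, I would compute the two relevant span composites via the pullback description of composition in $\Corr(\cD)$. The composite $h_{\cD}(f) \circ f^r$ is obtained by pulling the right leg $\id_X$ of $f^r$ back along the left leg $\id_X$ of $h_{\cD}(f)$, yielding the span $Y \xleftarrow{f} X \xrightarrow{f} Y$. Dually, $f^r \circ h_{\cD}(f)$ is obtained by pulling $f \colon X \to Y$ back along itself, yielding $X \xleftarrow{\pr_1} X \times_Y X \xrightarrow{\pr_2} X$. Under the description of 2-morphisms between parallel spans as maps between middle objects commuting with the two legs (a consequence of the mapping-space computation $\Map_{\Corr(\cD)}(c,d) \simeq \cD_{/c \times d}$), the diagonal $\Delta_f \colon X \to X \times_Y X$ defines a 2-morphism $\id_X \Rightarrow f^r \circ h_{\cD}(f)$ (the proposed unit), while the map $f \colon X \to Y$ defines a 2-morphism $h_{\cD}(f) \circ f^r \Rightarrow \id_Y$ (the proposed counit).

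Next, I would verify the two triangle identities. The intermediate composite $h_{\cD}(f) \circ f^r \circ h_{\cD}(f)$ is the span $X \xleftarrow{\pr_1} X \times_Y X \xrightarrow{f \circ \pr_2} Y$ (using $f \circ \pr_1 \simeq f \circ \pr_2$ from the pullback), and the first triangle composite acts on middle objects via $\pr_1 \circ \Delta_f = \id_X$, showing that $(\epsilon \ast \id_{h_{\cD}(f)}) \circ (\id_{h_{\cD}(f)} \ast \eta) \simeq \id_{h_{\cD}(f)}$. The second triangle, for the composite $f^r \circ h_{\cD}(f) \circ f^r$, is verified analogously using $\pr_2 \circ \Delta_f = \id_X$.

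The main obstacle is coherence: an adjunction in an $(\infty,2)$-category is not determined merely by 1-categorical unit/counit data satisfying the triangle identities up to 2-equivalence, but rather by a full tower of higher cells encoded by the walking adjunction. To address this, I would either work in an explicit model such as Stefanich's or Haugseng's 2-fold complete Segal space presentation of $\Corr(\cD)$, where the required higher simplices arise automatically from iterated pullbacks, or invoke a coherence theorem (in the spirit of Riehl--Verity) that promotes the low-dimensional data above to a coherent adjunction. Both cited references \cite{stefanich2020higher} and \cite{HaugsengSpans} follow the former route, explicitly constructing the higher cells as iterated pullbacks in $\cD$.
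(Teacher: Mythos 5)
The paper does not prove this lemma itself; it is cited from Stefanich \cite{stefanich2020higher} and Haugseng \cite{HaugsengSpans}. Your direct verification is sound and is essentially the route those sources take: the pullback computations giving $h_{\cD}(f) \circ f^r \simeq (Y \xleftarrow{f} X \xrightarrow{f} Y)$ and $f^r \circ h_{\cD}(f) \simeq (X \xleftarrow{\pr_1} X \times_Y X \xrightarrow{\pr_2} X)$ are correct, the identification of the counit with the map $f\colon X \to Y$ over $Y \times Y$ and of the unit with $\Delta_f\colon X \to X \times_Y X$ over $X \times X$ matches the lemma, and the triangle identities do reduce to $\pr_i \circ \Delta_f \simeq \id_X$ on the middle objects.

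Your worry about coherence is somewhat overstated for the statement at hand. The definition of a left adjoint morphism in an $(\infty,2)$-category used in this paper asks only for a unit and counit whose triangle composites are \emph{homotopic} to the respective identity 2-morphisms; it does not demand the infinite tower of data of a ``coherent adjunction''. The homotopies you exhibit therefore already discharge the definition as stated, with no need to invoke a Riehl--Verity-type result (that result instead justifies that this weaker definition is equivalent to the coherent one, which is a separate matter). What does require care --- and what the explicit 2-fold Segal space models in the cited references actually supply --- is the bookkeeping in forming composite spans and whiskerings: pullbacks in $\Corr(\cD)$ and the resulting identifications are only defined up to contractible ambiguity, so one must track that the canonical equivalences entering $(\epsilon \ast \id) \circ (\id \ast \eta)$ and $(\id \ast \epsilon) \circ (\eta \ast \id)$ cohere. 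That is not an obstruction of principle, but it is where the cited references earn their keep.
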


\subsection{Traces and dimensions in the \texorpdfstring{$(\infty,2)$}{2}-category of spans}
\label{subsec:DimensionsInCorr}

In this subsection, we recall the folklore fact that the trace of an endomorphism $X \xleftarrow{f} Z \xrightarrow{g} X$ in $\Span(\cD)$ is given by the equalizer $\Eq(f,g)$ of $f$ and $g$. An instance of this appears for example in \cite[Section~4]{BenZviNadler}. For completeness, we provide a proof of this fact. Furthermore, we will show that there is a natural identification $\dim_{\Span(\cD)}(X) \simeq LX$ for $X \in \cD$.

\begin{defn}
\label{def:Equalizer}
Let $f,g\colon Z \to X$ be two maps in $\cD$. The \textit{equalizer} $\Eq(f,g)$ of $f$ and $g$ is defined via the pullback square
\[
\begin{tikzcd}
    \Eq(f,g) \dar \rar \drar[pullback] & X \dar{\Delta} \\
    Z \rar{(f,g)} & X \times X.
\end{tikzcd}
\]
When $Z = X$ and $f = g = \id_X$ is the identity on $X$, we write the equalizer as
\[
    LX := \Eq(\id_X,\id_X)
\]
and call it the \textit{free loop space}, or the free loop object of $X$. Observe that it is equivalent to the cotensor $X^{S^1}$ of $X$ by $S^1$.
\end{defn}

The proof that traces in $\Span(\cD)$ are given by equalizers is a straightforward computation.
\begin{lem}
\label{lem:TracesInCorr}
Consider an endomorphism in $\Span(\cD)$ given by a span $X \xleftarrow{f} Z \xrightarrow{g} X$. 
Under the equivalence $\cD \simeq \Omega \Span(\cD)$ of \Cref{prop:OmegaCorr}, the trace of this endomorphism is equivalent to the equalizer $\Eq(f,g) \in \cD$. In particular, the dimension of $X$ in $\Span(\cD)$ is equivalent to its free loop space $LX \in \cD$.
\end{lem}
\begin{proof}
Spelling out the definition of trace and plugging in the explicit duality data from \Cref{lem:DblCorr}, the trace is given by the following composite of spans:
\[\begin{tikzcd}
	& X && {Z \times X} && X \\
	{\;\;\;\pt\;\;\;} && {X \times X} && {X \times X} && {\;\;\;\pt.\;\;\;}
	\arrow["{f \times \id}"', from=1-4, to=2-3]
	\arrow["{g \times \id}", from=1-4, to=2-5]
	\arrow["r"', from=1-2, to=2-1]
	\arrow["\Delta", from=1-2, to=2-3]
	\arrow["\Delta"', from=1-6, to=2-5]
	\arrow["r", from=1-6, to=2-7]
\end{tikzcd}\]
Observe that we have the following pullback diagram:
\[\begin{tikzcd}
	{\Eq(f,g)} \drar[pullback] & Z\drar[pullback] & X \\
	Z \drar[pullback] & {Z \times X} & {X \times X} \\
	X & {X \times X.}
	\arrow["{f \times \id}"', from=2-2, to=3-2]
	\arrow["{g \times \id}", from=2-2, to=2-3]
	\arrow["\Delta", from=3-1, to=3-2]
	\arrow["\Delta"', from=1-3, to=2-3]
	\arrow["f"', from=2-1, to=3-1]
	\arrow["g", from=1-2, to=1-3]
	\arrow["{(\id,f)}", from=2-1, to=2-2]
	\arrow["{(\id,g)}"', from=1-2, to=2-2]
	\arrow[from=1-1, to=2-1]
	\arrow[from=1-1, to=1-2]
\end{tikzcd}\]
It follows that the above composite span is equivalent to the span $\pt \xleftarrow{\;\;\;} \Eq(f,g) \xrightarrow{\;\;\;} \pt$, as desired.
\end{proof}

The above computation of dimensions in $\Span(\cD)$ gives rise to explicit descriptions of dimensions in $(\infty,2)$-categories $\cE$ which receive a bivariant theory from $\cD$:

\begin{cor}
\label{cor:BivariantTheoryGivesDualizability}
Let $F\colon \cD \to \cE$ be a symmetric monoidal bivariant theory, with $\cD$ and $\cE$ as in \Cref{def:bivariantTheory}.
\begin{enumerate}[(a)]
    \item For every object $X \in \cD$, the composite
    \[
        F(X) \tensor F(X) \simeq F(X \times X) \xrightarrow{F(\Delta)^r} F(X) \xrightarrow{F(r)} F(\pt) \simeq \unit
    \]
    exhibits the object $F(X) \in \cE$ as self-dual in $\cE$. In particular, objects in the image of $F$ are dualizable. 
    \item For every object $X \in \cD$, the dimension of $F(X) \in \cE$ is given by the composite
    \[
        \unit_{\cE} \simeq F(\pt) \xrightarrow{F(r)^r} F(LX) \xrightarrow{F(r)} F(\pt) \simeq \unit_{\cE}.
    \]
\end{enumerate}
\end{cor}
\begin{proof}
By the universal property of the $(\infty,2)$-category $\Span(\cD)$, the functor $F$ uniquely extends to a symmetric monoidal 2-functor $F'\colon \Span(\cD) \to \cE$. As $F'$ preserves duality data and adjunction data, statement a) thus follows from \Cref{lem:DblCorr}, while statement b) follows from \Cref{lem:TracesInCorr}.
\end{proof}

\subsubsection{Coherence}
\label{subsec:DimensionsInCorrCoherence}

We have seen above that for an object $X$ of a left exact $\infty$-category $\cD$, the dimension of $X$ in $\Span(\cD)$ is given by its free loop space: $\dim_{\Span(\cD)}(X) \simeq LX$. The goal of the remainder of subsection is to make this calculation functorial in $X$, see \Cref{thm:DimensionInCorr} below. This is more subtle than it looks: to get at the functoriality of the trace functor, we need to produce all the higher duality data for the objects in $\Span(\cD)$ in a coherent fashion. 

We will work around this problem by observing that the computation of dimensions in $\Span(\cD)$ can be done uniformly in $\cD \in \Cat_{\infty}^{\lex}$, allowing us to reduce to the `universal' left exact $\infty$-category. More precisely, we will consider the left exact $\infty$-category $\cD = (\Spc^{\fin})\catop$, the opposite of the $\infty$-category of finite spaces. By using the fact that $(\Spc^{\fin})\catop$ is freely generated under finite limits by the point $\pt \in (\Spc^{\fin})\catop$, it is possible to reduce the coherence problem to a non-coherent statement, which was solved in \Cref{lem:TracesInCorr}. In the remainder of this subsection, we will fill in the details of this proof strategy.

\begin{lem}
The functor $h_{\cD}\colon \cD \to \Span(\cD)$ factors through the subcategory $\Span(\cD)^{\dbl}$.
\end{lem}
\begin{proof}
Since $\Span(\cD)^{\dbl} \hookrightarrow \Span(\cD)$ is a non-full subcategory by \Cref{cor:Morphisms_In_Dbl}, it suffices to check this on the level of objects and morphisms. By \Cref{lem:DblCorr}, $h_{\cD}$ carries every object in $\cD$ to a dualizable object of $\Span(\cD)$, and since $h_{\cD}$ is a bivariant theory, it carries morphisms in $\cD$ to left adjoint morphisms in $\Span(\cD)$. The statement thus follows from the description of objects and morphisms of $\Span(\cD)^{\dbl}$ given in \Cref{cor:Morphisms_In_Dbl}.
\end{proof}

\begin{rem}
The functor $h_{\cD}\colon \cD \to \Span(\cD)^{\dbl}$ is in fact an equivalence of $\infty$-categories. As we will not need this statement, we leave the proof to the reader.
\end{rem}

It is relatively straightforward to show that the equivalence $\dim_{\Span(\cD)}(X) \simeq LX$ is natural with respect to \textit{equivalences} in the $\infty$-category $\cD$.

\begin{lem}
\label{lem:DimensionInCorrPrelim}
For every left exact $\infty$-category $\cD$, the following diagram naturally commutes:
\[
\begin{tikzcd}
    \cD^{\simeq} \rar{h_{\cD}} \dar[swap]{L} & \Span(\cD)^{\dbl} \dar{\dim_{\Span(\cD)}} \\
    \cD \rar["i_{\cD}", "\simeq"'] & \Omega \Span(\cD).
\end{tikzcd}
\]
\end{lem}
\begin{proof}
The four corners of the square are natural in $\cD \in \Cat^{\lex}_{\infty}$ and the four edges form natural transformations. The assignment $\cD \mapsto \cD^{\simeq}$ is corepresented by the left exact $\infty$-category $(\Spc^{\fin})\catop$. Indeed, the $\infty$-category $\Spc^{\fin}$ is freely generated under finite colimits by the point, see (the proof of) \cite[Proposition~5.3.6.2]{htt}. It thus suffices, by the Yoneda lemma, to show that the two composites agree for $\cD = (\Spc^{\fin})\catop$ when evaluated at the point $\pt \in (\Spc^{\fin})\catop$. This is an instance of \Cref{lem:TracesInCorr}.
\end{proof}

In order to enhance the above commutative diagram to a diagram defined on all of $\cD$ rather than just its groupoid core $\cD^{\simeq}$, we will need some understanding of the interaction between span-categories and functor categories.

\begin{lem} Let $\cE$ and $\cD$ be $\infty$-categories such that $\cD$ is left exact. The inclusion $\Fun(\cE,\cD) \hookrightarrow \Fun(\cE,\Span(\cD)) \hookrightarrow \Fun_{\lax}(\cE,\Span(\cD))$ is a symmetric monoidal bivariant theory, i.e.\ it is symmetric monoidal, sends all morphisms to left adjoint morphisms and sends pullback squares to right adjointable squares. 
\end{lem}
\begin{proof}
The inclusion $\Fun(\cE,\cD) \hookrightarrow \Fun_{\lax}(\cE,\Span(\cD))$ inherits a symmetric monoidal structure from applying the finite-product-preserving functor $\Fun(\cE,-)$ to the symmetric monoidal functor $\cD \to \Span(\cD)$.

Next, given a morphism $\alpha\colon F \to G$ in $\Fun(\cE,\cD)$, we have to show its image in $\Fun_{\lax}(\cE,\Span(\cD))$ is a left adjoint. By \cite[Theorem~4.6]{Haugseng2021Lax}, it suffices to show that this image in $\Fun_{\lax}(\cE,\Span(\cD))$ comes from a morphism in the functor category $\Fun(\cE,\Span(\cD))$, and that for every $e \in \cE$ the map $\alpha(e)\colon F(e) \to G(e)$ is a left adjoint morphism in $\Span(\cD)$. But the former is automatic, and the latter follows from the fact that $h_{\cD}\colon \cD \to \Span(\cD)$ sends morphisms to left adjoints.

Finally, given a pullback square 
\[\begin{tikzcd}
	F \drar[pullback] & G \\
	H & K
	\arrow["\alpha"', from=1-1, to=2-1]
	\arrow["\beta", from=1-1, to=1-2]
	\arrow["\delta"', from=2-1, to=2-2]
	\arrow["\gamma", from=1-2, to=2-2]
\end{tikzcd}\]
in $\Fun(\cE,\cD)$, we have to show that the resulting Beck-Chevalley map (see e.g.\ \cite[Section 2.2]{TeleAmbi})
\[\begin{tikzcd}
	F & G \\
	H & K
	\arrow["\beta", from=1-1, to=1-2]
	\arrow["\delta"', from=2-1, to=2-2]
	\arrow["{\alpha^*}", from=2-1, to=1-1]
	\arrow["{\gamma^*}"', from=2-2, to=1-2]
	\arrow[shorten <=4pt, shorten >=4pt, Rightarrow, from=1-1, to=2-2]
\end{tikzcd}\] 
in $\Fun_{\lax}(\cE,\Span(\cD))$ is an equivalence. For every $e \in \cE$, evaluation of the pullback square at $e \in \cE$ gives a pullback square in $\cD$. Hence, since $h_{\cD}\colon \cD \to \Span(\cD)$ is a bivariant theory, the above Beck-Chevalley map is an equivalence in $\Span(\cD)$ after evaluating at $e$:
\[
\begin{tikzcd}
	{F(e)} & {G(e)} \\
	{H(e)} & {K(e).}
	\arrow["{\beta_e}", from=1-1, to=1-2]
	\arrow["{\delta_e}"', from=2-1, to=2-2]
	\arrow["{\alpha^*_e}", from=2-1, to=1-1]
	\arrow["{\gamma^*_e}"', from=2-2, to=1-2]
	\arrow["\simeq"{description}, shorten <=4pt, shorten >=4pt, Rightarrow, from=1-1, to=2-2]
\end{tikzcd}
\]
Since the 2-functors $\ev_e\colon \Fun_{\lax}(\cE,\Span(\cD)) \to \Span(\cD)$ for $e \in \cE$ are jointly conservative on morphism $\infty$-categories, this gives the claim. 
\end{proof}

By the previous lemma, the symmetric monoidal inclusion $\Fun(\cE,\cD) \hookrightarrow \Fun_{\lax}(\cE,\Span(\cD))$ universally extends to a symmetric monoidal functor
\[
    \Phi_{\cE,\cD}\colon \Span(\Fun(\cE,\cD)) \to \Fun_{\lax}(\cE,\Span(\cD)).
\]
The following lemma shows that this functor interacts well with the equivalence $i_{\cD}\colon \cD \iso \Omega \Span(\cD)$:

\begin{lem}
\label{lem:OmegaCorrVSPhi}
For every $\infty$-category $\cE$, there is a natural homotopy making the following diagram of $\infty$-groupoids commute:
\begin{equation}
\label{eq:OmegaCorrVSPhi}
\begin{tikzcd}
    \Fun(\cE,\cD)^{\simeq} \ar[equal]{rr} \dar[swap]{i_{\Fun(\cE,\cD)}} && \Fun(\cE,\cD)^{\simeq} \dar{\Fun(\cE,i_{\cD})} \\
    \Omega \Span(\Fun(\cE,\cD))^{\simeq} \ar{r}{\Omega\Phi_{\cE,\cD}} &  \Omega\Fun_{\lax}(\cE, \Span(\cD))^{\simeq} \rar{\simeq} &  \Fun(\cE,\Omega \Span(\cD))^{\simeq}.
\end{tikzcd}
\end{equation}
\end{lem}
\begin{proof}
Under the identification $\Fun(\cE,\cE')^{\simeq} \simeq \Map_{\Cat_{\infty}}(\cE,\cE')$, the composite along the left and bottom of the diagram (\ref{eq:OmegaCorrVSPhi}) constitutes a map from $\Map_{\Cat_{\infty}}(\cE,\cD)$ to $\Map_{\Cat_{\infty}}(\cE,\Omega \Span(\cD))$ which is natural in $\cE \in \Cat_{\infty}$. By the Yoneda lemma, it is thus induced by a functor $j_{\cD}\colon \cD \to \Omega \Span(\cD)$. From the definition it is immediate that the functors $i_{\cD}$ and $j_{\cD}$ agree on groupoid cores and that the diagram (\ref{eq:OmegaCorrVSPhi}) commutes if we replace $i_{\cD}$ by $j_{\cD}$. It thus remains to show that the maps $i_{\cD}$ and $j_{\cD}$ are equivalent as functors $\cD \to \Omega \Span(\cD)$.

Note that the construction of $j_{\cD}$ is natural in $\cD \in \Cat^{\lex}_{\infty}$. We may thus consider the composite $k_{\cD}:= i^{-1}_{\cD}j_{\cD}\colon \cD \to \cD$ and regard it as an endomorphism $k\colon U \to U$ of the forgetful functor $U\colon \Cat^{\lex}_{\infty} \to \Cat_{\infty}$. We need to show that $k$ is the identity of $U$. Since $i_{\cD}$ and $j_{\cD}$ agree on groupoid cores, the induced map $k_{\cD}\colon \cD^{\simeq} \to \cD^{\simeq}$ on groupoid cores is naturally equivalent to the identity. It thus remains to prove the following result, \Cref{lem:Rigidity_Left_Exact_Categories}, which we record separately because of its independent interest.
\end{proof}

\begin{lemma}
\label{lem:Rigidity_Left_Exact_Categories}
    Let $k\colon U \to U$ be an endomorphism of the forgetful functor $U\colon \Cat^{\lex}_{\infty} \to \Cat_{\infty}$. Assume that the induced endomorphism $k^{\simeq}\colon U^{\simeq} \to U^{\simeq}$ of the functor $U^{\simeq}\colon \Cat^{\lex}_{\infty} \to \Spc$ is equivalent to the identity of $U^{\simeq}$. Then $k$ is equivalent to the identity of $U$.
\end{lemma}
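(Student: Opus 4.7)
The plan is to bootstrap the object-level hypothesis $k^{\simeq} \simeq \id_{U^{\simeq}}$ to a functor-level statement $k \simeq \id_U$ by exploiting naturality of $k$ with respect to cotensor constructions and classifying left exact functors. Fix a natural equivalence $\alpha \colon k^{\simeq} \simeq \id_{U^{\simeq}}$; this provides for each $\cD \in \Cat^{\lex}_\infty$ and each $X \in \cD$ an equivalence $\alpha_X\colon k_\cD(X) \simeq X$, natural in $\cD$ and in $X$. To enhance this to an equivalence of functors $k_\cD \simeq \id_\cD$, I need to produce, for every morphism $f\colon X \to Y$ in $\cD$, a commutative square with $k_\cD(f)$ along the top, $f$ along the bottom, and $\alpha_X, \alpha_Y$ as vertical sides, naturally in $\cD$ and $f$.

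First, I construct such a square (with $k_{\Fun([1],\cD)}(f)$ momentarily in place of $k_\cD(f)$) by applying the hypothesis to the left exact $\infty$-category $\Fun([1], \cD)$ (it is left exact since finite limits are computed pointwise). Objects of $\Fun([1], \cD)$ are morphisms in $\cD$, so the hypothesis furnishes an equivalence $k_{\Fun([1], \cD)}(f) \simeq f$ in $\Fun([1], \cD)$, which is exactly a commutative square in $\cD$ of the desired shape. Naturality of $\alpha$ with respect to the left exact evaluations $\ev_0, \ev_1\colon \Fun([1], \cD) \to \cD$ identifies the two vertical edges of this square with $\alpha_X$ and $\alpha_Y$ respectively.

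The main obstacle is then to identify the top edge $k_{\Fun([1],\cD)}(f)$ with $k_\cD(f)$, coherently in $\cD$ and $f$. For this, I would reduce to a universal case via the free left exact $\infty$-category on the arrow $[1]$, say $\cF := \Fr^{\lex}([1])$, equipped with its universal arrow $\mu\colon \star_0 \to \star_1$ and satisfying $\Fun^{\lex}(\cF,\cD) \simeq \Fun([1], \cD)$ for all $\cD \in \Cat^{\lex}_{\infty}$. Each morphism $f\colon X \to Y$ in $\cD$ is classified by a unique left exact functor $F_f\colon \cF \to \cD$ with $F_f(\mu) \simeq f$, and naturality of $k$ applied to $F_f$ gives $k_\cD(f) \simeq F_f(k_\cF(\mu))$. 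Since the Yoneda-like embedding $[1] \hookrightarrow \cF$ is fully faithful (a standard feature of the free left exact completion), the mapping space $\Map_{\cF}(\star_0,\star_1) \simeq \Map_{[1]}(0,1)$ is contractible; combined with the equivalences $\alpha_{\star_i}\colon k_\cF(\star_i) \simeq \star_i$, this forces $k_\cF(\mu) \simeq \mu$ essentially canonically. Transporting this canonical identification back along $F_f$ yields the required natural identification of $k_{\Fun([1],\cD)}(f)$ with $k_\cD(f)$, and assembling all of the above produces the sought-after equivalence $k \simeq \id_U$ in $\Fun(\Cat^{\lex}_\infty, \Cat_\infty)$.
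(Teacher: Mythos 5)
Your approach is genuinely different from the paper's, which transposes the problem across the free--forgetful adjunction $F \dashv U$ to a statement about endomorphisms of $\id_{\Cat_\infty}$ and then concludes using To\"en's rigidity theorem (namely, that $\End(\id_{\Cat_\infty})$ is contractible). You instead attempt to build the equivalence $k \simeq \id_U$ directly, degree by degree, which is a reasonable instinct and does correctly use some of the same ingredients: in particular, both arguments exploit the fully faithfulness of the unit $\cE \to U(F(\cE))$.

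However, there is a genuine gap in the final assembly step. What you construct is, for each $\cD$, a system of equivalences $\alpha_X\colon k_\cD(X) \simeq X$ (given by hypothesis) together with, for each $1$-morphism $f$, a preferred filler of the square
\[
\begin{tikzcd}
k_\cD(X) \rar{k_\cD(f)} \dar[swap]{\alpha_X} & k_\cD(Y) \dar{\alpha_Y} \\
X \rar{f} & Y.
\end{tikzcd}
\]
But to conclude $k \simeq \id_U$ as an equivalence in $\End(U) = \Map_{\Fun(\Cat^{\lex}_\infty,\Cat_\infty)}(U,U)$, you need to produce a natural equivalence of endofunctors $k_\cD \Rightarrow \id_\cD$ coherently in $\cD$, and a natural transformation of $\infty$-functors $\cD \to \cD$ consists of strictly more data than pointwise equivalences and the filled naturality squares: it includes coherences at all higher simplicial levels. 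Your final sentence (``assembling all of the above produces the sought-after equivalence'') thus papers over exactly the non-trivial part. In principle one could repair this by iterating your $\Fr^{\lex}([1])$ argument over $\Fr^{\lex}([n])$ for all $n$ --- each of these again has contractible mapping spaces between the images of the objects of $[n]$ --- and then carefully checking compatibility with the simplicial maps $[m] \to [n]$, but this requires additional argument that you do not supply. Moreover, the identification $k_{\Fun([1],\cD)} \simeq \Fun([1],k_\cD)$, which you correctly flag as ``the main obstacle,'' needs to be established coherently in $\cD$ and with respect to compositions; the reduction to $\Fr^{\lex}([1])$ handles single arrows but not how the equivalences for $f$, $g$ and $g\circ f$ interact. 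The paper's route via the monomorphism $u\colon \id \to U\circ F$ and To\"en's theorem sidesteps all of this coherence bookkeeping, which is precisely what makes that argument go through.
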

\begin{proof}
The forgetful functor $U$ admits a left adjoint $F\colon \Cat_{\infty} \to \Cat_{\infty}^{\lex}$, sending an $\infty$-category $\cE$ to the subcategory of the presheaf category $\PSh(\cE\catop)\catop$ generated under finite limits by the representable objects. In particular, the transformation $k\colon U \to U$ corresponds to a transformation $k'\colon \id \to U \circ F$, and the problem translates to showing that $k'$ is equivalent to the unit transformation $u\colon \id \to U \circ F$.

First we will show that $k'$ factors through $u$ via some transformation $k''\colon \id \to \id$. To see this, observe that for an $\infty$-category $\cE$ the unit map $u_{\cE}\colon \cE \to U(F(\cE))$ is fully faithful, both sides being full subcategories of $\PSh(\cE\catop)\catop$. It follows in particular that $u\colon \id \to U \circ F$ is a monomorphism in $\Fun(\Cat_{\infty},\Cat_{\infty})$, and thus it is a \textit{property} for $k'$ to factor through $u$. Moreover, as the property of factoring through a monomorphism in a functor category can be checked pointwise, it suffices to show that for each $\cE \in \Cat_{\infty}$ the functor $k'_{\cE}\colon \cE \to U(F(\cE))$ factors through $u_{\cE}\colon \cE \to U(F(\cE))$ via some map $k''_{\cE}\colon \cE \to \cE$. By construction, $k'_{\cE}\colon \cE \to U(F(\cE))$ is given by the composite
\[
    \cE \xrightarrow{u_{\cE}} U(F(\cE)) \xrightarrow{k_{F(\cE)}} U(F(\cE)).
\]
By assumption on $k$, the second map induces the identity on groupoid cores, proving that $k'_{\cE}$ factors through $u_{\cE}$ on objects. By fully faithfulness of $u_{\cE}\colon \cE \to U(F(\cE))$ this means $k'_{\cE}$ factors through $u_{\cE}$ as desired.

We are thus left with showing that the resulting endomorphism $k''\colon \id_{\Cat_{\infty}} \to \id_{\Cat_{\infty}}$ of the identity on $\Cat_{\infty}$ is equivalent to the identity. This is immediate: the space $\End(\id_{\Cat_\infty})$ of endomorphisms of $\id_{\Cat_{\infty}}$ is contractible. This follows from a theorem by To\"en, which says that the full subcategory of $\Fun(\Cat_{\infty},\Cat_{\infty})$ spanned by the equivalences is equivalent to the discrete category $\{\id, (-)^\op\}$, see \cite[Theorem~4.4.1]{lurie2009Goodwillie}.
\end{proof}

We are now ready for the main theorem of this section.

\begin{thm}
\label{thm:DimensionInCorr}
For every left exact $\infty$-category $\cD$, the following diagram naturally commutes:
\[
\begin{tikzcd}
    \cD \rar{h_{\cD}} \dar[swap]{L} & \Span(\cD)^{\dbl} \dar{\dim_{\Span(\cD)}} \\
    \cD \rar["i_{\cD}", "\simeq"'] & \Omega \Span(\cD).
\end{tikzcd}
\]
\end{thm}
\begin{proof}
By the Yoneda lemma, it suffices to prove that the diagram commutes after applying the functor $\Map_{\Cat_{\infty}}(\cE,-) = \Fun(\cE,-)^{\simeq}$ for every $\infty$-category $\cE$, naturally in $\cE$. Here we may use the following naturally commutative diagram:
\[
\begin{tikzcd}[cramped, column sep = 20]
    \Fun(\cE,\cD)^{\simeq} \ar[swap]{ddd}{(-)^{S^1}} \drar["h_{\Fun(\cE,\cD)}"{description}] \ar{rrr}{\Fun(\cE,h_{\cD})^{\simeq}} &&& \Fun(\cE,\Span(\cD)^{\dbl})^{\simeq} \ar[ddd,"{\Fun(\cE,\dim_{\Span(\cD)})^{\simeq}}"{description}]\\
    & \Span(\Fun(\cE,\cD))^{\dbl, \simeq} \dar["{\dim_{\Span(\Fun(\cE,\cD))}}"{description}] \rar{\Phi_{\cE,\cD}^{\dbl}} & \Fun_{\lax}(\cE,\Span(\cD))^{\dbl, \simeq} \dar["{\dim_{\Fun_{\lax}(\cE,\Span(\cD))}}"{description}] \urar{\simeq} \\
    & \Omega \Span(\Fun(\cE,\cD))^{\simeq} \rar{\Omega \Phi_{\cE,\cD}} & \Omega \Fun_{\lax}(\cE,\Span(\cD))^{\simeq} \drar{\simeq} \\
    \Fun(\cE,\cD)^{\simeq} \urar["i_{\Fun(\cE,\cD)}"{description}] \ar{rrr}{\Fun(\cE,i_{\cD})^{\simeq}} &&& \Fun(\cE,\Omega \Span(\cD))^{\simeq}.
\end{tikzcd}
\]
The left square commutes by \Cref{lem:DimensionInCorrPrelim} applied to $\Fun(\cE,\cD)$. The top square commutes by definition of $\Phi_{\cE,\cD}$. The right square commutes by definition of the higher categorical dimension functor (\Cref{def:Higher_Categorical_Trace_Functor}). The bottom square commutes by \Cref{lem:OmegaCorrVSPhi}. Finally, the middle square commutes by naturality of the dimension functor applied to the symmetric monoidal 2-functor $\Phi_{\cE,\cD}$.
\end{proof}

\section{Traces of \texorpdfstring{$\cC$}{C}-linear \texorpdfstring{$\infty$}{oo}-categories}
\label{sec:CLinearCategories}
Let $\cC \in \CAlg(\PrL)$ be a presentably symmetric monoidal $\infty$-category, taken to be fixed. In this case, we can form the symmetric monoidal $(\infty,2)$-category $\Mod_{\cC} := \Mod_{\cC}(\PrL)$ of $\cC$-modules in $\PrL$, with tensor product denoted by $\otimes_{\cC}$. We will refer to the objects of $\Mod_{\cC}$ as \textit{$\cC$-linear $\infty$-categories}, to its morphisms as \textit{$\cC$-linear functors} and to its 2-morphisms as \textit{$\cC$-linear transformations}. The mapping $\infty$-categories in $\Mod_{\cC}$ are $\Fun_{\cC}(\cD,\cE)$, the $\infty$-categories of $\cC$-linear colimit preserving functors from $\cD$ to $\cE$; they come equipped with natural enhancements to $\cC$-linear $\infty$-categories, making them into internal mapping objects in $\Mod_{\cC}$. We refer to \cite[Section~4.4]{HSS} for a more precise discussion. 

The goal of this section is to study traces in $\Mod_\cC$, which provide a natural categorification of traces in $\cC$. As $\cC$ is the monoidal unit of $\Mod_{\cC}$, the $\infty$-category $\Omega \Mod_{\cC}$ of \Cref{def:Omega} is given by the $\infty$-category $\Fun_{\cC}(\cC,\cC)$ of $\cC$-linear endofunctors of $\cC$. Observe that evaluation at the monoidal unit $\unit \in \cC$ induces an equivalence $\Fun_{\cC}(\cC,\cC) \iso \cC$, and thus postcomposing the trace functor of $\Mod_{\cC}$ with this identification yields a $\cC$-valued trace functor. Since this functor will play an important role in the remainder of this paper, we give it its own notation and terminology.

\begin{defn}
The \textit{$\cC$-linear trace functor} $\TrMod_{\cC}\colon \Mod_{\cC}^{\trl} \to \cC$ is defined as the following composite:
\[
    \TrMod_{\cC}\colon \Mod_{\cC}^{\trl} \xrightarrow{\tr_{\Mod_{\Cc}}} \Omega \Mod_{\cC} \simeq \Fun_{\cC}(\cC,\cC) \iso \cC.
\]
For a dualizable $\cC$-linear $\infty$-category $\cD$ equipped with a $\cC$-linear endofunctor $F\colon \cD \to \cD$, we call the object $\TrMod_{\Cc}(\cD,F) \in \cC$ the \textit{$\cC$-linear trace} of the pair $(\cD,F)$. When $F$ is the identity endofunctor of $\cD$, we will denote this object by $\TrMod_{\Cc}(\cD)$ and call it the \textit{$\Cc$-linear trace of $\cD$.}\footnote{An alternative name would be `$\cC$-linear dimension'.}
\end{defn}

\begin{example}
\label{ex:CLinear_Trace_Scalar}
Assume that $\cD = \Cc$, so that $F\colon \Cc \to \Cc$ is given by tensoring with an object $X \in \Cc$. Then by \Cref{ex:Generalized_Trace_Unit} the $\Cc$-linear trace is simply given by $X$ itself:
\[
    \TrMod_{\Cc}(\Cc,X) = X \qin \Cc.
\]
\end{example}

\begin{example}
    For a space $A$, recall that the functor category $\Cc^A = \Fun(A,\Cc)$ is dualizable in $\Mod_{\Cc}$ with dual given by $\Cc^A$ itself; see \Cref{cor:FreeCLinearCategoryIsDualizable} for details. An explicit computation shows that the $\Cc$-linear trace of $\Cc^A$ is given as
    \[
        \TrMod_{\Cc}(\Cc^A) = \unit_{\Cc}[LA] \qin \Cc,
    \]
    where $LA$ is the free loop space of $A$. This example will be treated in detail in \Cref{subsection:tracesfree} below.
\end{example}

\begin{example}
    For an algebra object $R \in \Alg(\Cc)$, Lurie showed in \cite[Proposition~4.6.3.12]{HA} that the $\Cc$-linear $\infty$-category $\RMod_R(\Cc)$ of right-modules over $R$ is dualizable in $\Mod_{\Cc}$, with dual given by the $\infty$-category $\LMod_R(\Cc)$ of left-modules. It is a folklore fact, reviewed for completeness in \Cref{subsection:HHtrace}, that the $\Cc$-linear trace of $\RMod_R(\Cc)$ is given by the $\Cc$-linear \textit{(topological) Hochshild homology} of $R$:
    \[
        \TrMod_{\Cc}(\RMod_R(\Cc)) = \THH_{\Cc}(R) := R \otimes_{R\otimes R\catop} R \qin \Cc.
    \]
    More generally, for an $R$-bimodule $M$ the $\Cc$-linear trace of the functor $- \otimes_R M\colon \RMod_R(\Cc) \to \RMod_R(\Cc)$ is $\THH_{\Cc}(R;M) := R \otimes_{R\otimes R\catop} M$.
\end{example}

\begin{example}
    Let $X$ be a locally compact Hausdorff space. Using proper base change, one can show that if $\Cc$ is stable, then the $\Cc$-linear $\infty$-category $\Shv(X;\Cc)$ of $\Cc$-valued sheaves on $X$ is dualizable in $\Mod_{\Cc}$ with dual given by $\Shv(X;\Cc)$ itself. The evaluation and coevaluation maps can be written down explicitly in terms of proper pushforwards, and the resulting $\Cc$-linear trace recovers the compactly supported cohomology of $X$ with coefficients in $\Cc$ (see for example \cite[Theorem~1.1]{KiuShendeZhang}):
    \[
        \TrMod_{\Cc}(\Shv(X;\Cc)) \simeq \Gamma_c(X;\unit_{\Cc}).
    \]
\end{example}

\begin{rem}
\label{rem:CLinearTraceSymmetricMonoidal}
By \Cref{cor:TraceSymmetricMonoidal}, the functor $\TrMod_{\cC}$ admits a canonical refinement to a symmetric monoidal functor of $\infty$-categories. 
In particular, given two $\cC$-linear endofunctors $(\cD,F), (\cD',F') \in \Mod^{\trl}_{\cC}$, there is a natural equivalence
\[
    \TrMod_{\Cc}(\cD \tensor_{\cC} \cD', F \tensor_\cC F') \simeq \TrMod_{\Cc}(\cD,F) \tensor \TrMod_{\Cc}(\cD',F') \qin \cC
\]
\end{rem}

Let us spell out the behavior of the $\cC$-linear trace functor on objects and morphisms. The $\infty$-category $\Mod_{\cC}^{\trl}$ is a (non-full) $(\infty,1)$-subcategory of the oplax functor category $\Fun_{\oplax}(B\NN,\Mod_{\cC})$, see \Cref{cor:Morphisms_In_Dbl}. Its objects are given by pairs $(\cD,F)$, where $\cD \in \Mod_{\cC}^{\dbl}$ is a dualizable $\cC$-linear $\infty$-category and $F\colon \cD \to \cD$ is a $\cC$-linear endofunctor. Its morphisms $(\cD,F) \to (\cE,G)$ are pairs $(H,\alpha)$, where $H\colon \cD \to \cE$ is a left adjoint morphism in $\Mod_{\cC}$ and where $\alpha\colon H \circ F \Rightarrow G \circ H$ is a $\cC$-linear transformation:
\begin{equation*}
\begin{tikzcd}
    \cD \dar[swap]{F} \rar{H} & \cE \dar{G} \\
    \cD \rar{H} & \cE.
    \arrow[shorten <=10pt, shorten >=10pt, Rightarrow, from=2-1, to=1-2, "\alpha"]
\end{tikzcd}
\end{equation*}
For an object $(\cD,F)$ of $\Mod^{\trl}_{\cC}$, its $\cC$-linear trace $\TrMod_{\cC}(\cD,F) \in \cC$ is the image of the monoidal unit $\one \in \cC$ under the following composite:
\begin{align*}
    \cC \xrightarrow{\coev} \cD \otimes_{\cC} \cD^\vee \xrightarrow{F \otimes \id} \cD \otimes_{\cC} \cD^\vee \overset{\tau}{\simeq} \cD^\vee \otimes_{\cC} \cD \xrightarrow{\ev} \cC.
\end{align*}
For a morphism $(H,\alpha)\colon (\cD,F) \to (\cE,G)$ in $\Mod^{\trl}_{\cC}$, the map $\TrMod_{\cC}(H,\alpha)\colon \TrMod_{\cC}(\cD,F) \to \TrMod_{\cC}(\cE,G)$ is given by evaluating the following composite transformation at the monoidal unit $\one \in \cC$:

\begin{equation}
\label{eq:FunctorialityTHH}
\begin{tikzcd}
	\cC && {\cD\otimes_{\cC} \cD^\vee} && {\cD\otimes_{\cC} \cD^\vee} \\
	\\
	{\cE\otimes_{\cC} \cE^\vee} && {\cD\otimes_{\cC} \cE^\vee} && {\cE\otimes_{\cC} \cD^\vee} && {\cD\otimes_{\cC} \cD^\vee} \\
	\\
	&& {\cE\otimes_{\cC} \cE^\vee} && {\cE\otimes_{\cC} \cE^\vee} && \cC.
	\arrow["{\coev_{\cE}}"', from=1-1, to=3-1]
	\arrow["{\coev_{\cD}}", from=1-1, to=1-3]
	\arrow["{H^r\otimes 1}"{description}, from=3-1, to=3-3]
	\arrow["{1\otimes (H^r)^\vee}"{description}, from=1-3, to=3-3]
	\arrow[""{name=0, anchor=center, inner sep=0}, "{H\otimes 1 }"{description}, from=3-3, to=5-3]
	\arrow[""{name=1, anchor=center, inner sep=0}, equal, curve={height=18pt}, from=3-1, to=5-3]
	\arrow["{H^r\otimes 1}"{description}, from=3-5, to=3-7]
	\arrow["{\ev_{\cD}}", from=3-7, to=5-7]
	\arrow[""{name=2, anchor=center, inner sep=0}, equal, curve={height=-18pt}, from=1-5, to=3-7]
	\arrow[""{name=3, anchor=center, inner sep=0}, "{H\otimes 1}"{description}, from=1-5, to=3-5]
	\arrow["{F\otimes 1}", from=1-3, to=1-5]
	\arrow["{\ev_{\cE}}"{description}, from=5-5, to=5-7]
	\arrow["{1\otimes(H^r)^\vee}"{description}, from=3-5, to=5-5]
	\arrow["G\otimes1"', from=5-3, to=5-5]
	\arrow[shorten <=23pt, shorten >=23pt, Rightarrow, no head, from=3-1, to=1-3]
	\arrow[shorten <=23pt, shorten >=23pt, Rightarrow, no head, from=5-5, to=3-7]
	\arrow["c\otimes1"{description}, shorten <=11pt, shorten >=5pt, Rightarrow, from=3-3, to=1]
	\arrow["{\alpha\otimes(H^r)^\vee}"{description}, shorten <=20pt, shorten >=20pt, Rightarrow, from=3, to=0]
	\arrow["u\otimes1"{description}, shorten <=11pt, shorten >=5pt, Rightarrow, from=2, to=3-5]
\end{tikzcd}
\end{equation}
This section is organized as follows: in \Cref{subsection:freecofree}, we define and review basic properties of $\cC$-linear $\infty$-categories freely and cofreely generated from spaces; in \Cref{subsec:Adjointability} we discuss maps of spaces $f$ for which the pullback functor $f^*$ admits a $\cC$-linear right adjoint, called $\cC$-adjointable maps; in \Cref{subsection:tracesfree} we compute the $\cC$-linear trace of $\cC$-linear $\infty$-categories freely generated by spaces and the morphisms between them induced by maps of spaces, and in the case of $\cC$-adjointable maps, we introduce a ``wrong-way'' morphism called the free loop transfer; in \Cref{subsection:HHtrace}, we prove the folklore identification of Hochschild homology as a trace in the case of module categories; and finally in \Cref{subsec:Free_CLinear_Cats_As_Module_Cats}, we discuss the identification of $\cC$-linear $\infty$-categories of local systems with module $\infty$-categories over the correspnding loop spaces. 
\subsection{Free and cofree \texorpdfstring{$\cC$}{C}-linear \texorpdfstring{$\infty$}{oo}-categories}\label{subsection:freecofree}
There are two classes of $\cC$-linear $\infty$-categories which will play an important role throughout this article: for every space $A$ we have the \textit{free} $\cC$-linear $\infty$-category $\cC[A]$ and the \textit{cofree} $\cC$-linear $\infty$-category $\cC^A$. The goal of this subsection is to recall the basic properties these $\infty$-categories have.

\begin{defn}
Let $A$ be a space and let $\cD$ be a $\cC$-linear $\infty$-category. We define the $\cC$-linear $\infty$-categories $\cD[A]$ and $\cD^A$ by
\[
    \cD[A] := \colim_A \cD \qin \Mod_{\cC} \qquad\qquad \text{ and } \qquad\qquad \cD^A := \lim_A \cD \qin \Mod_{\cC}.
\]
For a map $f\colon A \to B$ of spaces, we denote by
\[
    f_!\colon \cD[A] \to \cD[B] \qquad\qquad \text{ and } \qquad\qquad f^*\colon \cD^B \to \cD^A
\]
the induced $\cC$-linear functors. Their right adjoints in $\widehat{\Cat}_{\infty}$ are denoted by
\[
    f^*\colon \cD[B] \to \cD[A] \qquad\qquad \text{ and } \qquad\qquad f_*\colon \cD^A \to \cD^B.
\]
We will show in \Cref{cor:Free_VS_Cofree_CLinear_Category} that there is an equivalence $\cD[A] \simeq \cD^A$. Nevertheless, we choose to distinguish them in the notation to emphasize the different roles they play. Under this identification, the two functors called $f^*$ get identified, justifying the notation.
\end{defn}

\begin{rem}
Since the forgetful functors $\Mod_{\cC} \to \PrL$ and $\PrL \to \Cat_{\infty}$ preserve limits, the underlying $\infty$-category of $\cD^A$, and therefore also of $\cD[A]$, is the functor category $\Fun(A,\cD)$.
\end{rem}

The $\cC$-linear $\infty$-category $\cC[A]$ is called the \textit{free} $\cC$-linear $\infty$-category on $A$, as the $\cC$-linear functors out of it into some $\cC$-linear $\infty$-category $\cD$ correspond to functors $A \to \cD$. In fact, we have the following stronger statement:

\begin{lem}
\label{lem:MappingOutOfFreeCModule}
Let $\cD$ be a $\cC$-linear $\infty$-category and let $A$ be a space. 
There are natural equivalences of $\cC$-linear $\infty$-categories
\[
    \cD[A] \iso \cD \otimes_\cC \cC[A] \qquad\qquad \text{ and } \qquad\qquad \Fun_{\cC}(\cC[A],\cD) \iso \cD^A.
\]
\end{lem}
\begin{proof}
As the functor $\cD\otimes_\cC -\colon \Mod_{\cC} \to \Mod_{\cC}$ preserves colimits, the first equivalence follows from the equivalence $\cD \simeq \cD \tensor_{\cC} \cC$. Similarly, as the functor $\Fun_{\cC}(-,\cD)\colon \Mod_{\cC}\catop \to \Mod_{\cC}$ turns colimits in $\Mod_{\cC}$ into limits, the second equivalence similarly follows from the fact that evaluation at $\unit \in \cC$ induces an equivalence $\Fun_{\cC}(\cC,\cD) \iso \cD$.
\end{proof}

\begin{nota}
\label{nota:inducedmap}
We will frequently abuse notation and denote the $\cC$-linear functor associated with an object $X \in \cD^A$ simply by $X\colon \cC[A] \to \cD$, or sometimes by $X_{\cC}\colon \cC[A] \to \cD$ when we need to distinguish it from $X$ itself. For an object $Y \in \cC$ we will write $Y\colon \cC \to \cC$ for the functor $- \tensor Y\colon \cC \to \cC$.
\end{nota}

The assignments $A \mapsto \cC[A]$ and $A \mapsto \cD^A$ satisfy various adjointability properties, making them bivariant theories in the sense of \Cref{def:bivariantTheory}.

\begin{prop}
\label{prop:FreeCofreeCLinearCategories}
\begin{enumerate}[(1)]
    \item \label{it:FreeCLinearCategoryBivariant} The functor $\cC[-]\colon \Spc \to \Mod_{\cC}$ is a symmetric monoidal bivariant theory.
    \item \label{it:CofreeCLinearCategoryBivariant} For every $\cC$-linear $\infty$-category $\cD$, the functors $\cD[-]\colon \Spc \to \Mod_{\cC}$ and $\cD^{(-)}\colon \Spc \to \Mod_{\cC}\catop$ are bivariant theories.
    \item \label{it:RestrictionInduction}
    For every $\cC$-linear $\infty$-category $\cD$, there are equivalences of bivariant theories
    \[ 
    \cD\otimes_\cC\cC[-] \simeq \cD[-] \qquad\qquad \text{ and } \qquad\qquad \Fun_{\cC}(\cC[-],\cD) \simeq \cD^{(-)}.
    \]
\end{enumerate}
\end{prop}
\begin{proof}
For part (\ref{it:FreeCLinearCategoryBivariant}), notice that the functor $A \mapsto \cC[A]$ factors through the symmetric monoidal 2-functor $\cC \tensor_{\Spc} -\colon \PrL \to \Mod_{\cC}$. It thus suffices to show the claim when $\cC = \Spc$. More concretely, this means we have to show that the functor $\Spc[-]\colon \Spc \to \PrL$ is symmetric monoidal, sends maps of spaces to internal left adjoints in $\PrL$ and sends pullback squares of spaces to right adjointable squares in $\PrL$. Symmetric monoidality of $\Spc[-]\colon \Spc \to \PrL$ is automatic from the observation that it is the unique colimit-preserving symmetric monoidal functor $\Spc \to \PrL$. For the other two conditions, we use that for every space $A$ there is an equivalence between $\Spc[A]$ and the $\infty$-category $\PSh(A) := \Fun(A\catop,\Spc)$ of presheaves on $A$, see \cite[Theorem~5.1.5.6]{htt}. For a morphism of spaces $f\colon A \to B$, the induced functor $f_!\colon \PSh(A) \to \PSh(B)$ is given by left Kan extension. Consequently, the right adjoint $f^*\colon \PSh(B) \to \PSh(A)$ is given by precomposing with $f$, and hence preserves colimits. We deduce that $f^*$ is an internal right adjoint of $f_!$ in $\PrL$. It remains to show that $\Spc[-]$ sends pullback squares of spaces to adjointable squares in $\PrL$, which is an instance of \cite[Proposition~4.3.3]{HLAmbiKn}. This finishes the proof of part (\ref{it:FreeCLinearCategoryBivariant}).

Parts (\ref{it:CofreeCLinearCategoryBivariant}) and (\ref{it:RestrictionInduction}) follow immediately from part (\ref{it:FreeCLinearCategoryBivariant}). Indeed, observe that the post-composition of a bivariant theory $\cD \to \cE$ with a 2-functor $\cE \to \cE'$ is again a bivariant theory. Since $\cD\otimes_\cC -$ and $\Fun_{\cC}(-,\cD)\colon \Mod_{\cC} \to \Mod_{\cC}\catop$ are 2-functors, it thus follows from part (\ref{it:FreeCLinearCategoryBivariant}) that the functors $\cD\otimes_\cC \cC[-]$ and  $\Fun_{\cC}(\cC[-],\cD)$ are bivariant theories. By the natural equivalences $\cD\otimes_\cC \cC[-]\simeq \cD[-]$ and $\Fun_{\cC}(\cC[-],\cD) \simeq \cD^{(-)}$ of \Cref{lem:MappingOutOfFreeCModule}, parts (\ref{it:CofreeCLinearCategoryBivariant}) and (\ref{it:RestrictionInduction}) follow.
\end{proof}

Let us spell out some consequences of \Cref{prop:FreeCofreeCLinearCategories} in a more concrete form. There is for every pair of spaces $A$ and $B$ an equivalence
\[
\cC[A] \tensor_{\cC} \cC[B] \simeq \cC[A \times B].
\]
For every map $f\colon A \to B$ of spaces, the $\cC$-linear functor $f_!\colon \cC[A] \to \cC[B]$ admits a $\cC$-linear right adjoint $f^*\colon \cC[B] \to \cC[A]$, and similarly the $\cC$-linear functor $f^*\colon \cD^B \to \cD^A$ admits a $\cC$-linear left adjoint $f_!\colon \cD^A \to \cD^B$. Furthermore, for every pullback square of spaces
\[
\begin{tikzcd}
        A \rar{f} \dar[swap]{g} \drar[pullback] & B \dar{g'} \\
        C \rar{f'} & D,
    \end{tikzcd}
\]
if we consider the two associated commutative squares in $\Mod_{\cC}$
\[
\begin{tikzcd}
    \cC[A] \rar{f_!} \dar[swap]{g_!} & \cC[B] \dar{g'_!} \\
    \cC[C] \rar{f'_!} & \cC[D]
\end{tikzcd}
\qquad \qquad \text{ and } \qquad \qquad
\begin{tikzcd}
    \cD^A & \cD^B \lar[swap]{f^*} \\
    \cD^C \uar{g^*} & \cD^D \uar[swap]{{g'}^*} \lar{{f'}^*},
\end{tikzcd}
\]
then the left square is right adjointable in $\Mod_{\cC}$, while the right square is left adjointable.

\begin{cor}
\label{cor:RestrictionInduction}
Let $\cD$ be a $\cC$-linear $\infty$-category. For every map $f\colon A \to B$ of spaces, there are naturally commutative squares of $\cC$-linear functors
\[
\begin{tikzcd}
    \Fun_{\cC}(\cC[B],\cD) \dar[swap]{- \circ f_!} \rar{\simeq} & \cD^B \dar{f^*} \\
    \Fun_{\cC}(\cC[A],\cD) \rar{\simeq} & \cD^A
\end{tikzcd}
\qquad\qquad \text{and} \qquad\qquad
\begin{tikzcd}
    \Fun_{\cC}(\cC[A],\cD) \dar[swap]{- \circ f^*} \rar{\simeq} & \cD^A \dar{f_!} \\
    \Fun_{\cC}(\cC[B],\cD) \rar{\simeq} & \cD^B.
\end{tikzcd}
\]
\end{cor}
\begin{proof}
The left square is an instance of \Cref{lem:MappingOutOfFreeCModule}. The right square is obtained from the left square by passing to left adjoints.
\end{proof}

As a corollary of \Cref{prop:FreeCofreeCLinearCategories}, we obtain that the free $\cC$-linear $\infty$-categories $\cC[A]$ are dualizable in $\Mod_{\cC}$.

\begin{cor}
\label{cor:FreeCLinearCategoryIsDualizable}
For every space $A$, the $\cC$-linear pairing
\[
    \cC[A] \tensor_{\cC} \cC[A] \simeq \cC[A\times A] \xrightarrow{\Delta^*} \cC[A] \xrightarrow{A_!} \cC
\]
is non-degenerate. In particular the $\cC$-linear $\infty$-category $\cC[A]$ is self-dual in $\Mod_{\cC}$.
\end{cor}
\begin{proof}
By part (\ref{it:FreeCLinearCategoryBivariant}) of \Cref{prop:FreeCofreeCLinearCategories}, this is an instance of \Cref{cor:BivariantTheoryGivesDualizability}. The coevaluation is given by $\cC \oto{A^*} \cC[A] \oto{\Delta_!} \cC[A \times A] \simeq \cC[A] \otimes_{\cC} \cC[A]$.
\end{proof}

Under the self-duality of the free $\cC$-linear $\infty$-categories $\cC[A]$, the functors $f_!$ and $f^*$ are dual to each other, in the sense of the following lemma:

\begin{lem}
\label{lem:ResDualToInd}
Let $f\colon A \to B$ be a map of spaces. Then the following diagrams commute:
\[
\begin{tikzcd}
    \cC[B] \dar[swap]{\simeq} \ar{rr}{f^*} && \cC[A] \dar{\simeq} \\
    \cC[B]^{\vee} \ar{rr}{(f_!)^{\vee}} && \cC[A]^{\vee}
\end{tikzcd}
\qquad\qquad
\text{ and }
\qquad\qquad
\begin{tikzcd}
    \cC[A] \dar[swap]{\simeq} \ar{rr}{f_!} && \cC[B] \dar{\simeq} \\
    \cC[A]^{\vee} \ar{rr}{(f^*)^{\vee}} && \cC[B]^{\vee}.
\end{tikzcd}
\]
\end{lem}
\begin{proof}
We will prove the commutativity of the left diagram. The proof for the right diagram is analogous and is left to the reader. Expanding the definition of $(f_!)^{\vee}$ by plugging in the explicit evaluation and coevaluation maps from \Cref{cor:FreeCLinearCategoryIsDualizable}, we see it is given by the composite
\[
    \cC[B] \xrightarrow{(\pr_B)^*} \cC[B \times A] \xrightarrow{(\id \times (f,\id))_!} \cC[B \times B \times A] \xrightarrow{(\Delta \times \id)^*} \cC[B \times A] \xrightarrow{(\pr_A)_!} \cC[A].
\]
Observe that the maps $\id \times (f,\id)$ and $\Delta \times \id$ fit into a pullback square
\[\begin{tikzcd}
	A \ar[pullback]{drr} && {B \times A} \\
	{B \times A} && {B \times B \times A,}
	\arrow["{\id \times (f,\id)}", from=1-3, to=2-3]
	\arrow["{\Delta \times 1}", from=2-1, to=2-3]
	\arrow["{(f,\id)}"', from=1-1, to=2-1]
	\arrow["{(f,\id)}", from=1-1, to=1-3]
\end{tikzcd}\]
and thus it follows from part (\ref{it:FreeCLinearCategoryBivariant}) of \Cref{prop:FreeCofreeCLinearCategories} that the above composite is homotopic to
\[
    \cC[B] \xrightarrow{(\pr_B)^*} \cC[B \times A] \xrightarrow{(f,\id)^*} \cC[A] \xrightarrow{(f,\id)_!} \cC[B \times A] \xrightarrow{(\pr_A)_!} \cC[A].
\]
But this composite is the functor $f^*\colon \cC[B] \to \cC[A]$, as desired.
\end{proof}

We deduce from the previous lemma that the free and cofree $\cC$-linear $\infty$-categories $\cC[A]$ and $\cC^A$ are equivalent, and that under this equivalence the functor $f^*\colon \cC[B] \to \cC[A]$ agrees with $f^*\colon \cC^B \to \cC^A$ and  $f_!\colon \cC[A] \to \cC[B]$ agrees with $f_!\colon \cC^A \to \cC^B$.

\begin{cor}
\label{cor:Free_VS_Cofree_CLinear_Category}
For every space $A$ and every $\cC$-linear $\infty$-category $\cD$, there is an equivalence
\[
    \cD[A] \iso \cD^A
\]
of $\cC$-linear $\infty$-categories, natural in $A \in \Spc^{\simeq}$.  Furthermore, for every map $f\colon A \to B$ of spaces, the following diagrams commute:
\[
\begin{tikzcd}
    \cD[B] \rar{\simeq} \dar[swap]{f^*} & \cD^B \dar{f^*} \\
    \cD[A] \rar{\simeq} & \cD^A
\end{tikzcd}
\qquad
\text{ and }
\qquad
\begin{tikzcd}
    \cD[A] \rar{\simeq} \dar[swap]{f_!} & \cD^A \dar{f_!} \\
    \cD[B] \rar{\simeq} & \cD^B.
\end{tikzcd}
\]
\end{cor}
\begin{proof}
We will start by proving the equivalence $\cD[A] \simeq \cD^A$. By self-duality of $\cC[A]$, the evaluation map $\cC[A] \tensor_{\cC} \cC[A] \tensor_{\cC} \cD \to \cD$ adjoints over to an equivalence $\cC[A] \tensor_{\cC} \cD \iso \Fun_{\cC}(\cC[A],\cD)$, which is natural in $A \in \Spc^{\simeq}$. By \Cref{lem:MappingOutOfFreeCModule}, the left-hand side of this equivalence is equivalent to $\cD[A]$ and the right-hand side is equivalent to $\cD^A$. Together, this gives the desired equivalence $\cD[A] \simeq \cD^A$, naturally in $A \in \Spc^{\simeq}$.

Now we show that the two diagrams commute. The right diagram is obtained from the left one by passing to left adjoints, so it suffices to prove that the left diagram commutes. This follows from the following commutative diagram:
\[\begin{tikzcd}
	{\cD[B]} & {\cD \tensor_{\cC} \cC[B]} & {\Fun_{\cC}(\cC[B],\cD)} & {\cD^B} \\
	{\cD[A]} & {\cD \tensor_{\cC} \cC[A]} & {\Fun_{\cC}(\cC[A],\cD)} & {\cD^A.}
	\arrow["\sim", from=1-1, to=1-2]
	\arrow["\sim", from=2-1, to=2-2]
	\arrow["{f^*}"', from=1-1, to=2-1]
	\arrow["{\id \tensor_{\cC} f^*}", from=1-2, to=2-2]
	\arrow["{- \circ f_!}", from=1-3, to=2-3]
	\arrow["{f^*}", from=1-4, to=2-4]
	\arrow["\sim", from=1-3, to=1-4]
	\arrow["\sim", from=2-3, to=2-4]
	\arrow["\sim", from=1-2, to=1-3]
	\arrow["\sim", from=2-2, to=2-3]
\end{tikzcd}\]
The left and right squares commutes by functoriality in $A$ of the equivalences $\cD[A] \simeq \cD \tensor_{\cC} \cC[A]$ and $\Fun_{\Cc}(\Cc[A],\cD) \simeq \cD^A$ from \Cref{lem:MappingOutOfFreeCModule}. The middle square commutes by \Cref{lem:ResDualToInd}.
\end{proof}

\begin{rmk}
The assignment $A \mapsto \cD[A]$ can be turned into a \textit{contravariant} functor via the right adjoints $f^*\colon \cD[B] \to \cD[A]$. With this functoriality, the assignments $A \mapsto \cD[A]$ and $A \mapsto \cD^A$ are equivalent as functors $\Spc\catop \to \Mod_{\cC}$. Indeed, this follows from the observation that both send colimits of spaces to limits in $\Mod_{\cC}$ and agree on the point.
\end{rmk}

\begin{cor}
\label{cor:PointsJointlyConservative}
Let $A$ be a space. Then the functors $a^*\colon \cC[A] \to \cC$ for $a \in A$ are jointly conservative.
\end{cor}
\begin{proof}
Since this is true for the functors $a^*\colon \cC^A \to \cC$, this follows directly from \Cref{cor:Free_VS_Cofree_CLinear_Category}.
\end{proof}

\subsection{Adjointability in \texorpdfstring{$\Mod_{\cC}$}{ModC}}
\label{subsec:Adjointability}

Since $\Mod_{\cC}$ is an $(\infty,2)$-category, there is a notion of a left adjoint morphism in $\Mod_{\cC}$: a $\cC$-linear functor $H\colon \cD \to \cE$ which admits a $\cC$-linear right adjoint $H^r\colon \cE \to \cD$ with $\cC$-linear unit and counit. For emphasis, we will refer to the left adjoint morphisms in $\Mod_{\cC}$ as the \textit{internal left adjoints}. By \cite[Remark~7.3.2.9]{HA}, a $\cC$-linear functor $H\colon \cD \to \cE$ is an internal left adjoint precisely when its right adjoint $G\colon \cE \to \cD$ in $\widehat{\Cat}_{\infty}$ preserves colimits and satisfies the right projection formula: the canonical map
\[
    X \tensor G(Y) \to G(X \tensor Y)
\]
is an equivalence in $\cD$ for all $X \in \cC$ and $Y \in \cE$. 

In fact, it frequently happens that the projection formula comes for free:

\begin{lem}
\label{lem:CLinearityAutomatic}
Let $H\colon \cD \to \cE$ be a $\cC$-linear functor whose right adjoint $G\colon \cE \to \cD$ in $\widehat{\Cat}_{\infty}$ preserves colimits. Assume that one of the following two conditions hold:
\begin{enumerate}[(1)]
    \item The presentable $\infty$-category $\cC$ is an idempotent algebra in $\PrL$;
    \item The dualizable objects in $\cC$ generate $\cC$ under colimits.
\end{enumerate}
Then $H$ is an internal left adjoint in $\Mod_{\cC}$.
\end{lem}
\begin{proof}
In case (1), $\Mod_{\cC}$ is a full subcategory of $\PrL$, so any right adjoint of $H$ in $\PrL$ is automatically a right adjoint of $H$ in $\Mod_{\cC}$. In case (2), we have to show that the map $X \tensor G(Y) \to G(X \tensor Y)$ is an equivalence in $\cD$ for all $X \in \cC$ and $Y \in \cE$. The assumption on $G$ implies that both sides preserve colimits in $X$, and thus by the assumption on $\cC$ it will suffice to prove this whenever $X$ is dualizable in $\Cc$. But in this case, if we let $X^{\vee} \in \cC$ denote a dual of $X$, we observe that the transformation $X \tensor G(-) \to G(X \tensor -)$ is the total mate of the specified equivalence $X^{\vee} \tensor H(-) \xrightarrow{\simeq} H(X^{\vee} \tensor -)$, and thus is itself an equivalence.
\end{proof}

Testing whether a $\cC$-linear functor out of a free $\cC$-linear $\infty$-category $\cC[A]$ is an internal left adjoint can be done pointwise.
\begin{lem}
\label{lem:Criterion_Internal_Left_Adjoint}
A $\cC$-linear functor $F\colon \cC[A] \to \cD$ is an internal left adjoint if and only if the composition $F \circ a_!\colon \cC \to \cD$ is an internal left adjoint for every $a\colon \pt \to A$.
\end{lem}
\begin{proof}
Since internal left adjoints are closed under composition, one direction follows from the fact that the functors $a_!\colon \cC \to \cC[A]$ are internal left adjoints for every $a \in A$. For the other direction, assume that $F \circ a_!$ is an internal left adjoint for every $a \in A$. We need to show that the right adjoint $G\colon \cD \to \cC[A]$ of $F$ preserves colimits and satisfies the projection formula. For the projection formula, consider objects $X \in \cC$ and $Y \in \cD$. We need to show that the canonical map $X \tensor G(Y) \to G(X \tensor Y)$ is an equivalence. As the functors $a^*\colon \cC[A] \to \cC$ are jointly conservative by \Cref{cor:PointsJointlyConservative}, this may be tested after applying $a^*$ for every $a$. But since $a^*$ is $\Cc$-linear, this map becomes the exchange map $X \tensor (a^* \circ G)(Y) \to (a^* \circ G)(X \tensor Y)$ for the composite $a^* \circ G$, which is an equivalence by the assumption that $F \circ a_!$ is an internal left adjoint. We conclude that $G$ satisfies the projection formula. A similar argument shows that $G$ preserves colimits.
\end{proof}

\subsubsection{Adjointability for maps of spaces}
We will be particularly interested in the internal left adjoints which are induced by a map of spaces $g\colon A \to B$. In the covariant direction, we have the $\cC$-linear functor $g_!\colon \cC[A] \to \cC[B]$, which by \Cref{prop:FreeCofreeCLinearCategories}(1) is always an internal left adjoint. In the contravariant direction, we have the $\cC$-linear functor $g^*\colon \cC[B] \to \cC[A]$, which need not be an internal left adjoint in general. The class of morphisms for which this happens to be the case will play a mayor role throughout this article.

\begin{defn}
\label{def:CAdjointability}
     We say that a map of spaces $g\colon A \to B$ is \textit{$\cC$-adjointable} if the $\cC$-linear functor $g^*\colon \cC[B] \to \cC[A]$ is an internal left adjoint in $\Mod_{\cC}$, i.e.\ its right adjoint $g_*\colon \cC[A] \to \cC[B]$ preserves colimits and satisfies the projection formula. We say that a space $A$ is \textit{$\cC$-adjointable} if the map $A \to \pt$ is $\cC$-adjointable.
\end{defn}

It is immediate that the collection of $\cC$-adjointable maps is closed under composition and cartesian products.

\begin{lem}
\label{lem:FiberwiseAdjointableImpliesAdjointable}
Let $g\colon A \to B$ be a map of spaces. If all of the fibers of $g$ are $\cC$-adjointable, then $g$ is $\cC$-adjointable.
\end{lem}
\begin{proof}
By \Cref{lem:Criterion_Internal_Left_Adjoint}, the $\Cc$-linear functor $g^*\colon \cC[B] \to \cC[A]$ is an internal left adjoint as soon as for every point $b\colon \pt \to B$, the composite $g^* \circ b_!\colon \cC \to \cC[A]$ is an internal left adjoint. Letting $\iota_b\colon A_b \to A$ denote the inclusion of the fiber of $g$ at $b$, this composite is equivalent to the $\cC$-linear functor $(\iota_b)_! \circ (A_b)^*\colon \cC \to \cC[A]$. As $(\iota_b)_!$ is always an internal left adjoint and $(A_b)^*$ is an internal left adjoint by assumption, this finishes the proof.
\end{proof}

\begin{war}
    The converse of the previous lemma is not true.\footnote{This was mistakenly claimed in a previous version of this document, in which we also misleadingly referred to the $\Cc$-adjointable maps as \textit{twisted $\Cc$-ambidextrous}.} For a counterexample, consider the category $\cC= \Vect_{k}$ of vector spaces over some field $k$ of characteristic $p$ and let $G$ be the cyclic group of order $p$. Then the map $f\colon \pt\to B^2G$ is $\cC$-adjointable, since the functor $f^*\colon \Fun(B^2G,\Cc) \to \Cc$ is an equivalence and thus in particular an internal left adjoint. However, the fiber $BG$ of $f$ is not $\cC$-adjointable: applying the $G$-fixed point functor to the exact sequence $k[G] \to k \to 0$ gives the non-exact sequence $k \xrightarrow{0} k \to 0$, showing that $(BG)^*\colon \Cc \to \Fun(BG,\Cc)$ does not preserve colimits.
\end{war}

\begin{rmk}
The concept of $\cC$-adjointability is closely related to other notions appearing in the literature:
\begin{enumerate}
    \item In \cite[Definition~2.15]{BCSY2021Fourier}, a map of spaces $g\colon A \to B$ is called \textit{$\cC$-semiaffine} if the restriction functor $g^*\colon \cC^B \to \cC^A$ is an internal left adjoint in $\Mod_{\Cc^B}$. It is clear that any $\cC$-semiaffine map is $\Cc$-adjointable.  Conversely, a variant of the proof of \Cref{lem:FiberwiseAdjointableImpliesAdjointable} shows that if all the \textit{fibers} of $g$ are $\Cc$-adjointable, then $g$ is $\Cc$-semiaffine. Indeed, the main observation is that for a point $b \in B$, the $\Cc^B$-actions on $\Cc$ and $\Cc^{A_b}$ are obtained from the $\Cc$-action via the evaluation functor $b^*\colon \Cc^B \to \Cc$, so that the functor $A_b^*\colon \Cc \to \Cc^{A_b}$ is a $\Cc^B$-linear internal left adjoint if and only if it is a $\Cc$-linear internal left adjoint.
    \item For a space $A$, there exists a `dualizing object' $D_A \in \cC[A]$ and a `twisted norm map' $\Nm_A \colon A_!(- \otimes D_A)\to A_*$ which exhibits $A_!(- \otimes D_A)$ as the terminal $\cC$-linear approximation to the lax $\cC$-linear functor $A_*$. It follows that $A$ is $\cC$-adjointable if and only if it is \textit{twisted $\Cc$-ambidextrous}, i.e.\ if the twisted norm map $\Nm_A$ is an equivalence.
    
    When $\cC$ is the $\infty$-category of spectra, the parameterized spectrum $D_A \in \Sp^A$ was introduced and studied by John Klein \cite{klein2001dualizing} and is called the \textit{dualizing spectrum} of $A$.\footnote{For a topological group $G$, Klein denotes by $D_G$ what here is denoted by $D_{BG}$.} The universal property of the twisted norm map in this setting was proved by \cite[Theorem~I.4.1(v)]{NS}. 
    For general $\cC$, the construction of the twisted norm map and a proof of its universal property appears in the article \cite{Cnossen_Twisted_Ambi} by the second author, where in fact a more general version in the setting of parameterized homotopy theory is introduced.
\end{enumerate}
\end{rmk}

\begin{lem}[cf.\ {\cite[Lemma~21.1.2.14]{SAG}}]
\label{lem:TwistedAmbiClosure}
The collection of $\cC$-adjointable maps is closed under retracts.
\end{lem}
\begin{proof}
Consider a retract diagram
\[
\begin{tikzcd}
    A \dar{g} \rar{i} & A' \dar{g'} \rar{r} & A \dar{g} \\
    B \rar{j} & B' \rar{s} & B,
\end{tikzcd}
\]
and assume that $g'$ is $\cC$-adjointable. We have to show that $g^*\colon \cC[B]\to \cC[A]$ is an internal left adjoint. This follows by writing $g^*$ as a retract of the composite $r_!(g')^*j_!\colon \cC[B]\to \cC[A]$: indeed, the right adjoint $g_*$ is then also a retract of the composite $j^*(g')_*r^*$, compatibly with the projection formula maps. The fact that $j^*(g')_*r^*$ satisfies the projection formula therefore implies that so does $g_*$. Similarly, the fact that $j^*(g')_*r^*$ preserves colimits implies that so does $g_*$.
\end{proof}

We next discuss some examples of $\cC$-adjointable spaces under various assumptions on $\cC$. One source of such examples is the theory of ambidexterity developed by Hopkins and Lurie \cite{HLAmbiKn}:

\begin{example}
\label{ex:AdjointableFromAmbi}
    If a space $A$ is $\cC$-ambidextrous in the sense of \cite[Construction~4.1.8]{HLAmbiKn}, then it is in particular $\cC$-adjointable: colimit-preservation of $A_*$ follows from the fact that it is a left adjoint of $A^*$ \cite[Proposition~4.3.9]{HLAmbiKn}, while the projection formula follows from \cite[Proposition~3.3.1]{TeleAmbi}. In particular, if $\cC$ is $m$-semiadditive, then by definition every $m$-finite space is $\Cc$-ambidextrous and hence $\cC$-adjointable. This yields the following special cases:
    \begin{enumerate}
        \item[(-2)] For every $\cC$ the point $A = \pt$ is $\cC$-adjointable.
        \item[(-1)] For every pointed $\cC$, the empty space $A=\es$ is  $\cC$-adjointable.
        \item[(0)] For every semiadditive $\cC$, every finite discrete set $A$ is $\cC$-adjointable.
        \item[(1)] For every $1$-semiadditive $\cC$, and for every finite group $G$, the classifying space $A=BG$ is $\cC$-adjointable. 
        \item[($\infty$)] For every $\infty$-semiadditive $\cC$, every $\pi$-finite space $A$ is $\cC$-adjointable.
    \end{enumerate}
    For such spaces $A$, it follows that also all iterated loop spaces $\Omega^k(A,a)$ for all $k \in \NN$ and all basepoints $a \in A$ are $\cC$-adjointable, as they are also $\cC$-ambidextrous. Conversely, it follows directly from \cite[Proposition~4.3.9]{HLAmbiKn} that if $A$ is an $n$-truncated space such that both $A$ and all its iterated loop spaces $\Omega(A,a), \dots, \Omega^n(A,a)$ are $\cC$-adjointable, then $A$ is $\cC$-ambidextrous.
\end{example}

A space can be $\cC$-adjointable without being $\cC$-ambidextrous. For example, a space can be $\cC$-adjointable for trivial reasons:

\begin{example}
\label{ex:TwistedAmbiIn1Category}
    If $\cC$ is an $n$-category (i.e., its mapping spaces are homotopically $(n-1)$-truncated), then every $n$-connected space $A$ is $\cC$-adjointable, because the functor $A^*\colon \cC \to \cC^A$ is an equivalence, and hence in particular an internal left adjoint. In particular, if $\cC$ happens to be an ordinary category, then every simply-connected space is $\cC$-adjointable. Similarly, if $\cC$ happens to be a poset, then any connected space is $\cC$-adjointable.
\end{example}

There are however also non-trivial examples of $\cC$-adjointable spaces which are not $\cC$-ambidextrous. 

\begin{example}\label{ex:CompactSpacesAdjointable}
    If $\cC$ is stable, then every compact space is $\cC$-adjointable. To see this, we may reduce to the case $\cC = \Sp$ by base changing along the unique map $\Sp \to \cC$ in $\CAlg(\PrL)$, and to the case of a finite space $A$ by \Cref{lem:TwistedAmbiClosure}. In this case, the functor $A_*\colon \Sp[A] \to \Sp$ is a finite limit and thus commutes with colimits by stability. It follows from \Cref{lem:CLinearityAutomatic} that $A$ is $\cC$-adjointable.
\end{example}

More generally, the $\cC$-adjointable spaces are closed under pushouts when $\cC$ is stable.

\begin{lem}
\label{lem:Stable_Ambi_Pushouts}
Assume $\Cc$ is stable. Then for every space $B$, the collection of $\cC$-adjointable maps $A \to B$ is closed under finite colimits in $\Spc_{/B}$.
\end{lem}
\begin{proof}
As $\cC$ is pointed, the map $\emptyset \to B$ is $\cC$-adjointable. It will thus suffice to show that the $\cC$-adjointable morphisms are closed under pushouts in $\Spc_{/B}$. To this end, consider a pushout square of spaces
\[
\begin{tikzcd}
    A_0 \rar{\alpha} \dar[swap]{\beta} \drar[pushout] & A_1 \dar{j_1} \\
    A_2 \rar{j_2} & A
\end{tikzcd}
\]
Let $f\colon A \to B$ be a map of spaces, and define $f_i = f \circ j_i\colon A_i \to B$ for $j=0,1,2$, where $j_0 = j_2 \circ \beta \simeq j_1 \circ \alpha$. Assume that $f_1$, $f_2$ and $f_0$ are $\cC$-adjointable. We want to prove that the functor $f^*\colon\cC[B]\to\cC[A]$ is an internal left adjoint, or equivalently, by \Cref{cor:Free_VS_Cofree_CLinear_Category}, that the functor $f^*\colon \Fun(B,\cC) \to \Fun(A,\cC)$ is an internal left adjoint.

Let $I= \Lambda^2_0$ denote the walking cospan. The cospan $A_2\xleftarrow{\beta} A_0\xrightarrow{\alpha} A_1$ defines a functor $I \to \Spc_{/B} \simeq \Fun(B,\Spc)$. We let $p\colon E \to I\catop \times B$ denote the unstraightening of the associated functor $I \times B \to \Spc \subseteq \Cat_{\infty}$; note that the pullback of $p$ along the inclusion $\{i\} \times B\to I\catop\times B$ is equivalent to the map $f\colon A_i \to \{i\}\times B$ for $i=0,1,2$. The above pushout diagram determines a functor $g \colon E\to A$ which exhibits $A$ as the groupoidification of $E$ (see \cite[Corollary 3.3.4.6]{htt}). As $g\colon E\to A$ is a localization, the restriction functor $g^*\colon \Fun(A,\cC)\to \Fun(E,\cC)$ is fully faithful, so that the counit $g_!g^*\to \id_{\cC^A}$ is an equivalence. Letting $f' := f \circ g\colon E \to B$, it follows that the functor $f^*\colon \Fun(B,\cC)\to \Fun(A,\cC)$ is $\cC$-linearly equivalent to the following composite:
\[
    \Fun(B,\cC) \xrightarrow{{f'}^*} \Fun(E,\cC)\xrightarrow{g_!} \Fun(A,\cC).
\]
As the functor $g_!$ is an internal left adjoint, it will suffice to prove the same for ${f'}^*$. This functor can in turn be $\cC$-linearly decomposed as the following composite:
\[
    \Fun(B,\cC) \xrightarrow{\pr_B^*} \Fun(I\catop \times B, \Cc) \xrightarrow{p^*} \Fun(E,\cC),
\]
where the first functor is given by precomposition with the projection $\pr_B\colon I\catop \times B \to B$. 

The functor $\pr_B^*$ is obtained by tensoring the functor $(I\catop)^*\colon \Sp \to \Fun(I\catop,\Sp)$ with $\Fun(B,\Cc)$ in $\PrL$. Since finite limits in $\Sp$ commute with colimits, the functor $(I\catop)^*\colon \Sp \to \Fun(I\catop,\Sp)$ is an internal left adjoint in $\PrL_{\st}$, and thus $\pr_B^*\colon \Fun(B,\cC) \to \Fun(I\catop \times B, \Cc)$ is an internal left adjoint in $\Mod_{\Cc}$. 

It thus remains to show that $p^*$ is an internal left adjoint, i.e.\ that its right adjoint $p_*\colon \Fun(E,\cC) \to \Fun(I\catop,\cC)$ in $\widehat{\Cat}_{\infty}$ preserves colimits and satisfies the projection formula. The proof is similar to that of \Cref{lem:FiberwiseAdjointableImpliesAdjointable}.
The three evaluation functors $\ev_i\colon\Fun(I\catop \times B,\cC)\to \Fun(B,\cC)$ for $i \in I\catop$ are jointly conservative, and since projection formulas are compatible with $\cC$-linear functors (cf.\ \cite[Lemma 2.2.4]{TeleAmbi}) it suffices to check that each of the three composites
\[
    \Fun(E,\cC)\xrightarrow{p_*}\Fun(I\catop \times B,\cC)\xrightarrow{\ev_i}\Fun(B,\cC)
\]
preserves colimits and satisfies the projection formula. In other words, letting $i \colon B \hookrightarrow I\catop \times B$ denote the inclusion of $B\simeq B \times \{i\}$ into $B \times I^\op$, it suffices to check that for each $i\in I\catop$ the composite $p^*\circ i_!\colon \Fun(B,\cC)\to \Fun(E,\cC)$ is an internal left adjoint. To this end, observe that for each $i \in I\catop$ there is a pullback diagram
\[
\begin{tikzcd}
    A_i \rar{j_i} \dar[swap]{f_i} \drar[pullback] & E \dar{p} \\
    B \rar[hookrightarrow]{i} & I\catop \times B.
\end{tikzcd}
\]
As $p$ is a cartesian fibration, it follows from \cite[Proposition~4.1.2.11, Proposition~4.1.2.15]{htt}\footnote{Beware that in \cite[Proposition~4.1.2.11]{htt} Lurie works with the contravariant model structure, so that what he denotes by $Lf_!$ would be $(f\catop)_!$ in our notation. Since $p\catop$ is a cocartesian fibration, \cite[Proposition~4.1.2.15]{htt} does indeed apply to our situation.}
that the Beck-Chevalley map ${j_i}_! \circ f_i^*  \to p^*\circ i_!$ in $\FunL(\Spc,\Spc^{E})$ is an equivalence. By tensoring with $\cC$, we obtain a similar equivalence of functors $\cC \to \cC^E$. By assumption on $f_i$, the functor $f_i^*\colon \cC^B \to \cC^{A_i}$ is an internal left adjoint, and the same is always true for $(j_i)_!\colon \cC^{A_i} \to \cC^A$. It thus follows that their composite $p^* \circ i_!\colon \cC^B \to \Cc^E$ is an internal left adjoint as well, finishing the proof.
\end{proof}

As it turns out, $\cC$-adjointability becomes ubiquitous when we move one categorical level up. 

\begin{example}\label{ex:ambiPrL}
    \textit{Every} space is $\PrL$-adjointable. For $n$-truncated spaces, this is an instance of \Cref{ex:AdjointableFromAmbi}, using \cite[Example~4.3.11]{HLAmbiKn}. For a general space $A$, the colimit of any diagram $F\colon A \to \PrL$ is computed as the limit of the associated diagram of right adjoints $F^r \colon A^\op \to \PrR$ (see \cite{htt}). Since $A$ is a space, we have a natural isomorphism $F \simeq F^r$ and hence a natural isomorphism of functors $A_! \simeq A_*$. Since $A_!$ preserves colimits, so does $A_*$. Since dualizable presentable $\infty$-categories generate $\PrL$ under colimits, cf.\ \cite[Lemma 7.14]{RagimovSchlank}, the claim follows from \cref{lem:CLinearityAutomatic}.
    
    It follows that, similarly, every space is $\Mod_{\cC}$-adjointable for every $\cC \in \CAlg(\PrL)$.
\end{example}
\begin{rem}\label{rem:kappaambi}
For a fixed space $A$ and for large enough cardinals $\kappa$ (depending on $A$), the inclusion $\PrL_\kappa\into\PrL$ preserves $A$-shaped limits. Therefore, for such $\kappa$, the above example also works in $\PrL_\kappa$.
\end{rem}

\subsubsection{Adjointability and dualizability}
In the remainder of this subsection, we will discuss the close relationship between internal left adjoints in $\Mod_{\cC}$ and dualizable objects in $\cC$. Recall from \Cref{nota:inducedmap} that every object $X \in \Cc^A$ gives rise to a $\Cc$-linear functor that we abusively also denote by $X\colon \Cc[A] \to \Cc$.

\begin{obs}
\label{obs:Int_Left_Adj_Vs_Dbl}
For an object $Y\in\cC$, the morphism $Y\colon \cC \to \cC$ in $\Mod_{\cC}$ is an internal left adjoint if and only if $Y$ is dualizable in $\cC$. Indeed, the internal right adjoint of $Y\colon \cC \to \cC$ is of the form $Y^{\vee}\colon \cC \to \cC$ for some object $Y^{\vee} \in \cC$, and the unit and counit of the adjunction correspond to the evaluation and coevaluation maps forming the duality data. The triangle identities for the adjunction translate into the triangle identities for the duality.
\end{obs}

\begin{cor}
\label{cor:Criterion_Pointwise_Dualizable}
For $X \in \cC^A$, the functor $\cC[A] \oto{X} \cC$ is an internal left adjoint if and only if $X$ is pointwise dualizable.
\end{cor}
\begin{proof}
The composite of the functor $X\colon \cC[A] \to \cC$ with $a_!\colon \cC \to \cC[A]$ corresponds to $a^*X \in \cC$ by \Cref{cor:RestrictionInduction}. The claim thus follows from \Cref{obs:Int_Left_Adj_Vs_Dbl} and \Cref{lem:Criterion_Internal_Left_Adjoint}.
\end{proof}

In what follows, we equip the $\infty$-category $\cC^A$ with the pointwise symmetric monoidal structure. In particular, an object of $\cC^A$ is dualizable if and only if it is pointwise dualizable.

\begin{prop} [{{cf.\ \cite[Proposition 2.5]{CSY2021Cyclotomic}}}]
\label{prop:ColimIsDbl}
Let $g\colon A \to B$ be a map of spaces and assume that $g$ is $\cC$-adjointable. Then the left Kan extension functor $g_!\colon \cC^A \to \cC^B$ preserves dualizable objects. For a dualizable object $X \in \cC^A$, the dual of $g_!(X) \in \cC^B$ is given by $g_*(X^\vee)$.
In particular, if $A$ is a $\cC$-adjointable space and $X \in \cC^A$ is dualizable, then $A_!X$ is dualizable with dual $A_*X^{\vee}$.
\end{prop}
\begin{proof}
Assume that $X \in \cC^A$ is dualizable. By \Cref{cor:Criterion_Pointwise_Dualizable}, the associated $\cC$-linear functor $X\colon \cC[A] \to \cC$ is an internal left adjoint. Since $g$ is $\cC$-adjointable, the $\cC$-linear functor $g^*\colon \cC[B] \to \cC[A]$ is also an internal left adjoint, and thus so is the composite
\[
    \cC[B] \xrightarrow{g^*} \cC[A] \xrightarrow{X} \cC.
\]
By \Cref{cor:RestrictionInduction}, this composite classifies the object $g_!X \in \cC^B$, and thus it follows from another application of \Cref{cor:Criterion_Pointwise_Dualizable} that $g_!X$ is dualizable. The dual of $g_!X$ may now be computed by $(g_!X)^\vee \simeq \hom_{\cC^B}(g_!X, \one) \simeq g_*\hom_{\cC^A}(X,\one) \simeq g_*(X^\vee)$, where $\hom_\cD$ denotes the internal hom in a symmetric monoidal $\infty$-category $\cD$. 
\end{proof}

\subsection{Traces of free \texorpdfstring{$\cC$}{C}-linear \texorpdfstring{$\infty$}{oo}-categories and free loop transfers}\label{subsection:tracesfree}
Let $A$ be a space and consider the free $\cC$-linear $\infty$-category $\cC[A]$. By combining \Cref{cor:BivariantTheoryGivesDualizability} and \Cref{prop:FreeCofreeCLinearCategories}(\ref{it:FreeCLinearCategoryBivariant}) we get that $\cC[A]$ is a dualizable object in $\Mod_{\cC}$, and that its dimension in $\Mod_{\cC}$ is given by the composite
\[
    \cC \xrightarrow{(LA)^*} \cC[LA] \xrightarrow{(LA)_!} \cC,
\]
where $LA = \Map(S^1,A) \in \Spc$ is the free loop space of $A$. By evaluating at the monoidal unit $\unit \in \cC$, we thus obtain an equivalence
\[
    \TrMod_{\cC}(\cC[A]) \simeq \one[LA].
\]
When $f\colon A \to B$ is a map of spaces, applying the $\cC$-linear trace functor $\Tr_{\cC}$ to the internal left adjoint $f_!\colon \cC[A] \to \cC[B]$ gives the map $Lf\colon \unit[LA] \to \unit[LB]$, see \Cref{cor:THHFreeLoopsOnMorphisms} below. In case the map $f$ is $\cC$-adjointable, we also get a map in the opposite direction.

\begin{defn}
\label{def:Free_Loop_Space_Transfers}
Let $f\colon A \to B$ be a $\cC$-adjointable map of spaces, and consider the $\cC$-linear functor $f^*\colon \cC[B] \to \cC[A]$, which by assumption is an internal left adjoint. The \textit{($\cC$-linear) free loop transfer} of $f$ is the map
\[
    \TrMod_{\cC}(f^*)\colon \unit[LB] \to \unit[LA]
\]
obtained by applying the $\cC$-linear trace functor to $f^*$.
\end{defn}

The goal of the remainder of this section is to show that the equivalence $\TrMod_{\cC}(\cC[A]) \simeq \one[LA]$ is functorial in $A$, see \Cref{thm:THHFreeLoops} below. Since we have already solved a similar coherence problem for the computation of dimensions in the $(\infty,2)$-category $\Span(\Spc)$ in \Cref{subsec:DimensionsInCorr}, the main ingredient will be a good understanding of the functor $\Cc[-]\colon \Span(\Spc) \to \Mod_{\Cc}$ obtained from the bivariant theory $\Cc[-]\colon \Spc \to \Mod_{\Cc}$ of \Cref{prop:FreeCofreeCLinearCategories}. More precisely, we will show that after applying $\Omega$ this functor induces the functor $\unit[-]\colon \Spc \to \Cc$.

We start with some preliminary results.

\begin{lem}
\label{lem:colim_in_func_2cat}
Let $\cE$ be a small $(\infty,2)$-category. The $\infty$-category $\iota_1\Fun(\cE,\PrL)$ is cocomplete, and colimits therein are preserved under restriction along any $2$-functor $\cE'\to \cE$. In particular, colimits are computed pointwise.
\end{lem}
\begin{proof}
For every $\infty$-category $I$, the colimit adjunction $(\PrL)^I \rightleftarrows \PrL$ is a $2$-adjunction: both the unit and co-unit transformations are transformations of $2$-functors, and they induce the appropriate equivalences on mapping \emph{$\infty$-categories} rather than just spaces. In particular, it induces an adjunction $\Fun(\cE,\PrL)^I \simeq \Fun(\cE,(\PrL)^I)\rightleftarrows \Fun(\cE,\PrL)$ where the right adjoint is equivalent to the diagonal functor. The claim follows.
\end{proof}

\begin{lem}
\label{lem:FunL_From_Spaces}
Evaluation at a point induces an equivalence of $(\infty,2)$-categories
\[
    \FunL(\Spc,\PrL)\iso \PrL.
\]
\end{lem}
\begin{proof}
By the Yoneda lemma, it suffices to check that for every small $(\infty,2)$-category $\cE$ the induced map of spaces
\[
    \Map(\cE, \FunL(\Spc,\PrL)) \to \Map(\cE,\PrL)
\]
is an equivalence. By \Cref{lem:colim_in_func_2cat}, the underlying $\infty$-category of $\Fun(\cE,\PrL)$ is again cocomplete, and there is an equivalence 
\[
    \Map^{\rm{L}}(\Spc,\Fun(\cE,\PrL)) \simeq \Map(\cE, \FunL(\Spc,\PrL)).
\]
The composite map
\[
    \Map^{\rm{L}}(\Spc,\Fun(\cE,\PrL)) \to \Map(\cE,\PrL)
\]
is given by evaluation at a point, which is an equivalence by the universal property of the $\infty$-category of spaces (see e.g.\ \cite[Theorem~5.1.5.6]{htt}).
\end{proof}

\begin{defn}
Define $\BivL(\Spc,\PrL) \subseteq \Biv(\Spc,\Cat_{\infty})$ as the sub-2-category consisting of the colimit-preserving bivariant functors $\Spc \to \PrL \subseteq \Cat_{\infty}$.
\end{defn}

\begin{lem}
\label{lem:BivL}
The inclusion
\[
    \BivL(\Spc,\PrL) \hookrightarrow \FunL(\Spc,\PrL)
\]
is an equivalence of $(\infty,2)$-categories.
\end{lem}
\begin{proof}
Note that $\BivL(\Spc,\PrL)$ is a locally full sub-2-category of $\FunL(\Spc,\PrL)$, meaning that the induced maps on mapping categories are fully faithful. It will thus suffice to show the following two claims:
\begin{enumerate}[(a)]
    \item Any object in $\FunL(\Spc,\PrL)$ is a bivariant theory;
    \item Any morphism in $\FunL(\Spc,\PrL)$ is a morphism of bivariant theories.
\end{enumerate}
For claim (a), note that a colimit preserving functor $F\colon \Spc \to \PrL$ is necessarily of the form $A \mapsto \cD[A]$ for $\cD = F(\pt) \in \PrL$, and thus claim (a) follows from \Cref{prop:FreeCofreeCLinearCategories}(\ref{it:CofreeCLinearCategoryBivariant}). For claim (b), let $\cD_0$ and $\cD_1$ be presentable $\infty$-categories, and let $G\colon \cD_0 \to \cD_1$ be a colimit-preserving functor. We have to show that the associated morphism $\cD_0[-] \to \cD_1[-]$ in $\FunL(\Spc,\PrL)$ is a morphism of bivariant theories, i.e.\ that for any map $f\colon A\to B$ of spaces the following commuting square is vertically right adjointable: 
\[\begin{tikzcd}
	{\cD_0[A]} 
	& {\cD_1[A]} \\
	{\cD_0[B]} & {\cD_1[B].}
	\arrow[from=1-1, to=1-2]
	\arrow["{f_!}"', from=1-1, to=2-1]
	\arrow[from=2-1, to=2-2]
	\arrow["{f_!}", from=1-2, to=2-2]
\end{tikzcd}\]
Under the identification $\cD[A]\simeq \cD^A$, the Beck-Chevalley transformation of this square is the natural isomorphism rendering the following square commutative:
\[\begin{tikzcd}
	{\cD_0^A} & {\cD_1^A} \\
	{\cD_0^B} & {\cD_1^B.}
	\arrow[from=1-1, to=1-2]
	\arrow["{f^*}", from=2-1, to=1-1]
	\arrow[from=2-1, to=2-2]
	\arrow["{f^*}"', from=2-2, to=1-2]
\end{tikzcd}\]
This proves that claim (b) holds, which finishes the proof that $\BivL(\Spc,\PrL)= \FunL(\Spc,\PrL)$.
\end{proof}

The functor $\Spc[-]\colon \Spc \to \PrL$ is a symmetric monoidal bivariant theory by \Cref{prop:FreeCofreeCLinearCategories}(\ref{it:FreeCLinearCategoryBivariant}), and hence by the universal property of the $(\infty,2)$-category $\Span(\Spc)$ uniquely extends to a symmetric monoidal 2-functor $\Spc[-]\colon \Span(\Spc) \to \PrL$.

\begin{prop}
\label{prop:CorrVsPrL}
The 2-functor $\Spc[-]\colon \Span(\Spc) \to \PrL$ is 2-fully-faithful, in the sense that it induces equivalences of mapping $\infty$-categories
\[
    \Hom_{\Span(\Spc)}(A,B) \iso \FunL(\Spc[A],\Spc[B])
\]
for all spaces $A,B \in \Spc$. 
In particular, it induces an equivalence $\Omega \Span(\Spc) \iso \Omega \PrL$.
\end{prop}

\begin{proof}
Consider the functor 
\[
       \Spc \to \Biv(\Spc,\Cat_{\infty})\catop, \qquad A  \mapsto (B \mapsto \Spc_{/(A \times B)}),
\]
called the `bivariant op-Yoneda functor' in \cite{Macpherson}. Here the functoriality in $B$ is via post-composition, while the functoriality in $A$ is via pullback. By \cite[Corollary~3.7.5, Theorem~4.2.6]{Macpherson}, this is a bivariant theory and the induced 2-functor $\Span(\Spc) \to \Biv(\Spc,\Cat_{\infty})\catop$ is 2-fully faithful. Note that the bivariant op-Yoneda functor factors through the locally full sub-2-category $\BivL(\Spc,\PrL)\catop$ of $\Biv(\Spc,\Cat_{\infty})\catop$, and in particular, the map $\Span(\Spc) \to \BivL(\Spc,\PrL)\catop$ is also 2-fully-faithful. By \Cref{lem:BivL} and \Cref{lem:FunL_From_Spaces}, the latter $(\infty,2)$-category is equivalent to $(\PrL)\catop$ by evaluation at a point, and it follows that also the composite
\[
    \Span(\Spc) \hookrightarrow \BivL(\Spc,\PrL)\catop \xrightarrow{\simeq }(\PrL)\catop,
\]
is 2-fully faithful. Passing to opposites, we obtain a fully faithful embedding
-\[
    \Span(\Spc) \simeq \Span(\Spc)\catop \hookrightarrow \PrL.
\]
Spelling out what this composite is and using the natural equivalences $\Spc[A] \simeq \Spc^{A} \simeq \Spc_{/A}$, one observes that it is the functor $\Spc[-]\colon \Span(\Spc) \to \PrL$, which finishes the proof.
\end{proof}

We now have the ingredients at hand to prove a coherent version of the equivalence $\TrMod_{\cC}(\cC[A]) \simeq \one[LA]$.
\begin{thm}
\label{thm:THHFreeLoops}
There is an equivalence
\begin{align*}
    \TrMod_{\cC}(\cC[A]) \simeq \one[LA] \qin \cC,
\end{align*}
natural in the space $A\in\Spc$, between the $\cC$-linear trace $\TrMod_{\cC}(\cC[A])$ of $\cC[A]$ and the tensoring $\one[LA]$ of the monoidal unit $\one \in \cC$ by the free loop space $LA \in \Spc$. 
\end{thm}
\begin{proof}
Since the $\cC$-linear trace functor $\TrMod_{\Cc}$ is natural in the variable $\cC \in \CAlg(\PrL)$, the symmetric monoidal colimit-preserving functor $\unit[-]\colon \Spc \to \cC$ gives rise to a commutative diagram
\[
\begin{tikzcd}
    (\PrL)^{\dbl} \dar[swap]{\TrMod_{\Spc}} \rar{\cC \tensor -} & \Mod_{\cC}^{\dbl} \dar{\TrMod_{\cC}} \\
    \Spc \rar{\unit[-]} & \cC.
\end{tikzcd}
\]
As $\cC[A] \simeq \cC \tensor \Spc[A]$, it thus suffices to produce a natural equivalence of spaces
\[
    \TrMod_{\Spc}(\Spc[A]) \simeq LA \qin \Spc.
\]
Consider the functor $\Spc[-]\colon \Spc \to \PrL$. By part (\ref{it:FreeCLinearCategoryBivariant}) of \Cref{prop:FreeCofreeCLinearCategories}, this functor is a symmetric monoidal bivariant theory and in particular it extends uniquely to a symmetric monoidal 2-functor $\Spc[-]\colon \Span(\Spc) \to \PrL$. We may now consider the following commutative diagram:
\[
\begin{tikzcd}
    \Spc \rar{h_{\Spc}} \dar[swap]{L} & \Span(\Spc)^{\dbl} \rar{\Spc[-]} \dar{\dim} & (\PrL)^{\dbl} \dar{\dim} \drar[bend left]{\TrMod_{\Spc}} \\
    \Spc \rar["i_{\Spc}", "\simeq"'] & \Omega \Span(\Spc) \rar["{\Omega \Spc[-]}"] & \Omega \PrL \rar[swap]{\simeq} & \Spc
\end{tikzcd}
\]
The left square commutes by \Cref{thm:DimensionInCorr}. The middle square commutes by functoriality of the dimension functor, applied to the symmetric monoidal 2-functor $\Spc[-]\colon \Span(\Spc) \to \PrL$. The right triangle commutes by definition. The functor $i_{\Spc}\colon \Spc \to \Omega \Span(\Spc)$ from \Cref{prop:OmegaCorr} is an equivalence, and by \Cref{prop:CorrVsPrL} below also the functor $\Omega \Spc[-]\colon \Omega \Span(\Spc) \to \Omega \PrL$ is an equivalence. It follows that the bottom composite $\Spc \to \Spc$ is an equivalence. In particular, it is colimit preserving and hence given by $X\times (-)$ for some space $X$. Since the only space for which this functor is invertible is $X=\pt$, we deduce that the bottom composite is homotopic to the identity on $\Spc$. Comparing the two outer paths in the diagram then gives the desired result.
\end{proof}

\Cref{thm:THHFreeLoops} has the following immediate consequences:

\begin{cor}
\label{cor:THHFreeLoopsOnMorphisms}
Let $f\colon A \to B$ be a map of spaces. Then applying the $\cC$-linear trace functor $\Tr_{\cC}$ to the internal left adjoint $f_!\colon \cC[A] \to \cC[B]$ gives the map $Lf\colon \unit[LA] \to \unit[LB]$:
\[
    \Tr_{\cC}(f_!) \simeq Lf \qin \Map_{\cC}(\unit[LA],\unit[LB]).
\]
\end{cor}

\begin{cor}
\label{THHOfCotensor}
For $(\cD,F) \in \Mod^{\trl}_{\cC}$ and $A \in \Spc$, there is a natural equivalence
\begin{align*}
    \TrMod_{\cC}((\cD,F)[A]) \simeq \TrMod_{\cC}(\cD,F)[LA] \qin \cC.
\end{align*}
\end{cor}
\begin{proof}
By symmetric monoidality of $\TrMod_{\cC}$ and the equivalence $(\cD,F)[A] \simeq (\cD,F) \otimes_{\cC} \cC[A]$ in $\Mod^{\trl}_{\cC}$, this follows immediately from the natural equivalence $\TrMod_{\cC}(\cC[A]) \simeq \one[LA]$ of \Cref{thm:THHFreeLoops}.
\end{proof}

\begin{cor}
\label{cor:constantLoopsViaTHH}
For any space $A \in \Spc$, the composite
\begin{align*}
    A \simeq \Map_{\Spc}(\pt,A) \xrightarrow{\cC[-]} \Map_{\Mod_{\cC}^{\dbl}}(\cC,\cC[A]) \xrightarrow{\TrMod_{\cC}} \Map_{\cC}(\one,\one[LA])
\end{align*}
is the mate of the map $c_A\colon \one[A] \to \one[LA]$ induced by the inclusion of constant loops $A$ into $LA$.
\end{cor}

\begin{proof}
By \Cref{thm:THHFreeLoops} this composite is equivalent to the composite
\[
    A \simeq \Map_{\Spc}(\pt,A) \xrightarrow{L} \Map_{\Spc}(\pt,LA) \xrightarrow{\unit[-]} \Map_{\cC}(\one,\one[LA]).
\]
The composite of the first two maps is the composition of the map $c\colon A \to LA$ and the equivalence $LA \simeq \Map_{\Spc}(\pt,LA)$. From this, the description given in the statement of the corollary follows directly.
\end{proof}

\subsubsection{Traces of space-indexed families of maps}
\label{subsec:THHFamilies}
The functoriality of the $\cC$-linear trace functor $\TrMod_{\Cc}$ is somewhat mysterious: although we gave an explicit description of what it does on objects and on morphisms, the higher coherences are not explicit due to the fact that $\TrMod_{\Cc}$ is defined by reduction to a universal example. In particular, given a family $\cD_{\bullet}\colon A \to \Mod^{\trl}_{\cC}$ indexed by some space $A$, the description we gave for $\TrMod_\cC$ does not immediately provide a full description of the composite functor $\TrMod_{\cC} \circ \cD_{\bullet}\colon A \to \cC$. The goal of this subsection is to show that the situation improves when we try to compute the $\cC$-linear traces of an $A$-indexed family of maps in $\Mod^{\trl}_{\cC}$ with \emph{fixed} source $(\cD,F)$ and target $(\cE,G)$. More concretely, we have the following lemma:

\begin{lem}
\label{lem:THHOfFamiliesOfDiagrams}
For $(\cD,F), (\cE,G) \in \Mod^{\trl}_{\cC}$ and $A \in \Spc$, the following diagram commutes:
\begin{equation*}
\begin{tikzcd}
    \Map_{\Mod^{\trl}_{\cC}}((\cD,F)[A],(\cE,G)) \dar{\TrMod_{\cC}} \rar{\simeq} & \Map(A,\Map_{\Mod^{\trl}_{\cC}}((\cD,F),(\cE,G))) \ar{dd}{\Map(A,\TrMod_{\cC})} \\
    \Map_{\cC}(\TrMod_{\cC}(\cD,F)[LA],\TrMod_{\cC}(\cE,G)) \dar{\simeq} \\
    \Map(LA,\Map_{\cC}(\TrMod_{\cC}(\cD,F),\TrMod_{\cC}(\cE,G)) \rar{- \circ c_A} & \Map(A,\Map_{\cC}(\TrMod_{\cC}(\cD,F),\TrMod_{\cC}(\cE,G)),
\end{tikzcd}
\end{equation*}
where $c\colon A \to LA$ is the inclusion of constant loops.
\end{lem}
\begin{proof}
By the Yoneda lemma, it suffices to prove this in the universal case, i.e.\ when $(\cE,G) = (\cD,F)[A]$ and only at the identity on $(\cD,F)[A]$. Expanding definitions, we thus need to show that the composite
\begin{align*}
    A \simeq \Map(\pt,A) \xrightarrow{(\cD,F)[-]} \Map_{\Mod^{\trl}_{\cC}}((\cD,F), (\cD,F)[A]) \xrightarrow{\TrMod_{\cC}} \Map_{\cC}(\TrMod_{\cC}(\cD,F),\TrMod_{\cC}((\cD,F)[A]))
\end{align*}
is the mate of the map
\begin{align*}
    \TrMod_{\cC}(\cD,F)[A] \xrightarrow{c_A} \TrMod_{\cC}(\cD,F)[LA] \simeq \TrMod_{\cC}((\cD,F)[A]).
\end{align*}
This follows directly from \Cref{cor:constantLoopsViaTHH}, using the equivalence $(\cD,F)[A] \simeq (\cD,F) \tensor_{\cC} \cC[A]$ in $\Mod_{\cC}^{\trl}$ and the symmetric monoidality of the functor $\TrMod_{\cC}$.
\end{proof}

The content of \Cref{lem:THHOfFamiliesOfDiagrams} may more informally be described as follows. Let $(H,\alpha)\colon (\cD,F)[A] \to (\cE,G)$ be a morphism in $\Mod^{\trl}_{\cC}$, and define for every $a \in A$ the map $(H_a,\alpha_a)\colon (\cD,F) \to (\cE,G)$ in $\Mod_{\cC}^{\trl}$ by precomposition with $a_!\colon (\cD,F) \to (\cD,F)[A]$:
\begin{equation*}
\begin{tikzcd}
    \cD \dar[swap]{F} \rar{H_a} & \cE \dar{G} \\
    \cD \urar[shorten <=6pt, shorten >=6pt, Rightarrow, "\alpha_a"] \rar{H_a} & \cE.
\end{tikzcd}
\qquad
=
\qquad
\begin{tikzcd}
    \cD \dar[swap]{F} \rar{a_!} & \cD[A] \dar["{F[A]}"{description}] \rar{H} & \cE \dar{G} \\
    \cD \urar[shorten <=8pt, shorten >=8pt, equal] \rar{a_!} & \cD[A] \urar[shorten <=5pt, shorten >=5pt,Rightarrow, "\alpha"] \rar{H} & \cE
\end{tikzcd}
\end{equation*}
By applying $\TrMod_{\cC}$ to the maps $(H_a,\alpha_a)$, we get an $A$-indexed family of maps $\TrMod_{\cC}(\cD,F) \to \TrMod_{\cC}(\cE,G)$ in $\cC$, or equivalently a map $\TrMod_{\cC}(\cD,F)[A] \to \TrMod_{\cC}(\cE,G)$. \Cref{lem:THHOfFamiliesOfDiagrams} shows that this map is equivalent to the composite
\begin{align*}
    \TrMod_{\cC}(\cD,F)[A] \xrightarrow{c_A} \TrMod_{\cC}(\cD,F)[LA] \overset{\ref{THHOfCotensor}}{\simeq} \TrMod_{\cC}((\cD,F)[A]) \xrightarrow{\TrMod_{\cC}(H,\alpha)} \TrMod_{\cC}(\cE,G),
\end{align*}
where the first map is induced by the inclusion $c_A\colon A \to LA$ of the constant loops. In particular, we may compute the effect of $\TrMod_{\cC}$ on the family of maps $(H_{a},\alpha_a)$ by computing the effect of $\TrMod_{\cC}$ on the single map $(H,\alpha)\colon (\cD,F)[A] \to (\cE,G)$.

\subsection{Hochschild homology as a \texorpdfstring{$\cC$}{C}-linear trace}\label{subsection:HHtrace}

One natural example of a dualizable object in $\Mod_\cC$ is the $\infty$-category $\RMod_R(\cC)$ of right modules in $\Cc$ over an algebra $R \in \Alg(\cC)$. Its $\cC$-linear trace $\TrMod_{\cC}(\RMod_R(\cC)) \in \cC$ identifies with the more classically defined invariant known as the \textit{(topological) Hochschild homology} of $R$. While this folklore identification will not be used in the rest of this article, we shall review it in detail for completeness.\footnote{A certain flavour of it is outlined in \cite[\S 4.5]{HSS} as well.}

\begin{defn}
\label{def:THH}
    Let $R \in \Alg(\cC)$ be an associative algebra in $\cC$, and let $M$ be an $(R,R)$-bimodule. We define the \textit{Hochschild homology of the pair $(R,M)$} as
    \[
        \THH_{\cC}(R,M) := M \tensor_{R \tensor R^{\op}} R \qin \cC,
    \]
    where we regard $R$ as a bimodule over itself via the left and right multiplication. The \textit{Hochschild homology of $R$} is defined as $\THH_{\cC}(R) := \THH_{\cC}(R,R)$.  
\end{defn}

Recall from \cite[Remark~4.8.4.8]{HA} that the $\cC$-linear $\infty$-category $\RMod_R(\cC)$ of right $R$-modules is dualizable in $\Mod_{\cC}$, with dual given by $\LMod_R(\cC)$. For the identification of its $\cC$-linear trace, we will need an explicit description of the evaluation and coevaluation maps for the duality data for $\RMod_{R}(\cC)$ and $\LMod_R(\cC)$. Alternatively, because of the equivalence $\LMod_R(\cC) \simeq \RMod_{R\catop}(\cC)$, we may exhibit explicit duality data between $\RMod_R(\cC)$ and $\RMod_{R\catop}(\cC)$. To this end, recall from \cite[Construction~4.6.3.7]{HA} that the $(R,R)$-bimodule $R$ gives rise to an \textit{evaluation module} $R^e \in \LMod_{R \otimes R\catop}(\cC)$ and a \textit{coevaluation module} $R^c \in \RMod_{R \otimes R\catop}(\cC)$.

\begin{lem}[{\cite{HA}}]
The composite
\[
    \Cc \xrightarrow{R^c} \RMod_{R \otimes R\catop}(\cC) \simeq \RMod_{R}(\cC) \otimes_{\cC} \RMod_{R\catop}(\cC)
\]
is the coevaluation of a duality datum in $\Mod_{\cC}$ between $\RMod_{R}(\cC)$ and $\RMod_{R\catop}(\Cc)$. The evaluation is given by the composite
\[
    \RMod_{R}(\cC) \otimes_{\cC} \RMod_{R\catop}(\cC) \simeq \RMod_{R \otimes R\catop}(\cC) \xrightarrow{- \otimes_{R \otimes R\catop} R^e} \cC.
\]
\end{lem}
Here, the first equivalence is an instance of \cite[Theorem 4.8.5.16(4)]{HA}.
\begin{proof}
The two triangle identities are an immediate consequence of \cite[Proposition~4.6.3.12]{HA}.
\end{proof}

Using the above explicit identification of the duality data of the $\cC$-linear $\infty$-category $\RMod_R(\cC)$, we can now calculate that its $\cC$-linear trace is the Hochschild homology of $R$.

\begin{prop}
\label{prop:THH_As_Categorical_Trace}
    Let $R \in \Alg(\cC)$ be an associative algebra in $\cC$, and let $M$ be an $(R,R)$-bimodule. Then there is an equivalence between the Hochschild homology of the pair $(R,M)$ and the $\cC$-linear trace of the $\cC$-linear endofunctor $- \tensor_R M\colon \RMod_R(\cC) \to \RMod_R(\cC)$:
    \[
        \THH_{\cC}(R,M) \simeq \TrMod_{\cC}(\RMod_R(\cC), - \tensor_R M) \qin \cC.
    \]
    In particular, there is an equivalence
    \[
        \THH_{\cC}(R) \simeq \TrMod_{\cC}(\RMod_R(\cC)) \qin \cC.
    \]
\end{prop}
\begin{proof}
Observe that the second statement is a special case of the first statement by taking $M = R$, so we focus on the first statement. By definition of the trace, we have to compute the composite
\[
    \cC \xrightarrow{\coev} \RMod_R(\cC)\otimes_\cC \RMod_{R\catop}(\cC) \xrightarrow{M \otimes_{\Cc} \id} \RMod_R(\cC)\otimes_\cC \RMod_{R\catop}(\cC) \xrightarrow{\ev} \cC.
\]
Plugging in the evaluation and coevaluation maps described above, we see that this is given by the following composite:
\[
    \cC \xrightarrow{R^c} \RMod_{R \otimes R\catop}(\cC) \xrightarrow{- \otimes_{R \otimes R\catop} (M \boxtimes R\catop)} \RMod_{R \otimes R\catop}(\cC) \xrightarrow{- \otimes_{R \otimes R\catop} R^e} \cC,
\]
where we abusively write $M$ for the functor given by tensoring with $M$, and where $\boxtimes$ denotes the external tensor product defined in \cite[Notation~4.6.3.3]{HA}. The composite of the first two maps is classified by the right $(R \otimes R\catop)$-module $R^c \otimes_{R \otimes R\catop} (M \boxtimes R\catop)$, which is by definition precisely the image of $M$ under the equivalence  ${}_R\!\BMod_R(\cC) \simeq \RMod_{R \otimes R\catop}(\cC)$ of \cite[Construction~4.6.3.9]{HA}. If we abusively denote this right $(R \otimes R\catop)$-module again by $M$ and similarly denote the left $(R \otimes R\catop)$-module $R^e$ by $R$, we see that the above composite is given by the object $M \otimes_{R \otimes R\catop} R= \THH_{\Cc}(R,M)$, as desired.
\end{proof}

\begin{rem}
The equivalence of \Cref{prop:THH_As_Categorical_Trace} above, expressing Hochschild homology as a $\cC$-linear trace, is highly expected to be natural in the triple $(\cC,R,M)$. We emphasize that the above proof does not prove this stronger statement: as in the case of traces in $(\infty,2)$-categories of spans, discussed in \Cref{subsec:DimensionsInCorrCoherence}, the subtlety lies in making the above explicit duality data for $\RMod_R(\cC)$ suitably natural in $\cC$ and $R$. We will, however, not need this natural enhancement.
\end{rem}

\subsection{Free \texorpdfstring{$\cC$}{C}-linear \texorpdfstring{$\infty$}{oo}-categories as module categories}
\label{subsec:Free_CLinear_Cats_As_Module_Cats}

For a space $A$, we have seen that the free $\cC$-linear $\infty$-category $\cC[A]$ can be identified with the cofree $\cC$-linear $\infty$-category $\cC^A$. In the case where $A$ is a pointed connected space, $\cC[A]$ can also be identified with $\RMod_{\one[\Omega A]}(\cC)$, the $\cC$-linear $\infty$-category of modules in $\cC$ over the group algebra $\one[\Omega A]$. We shall explain this in some detail to flesh out the rather strong naturality properties of this identification that will be required later on. 

Recall from \cite[\S 4.8.5]{HA} the functor
\[
    \RMod_{(-)}(\cC)\colon \Alg(\cC) \longhookrightarrow 
    (\Mod_{\cC})_{\cC/},
\]
which assigns to an algebra $R$ in $\cC$ the $\cC$-linear $\infty$-category $\RMod_R(\cC)$ of right $R$-modules in $\cC$, pointed by $R$. This functor is fully faithful and admits a right adjoint (\cite[Theorem~4.8.5.11, Remark~4.8.5.12]{HA})
\[
    \End^{\cC}(\one_{(-)})\colon 
    (\Mod_{\cC})_{\cC/} \too
    \Alg(\cC),
\]
which takes a $\cC$-linear $\infty$-category $\cD$ pointed by some $\one_\cD \in \cD$ to the $\cC$-object of endomorphisms of $\one_\cD$, viewed as an $\EE_1$-algebra in $\cC$.\footnote{The notation $\one_\cD$ is \emph{not} meant to indicate that $\cD$ has a monoidal structure. Rather, it is an $\EE_0$-algebra in $\Mod_\cC$, and so has a distinguished object which we denote $\one_{\cD}$.} 
\begin{lem}[{cf.\ \cite[Proposition~A.4]{CyclyShift}}]
\label{Cat_Grp_Alg_Mod}
    For a pointed connected space $A$, we have an equivalence
    \[
        \cC[A] \simeq \RMod_{\one[\Omega A]}(\cC)
        \qin (\Mod_\cC)_{\cC/},
    \]
    natural in both $A$ and $\cC$.
\end{lem}
\begin{proof}
    In light of the equivalence $\Spc_*^{\geq 1} \simeq \Grp(\Spc)$ between pointed connected spaces and group objects in $\Spc$, we may equivalently produce a natural equivalence
    \[
        \cC[BG] \simeq \RMod_{\unit[G]}(\cC) \qin (\Mod_\cC)_{\cC/}
    \]
    for $G \in \Grp(\Spc)$. The canonical functors $\unit[-]\colon \Spc[BG] \to \cC[BG]$ and $\unit[-]\colon \RMod_{G}(\Spc) \to \RMod_{\one[G]}(\cC)$ in $\PrL_{\Spc/}$ induce natural equivalences
    \[
        \cC \otimes \Spc[BG] \iso \cC[BG] \qquad \text{ and } \qquad \cC \otimes \RMod_{G}(\Spc) \iso \RMod_{\one[G]}(\cC)
    \]
    in $(\Mod_\cC)_{\cC/}$; the first one since $\cC \otimes -$ preserves colimits and the second one by \cite[Theorem~4.8.4.6]{HA}. It thus remains to show that $\Spc[BG] \simeq \RMod_{G}(\Spc)$, or equivalently that $\Spc^{BG} \simeq \LMod_{G}(\Spc)$ by passing to duals. Recalling from \cite[Notation~4.2.2.5]{HA} the $\infty$-category $\LMon_{G}(\Spc)$ of spaces with a left action of $G$, we then have the following sequence of natural equivalences:
    \[
        \Spc^{BG} \iso \Spc_{/BG} \iso \LMon_{G}(\Spc) \iso \LMod_{G}(\Spc);
    \]
    here the equivalence on the left is straightening-unstraightening, the middle equivalence is \cite[Proposition~3.2.76]{Sati_Schreiber} (see also \cite[4.1]{NSS_Principle_Bundles}) and the equivalence on the right is \cite[Proposition~4.2.2.9]{HA}. 
\end{proof}

\begin{cor}
    The following square naturally commutes:
\[\begin{tikzcd}
	{(\Mod_\cC)_{\cC/}} &&& {\Alg(\cC)} \\
	{\Spc_*} &&& {\Grp(\Spc).}
	\arrow["(-)^\simeq"', from=1-1, to=2-1]
	\arrow["{(-)^\times}", from=1-4, to=2-4]
	\arrow["{\End^\cC(\one_{(-)})}", from=1-1, to=1-4]
	\arrow["\Omega", from=2-1, to=2-4]
\end{tikzcd}\]
Here the right vertical functor sends an algebra $R \in \Alg(\cC)$ to the subgroup $R^{\times}$ of invertibles in the $\EE_1$-monoid $\Map_{\cC}(\unit_{\Cc},R)$.
\end{cor}
\begin{proof}
    Passing to left adjoints, we may equivalently produce a commutative square of the form
    \[\begin{tikzcd}
    	{(\Mod_\cC)_{\cC/}} &&& {\Alg(\cC)} \\
    	{\Spc_*} &&& {\Grp(\Spc).}
    	\arrow["B"', hook', from=2-4, to=2-1]
    	\arrow["{\RMod_{(-)}}"', hook', from=1-4, to=1-1]
    	\arrow["{\one[-]}"', from=2-4, to=1-4]
    	\arrow["{\cC[-]}", from=2-1, to=1-1]
    \end{tikzcd}\]
    This is the content of \Cref{Cat_Grp_Alg_Mod}.
\end{proof}

Given an algebra $R\in \Alg(\cC)$, a pointed map $\zeta \colon A \to \RMod_R(\cC)$ corresponds by adjunction to a pointed $\cC$-linear functor $\zeta_\cC \colon \cC[A] \to \RMod_R(\cC)$, hence by the lemma to a pointed $\cC$-linear functor $\RMod_{\one[\Omega A]}(\cC) \to \RMod_R(\cC)$. This can be described in terms of the group map $\Omega \zeta \colon \Omega A \to \Omega \RMod_R(\cC)$, which under the equivalence
\[
    \Omega \RMod_R(\cC)\simeq \End^{\Cc}(\RMod_R(\cC))^{\times} \simeq R^\times
\]
corresponds to an algebra map $\one[\Omega A] \to R$. 

\begin{lem}\label{Aug_Base_Change}
    The composite
    \[
        \RMod_{\one[\Omega A]}(\cC) \iso 
        \cC[A] \oto{\:\zeta_\cC\:}
        \RMod_R(\cC)
    \]
    is given by extension of scalars along 
    $\one[\Omega A] \to R$.
\end{lem}
\begin{proof}
    Since the functoriality of $\RMod_{(-)}(\cC)$ is given by extension of scalars, this follows from a straightforward diagram chase in the above commutative square of adjunctions.
\end{proof}

\begin{rem}
Let $A$ be a pointed connected space. Combining \Cref{prop:THH_As_Categorical_Trace}, \Cref{Cat_Grp_Alg_Mod} and \Cref{thm:THHFreeLoops}, we get a chain of equivalences
\[
    \THH_{\cC}(\unit[\Omega A]) \simeq \TrMod_{\Cc}(\RMod_{\unit[\Omega A]}(\cC)) \simeq \TrMod_{\Cc}(\cC[A]) \simeq \unit[LA].
\]
The resulting identification of the Hochschild homology of $\unit[\Omega A]$ with the free loop spac $\unit[LA]$ is well-known among experts, and is usually stated and proved in terms of the cyclic bar construction. For example, when $\cC$ is the $\infty$-category of spectra, this is \cite[Corollary IV.3.3]{NS}. For earlier references, one can consult \cite[Proposition 3.7]{BHM}, or \cite[Lemma V.1.3]{Goodwillie} in the integral case. 
\end{rem}

\section{Traces and characters via categorified traces}
\label{sec:GeneralizedCharacters}

Let $\Cc$ be a presentably symmetric monoidal $\infty$-category, taken to be fixed. In this section, we will study the interaction between (generalized) traces in $\Cc$ and categorified traces in the $(\infty,2)$-category $\Mod_{\Cc}$ of $\Cc$-linear $\infty$-categories, studied in the previous section. 

The main ingredient for this interaction is the fact that one can express generalized traces in $\Cc$ in terms of $\Cc$-linear traces, so let us start by explaining how this goes. Consider objects $X$, $Y$ and $Z$ of $\cC$, where $X$ is dualizable, and let $f\colon Z \tensor X \to X \tensor Y$ be a generalized endomorphism. Forming the tensor product with $X$ gives a functor $X\colon \Cc \to \Cc$, which by dualizability of $X$ is an internal left adjoint, see \Cref{obs:Int_Left_Adj_Vs_Dbl}. Similarly we obtain $Y\colon \Cc \to \Cc$ and $Z\colon \Cc \to \Cc$, and the morphism $f$ corresponds to a $\Cc$-linear natural transformation fitting in the following diagram:
\[\begin{tikzcd}
	\cC && \cC \\
	\cC && \cC.
	\arrow["Z"', from=1-1, to=2-1]
	\arrow["X", from=1-1, to=1-3]
	\arrow["X"', from=2-1, to=2-3]
	\arrow["Y", from=1-3, to=2-3]
	\arrow["f"{description},shorten <=7pt, shorten >=7pt, Rightarrow, from=2-1, to=1-3]
\end{tikzcd}\]
We denote by $(X,f)\colon (\cC,Z) \to (\cC,Y)$ the resulting morphism in $\Mod_{\Cc}^{\trl}$. Applying the $\Cc$-linear trace functor to this map provides a morphism of the form
\[
    \TrMod_{\Cc}(X,f)\colon Z \to Y \qin \cC;
\]
here we use the identifications $\TrMod_{\Cc}(\Cc,Z) = Z$ and $\TrMod_{\Cc}(\Cc,Y) = Y$ of \Cref{ex:CLinear_Trace_Scalar}, which we will leave implicit from now on. The next lemma shows that this morphism is given by the generalized trace $\gentr{f}{X}\colon Z \to Y$ of $f$.

\begin{lem}[Trace comparison lemma, pointwise version]
\label{THH_Trace}
Let $X$, $Y$ and $Z$ be objects of $\cC$, where $X$ is dualizable, and let $f\colon Z \otimes X \to X \otimes Y$ be a generalized endomorphism of $X$ in $\cC$.
Then there is an equivalence
\[
    \TrMod_{\Cc}(X,f) \simeq \gentr{f}{X} \qin \Map_{\Cc}(Z,Y).
\]
\end{lem}
\begin{proof}
We compute $\TrMod_{\cC}(X,f)$ using the description of the functoriality of $\TrMod_{\cC}$ from the diagram (\ref{eq:FunctorialityTHH}) on page \pageref{eq:FunctorialityTHH}. Since $\cC$ is the monoidal unit of $\Mod_{\Cc}$, it is in particular self-dual, and we get $\cC \otimes_{\cC} \cC^\vee \simeq \cC$. The right adjoint of $X\colon \Cc \to \Cc$ is $X^{\vee}\colon \Cc \to \Cc$, whose $\Cc$-linear dual is again $X^{\vee}\colon \Cc \to \Cc$. It follows that the map $\TrMod_{\cC}(X,f)\colon Z \to Y$ is given by evaluating the following composite transformation at $\one \in \cC$:
\[\begin{tikzcd}
	&& \cC &&& \cC \\
	\cC &&&&&&& \cC \\
	&& \cC &&& \cC
	\arrow[Rightarrow, bend left = 10, no head, from=2-1, to=1-3]
	\arrow[""{name=0, anchor=center, inner sep=0}, Rightarrow, no head, bend right = 10, from=2-1, to=3-3]
	\arrow["Y"', from=3-3, to=3-6]
	\arrow["{X \otimes X^\vee}", from=1-3, to=3-3]
	\arrow["{X \otimes X^\vee}"', from=1-6, to=3-6]
	\arrow[""{name=1, anchor=center, inner sep=0}, Rightarrow, no head, from=1-6, to=2-8, bend left = 10]
	\arrow[Rightarrow, no head, from=3-6, to=2-8, bend right = 10]
	\arrow["f"{description}, shorten <=24pt, shorten >=24pt, Rightarrow, from=1-6, to=3-3]
	\arrow["Z", from=1-3, to=1-6]
	\arrow["{\coev_{X}}"{description}, shorten <=7pt, shorten >=7pt, Rightarrow, from=1, to=3-6]
	\arrow["{\ev_X}"{description}, shorten <=11pt, shorten >=7pt, Rightarrow, from=1-3, to=0]
\end{tikzcd}\]
This is precisely the definition of the generalized trace $\gentr{f}{X}\colon Z \to Y$ of $f$.
\end{proof}
The goal of this section is to exploit the above interaction to set up a \emph{generalized character theory} to study these generalized traces in families indexed by spaces. Specifically, given an $A$-indexed family of generalized endomorphisms
\[
    f_a\colon X_a\to X_a\otimes Y, \quad a\in A
\]
we will associate a generalized character $\chi_f\colon \one[LA]\to Y$, encoding the generalized traces of twists of $f$ by free loops in $A$. These generalized characters are the main object of study of this section, and of the entire article. 

This section is organized as follows. We start in \Cref{subsec:Generalized_Characters} with the definition of the generalized character $\chi_f\colon \unit[LA] \to Y$, and prove an explicit description of it in terms of generalized traces of twists of $f$. Showing that the various pointwise descriptions we give are in fact natural equivalences is a bit subtle, and will be discussed in \Cref{subsec:Coherence_Trace_Formulas}. In \Cref{subsec:RestrictionInductionCharacters}, we conclude the study of these characters with explicit formulas for induced and restricted characters.

In \Cref{subsec:Additivity_Traces}, we demonstrate another use of categorification to deduce the additivity of generalized traces from exactness properties of the $\Cc$-linear trace functor $\TrMod_{\Cc}$. This final subsection is independent of first three subsections, in that it does not use our generalized character theory.

\subsection{Generalized characters}
\label{subsec:Generalized_Characters}
Let $G$ be a finite group, and let $V$ be a finite-dimensional complex $G$-representation. The \textit{character} $\chi_V\colon G \to \CC$ of $V$ is the function which assigns to every group element $g \in G$ the trace of the action map $g\colon V \to V$. The character of $V$ is a \textit{class function}: for two conjugate group elements $g$ and $g'$, we have $\chi_V(g) = \chi_V(g')$. If we let $G/\conj$ denote the set of conjugacy classes, it is thus natural to regard the character of $V$ as a function
\[
    \chi_V\colon G/\conj \to \CC.
\]

In this subsection, we will generalize the definition of characters in the following three ways:
\begin{enumerate}[(1)]
    \item The category $\Vect_{\CC}$ of complex vector spaces gets replaced by an arbitrary presentably symmetric monoidal $\infty$-category $\cC$, from now on taken to be fixed;
    \item Instead of $G$-equivariant objects, regarded as local systems on the classifying space $BG$, we will work with local systems on an arbitrary space $A$.
    \item Rather than studying the character of an object, we will study the character associated to a generalized endomorphism of an object.
\end{enumerate}

Taking inspiration from \Cref{THH_Trace}, our notion of generalized character map will be defined as the $\Cc$-linear trace of a suitable map in $\Mod_{\Cc}^{\trl}$.

\begin{cons}
Assume given a space $A$, a dualizable object $X \in \cC^A$, and a generalized endomorphism of $X$ in $\cC^A$ of the form $f\colon X \to X \tensor A^*Y$ for some object $Y \in \Cc$. We denote by $(X,f)\colon \cC[A] \to (\cC,Y)$ the map in $\Mod^{\trl}_{\cC}$ presented by the following diagram:
\[\begin{tikzcd}
	{\cC[A]} && \cC \\
	{\cC[A]} && \cC.
	\arrow[Rightarrow, no head, from=1-1, to=2-1]
	\arrow["Y", from=1-3, to=2-3]
	\arrow["X", from=1-1, to=1-3]
	\arrow["X"', from=2-1, to=2-3]
	\arrow["f"{description},shorten <=7pt, shorten >=7pt,  Rightarrow, from=2-1, to=1-3]
\end{tikzcd}\]
Here we regard $X$ as a $\cC$-linear functor $X\colon \cC[A] \to \cC$ using the equivalence $\Fun_{\cC}(\cC[A],\cC) \simeq \cC^A$ of \Cref{lem:MappingOutOfFreeCModule}, and we similarly regard $f$ as a $\cC$-linear transformation. Since $X$ is pointwise dualizable, the functor $X\colon \cC[A] \to \cC$ is an internal left adjoint by \Cref{cor:Criterion_Pointwise_Dualizable}, and thus this diagram is indeed a morphism in $\Mod^{\trl}_{\cC}$ by \Cref{cor:Morphisms_In_Dbl}.
\end{cons}

The generalized character of $f$ is obtained by applying the $\cC$-linear trace functor to the map $(X,f)\colon \Cc[A] \to (\Cc,Y)$.

\begin{defn}
\label{def:Character} 
Let $A$ be a space, $X \in \cC^A$ a local system of dualizable objects, $Y \in \cC$ an object and $f\colon X \to X \tensor A^*Y$ a morphism in $\cC^A$. We define the \textit{generalized character} $\chi_f\colon \unit[LA] \to Y$ of $f$ to be the map
    \[
        \chi_f := \TrMod_{\cC}(X,f)\colon \one[LA] \too Y
        \qin \cC.
    \]
    We will sometimes abuse notation by identifying it with its mate
    \[
        \chi_f \colon LA \too \Map_{\cC}(\one, Y)
        \qin \Spc.
    \]
When $Y = \unit$ and $f=\id_X$, we will write $\chi_X$ for $\chi_f$.
\end{defn}

\begin{example}
When $A = \pt$, the data of $f$ is a generalized endomorphism $f\colon X \to X \tensor Y$ in $\cC$, and the character $\chi_f$ is just the generalized trace $\gentr{f}{X}\colon \unit \to Y$ by \Cref{THH_Trace}.
\end{example}

\begin{example}[Characters in representation theory]
When $A$ is the classifying space $BG$ of a finite group $G$ and $\Cc$ is the (ordinary) category $\Vect_{\CC}$ of complex vector spaces, the data of $X$ is that of a finite-dimensional complex $G$-representation $V$. Since there is an isomorphism of complex vector spaces
\[
    \CC[LBG] \simeq \CC[\pi_0(LBG)] \simeq \CC[G/\conj],
\]
the generalized character map $\chi_V\colon \CC[LBG] \to \CC$ may be identified with a function
\[
    \chi_V\colon G/\conj \to \CC.
\]
We shall see in \Cref{cor:Character_Formula_Id} that this is precisely the ordinary character of the $G$-representation $V$.
\end{example}

The goal of the remainder of this subsection is to give an explicit formula for the generalized character $\chi_f$ in terms of generalized traces. Concretely, we shall show that its value $\chi_f(\gamma)\colon \unit \to Y$ on a free loop $\gamma \in LA$ is given by the following generalized trace:
\[
    \chi_f(\gamma) = \gentr{
        X_a \oto{\gamma} 
        X_a \oto{f_a} 
        X_a \otimes Y}{X_a} \qin \Map_{\cC}(\unit,Y).
\]
Here we let $a := \gamma(\ast)$ denote the basepoint of $\gamma$, and $\gamma\colon X_a \to X_a$ denotes the automorphism of $X_a$ induced by applying the functor $X\colon A \to \Cc$ to the automorphism $\gamma \colon a \iso a$ in $A$. 

The main ingredient in the proof of this character formula is the following lemma:

\begin{lem}[Loop indicator lemma, pointwise version]
\label{Loop_Indicator}
    Consider a free loop $\gamma \in LA$ with basepoint $a := \gamma(\ast)$, and consider the morphism $(a_!,\gamma_!)\colon \Cc \to \Cc[A]$ in $\Mod^{\trl}_{\Cc}$ given by
    \[\begin{tikzcd}
    	\cC && {\cC[A]} \\
    	\cC && {\cC[A],}
    	\arrow[Rightarrow, no head, from=1-1, to=2-1]
    	\arrow[Rightarrow, no head, from=1-3, to=2-3]
    	\arrow["{a_!}", from=1-1, to=1-3]
    	\arrow["{a_!}"', from=2-1, to=2-3]
    	\arrow[shorten <=6pt, shorten >=6pt, "\gamma_!"{description}, Rightarrow, from=2-1, to=1-3]
    \end{tikzcd}\]
    where $\gamma$ is regarded as a self-homotopy of the map $a\colon \pt \to A$. Then the $\Cc$-linear trace $\TrMod_{\Cc}(a_!,\gamma_!)\colon \unit \to \unit[LA]$ is homotopic to the map $\one \oto{\gamma} \one[LA]$ induced from the map $\pt \oto{\gamma} LA$:
    \[
        \TrMod_{\Cc}(a_!,\gamma_!) \simeq \gamma \qin \Map_{\Cc}(\unit,\unit[LA]).
    \]
\end{lem}
In other words, the map $\TrMod_{\cC}(a_!,\gamma_!)\colon \one \to \one[LA]$ can be thought of as the ``Dirac measure'' supported at the loop $\gamma \in LA$. 
\begin{proof}
By \Cref{prop:FreeCofreeCLinearCategories}, the functor $\cC[-]\colon \Spc \to \Mod_\cC$ is a symmetric monoidal bivariant theory, and hence uniquely extends to a symmetric monoidal functor $\Span(\Spc) \to \Mod_\cC$. Since the formation of the categorified trace functor is manifestly natural in symmetric monoidal functors, the claim for $\Mod_\cC$ follows from the analogous statement in the $(\infty,2)$-category $\Span(\Spc)$. It thus suffices to show that applying the trace functor to the morphism 
\[
\begin{tikzcd}
    	{\pt} && {A}\\
    	{\pt} && {A}
    	\arrow[Rightarrow, no head, from=1-1, to=2-1]
    	\arrow[Rightarrow, no head, from=1-3, to=2-3]
    	\arrow["{a}", from=1-1, to=1-3]
    	\arrow["{a}"', from=2-1, to=2-3]
    	\arrow[shorten <=6pt, shorten >=6pt, "\gamma"{description}, Rightarrow, from=2-1, to=1-3]
\end{tikzcd}
\]
in $\Span(\Spc)^{\trl}$ yields the map $\gamma \colon \pt \to LA$ in $\Omega \Span(\Spc) \simeq \Spc$. 

To this end, recall from \Cref{lem:DblCorr} that $A$ is self-dual in $\Span(\Spc)$ with evaluation and coevaluation maps given by 
\[
    A \times A \xleftarrow{\;\Delta\;} A \xrightarrow{\;\pi\;} \pt \qquad \text{and} \qquad 
    \pt \xleftarrow{\;\pi\;} A \xrightarrow{\;\Delta\;}  A \times A. 
\]
We can now compute $\Tr_{\Span(\Spc)}(a;\gamma)$ by substituting these evaluation and coevaluation maps into the diagram (\ref{eq:2FunctorialityTrace}), which in our case assumes the form 
\[
\begin{tikzcd}
	\pt && {\pt} && {\pt} \\
	\\
	{A\times A} && {A} && {A} && {\pt} \\
	\\
	&& {A\times A} && {A\times A} && \pt.
	\arrow["{\coev_A}"', from=1-1, to=3-1]
	\arrow["{\id}", from=1-1, to=1-3]
	\arrow["{(a,\id)^r}"{description}, from=3-1, to=3-3]
	\arrow["{a}"{description}, from=1-3, to=3-3]
	\arrow[""{name=0, anchor=center, inner sep=0}, "{(a,\id)}"{description}, from=3-3, to=5-3]
	\arrow[""{name=1, anchor=center, inner sep=0}, "{\id}"', curve={height=18pt}, from=3-1, to=5-3]
	\arrow["{a^r}"{description}, from=3-5, to=3-7]
	\arrow["{\id}", from=3-7, to=5-7]
	\arrow[""{name=2, anchor=center, inner sep=0}, "{\id}", curve={height=-18pt}, from=1-5, to=3-7]
	\arrow[""{name=3, anchor=center, inner sep=0}, "{a}"{description}, from=1-5, to=3-5]
	\arrow[equal, shorten <=16pt, shorten >=16pt, from=1-3, to=3-1]
	\arrow[equal, shorten <=16pt, shorten >=16pt, from=5-5, to=3-7]
	\arrow["{\id}", from=1-3, to=1-5]
	\arrow["{\ev_A}"', from=5-5, to=5-7]
	\arrow["{(\id,a)}"{description}, from=3-5, to=5-5]
	\arrow["\id"', from=5-3, to=5-5]
	\arrow["{(c,\id)}"{description}, shorten <=9pt, shorten >=9pt, Rightarrow, from=3-3, to=1]
	\arrow["u"{description}, shorten <=9pt, shorten >=9pt, Rightarrow, from=2, to=3-5]
	\arrow["{(\gamma, a)}"{description}, shorten <=19pt, shorten >=19pt, Rightarrow, from=3, to=0]
\end{tikzcd}
\]
Here we identify objects and morphisms in $\Spc$ with their images under the inclusion $h_{\Spc}\colon \Spc \hookrightarrow \Span(\Spc)$, and given a morphism $f\colon A \to B$ of spaces, we denote its right adjoint in $\Span(\Spc)$ by $f^r\colon B \to A$, as in \Cref{lem:AdjointInCorr}. We can now explicitly identify each of the natural transformations in this diagram as a morphism in a suitable over-category of a space: 

\begin{enumerate}
    \item By the construction of the unit $2$-morphism $u\colon \id_\pt \to a^r \circ a$, it corresponds via the equivalence 
    \[
    \Hom_{\Span(\Spc)}(\pt,\pt) \simeq \Spc
    \] 
    to the map of spaces  
    $\pt \to \pt \times_A \pt$ classified by the square 
    \[
    \begin{tikzcd}
\pt	&&\pt   \\
\pt	&&{A.} 
	\arrow[Rightarrow, no head, from=1-1, to=1-3]
	\arrow[Rightarrow, no head, from=1-1, to=2-1]
	\arrow["a"{description}, from=2-1, to=2-3]
    \arrow["a"{description}, from=1-3, to=2-3]
    \arrow[Rightarrow,no head, shorten <=7pt, shorten >=7pt, from=2-1, to=1-3]
\end{tikzcd}
    \]
By definition, $\Omega_a A  = \pt \times_A \pt$, and the above square classifies the constant loop map $c_a \colon \pt \to \Omega_a A$. 

\item The $2$-morphism $\gamma \times a\colon (a,\id)\circ a \to (\id,a)\circ a$ corresponds via the equivalence 
\[
\Hom_{\Span(\Spc)}(\pt,A\times A) \simeq  \Spc_{/A\times A}
\] 
to the automorphism of the map $a\times a \colon \pt \to A\times A$ given by applying $\gamma$ to the first coordinate.  Composing it from the right with the evaluation map $A\times A \to \pt$ in $\Span(\Spc)$ amounts to pulling it back along the diagonal map $\Delta \colon A\to A\times A$ and then applying the forgetful functor $\Spc_{/A} \to \Spc$.
The space $\pt \times_{A\times A} A$ identifies with $\Omega_a A$, and the pullback of $(\gamma ,a)$ along $\Delta \colon A\to A\times A$ corresponds via this identification to the map $\gamma \star (-)\colon \Omega_a A \to \Omega_a A$ of concatenation with $\gamma$.  

\item The $2$-morphism $c\times \id \colon (a,\id)\circ(a,\id)^r \to \id_{A\times A}$ 
corresponds via the equivalence
\[
\Hom_{\Span(\Spc)}(A\times A, A\times A) \simeq \Spc_{/A\times A \times A\times A} 
\]
to the commutative triangle 
\[
\begin{tikzcd} 
A && A\times A \\ 
  & A\times A \times A \times A,
\arrow["{(a,\id_A)}",from=1-1, to=1-3]
\arrow["\Delta_{A\times A}"{description},from=1-3, to=2-2]
\arrow["{(a,\id_A,a,\id_A)}"{description},from=1-1, to=2-2]
\end{tikzcd}
\]
regarded as a morphism in the over-category of $A\times A \times A\times A$. Precomposing it with the evaluation of $A$ and post-composing with the coevaluation of $A$ amounts to pulling it back along the map $\Delta_A\times \Delta_A \colon A\times A \to A\times A \times A\times A$ and then applying the forgetful functor $\Spc_{/A\times A} \to \Spc$. Now, pulling back the morphism $(a,\id_A, a, \id_A)$ along $\Delta_A \times \Delta_A$ gives the space $\Omega_a A$, with a certain map to $A\times A$ that we then forget. Pulling back the map $\Delta_{A\times A}$ along $\Delta_A \times \Delta_A$ gives the space $LA$, again with a map to $A\times A$ that we forget. Finally, via these identifications, the map $(a,\id_A)$ pulls back to the map $\Omega_a A \to LA$ embedding the based loops at $A$ inside the space of all free loops. 
\end{enumerate}
We thus deduce that the trace
$
\Tr_{\Span(\Spc)}(a;\gamma)
$ 
is given by the composition 
\[
\pt \oto{c_a} \Omega_a A \oto{\gamma \star (-)} \Omega_a A \to LA.
\]
But this composition is clearly given by $\gamma\colon \pt \to LA$, finishing the proof.
\end{proof}

We may use \Cref{Loop_Indicator} to prove the description of the character in terms of generalized traces.
\begin{prop}[Character formula, pointwise version]
\label{prop:Character_Formula}
Let $A$ be a space, let $X\in \cC^A$ be a dualizable object, let $Y\in \cC$ be an object and let $f\colon X \to X \tensor A^*Y$ be a generalized endomorphism. For every free loop $\gamma \in LA$ with basepoint $a := \gamma(\ast)$, there is an equivalence
    \[
    \chi_f(\gamma) \simeq \gentr{
        X_a \oto{\gamma} 
        X_a \oto{f_a} 
        X_a \otimes Y}{X_a} \qin \Map_{\cC}(\unit,Y).
    \]
\end{prop}
\begin{proof}
Consider the following composition of morphisms in $\Mod_{\cC}^{\trl}$:
\[\begin{tikzcd}
\cC	&&{\cC[A]} && \cC \\
\cC	&&{\cC[A]} && \cC.
	\arrow[Rightarrow, no head, from=1-3, to=2-3]
	\arrow[Rightarrow, no head, from=1-1, to=2-1]
	\arrow["a_!", from=1-1, to=1-3]
	\arrow["a_!"', from=2-1, to=2-3]
	\arrow["Y", from=1-5, to=2-5]
	\arrow["X", from=1-3, to=1-5]
	\arrow["X"', from=2-3, to=2-5]
	\arrow["f"{description},shorten <=7pt, shorten >=7pt,  Rightarrow, from=2-3, to=1-5]
	\arrow["\gamma_!"{description},shorten <=7pt, shorten >=7pt,  Rightarrow, from=2-1, to=1-3]
\end{tikzcd}
\]
It follows from \Cref{cor:RestrictionInduction} that this composition is given by the morphism
\[
\begin{tikzcd}
\cC	 && \cC \\
\cC	 && \cC.
	\arrow[Rightarrow, no head, from=1-1, to=2-1]
	\arrow["X_a", from=1-1, to=1-3]
	\arrow["X_a"', from=2-1, to=2-3]
	\arrow["Y", from=1-3, to=2-3]
	\arrow["f_a\circ \gamma"{description},shorten <=7pt, shorten >=7pt,  Rightarrow, from=2-1, to=1-3]
\end{tikzcd}
\]
In particular, we deduce from the functoriality of $\Cc$-linear trace functor that there is an identification
\[\TrMod_{\Cc}(X,f) \circ \TrMod_{\Cc}(a_!,\gamma_!) \simeq \TrMod_{\Cc}(X_a,f_a \circ \gamma) \qin \Map_{\Cc}(\unit,Y).
\] 
The $\Cc$-linear trace $\TrMod_{\Cc}(X,f)$ is by definition the character $\chi_f\colon \one[LA] \to Y$, and by \Cref{Loop_Indicator} the $\Cc$-linear trace $\TrMod_{\cC}(a_!,\gamma_!)$ is given by $\one \oto{\gamma} \one[LA]$. It follows that the left hand side of the above identification is given by the composite 
\[
\one \oto{\gamma} \one[LA] \oto{\chi_f} Y,
\]
which is exactly $\chi_f(\gamma)$. On the other hand, we have by \Cref{THH_Trace} that the right hand side, $\Tr_{\cC}(X_a,f_a \circ \gamma)$, is the generalized trace of the generalized endomorphism $f_a \circ \gamma\colon X_a \to X_a \otimes Y$. The result follows. 
\end{proof}

\begin{cor} 
\label{cor:Character_Formula_Id}
Let $A$ be a space and let $\gamma\in LA$ with $a := \gamma(\pt)$. For $X\in \cC[A]$, we have 
    \[
        \chi_X(\gamma) \simeq \gentr{X_a \oto{\gamma} X_a}{X_a}
        \qin \End(\one).
    \]
\end{cor}

\begin{proof}
This follows from \Cref{prop:Character_Formula} by taking $Y=\one$ and $f = \id_X$. 
\end{proof}

\subsection{Coherence of trace formulas}
\label{subsec:Coherence_Trace_Formulas}
In the previous subsection, we have shown various explicit formulas of $\Cc$-linear traces. The goal of this subsection is to show that these formulas are natural in the input variables. This question is somewhat more subtle than one might first expect, due to the fact that the explicit description of the functoriality of the $\Cc$-linear trace functor given at the beginning of \Cref{sec:CLinearCategories} is a pointwise description which involves choices of duality data for the objects involved. 

\subsubsection{Coherent version of the trace comparison lemma}
We will start by proving a natural version of the trace comparison lemma, \Cref{THH_Trace}, in space-indexed families of generalized endomorphisms. The main input will be the following observation about the interaction between $\cC$-linear traces and $\cC^A$-linear traces for a given space $A$. Since $\Mod_{\cC}^{\trl}$ is functorial in $\cC$, there is a coassembly map
\[
    \Mod_{\cC^A}^{\trl} = \Mod_{\lim_A\cC}^{\trl} \xrightarrow{\coas} \lim_A \Mod_{\cC}^{\trl} = \Fun(A,\Mod_{\cC}^{\trl})
\]
associated to the functor $\Mod_{(-)}^{\trl}\colon \CAlg(\PrL) \to \PrL$. Informally speaking, it sends a pair $(\cD,F)$ to the functor
\[
    A \to \Mod^{\trl}_{\cC}, \qquad a \mapsto (\cD_a,F_a)
\]
where $\cD_a = \cD \otimes_{\cC^A} \cC$ is the base change of $\cD$ along the map $a^*\colon \cC^A \to \cC$ in $\CAlg(\PrL)$ and similarly for $F_a$.

\begin{lem}
\label{lem:AssemblyTHH}
For a space $A$, the following diagram commutes:
\begin{equation*}
\begin{tikzcd}
    \Mod^{\trl}_{\cC^A}  \dar[swap]{\coas} \rar{\TrMod_{\cC^A}} & \cC^A \dar[equal] \\
    \Fun(A,\Mod^{\trl}_{\cC}) \rar{\TrMod_{\cC} \circ -} & \Fun(A,\cC).
\end{tikzcd}
\end{equation*}
\end{lem}
\begin{proof}
Viewing the right vertical map as the coassembly map for forgetful functor $\CAlg(\PrL)\to \PrL$, the commutativity of the square follows from the naturality of the coassembly map.
\end{proof}

In other words, given a $\cC^A$-linear $\infty$-category $\cD$ equipped with an endofunctor $F\colon \cD \to \cD$, its $\cC^A$-linear trace $\TrMod_{\cC^A}(\cD,F) \in \cC^A$ is the $A$-indexed family of objects of $\cC$ obtained by pointwise taking the $\cC$-linear trace of the family $(\cD_a,F_a)_{a \in A}$:
\begin{align*}
    \TrMod_{\cC^A}(\cD,F)_a \simeq \TrMod_{\cC}(\cD_a,F_a) \qin \Fun(A,\cC).
\end{align*}

We now obtain the following generalization of \Cref{THH_Trace} for space-indexed family of generalized endomorphisms:

\begin{lem}[Trace comparison lemma, coherent version]
\label{THH_Trace_Coherent}
Let $A$ be a space, let $X,Y,Z\colon A \to \cC$ be $A$-indexed families of objects of $\cC$, and let $f_a\colon Z_a \tensor X_a \to X_a \tensor Y_a$ be a family of generalized endomorphisms in $\cC$. Assume that $X_a$ is dualizable in $\cC$ for every $a \in A$. Then the choice of homotopy
\[
    \TrMod_{\cC}(X_a,f_a) \simeq \gentr{f_a}{X_a}\qin \Map_{\cC}(Z_a, Y_a)
\]
of \Cref{THH_Trace} can be chosen to depend naturally on $a \in A$.
\end{lem}
\begin{proof}
We may consider $f$ as a generalized endomorphism $Z \tensor X \to X \tensor Y$ of $X$ in $\cC^A$. By \Cref{THH_Trace}, its generalized trace $\gentrC{\cC^A}{f}{X}\colon Z \to Y$ in $\cC^A$ is homotopic to the map $\TrMod_{\cC^A}(X,f)$ obtained by forming the $\cC^A$-linear trace of the map $(X,f)\colon (\cC^A,Z) \to (\cC^A,Y)$ in $\Mod^{\trl}_{\cC^A}$:
\[\begin{tikzcd}
	\cC^A && \cC^A \\
	\cC^A && \cC^A.
	\arrow["Z"', from=1-1, to=2-1]
	\arrow["X", from=1-1, to=1-3]
	\arrow["X"', from=2-1, to=2-3]
	\arrow["Y", from=1-3, to=2-3]
	\arrow["f"{description},shorten <=7pt, shorten >=7pt, Rightarrow, from=2-1, to=1-3]
\end{tikzcd}\]
Since generalized traces are functorial under symmetric monoidal functors
(\Cref{rmk:GeneralizedTraceFunctorial}) the map $\gentrC{\cC^A}{f}{X}\colon Z \to Y$ in $\cC^A$ corresponds to the $A$-indexed family of maps $\gentrC{\cC}{f_a}{X_a}\colon Z_a \to Y_a$:
\[
    \gentrC{\cC^A}{f}{X}_a \simeq \gentrC{\cC}{f_a}{X_a} \qin \Map_{\Cc}(Z_a,Y_a).
\]
For the same reason, we deduce from \Cref{lem:AssemblyTHH} that the map $\TrMod_{\cC^A}(X,f)\colon Z \to Y$ in $\cC^A$ is given by the $A$-indexed family of maps $\TrMod_{\cC}(X_a,f_a)\colon Z_a \to Y_a$:
\[
    \TrMod_{\cC^A}(X,f)_a \simeq \TrMod_{\cC}(X_a,f_a) \qin \Map_{\Cc}(Z_a,Y_a).
\]
As $\TrMod_{\cC^A}(X,f) \simeq \gentrC{\cC^A}{f}{X} \in \Map_{\cC^A}(Z,Y)$, this finishes the proof.
\end{proof}

\subsubsection{Coherent version of the loop indicator lemma}
We will next prove a coherent version of the loop indicator lemma, \Cref{Loop_Indicator}. Recall that the loop indicator lemma describes the effect of $\TrMod_{\Cc}$ on the map $(a_!,\gamma_!)$ in $\Mod_{\Cc}^{\trl}$, which is obtained from a free loop $\gamma \in LA$ by interpreting it as a self-homotopy of the basepoint inclusion $\gamma(\ast)\colon \pt \to A$. We will start by generalizing this description to the case of an arbitrary self-homotopy of a map of spaces.

\begin{prop}\label{HomotopyviaTHH}
Let $f\colon A\to B$ be a map of spaces, and let $H\colon A \times S^1 \to B$ be a self-homotopy of $f$. Applying the functor $\Cc[-]\colon \Spc \to \Mod_{\Cc}$ gives an automorphism $H_!$ of $f_!$ in $\Fun_{\Cc}(\Cc[A],\Cc[B])$, which gives rise to a morphism $(f_!,H_!)\colon \cC[A] \to \cC[B]$ in $\Mod_{\cC}^{\trl}$:
\[\begin{tikzcd}
	{\cC[A]} && {\cC[B]} \\
	{\cC[A]} && {\cC[B].}
	\arrow["{f_!}", from=1-1, to=1-3]
	\arrow["{f_!}"', from=2-1, to=2-3]
	\arrow[Rightarrow, no head, from=1-3, to=2-3]
	\arrow[Rightarrow, no head, from=1-1, to=2-1]
	\arrow[shorten <=6pt, shorten >=6pt,"{H_!}"{description}, Rightarrow, from=2-1, to=1-3]
\end{tikzcd}\] 
Applying $\TrMod_{\cC}$ to this map gives the map $\one[LA]\to \one[LB]$ induced by the composite
\begin{align*}
    LA \simeq LA \times \pt \oto{LA \times [\id_{S^1}]} LA\times LS^1 \simeq  L(A\times S^1)\oto{LH} LB
\end{align*}
which informally sends a loop $\gamma \in LA$ in $A$ to the loop $s\mapsto H(\gamma(s),s)$ in $B$. 
\end{prop}
\begin{proof}
Let $H_A\colon S^1 \to \Map(A,A \times S^1)$ denote the mate of the identity of $A \times S^1$. It is a self-homotopy of the map $A \times b\colon A \to A \times S^1$, where $b\colon \pt \to S^1$ is the inclusion of the basepoint. Observe that the self-homotopy $S^1 \to \Map(A,B)$ classified by $H$ can be obtained from $H_A$ by postcomposing with $H\colon A \times S^1 \to B$, and in particular the square in the statement of the proposition can be written as the composite of the squares
\[\begin{tikzcd}
    	{\cC[A]} && {\cC[A\times S^1]} \\
    	{\cC[A]} && {\cC[A\times S^1]}
    	\arrow["{(A \times b)_!}", from=1-1, to=1-3]
    	\arrow["{(A \times b)_!}"', from=2-1, to=2-3]
    	\arrow[Rightarrow, no head, from=1-3, to=2-3]
    	\arrow[Rightarrow, no head, from=1-1, to=2-1]
    	\arrow[shorten <=6pt, shorten >=6pt, "{(H_A)_!}"{description}, Rightarrow, from=2-1, to=1-3]
    \end{tikzcd}
    \qquad \text{ and } \qquad
    \begin{tikzcd}
    	{\cC[A\times S^1]} && {\cC[B]} \\
    	{\cC[A\times S^1]} && {\cC[B].}
    	\arrow["{H_!}", from=1-1, to=1-3]
    	\arrow["{H_!}"', from=2-1, to=2-3]
    	\arrow[Rightarrow, no head, from=1-3, to=2-3]
    	\arrow[Rightarrow, no head, from=1-1, to=2-1]
    	\arrow[equal, shorten <=8pt, shorten >=8pt, from=2-1, to=1-3]
    \end{tikzcd}
    \] 
By \Cref{thm:THHFreeLoops}, applying $\TrMod_{\cC}$ to the right square gives the map $\one[LH]\colon \one[L(A \times S^1)] \to \one[LB]$. For the left square we observe that the homotopy $H_A$ is equivalent to the product $A \times H_b$, where $H_b\colon S^1 \to \Map(\pt, S^1)$ is the canonical self-homotopy of $b\colon \pt \to S^1$. By symmetric monoidality of $\TrMod_{\cC}$, we thus find that $\TrMod_{\cC}$ of the left square is given by
$$LA\times L\pt\xrightarrow{LA\times \TrMod_{\cC}(b_!,(H_b)_!)} LA\times LS^1 \simeq L(A \times S^1).$$
By \Cref{Loop_Indicator}, applied to the identity $\gamma = \id_{S^1} \in LS^1$, the map $\TrMod_{\cC}(b_!,(H_b)_!))\colon \pt \to LS^1$ is exactly the map $\pt\xrightarrow{[\id_{S^1}]} LS^1$. This finishes the proof.
\end{proof}

In the following, let $d\colon L^2A \to LA$ denote the map induced by the diagonal $S^1 \to S^1 \times S^1$, and let $\can_A\colon S^1 \to \Map(LA,A)$ denote the mate of the evaluation map $\ev_A\colon LA \times S^1 \to A$. It is a canonical self-homotopy of the map $e\colon LA \to A$ given by evaluation at $* \in S^1$.

\begin{cor}
\label{DiagonalViaHH}
    Applying $\TrMod_{\cC}$ to the diagram
    \[\begin{tikzcd}
    	{\cC[LA]} && {\cC[A]} \\
    	{\cC[LA]} && {\cC[A].}
    	\arrow["{e_!}", from=1-1, to=1-3]
    	\arrow["{e_!}"', from=2-1, to=2-3]
    	\arrow[Rightarrow, no head, from=1-3, to=2-3]
    	\arrow[Rightarrow, no head, from=1-1, to=2-1]
    	\arrow[shorten <=6pt, shorten >=6pt,"{\can_{A,!}}"{description}, Rightarrow, from=2-1, to=1-3]
    \end{tikzcd}\] 
    gives the map $\one[L^2A] \oto{d} \one[LA]$.
\end{cor}
\begin{proof}
By \Cref{HomotopyviaTHH}, the map $\TrMod_{\cC}(e_!,\can_{A,!})\colon \one[L^2A] \to \one[LA]$ is induced from the composite 
$$L^2A = LLA\to L(LA\times S^1)\oto{L(\ev_A)} LA,$$ 
where the first map sends a double loop $\gamma\colon S^1 \times S^1 \to A$ in $A$ to the loop $(s\mapsto (\gamma(s,-),s))$ in $LA \times S^1$. In particular, the composite is given by $\gamma \mapsto (s\mapsto \gamma(s,s))$, which is precisely the map $d\colon L^2A \to LA$ as claimed. 
\end{proof}

All in all, we obtain a coherent version of the loop indicator lemma, \Cref{Loop_Indicator}.

\begin{cor}[Loop indicator lemma, coherent version]
\label{Loop_Indicator_Coherent}
For a space $A$, the equivalence
\[
    \TrMod_{\cC}(a_!,\gamma_!) \simeq \gamma \qin \Map_{\cC}(\unit,\unit[LA])
\]
from \Cref{Loop_Indicator} may be chosen naturally in $\gamma \in LA$.
\end{cor}
\begin{proof}
Consider the map $(e_!,\can_{A,!})\colon \cC[LA] \to \cC[A]$ in $\Mod_{\cC}^{\trl}$ considered in \Cref{DiagonalViaHH}. It corresponds to an $LA$-indexed family of maps $\cC \to \cC[A]$ given at $\gamma \in LA$ by precomposing with $\gamma_!\colon \cC \to \cC[LA]$. We claim that this is the $LA$-indexed diagram of maps $(a_!,\gamma_!)\colon \cC \to \cC[A]$ whose $\cC$-linear trace we want to compute:
\[
\begin{tikzcd}
	\cC & {\cC[LA]} & {\cC[A]} \\
	\cC & {\cC[LA]} & {\cC[A]}
	\arrow["{\gamma_!}", from=1-1, to=1-2]
	\arrow["{e_!}", from=1-2, to=1-3]
	\arrow["{\gamma_!}"', from=2-1, to=2-2]
	\arrow["{e_!}"', from=2-2, to=2-3]
	\arrow[Rightarrow, no head, from=1-1, to=2-1]
	\arrow[Rightarrow, no head, from=1-2, to=2-2]
	\arrow[Rightarrow, no head, from=1-3, to=2-3]
	\arrow["{\can_{A,!}}"{description}, Rightarrow, from=2-2, to=1-3]
	\arrow[shorten <=4pt, shorten >=4pt, Rightarrow, no head, from=2-1, to=1-2]
\end{tikzcd}
\qquad = \qquad
\begin{tikzcd}
	\cC && {\cC[A]} \\
	\cC && {\cC[A].}
	\arrow[Rightarrow, no head, from=1-1, to=2-1]
	\arrow[Rightarrow, no head, from=1-3, to=2-3]
	\arrow["{a_!}", from=1-1, to=1-3]
	\arrow["{a_!}"', from=2-1, to=2-3]
	\arrow[shorten <=6pt, shorten >=6pt,"\gamma_!"{description}, Rightarrow, from=2-1, to=1-3]
\end{tikzcd}
\]
Indeed, one observes that evaluating the canonical homotopy $\can_A\colon S^1 \to \Map(LA,A)$ at $\gamma \in LA$ gives the loop $\gamma\colon S^1 \to A$, functorially in $\gamma$. By \Cref{lem:THHOfFamiliesOfDiagrams}, applying the $\cC$-linear trace functor $\TrMod_{\Cc}$ to this family of maps in $\Mod_{\cC}^{\trl}$ gives the map $LA \to \Map_{\cC}(\unit,\unit[LA])$ which is the mate of the composite
\begin{align*}
    \one[LA] \xrightarrow{c_{LA}} \one[L^2A] \xrightarrow{\TrMod_{\cC}(e_!,\can_{A,!})} \one[LA].
\end{align*}
By \Cref{DiagonalViaHH} the second map is the map $d\colon \one[L^2A] \to \one[LA]$, and thus the composite is the identity $\one[LA] \to \one[LA]$. The mate of this is the map $LA \to \Map_{\cC}(\unit,\unit[LA])$ which sends $\gamma$ to $\unit \xrightarrow{\gamma} \unit[LA]$, as desired.
\end{proof}

\subsubsection{Coherent version of the character formula}
We are now in a position to prove that the formula for generalized characters given in \Cref{prop:Character_Formula} is natural in the loop $\gamma \in LA$.

\begin{prop}[Character formula, coherent version]
\label{prop:Character_Formula_Coherent}
Let $X \in \Cc^A$ and $f\colon X \to X \tensor A^*Y$ be as in \Cref{prop:Character_Formula}. Then the equivalence
    \[
        \chi_f(\gamma) \simeq \gentr{X_a \oto{\gamma} 
            X_a \oto{f_a} 
            X_a \otimes Y}{X_a} \qin \Map_{\cC}(\unit,Y)
    \]
    of \Cref{prop:Character_Formula} is natural in $\gamma \in LA$.
\end{prop}
\begin{proof}
We will go through the proof of \Cref{prop:Character_Formula} and check that everything is natural in $\gamma$. For every $\gamma \in LA$, we consider the following composition of morphisms in $\Mod_{\cC}^{\trl}$:
\[\begin{tikzcd}
\cC	&&{\cC[A]} && \cC \\
\cC	&&{\cC[A]} && \cC
	\arrow[Rightarrow, no head, from=1-3, to=2-3]
	\arrow[Rightarrow, no head, from=1-1, to=2-1]
	\arrow["a_!", from=1-1, to=1-3]
	\arrow["a_!"', from=2-1, to=2-3]
	\arrow["Y", from=1-5, to=2-5]
	\arrow["X", from=1-3, to=1-5]
	\arrow["X"', from=2-3, to=2-5]
	\arrow["f"{description},shorten <=7pt, shorten >=7pt,  Rightarrow, from=2-3, to=1-5]
	\arrow["\gamma_!"{description},shorten <=7pt, shorten >=7pt,  Rightarrow, from=2-1, to=1-3]
\end{tikzcd}
\qquad \simeq \qquad 
\begin{tikzcd}
\cC	 && \cC \\
\cC	 && \cC.
	\arrow[Rightarrow, no head, from=1-1, to=2-1]
	\arrow["X_a", from=1-1, to=1-3]
	\arrow["X_a"', from=2-1, to=2-3]
	\arrow["Y", from=1-3, to=2-3]
	\arrow["f_a\circ \gamma"{description},shorten <=7pt, shorten >=7pt,  Rightarrow, from=2-1, to=1-3]
\end{tikzcd}
\]
Observe that this identification is natural in $\gamma \in LA$. By \Cref{THH_Trace_Coherent}, the $\Cc$-linear trace of the right-hand side can naturally be identified with the generalized trace of the map $f_a \circ \gamma\colon X_a \to X_a \otimes Y$. Furthermore, by \Cref{Loop_Indicator_Coherent} the $\Cc$-linear trace of $(a_!,\gamma_!)\colon \Cc \to \Cc[A]$ can naturally be identified with the morphism $\gamma\colon \unit \to \unit[LA]$. Since we have $\chi_f = \TrMod_{\Cc}(X,f)$ by definition, the above diagram thus gives a natural equivalence
\[
    \chi_f(\gamma) = \chi_f \circ \gamma \simeq \gentr{f_a \circ \gamma}{X_a} \qin \Map_{\Cc}(\unit,Y),
\]
finishing the proof.
\end{proof}

Given a generalized endomorphism $f \colon X \to X \otimes A^*Y$ in $\cC^A$, its generalized trace in $\Cc^A$ is a map $A^*\unit \to A^*Y$, which by adjunction corresponds to a map
\[
    \gentr{f}{X}\colon \unit[A] \simeq A_!A^*\unit \to Y \qin \Cc.
\]
It follows from the coherent character formula of \Cref{prop:Character_Formula_Coherent} that the generalized trace $\gentr{f}{X}$ of $f$ can be obtained from the the character $\chi_f\colon \unit[LA] \to Y$ by precomposition with the inclusion of constant loops $c\colon \unit[A] \to \unit[LA]$.

\begin{cor}
\label{Char_Trace}
    Let $A$ be a space, $X \in \cC^A$ a dualizable object and $f\colon X \to X \tensor A^*Y$ in $\cC^A$ a generalized morphism. Then the following diagram commutes:
    \[\begin{tikzcd}
    	{\one[LA]}  \\
    	{\one[A]} && Y
    	\arrow["c", from=2-1, to=1-1]
    	\arrow["{\chi_f}",pos=0.4, from=1-1, to=2-3]
    	\arrow["{\gentr{f}{X}}"', from=2-1, to=2-3]
    \end{tikzcd}\]
\end{cor}
\begin{proof}
For $a \in A$, let $c_a \in LA$ denote the constant loop on $a$. By \Cref{prop:Character_Formula_Coherent} and \Cref{THH_Trace_Coherent}, there are natural equivalences
\[
    \chi_f(c_a) \simeq \gentr{f_a\colon X_a \to X_a \otimes Y}{X_a} \simeq \gentr{f}{X}_a \qin \Map_{\Cc}(\unit,Y).
\]
This finishes the proof.
\end{proof}

\subsection{Restriction and induction of characters}
\label{subsec:RestrictionInductionCharacters}
In this subsection, we consider the behavior of the character maps under restriction and induction along maps of spaces $g\colon A \to B$.

\begin{defn}
\label{def:RestrictionFamily}
Let $X \in \cC^B$ be dualizable and let $f\colon X \to X \tensor B^*Y$ be a morphism in $\cC^B$. We define the \textit{restriction} of $f$ along $g$ as the the composite
\[
    \Res_g(f)\colon g^*X \xrightarrow{g^*(f)} g^*(X \tensor B^*Y) \simeq g^*X \tensor A^*Y \qin \cC^A.
\]
\end{defn}
Note that $g^*X \in \cC^A$ is still pointwise dualizable, so $\Res_g(f)$ corresponds to an $A$-family of generalized endomorphisms
\[
    \Res_g(f)_a = f_{g(a)} \colon 
    X_{g(a)} \to   
    X_{g(a)}\otimes Y.
\]

\begin{lem}[Restricted character formula]
\label{Res_Char}
In the setting of \Cref{def:RestrictionFamily}, the character $\chi_{\Res_g(f)}\colon \unit[LA] \to Y$ of the restriction $\Res_g(f)\colon g^*X \to g^*X \tensor A^*Y$ is given by the composite
    \[
        \one[LA] \oto{\TrMod_{\cC}(g_!)} 
        \one[LB] \oto{\:\chi_f\:} 
        Y.
    \]
\end{lem}
\begin{proof}
Regarding $f$ as a transformation of $\cC$-linear functors $\cC[B] \to \cC$ and $\Res_g(f)$ as a transformation of $\cC$-linear functors $\cC[A] \to \cC$, \Cref{cor:RestrictionInduction} tells us that $\Res_g(f)$ is obtained from $f$ by whiskering with the functor $g_!\colon \cC[A] \to \cC[B]$. In particular, we get the following equivalence of maps $\cC[A] \to (\cC,Y)$ in $\Mod^{\trl}_{\cC}$:
    \[
    \begin{tikzcd}
    	{\cC[B]} && \cC \\
    	{\cC[B]} && \cC,
    	\arrow[Rightarrow, no head, from=1-1, to=2-1]
    	\arrow["{g^*X}", from=1-1, to=1-3]
    	\arrow["{g^*X}"', from=2-1, to=2-3]
    	\arrow["Y", from=1-3, to=2-3]
    	\arrow["{\Res_g(f)}"{description}, shorten <=7pt, shorten >=7pt, Rightarrow, from=2-1, to=1-3]
    \end{tikzcd}
    \qquad \simeq \qquad
    \begin{tikzcd}
    	{\cC[A]} && {\cC[B]} && \cC \\
    	{\cC[A]} && {\cC[B]} && \cC.
    	\arrow[Rightarrow, no head, from=1-1, to=2-1]
    	\arrow[Rightarrow, no head, shorten <=7pt, shorten >=7pt, from=2-1, to=1-3]
    	\arrow["{g_!}", from=1-1, to=1-3]
    	\arrow["{g_!}"', from=2-1, to=2-3]
    	\arrow[Rightarrow, no head, from=1-3, to=2-3]
    	\arrow["{Y}", from=1-5, to=2-5]
    	\arrow["{X}", from=1-3, to=1-5]
    	\arrow["{X}"', from=2-3, to=2-5]
    	\arrow["f"{description}, shorten <=7pt, shorten >=7pt, Rightarrow, from=2-3, to=1-5]
    \end{tikzcd}\]
    We thus obtain the claim by applying the functor $\TrMod_{\cC}\colon \Mod^{\trl}_{\cC} \to \cC$.
\end{proof}

\begin{defn}
\label{def:InductionFamily}
Let $X \in \cC^A$ be dualizable and let $f\colon X \to X \tensor A^*Y$ be a morphism in $\cC^A$. Assume that the map $g\colon A \to B$ is $\cC$-adjointable. We define the \textit{induction} of $f$ along $g$ as the the composite
\[
    \Ind_g(f)\colon g_!X \xrightarrow{g_!(f)} g_!(X \tensor A^*Y) \simeq g_!(X \tensor g^*B^*Y) \simeq g_!X \tensor B^*Y \qin \cC^B.
\]
\end{defn}
Note that the object $g_!X \in \cC^B$ is again dualizable by \Cref{prop:ColimIsDbl}, and thus $\Ind_g(f)$ corresponds to a $B$-family of generalized endomorphisms
\[
    \Ind_g(f)_b = \colim_{a \in A_b} f_a \colon 
    (\colim_{a \in A_b} X_a) \too   
    (\colim_{a \in A_b} X_a)\otimes Y,
\]
where $A_b$ is the fiber of $A\oto{g} B$ over $b\in B$. 

\begin{war}
Without the assumption that $g\colon A \to B$ is $\cC$-adjointable, the object $g_!X\in \cC^B$ will not in general be dualizable and the character of $\Ind_g(f)$ is not defined.
\end{war}

\begin{thm}[Induced character formula]
\label{Ind_Char}
In the setting of \Cref{def:InductionFamily}, the character $\chi_{\Ind_g(f)}\colon \unit[LB] \to Y$ of the induction $\Ind_g(f)\colon g_!(X) \to g_!(X) \tensor B^*Y$ is given by the composition
    \[
        \one[LB] \oto{\TrMod_{\cC}(g^*)} 
        \one[LA] \oto{\quad\chi_f\quad} 
        Y.
    \]
\end{thm}
\begin{proof}
The proof is essentially the same as that of \Cref{Res_Char}. Regarding $f$ as a transformation of $\cC$-linear functors $\cC[A] \to \cC$ and $\Ind_g(f)$ as a transformation of $\cC$-linear functors $\cC[B] \to \cC$, \Cref{cor:RestrictionInduction} tells us that $\Ind_g(f)$ is obtained from $f$ by whiskering with the functor $g^*\colon \cC[B] \to \cC[A]$. In particular, we get the following equivalence of maps $\cC[B] \to (\cC,Y)$ in $\Mod^{\trl}_{\cC}$:
    \[
    \begin{tikzcd}
    	{\cC[B]} && \cC \\
    	{\cC[B]} && \cC.
    	\arrow[Rightarrow, no head, from=1-1, to=2-1]
    	\arrow["{g_!X}", from=1-1, to=1-3]
    	\arrow["{g_!X}"', from=2-1, to=2-3]
    	\arrow["Y", from=1-3, to=2-3]
    	\arrow["{\Ind_g(f)}"{description}, shorten <=7pt, shorten >=7pt, Rightarrow, from=2-1, to=1-3]
    \end{tikzcd}
    \qquad \simeq \qquad
    \begin{tikzcd}
    	{\cC[B]} && {\cC[A]} && \cC \\
    	{\cC[B]} && {\cC[A]} && \cC.
    	\arrow[Rightarrow, no head, from=1-1, to=2-1]
    	\arrow[Rightarrow, no head, shorten <=7pt, shorten >=7pt, from=2-1, to=1-3]
    	\arrow["{g^*}", from=1-1, to=1-3]
    	\arrow["{g^*}"', from=2-1, to=2-3]
    	\arrow[Rightarrow, no head, from=1-3, to=2-3]
    	\arrow["{Y}", from=1-5, to=2-5]
    	\arrow["{X}", from=1-3, to=1-5]
    	\arrow["{X}"', from=2-3, to=2-5]
    	\arrow["f"{description}, shorten <=7pt, shorten >=7pt, Rightarrow, from=2-3, to=1-5]
    \end{tikzcd}\]
    We thus obtain the claim by applying the functor $\TrMod_{\cC}\colon \Mod^{\trl}_{\cC} \to \cC$.
\end{proof}

As a special case of \Cref{Ind_Char}, we obtain main result of this section: a formula for the trace of a colimit of endomorphisms.

\begin{cor}[Traces and dimensions of colimits]
\label{cor:Trace_Dimension_Colimit}
Let $A$ be a $\cC$-adjointable space, and let $X \in \cC^A$ be a dualizable object. For an endomorphism $f\colon X \to X$ in $\cC^A$, the trace of the induced endomorphism $A_!(f)\colon A_!(X) \to A_!(X)$ on colimits over $A$ is given by the composite
\[
    \tr(A_!(f))\colon   
    \unit \xrightarrow{\TrMod_{\cC}(A^*)} 
    \unit[LA] \xrightarrow{\quad\chi_f\quad} \unit.
\]
In particular, the dimension $\dim(A_!(X))$ of the colimit of $X$ is given by the composite
\[
    \dim(A_!(X))\colon 
    \one \xrightarrow{\TrMod_{\cC}(A^*)} 
    \one[LA] \xrightarrow{\:\:\chi_X\:\:} 
    \unit.
\]
\end{cor}

\subsection{Additivity of generalized traces}
\label{subsec:Additivity_Traces}
Let $V_1$, $V_2$ and $V_3$ be finite-dimensional vector spaces over a field $k$, fitting in a short exact sequence $0 \to V_1 \xrightarrow{\varphi} V_2 \xrightarrow{\psi} V_3 \to 0$. Assume that $V_i$ comes equipped with an endomorphism $f_i\colon V_i \to V_i$ for $i=1,2,3$ and that these fit into a commutative diagram
\[
\begin{tikzcd}
    V_1 \dar[swap]{f_1} \rar{\varphi} & V_2 \dar{f_2} \rar{\psi} & V_3 \dar{f_3} \\
    V_1 \rar{\varphi} & V_2 \rar{\psi} & V_3.
\end{tikzcd}
\]
In this case, the trace of $f_2$ may be expressed in terms of the traces of $f_1$ and $f_3$ as follows:
\[
   \gentr{f_2}{V_2} = \gentr{f_1}{V_1} + \gentr{f_3}{V_3}
\]
To see this, one may choose an identification $V_2 \simeq V_1 \oplus V_3$ and write the map $f_2\colon V_2 \to V_2$ as a block matrix $\begin{pmatrix} f_1 & g \\ 0 & f_3\end{pmatrix}\colon V_1 \oplus V_3 \to V_1 \oplus V_3$ for some map $g\colon V_3 \to V_1$.

An additivity result for traces in more general general contexts, such as tensor-triangulated categories, has proved subtle. In \cite{ferrand2005}, Daniel Ferrand provided an example of an endomorphism in the derived category of an exact triangle of perfect complexes for which the trace of the middle map is not equal to the sum of the traces of the two other ones. The issue can be phrased as a coherence problem: when regarding the short exact sequence as a cofiber sequence in the derived $\infty$-category, the chain homotopies exhibiting the diagram as commutative should be suitably compatible with the null-homotopies witnessing the sequences as cofiber sequences. In particular, the bare triangulated category contains too little information to allow for an additivity result for traces.

Since then, additivity of traces has been proved in more structured settings. In \cite{may2001additivity}, May proved additivity of traces for tensor-triangulated categories coming from a stable closed symmetric monoidal model category. In \cite{GPS2014AdditivityTraces}, Groth, Ponto and Shulman translated the statement and proof of May's result into the language of stable derivators. In fact, as Grothendieck explained in a letter to Thomason\footnote{See \url{http://matematicas.unex.es/~navarro/res/thomason.pdf}.}, the failure of additivity of traces in the bare triangulated setting was his main motivation for the development of the formalism of derivators, where mapping cones would be functorial. Finally, in \cite{PSLinearity}, Ponto and Shulman gave a proof of the additivity of traces which is much closer to our approach. 

The goal of this section is to give a new proof of additivity of generalized traces in a stable presentably symmetric monoidal $\infty$-category $\cC$. Our method is fundamentally different from those of \cite{may2001additivity} and \cite{GPS2014AdditivityTraces}, in which the required homotopy between trace maps was produced directly via a large diagram chase. Instead, we will deduce the result from a categorification of the problem, similarly to \cite{PSLinearity}: the $\cC$-linear trace functor $\TrMod_{\Cc}\colon \Mod_{\cC}^{\trl} \to \cC$ is ``additive'' in a sense analogous to the ``additive invariants'' of \cite{BGT_KTheory}. The advantage of this approach is that the additivity of the functor $\TrMod_{\Cc}$ is a \textit{property} (sending localization sequences to cofiber sequences, cf.\ \Cref{thm:localization}) rather than \textit{structure} (a homotopy between the various trace maps). The approach in \cite{PSLinearity} is to use a variant of \Cref{cor:Trace_Dimension_Colimit} with $A$ being a $1$-category in place of a space. 

For the convenience of the reader, we will recall from \cite{HSS} the relevant definitions and statements about the additivity of the categorical trace. Throughout this section, $\cC$ is a fixed \textit{stable} presentably symmetric monoidal $\infty$-category.

\begin{defn}[{\cite[Definition~3.2]{HSS}}]
\label{def:localization_sequence}
	A sequence $\cD \xrightarrow\iota \cE \xrightarrow\pi \cF$ in $\Mod_{\cC}$ is called a \emph{localization sequence} if the following conditions hold:
	\begin{itemize}
		\item $\iota$ and $\pi$ have $\cC$-linear right adjoints $\iota^r$ and $\pi^r$;
		\item the composite $\pi\iota\colon \cD \to \cF$ is the zero functor;
		\item the unit $\eta\colon\id_\cD \to \iota^r \iota$ and the counit $\epsilon\colon \pi\pi^r\to\id_\cF$ are equivalences;
		\item the sequence $\iota\iota^r \to \id_\cE \to \pi^r\pi$, with its unique nullhomotopy, is a cofiber sequence in $\Fun_{\cC}(\cE,\cE)$.
	\end{itemize}
	A sequence $(\cD,F) \xrightarrow{(\iota,\alpha)} (\cE,G) \xrightarrow{(\pi,\beta)} (\cF,H)$ in $\Mod_{\cC}^{\trl}$ is called a \emph{localization sequence} if $\cD\stackrel\iota\to \cE\stackrel\pi\to \cF$ is a localization sequence and moreover the morphisms $(\iota,\alpha)$ and $(\pi,\beta)$ are right adjointable in $\Mod_\cC$.
\end{defn}

The assumption that the composite $\pi \iota$ in a localization sequence is the zero functor implies that the sequence $\TrMod_{\cC}(\cD,F) \to \TrMod_{\cC}(\cE,G) \to \TrMod_{\cC}(\cF,H)$ in $\cC$ obtained by applying $\TrMod_{\cC}$ admits a canonical null-homotopy.

\begin{theorem}[{\cite[Theorem~3.4]{HSS}}]
\label{thm:localization}
Let 
\[
    (\cD,F) \xrightarrow{(\iota,\alpha)} (\cE,G) \xrightarrow{(\pi,\beta)} (\cF,H)
\] 
be a localization sequence in $\Mod_{\cC}^{\trl}$. Then the canonical null-homotopy exhibits the sequence
\[
    \TrMod_{\cC}(\cD,F) \to \TrMod_{\cC}(\cE,G) \to \TrMod_{\cC}(\cF,H)
\]
as a cofiber sequence in $\cC$.
\end{theorem}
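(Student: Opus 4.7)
The plan is to deduce the theorem from an exactness property of a single trace functor fixed at $\cE$, together with cyclic invariance used to re-index the traces at $\cD$ and $\cF$ in terms of $\cE$. Concretely, I want to start from the cofiber sequence $\iota\iota^r \to \id_\cE \to \pi^r\pi$ in $\Fun_\cC(\cE,\cE)$ given in \Cref{def:localization_sequence}, post-compose with $G$ to obtain a cofiber sequence
\[
\iota\iota^r G \to G \to \pi^r\pi G
\]
in the stable $\infty$-category $\Fun_\cC(\cE,\cE)$, and then apply a trace construction that turns this into the asserted cofiber sequence in $\cC$.

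The key auxiliary statement is that for any dualizable $\cE\in\Mod_\cC$, the functor
\[
\tr_\cE\colon \Fun_\cC(\cE,\cE) \to \cC, \qquad F \mapsto \TrMod_\cC(\cE,F),
\]
preserves colimits, and in particular is exact. This follows directly from the definition as the composite
\[
\Fun_\cC(\cE,\cE) \simeq \cE\otimes_\cC \cE^\vee \xrightarrow{\ev_\cE} \cC,
\]
all factors being $\cC$-linear functors between presentable stable $\infty$-categories. Applying $\tr_\cE$ to the displayed cofiber sequence gives a cofiber sequence $\TrMod_\cC(\cE,\iota\iota^r G) \to \TrMod_\cC(\cE,G) \to \TrMod_\cC(\cE,\pi^r\pi G)$ in $\cC$.

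The next step is to identify the outer terms with $\TrMod_\cC(\cD,F)$ and $\TrMod_\cC(\cF,H)$. Here one uses the right adjointability of $(\iota,\alpha)$ and $(\pi,\beta)$: the mate of $\alpha\colon \iota F\Rightarrow G\iota$ under the adjunction $\iota\dashv \iota^r$ is an equivalence $F\iota^r\simeq \iota^r G$, hence $\iota\iota^r G\simeq \iota F\iota^r$, and by cyclic invariance of the $\cC$-linear trace (applied to the pair of composable $\cC$-linear functors $\iota F$ and $\iota^r$, or equivalently to the Morita equivalence $\iota\dashv\iota^r$ between $\cD$ and its essential image in $\cE$, using that the unit $\id_\cD\simeq \iota^r\iota$ is an equivalence) one gets $\TrMod_\cC(\cE,\iota F\iota^r)\simeq \TrMod_\cC(\cD, \iota^r\iota F)\simeq \TrMod_\cC(\cD,F)$. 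The analogous manipulation using $\beta$ and $\pi\dashv\pi^r$ (together with $\pi\pi^r\simeq \id_\cF$) gives $\TrMod_\cC(\cE,\pi^r\pi G)\simeq \TrMod_\cC(\cF,H)$. Moreover, unfolding the diagram \eqref{eq:FunctorialityTHH} shows that the induced maps in the cofiber sequence agree with $\TrMod_\cC(\iota,\alpha)$ and $\TrMod_\cC(\pi,\beta)$, and that the canonical null-homotopy coming from $\pi\iota=0$ is matched with the one coming from the cofiber sequence $\iota\iota^r G\to G\to \pi^r\pi G$.

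The main obstacle is the bookkeeping in this last identification: one must verify coherently that the equivalences produced by cyclic invariance intertwine the two natural null-homotopies, so that the cofiber sequence produced by $\tr_\cE$ is genuinely the one claimed in the theorem rather than merely an abstractly equivalent cofiber sequence. This amounts to checking that the cyclic-invariance equivalences $\TrMod_\cC(\cE,\iota F\iota^r)\simeq \TrMod_\cC(\cD,F)$ and $\TrMod_\cC(\cE,\pi^r H\pi)\simeq \TrMod_\cC(\cF,H)$ are themselves natural with respect to the morphisms $(\iota,\alpha)$ and $(\pi,\beta)$, a fact one can extract from the $(\infty,2)$-categorical functoriality of the trace developed in \Cref{subsec:HigherCategoricalTraces} together with the duality data made explicit in the proof of \cite[Lemma~2.4]{HSS}.
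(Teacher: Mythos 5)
The paper does not prove this theorem; it is stated here with a direct citation to \cite[Theorem~3.4]{HSS}, so the relevant comparison is with the proof given there. Your overall strategy agrees with it: one applies the colimit-preserving $\cC$-linear functor $\TrMod_\cC(\cE,-)\colon\Fun_\cC(\cE,\cE)\to\cC$ (identified, up to a twist, with the composite $\Fun_\cC(\cE,\cE)\simeq \cE^\vee\otimes_\cC\cE\xrightarrow{\ev}\cC$) to the cofiber sequence $\iota\iota^r G\to G\to\pi^r\pi G$, and then uses the right-adjointability hypotheses together with cyclic invariance of the $\cC$-linear trace to identify the outer terms with $\TrMod_\cC(\cD,F)$ and $\TrMod_\cC(\cF,H)$. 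The exactness claim for $\TrMod_\cC(\cE,-)$ is correct and is the right starting point.

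However, the gap you flag in your last paragraph is genuine, and it is where essentially all of the content of the HSS proof lies. Identifying the three objects up to abstract isomorphism does not produce the theorem: one must also show that the two maps in the resulting cofiber sequence are precisely $\TrMod_\cC(\iota,\alpha)$ and $\TrMod_\cC(\pi,\beta)$, and that the null-homotopy coming from the cofiber sequence $\iota\iota^r G\to G\to\pi^r\pi G$ matches the canonical one induced by $\pi\iota\simeq 0$. Concretely this requires unfolding the explicit pasting diagram \eqref{eq:FunctorialityTHH} for $\TrMod_\cC(\iota,\alpha)$ (which involves the unit and counit of $\iota\dashv\iota^r$ and the dual $(\iota^r)^\vee$) and checking that it decomposes as the cyclic-invariance equivalence $\TrMod_\cC(\cD,F)\iso\TrMod_\cC(\cE,\iota F\iota^r)\simeq\TrMod_\cC(\cE,\iota\iota^r G)$ followed by the map induced by the counit $\iota\iota^r\Rightarrow\id_\cE$, then doing the dual check for $\pi$ and finally comparing null-homotopies. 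This needs a \emph{natural}, not merely pointwise, form of cyclic invariance, which you assert ``can be extracted'' but do not extract; this extraction is precisely the diagram chase carried out in HSS using the duality data made explicit in their Lemma~2.4. As written, your argument shows the three terms agree, not that the two cofiber sequences agree, so the proof is incomplete at the decisive step.
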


One might refer to this result as saying that the functor $\TrMod_{\cC}$ is `localizing', cf.\ \cite[Definition 8.1]{BGT_KTheory}. We will show next how this implies that $\TrMod_{\cC}$ sends bifiber sequences between \textit{morphisms} in $\Mod_{\cC}^{\trl}$ to sums in mapping spaces in $\cC$.

\begin{defn}
\label{def:Bifiber_Sequence_Mod_Trl}
Let $(\cD,F)$ and $(\cE,G)$ be in $\Mod_{\cC}^{\trl}$ and consider morphisms $(\iota_i,\alpha_i)\colon (\cD,F) \to (\cE,G)$ for $i = 1,2,3$, that is, $\iota_i\colon \cD \to \cE$ is an internal left adjoint in $\Mod_{\cC}$ and $\alpha_i\colon \iota_i\circ F\to G\circ \iota_i$ is a morphism in $\Fun_\cC(\cD,\cE)$. We define a \textit{bifiber sequence}
\[
    (\iota_1,\alpha_1) \xrightarrow{\varphi} (\iota_2,\alpha_2) \xrightarrow{\psi} (\iota_3,\alpha_3)
\]
to be a bifiber sequence $\iota_1\xrightarrow{\varphi} \iota_2\xrightarrow{\psi} \iota_3$ in $\Fun_\cC(\cD,\cE)$ which fits into a bifiber sequence in $\Fun_{\cC}(\cD,\cE)^{[1]}$ of the form
\[
\begin{tikzcd}
	{\iota_1\circ F} \dar{\alpha_1} \rar{\varphi\circ F} & {\iota_2\circ F} \rar{\psi\circ F} \dar{\alpha_2} & {\iota_3\circ F} \dar{\alpha_3}\\
	{G\circ \iota_1} \rar{G\circ \varphi} & {G\circ \iota_2} \rar{G\circ \psi} & {G\circ \iota_3},
\end{tikzcd}
\]
where the top (resp.\ bottom) bifiber sequence is obtained by precomposing with $F$ (resp.\ postcomposing with $G$)
\end{defn}

\begin{rmk}
We emphasize at this point that the data of a bifiber sequence $X \xrightarrow{\varphi} Y \xrightarrow{\psi} Z$ in a stable $\infty$-category $\cE$ is that of a bicartesian square in $\cE$ of the form
\[
\begin{tikzcd}
    X \rar{\varphi} \dar & Y \dar{\psi} \\
    0 \rar & Z.
\end{tikzcd}
\]
In particular, a bifiber sequence in $\cE^{[1]}$ contains the data of a commutative cube in $\cE$. We will often leave the null-homotopy implicit in the notation to enhance readability.
\end{rmk}

\begin{corollary}
\label{cor:Additivity_THH}
Let 
\[
    (\iota_1,\alpha_1) \xrightarrow{\:\varphi\:} (\iota_2,\alpha_2) \xrightarrow{\:\psi\:} (\iota_3,\alpha_3)
\] 
be a bifiber sequence of morphisms $(\cD,F) \to (\cE,G)$ in $\Mod_{\cC}^{\trl}$. There is an equivalence
\[
    \TrMod_{\cC}(\iota_2, \alpha_2) \simeq \TrMod_{\cC}(\iota_1, \alpha_1) + \TrMod_{\cC}(\iota_3, \alpha_3)
\]
in the mapping space $\Map_{\cC}(\TrMod_{\cC}(\cD, F),\TrMod_{\cC}(\cE, G)))$.
\end{corollary}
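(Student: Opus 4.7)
The plan is to factor each of the three morphisms through a single morphism to the arrow category $\cE^{[1]} := \Fun(\Delta^1, \cE)$, and then reduce the desired additivity to a relation between the $\TrMod_\cC$-images of the three canonical projections $\cE^{[1]} \to \cE$, which I shall extract from a split localization sequence via \Cref{thm:localization}. First I would equip $\cE^{[1]}$ with the pointwise endofunctor $G^{[1]}$, so that $(\cE^{[1]}, G^{[1]}) \in \Mod_\cC^{\trl}$. Since a bifiber sequence in the stable $\infty$-category $\Fun_\cC(\cD, \cE)$ is determined by any one of its arrows, the given data assembles into a single morphism $(\tilde\iota, \tilde\alpha)\colon (\cD, F) \to (\cE^{[1]}, G^{[1]})$ in $\Mod_\cC^{\trl}$ sending $d$ to $(\iota_1(d) \xrightarrow{\varphi_d} \iota_2(d))$, with 2-cell built from $\alpha_1, \alpha_2$; its $\cC$-linear right adjoint is the pullback $(A \to B) \mapsto \iota_1^r A \times_{\iota_1^r B} \iota_2^r B$. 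The three projections $\pi_1 := \mathrm{src},\; \pi_2 := \mathrm{tgt},\; \pi_3 := \mathrm{cof}\colon \cE^{[1]} \to \cE$ are internal left adjoints carrying canonical 2-cells to $G$ (identities for $\pi_1, \pi_2$ and the exactness isomorphism for $\pi_3$), and by construction $\pi_i \circ (\tilde\iota, \tilde\alpha) \simeq (\iota_i, \alpha_i)$; writing $T_{\pi_i}, T_{\tilde\iota}$ for the $\TrMod_\cC$-images, the corollary therefore reduces, via post-composition with $T_{\tilde\iota}$, to the identity $T_{\pi_2} \simeq T_{\pi_1} + T_{\pi_3}$ in $\Map_\cC(\TrMod_\cC(\cE^{[1]}, G^{[1]}), \TrMod_\cC(\cE, G))$.

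For this, I would apply \Cref{thm:localization} to the localization sequence $(\cE, G) \to (\cE^{[1]}, G^{[1]}) \to (\cE, G)$ given by the inclusion $i_3 := (- = -)$ and the projection $\pi_3 = \mathrm{cof}$, whose validity is witnessed by the pointwise tautological bifiber sequence of arrows $(X = X) \to (A \to B) \to (0 \to B/A)$. This yields a cofiber sequence
\[
\TrMod_\cC(\cE, G) \xrightarrow{T_{i_3}} \TrMod_\cC(\cE^{[1]}, G^{[1]}) \xrightarrow{T_{\pi_3}} \TrMod_\cC(\cE, G)
\]
in the stable $\infty$-category $\cC$. Functoriality of $\TrMod_\cC$ applied to the equivalence $\pi_3 \circ (0 \to -) \simeq \id_\cE$ shows that $T_{i_1} := \TrMod_\cC((0 \to -), \id)$ is a section of $T_{\pi_3}$, so the cofiber sequence splits; the identities $\pi_1 \circ (-=-) = \id_\cE$ and $\pi_1 \circ (0 \to -) = 0$ identify $T_{\pi_1}$ as the complementary retraction, giving an equivalence $(T_{\pi_1}, T_{\pi_3})\colon \TrMod_\cC(\cE^{[1]}, G^{[1]}) \xrightarrow{\simeq} \TrMod_\cC(\cE, G)^{\oplus 2}$ with inverse $(a, c) \mapsto T_{i_3}(a) + T_{i_1}(c)$. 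Finally, the identities $\pi_2 \circ (-=-) = \id_\cE = \pi_2 \circ (0 \to -)$ yield $T_{\pi_2} \circ T_{i_3} \simeq \id \simeq T_{\pi_2} \circ T_{i_1}$, so under this decomposition $T_{\pi_2}$ is the summation map, i.e.\ $T_{\pi_2} \simeq T_{\pi_1} + T_{\pi_3}$, completing the reduction.

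The hard part will be verifying carefully that $(\tilde\iota, \tilde\alpha)$ is indeed a morphism in $\Mod_\cC^{\trl}$: both the internal left adjoint property of $\tilde\iota$ (which requires $\cC$-linearity of the fiber-product right adjoint, hence uses that tensoring in $\cC$ preserves finite limits in the stable setting) and the coherent assembly of the 2-cell $\tilde\alpha$ from the $\alpha_i$'s (for which the full bicartesian compatibility data of \Cref{def:Bifiber_Sequence_Mod_Trl} is essential).
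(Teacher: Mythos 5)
Your proposal is correct and takes essentially the same approach as the paper: the paper uses the Waldhausen $S_2$-construction $S_2(\cE)$ (triples $X \to Y \to Z$) while you use $\cE^{[1]}$ (arrows $A \to B$), but these are $\cC$-linearly equivalent and your projections $\mathrm{src}, \mathrm{tgt}, \mathrm{cof}$ correspond exactly to the paper's three component projections. Both proofs apply \Cref{thm:localization} to the same localization sequence, split the resulting cofiber sequence, and express the middle projection as the sum of the outer two before composing with the trace of the classifying morphism; the only difference is the order in which the splitting identity and the postcomposition are invoked.
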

\begin{proof}
This is a standard trick due to Waldhausen, adapted to the current context. Consider the full subcategory $S_2(\cE) \subseteq \Fun([1] \times [1], \cE)$ spanned by the bifiber sequences in $\cE$. Equip $S_2(\cE)$ with the endomorphism $S_2(G)\colon S_2(\cE)\to S_2(\cE)$ defined by pointwise application of $G$, which is well-defined as $G$ preserves cofiber sequences. The $\infty$-category $S_2(\cE)$ is canonically $\cC$-linear, and as such it is equivalent to $\cE^{[1]} \simeq \PSh([1]) \tensor \cE$. In particular, $S_2(\cE)$ is dualizable in $\Mod_{\cC}$, and thus the pair $(S_2(\cC),S_2(G))$ forms an object of $\Mod^{\trl}_{\cC}$. It fits into a localization sequence
\[
    \begin{tikzcd}
        (\cE,G) \ar[rrr, "{i(X) = (X,X,0)}", shift left, hookrightarrow]  &&& (S_2(\cE),S_2(G)) \ar[lll,shift left, "{r(X,Y,Z) = X}"] \ar[rrr,shift left, "{p(X,Y,Z) = Z}"] &&& (\cE,G) \ar[lll,shift left, hookrightarrow, "{s(Z) = (0,Z,Z)}"],
    \end{tikzcd}
\]
see for example \cite[Proof of proposition 7.17]{BGT_KTheory}. By \Cref{thm:localization}, the induced sequence
\[
    \TrMod_\cC(\cE,G) \xrightarrow{\TrMod_\cC(i)} \TrMod_\cC(S_2(\cE),S_2(G)) \xrightarrow{\TrMod_\cC(p)} \TrMod_\cC(\cE,G)
\]
is a cofiber sequence in $\cC$, and thus the maps $r\colon S_2(\cE) \to \cE$ and $p\colon S_2(\cE) \to \cE$ induce an equivalence
\[
    (\TrMod_\cC(r), \TrMod_\cC(p))\colon \TrMod_\cC(S_2(\cE),S_2(G)) \iso \TrMod_\cC(\cE,G) \oplus \TrMod_\cC(\cE,G),
\] 
whose inverse is $(\TrMod_\cC(i), \TrMod_\cC(s))$. In particular, the identity on $\TrMod_\cC(S_2(\cE),S_2(G))$ is equivalent to the sum of the maps $\TrMod_\cC(ir)$ and $\TrMod_\cC(sp)$. 

Observe that a bifiber sequence $(\iota_1,\alpha_1)\xrightarrow{\varphi} (\iota_2,\alpha_2)\xrightarrow{\psi} (\iota_3,\alpha_3)$ as in \Cref{def:Bifiber_Sequence_Mod_Trl} is the same data as a map $(\iota,\alpha)\colon (\cD,F) \to (S_2(\cE),S_2(G))$ in $\Mod_{\cC}^{\trl}$ whose three components are given by $(\iota_i,\alpha_i)$ for $i=1,2,3$. After applying $\TrMod_\cC(-)$, we then get that the map in $\cC$
\[
    \TrMod_\cC\left(\iota, \alpha\right) \colon \TrMod_\cC(\cD,F) \to \TrMod_\cC(S_2(\cE),S_2(G))
\]
is equivalent to the sum of
\[
    \TrMod_\cC(\iota_1,\iota_1,0) + \TrMod_\cC(0, \iota_3,\iota_3)\colon \TrMod_\cC(\cD,F) \to \TrMod_\cC(S_2(\cE),S_2(G)),
\]
where $\alpha_1$ and $\alpha_3$ are dropped from the notation to enhance readability. In particular, if we postcompose it with the map induced by $S_2(\cE) \to \cE\colon (X,Y,Z) \mapsto Y$, we get a homotopy
\[
    \TrMod_\cC(\iota_2,\alpha_2) \simeq \TrMod_\cC(\iota_1,\alpha_1) + \TrMod_\cC(\iota_3,\alpha_3)
\]
as desired.
\end{proof}

As a consequence of the above additivity property of the $\cC$-linear trace functor, we immediately obtain the main result of this subsection: the additivity of generalized traces in $\cC$.

\begin{thm}[Additivity of generalized traces, cf.\ \cite{may2001additivity,PSLinearity,ramzi2021additivity}]
\label{thm:Additivity_Traces}
    Let $\cC$ be a stable presentably symmetric monoidal $\infty$-category, and let $X_1 \xrightarrow{\varphi} X_2 \xrightarrow{\psi} X_3$ be a bifiber sequence of dualizable objects in $\cC$. Let $f_i\colon Z \tensor X_i \to X_i \tensor Y$ be morphisms fitting in bifiber sequence in $\cC^{[1]}$ of the form
    \[
    \begin{tikzcd}
        Z \tensor X_1 \dar[swap]{f_1} \rar{1_Z \tensor \varphi} & Z \tensor X_2 \dar{f_2} \rar{1_Z \tensor \psi} & Z \tensor X_3 \dar{f_3} \\
        X_1 \tensor Y \rar{\varphi \tensor 1_Y} & X_2 \tensor Y \rar{\psi \tensor 1_Y} & X_3 \tensor Y,
    \end{tikzcd}
    \]
    where the top and bottom sequences are obtained from the original bifiber sequence by tensoring with $Z$ resp.\ $Y$. Then there is an equivalence
    \[
        \gentr{f_2}{X_2} \simeq \gentr{f_1}{X_1} + \gentr{f_3}{X_3} \qin \Map_{\cC}(Z,Y).
    \]
\end{thm}
\begin{proof}
Recall from \Cref{THH_Trace} that the generalized trace $\gentr{f_i}{X_i}\colon Z \to Y$ of the generalized endomorphism $f_i\colon Z \tensor X_i \to X_i \tensor Y$ is the result of applying the $\cC$-linear trace functor $\TrMod_{\cC}$ to the morphism $(X_i,f_i)\colon (\cC,Z) \to (\cC,Y)$ in $\Mod^{\trl}_{\cC}$. One observes that the maps $\varphi\colon X_1 \to X_2$ and $\psi\colon X_2 \to X_3$ induce a bifiber sequence
\[
    (X_1,f_1) \xrightarrow{\varphi} (X_2,f_2) \xrightarrow{\psi} (X_3,f_3)
\]
of morphisms in $\Mod_\cC^\trl$. It then follows from \Cref{cor:Additivity_THH} that $\gentr{f_2}{X_2} \simeq \gentr{f_1}{X_1} + \gentr{f_3}{X_3}$ as desired.
\end{proof}
\begin{rmk}
Replacing $\cC$ with $\Ind(\cC^\dbl)$, we see that we can drop the presentability assumption on $\cC$. 
\end{rmk}
A special case of additivity of traces is the situation where $X_2 \simeq X_1 \oplus X_3$ is given by a direct sum. While this follows for stable $\Cc$ from \Cref{thm:Additivity_Traces}, the proof in this case is much more elementary and only requires $\Cc$ to be semiadditive, in the sense that finite products in $\Cc$ are also finite coproducts. For completeness, we give a proof.

\begin{lem}
\label{lem:Additivity_Traces}
Let $\Cc$ be a semiadditive presentably symmetric monoidal $\infty$-category. Let $X_1$ and $X_2$ be dualizable objects, and set $X := X_1 \oplus X_2$. Consider morphisms $f_i\colon Z \otimes X_i \to X_i \otimes Y$ for $i = 1,2$, and let $f\colon Z \otimes X \to X \otimes Y$ be their direct sum. Then there is an equivalence
\[
    \gentr{f}{X} \simeq \gentr{f_1}{X_1} + \gentr{f_2}{X_2} \qin \Map_{\Cc}(Z,Y).
\]
\end{lem}
\begin{proof}
The object $X = X_1 \oplus X_2$ is dualizable, with duality data given by
\begin{align*}
    \begin{pmatrix}\coev_{X_1} & 0 \\ 0 & \coev_{X_2}\end{pmatrix} \colon \unit \xrightarrow{} (X_1 \oplus X_2) \otimes (X_1^\vee \oplus X_2^\vee),\\
    \begin{pmatrix}\ev_{X_1} & 0 \\ 0 & \ev_{X_2}\end{pmatrix} \colon (X_1 \oplus X_2) \otimes (X_1^\vee \oplus X_2^\vee) \xrightarrow{} \unit.
\end{align*}
The result now follows from spelling out the definition of the generalized trace and the definition of addition of morphisms in $\cC$.
\end{proof}

\section{The Becker-Gottlieb transfer}
\label{sec:BeckerGottlieb}

Let $f\colon A \to B$ be a map of spaces. By functoriality of the suspension spectrum functor, there is an induced map of spectra $\SS[f]\colon \SS[A] \to \SS[B]$. When the fibers of $f$ are compact, there is also a `wrong-way' map $f^!\colon \SS[B] \to \SS[A]$, discovered by Becker and Gottlieb \cite{BG1975,BG} and now known as the \textit{Becker-Gottlieb transfer}. While the first construction in \cite{BG1975} was geometric in nature and only for fiber bundles of smooth manifolds, Becker and Gottlieb realized in \cite{BG} that their transfer map has a description purely in terms of duality data. In modern language: it is a special case of a generalized trace.

Because of the formal nature of generalized traces, the definition of the Becker-Gottlieb transfer makes sense in more general settings than spectra and maps of spaces with compact fibers. In \cite{dwyer1996transfer}, Dwyer introduces for every ring spectrum $R$ the notion of an \textit{$R$-small fibration} $f\colon A \to B$, and constructs for every such fibration a transfer map of spectra of the form $R[B] \to R[A]$. More generally, if $\cC$ is an arbitrary presentably symmetric monoidal $\infty$-category, then there is for every $\cC$-adjointable map $f\colon A \to B$ a Becker-Gottlieb transfer
\[
    f^!\colon \unit[B] \to \unit[A].
\]
When $\Cc = \RMod_R$ for a commutative ring spectrum $R$, this recovers Dwyer's construction in the commutative case.

In this section, we will study these Becker-Gottlieb transfers utilizing the perspective on generalized traces developed earlier. We start by recalling the definition and the basic properties of Becker-Gottlieb transfers in \Cref{subsec:Definition_BG_Transfers}. In \Cref{subsec:LM}, we will apply the methods from the previous sections to give a short proof of a theorem by John Lind and Cary Malkiewich \cite{LM} which expresses the Becker-Gottlieb transfer $f^!\colon \unit[B] \to \unit[A]$ as the composite of the inclusion of constant loops $c\colon \unit[B] \to \unit[LB]$, the free loop transfer $\TrMod_{\Cc}(f^*)\colon \unit[LB] \to \unit[LA]$ and the evaluation map $e\colon \unit[LA] \to \unit[A]$, see \Cref{thm:LM}. In \Cref{sec:composability_BG}, we discuss the question of composability of Becker-Gottlieb transfers, and prove several instances where it holds. A simple counterexample when $\cC$ is the $\infty$-category of rational vector spaces shows that this cannot be expected to hold in full generality. We end the section in \Cref{subsec:Relative_Free_Loops} by explaining how keeping track of the naturality in the map $f$ formally leads to a refinement of \Cref{thm:LM}.

Throughout this section, we fix a presentably symmetric monoidal $\infty$-category $\cC$.

\subsection{Definition and basic properties}
\label{subsec:Definition_BG_Transfers}

For a space $A$, the diagonal map $\Delta_A\colon A \to A \times A$ induces a map in $\cC$ of the form
\[
    \Delta_A\colon \unit[A] \xrightarrow{} \unit[A \times A] \simeq \unit[A] \tensor \unit[A] \qin \Cc.
\]
If $A$ is $\cC$-adjointable, then the object $\unit[A] \simeq A_!A^*\unit$ is a dualizable object by \Cref{prop:ColimIsDbl}, and thus we may form the generalized trace of this diagonal map. The resulting map, denoted
\[
    A^!\colon \unit \to \unit[A]\qin \cC,
\]
is called the \textit{Becker-Gottlieb transfer} of $A$. 

For a relative version of this construction, consider a $\cC$-adjointable map of spaces $f\colon A \to B$. For every fiber $A_b$ of $f$, the object $\one[A_b]$ is dualizable by \Cref{prop:ColimIsDbl}, so that the above construction gives a $B$-indexed family of Becker-Gottlieb transfers $A_b^!\colon \unit \to \unit[A_b]$ in $\cC$. Passing to colimits over $B$ then gives a map
\[
    \unit[B] = \colim_{b \in B} \unit \to \colim_{b \in B} \unit[A_b] = \unit[A] \qin \Cc,
\]
called the \textit{Becker-Gottlieb transfer of $f$}. 

Throughout this section, we will frequently use arguments involving space-indexed families of objects or morphisms, and thus it will be convenient to set up some special notation for this. For a space $B$, consider the composite functor
\[
    \unit_B[-]\colon \Spc_{/B} \iso \Spc^B \xrightarrow{\unit[-]} \Cc^B,
\]
where the first equivalence is the straightening equivalence and the second is given by post-composition with $\unit[-]\colon \Spc \to \Cc$. It preserves colimits and is naturally symmetric monoidal. We think of the functor $\unit_B[-]\colon \Spc_{/B} \to \Cc^B$ as the $B$-parameterized analogue of the functor $\unit[-]\colon \Spc \to \Cc$. Observe that it is compatible with restriction and pushforward: if $\beta\colon B' \to B$ is a map of spaces, then the following diagrams naturally commute:
\[
\begin{tikzcd}
    \Spc_{/B} \ar{rr}{\unit_B[-]} \dar[swap]{\beta^*} && \Cc^B \dar{\beta^*} \\
    \Spc_{/B'} \ar{rr}{\unit_{B'}[-]} && \Cc^{B'},
\end{tikzcd}
\qquad \qquad
\begin{tikzcd}
    \Spc_{/B'} \ar{rr}{\unit_{B'}[-]} \dar[swap]{\beta \circ -} && \Cc^{B'} \dar{\beta_!} \\
    \Spc_{/B} \ar{rr}{\unit_B[-]} && \Cc^B.
\end{tikzcd}
\]
Applying this to $\beta = f$, we see in particular that $\unit_B[A] \simeq f_!\unit_A$ for every space $A$ over $B$. Applying it to the map $B \to \pt$, we see that $B_!\unit_B[A] = \unit[A]$.

\begin{war}
The notation $\unit_B[A]$ is abusive, as this object depends on a map of spaces $f\colon A \to B$: at a point $b \in B$ it is given by $\unit[A_b]$. We emphasize that this is different from the object $B^*(\unit[A]) \in \Cc^B$, which does not depend on $f$: it is given by $\unit[A]$ at every point $b \in B$.
\end{war}

We may rewrite the definition of the Becker-Gottlieb transfer in terms of this functor $\unit_B[-]$ as follows. Observe that for a $\cC$-adjointable map $f\colon A \to B$, the object $\unit_B[A] \simeq f_!\unit_A \in \Cc^B$ is dualizable by \Cref{prop:ColimIsDbl}. Furthermore, the diagonal map $\Delta_A\colon A \to A \times_B A$ in $\Spc_{/B}$ induces a map
\[
    \Delta_A\colon \unit_B[A] \to \unit_B[A] \otimes \unit_B[A] \qin \Cc^B
\]
of which we can take the generalized trace.

\begin{defn}
\label{def:BeckerGottlieb}
Let $f\colon A \to B$ be a $\cC$-adjointable map of spaces. We define the \textit{Becker-Gottlieb pretransfer}
\[
    f^{!,\pre}\colon \unit_B \to \unit_B[A] \qin \cC^B
\]
as the generalized trace in $\cC^B$ of the diagonal map $\Delta_A\colon \unit_B[A] \to \unit_B[A] \tensor \unit_B[A]$.

We define the \textit{Becker-Gottlieb transfer} $f^!\colon \unit[B] \to \unit[A]$ as the composite
\[
    \unit[B] = B_!\unit_B \xrightarrow{B_!f^{!,\pre}} B_!\unit_B[A] = \unit[A] \qin \cC,
\]
obtained from the pretransfer $f^{!,\pre}$ by applying the colimit functor $B_!\colon \cC^B \to \cC$.
\end{defn}

For completeness, we state and prove the main basic properties of Becker-Gottlieb transfers, which can be found for example in \cite[Section~2]{dwyer1996transfer} and \cite[p. 189]{LMS}.

\begin{lemma}
\label{lem:Basic_Properties_BG_Transfer}
\begin{enumerate}[(1)]
 \item (Identity) The transfer for the identity $\id\colon B \to B$ is the identity $\unit[B] \to \unit[B]$;
    \item \label{it:BGNaturality} (Naturality) Consider a pullback square of spaces as on the left, where the maps $f$ and $f'$ are $\cC$-adjointable. Then there is a commutative diagram as on the right:
    \[
    \begin{tikzcd}
        A' \dar[swap]{f'} \drar[pullback] \rar{\alpha} & A \dar{f} \\
        B' \rar{\beta} & B
    \end{tikzcd}
    \qquad \qquad \qquad \qquad
    \begin{tikzcd}
        \unit[A'] \rar{\alpha} & \unit[A] \\
        \unit[B'] \uar{{f'}^!} \rar{\beta} & \unit[B]; \uar[swap]{f^!}
    \end{tikzcd}
    \]
   
    \item \label{it:BGProduct} (Product) Let $f\colon A \to B$ and $f'\colon A' \to B'$ be $\cC$-adjointable maps of spaces. Then the transfer map $(f \times f')^!\colon \unit[B \times B'] \to \unit[A \times A']$ of the product map $f \times f'\colon A \times A' \to B \times B'$ is equivalent to $f^! \tensor {f'}^!\colon \unit[B] \tensor \unit[B'] \to \unit[A] \tensor \unit[A']$.
\end{enumerate}
\end{lemma}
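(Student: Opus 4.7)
The plan is to deduce each of the three properties from the corresponding functoriality of generalized traces applied to the relative diagonal map that defines the pretransfer.

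For (1), when $f = \id_B$ we have $A \times_B A = B$, so the relative diagonal $\Delta_A$ is the identity of $B$. Consequently $\unit_B[A] = (\id_B)_!\unit_B \simeq \unit_B$, and the map $\Delta_A\colon \unit_B[A] \to \unit_B[A] \otimes \unit_B[A]$ reduces to the unit isomorphism of $\unit_B$. By \Cref{ex:Generalized_Trace_Unit}, its generalized trace is the identity of $\unit_B$, and applying $B_!$ yields $\id_B^! = \id_{\unit[B]}$.

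For (2), I would use that the restriction functor $\beta^*\colon \cC^B \to \cC^{B'}$ is symmetric monoidal, and that base change along the pullback square provides a natural equivalence $\beta^*\unit_B[A] \simeq \unit_{B'}[A']$ under which the relative diagonals of $A/B$ and $A'/B'$ correspond (using $A' \times_{B'} A' \simeq \beta^*(A \times_B A)$ as spaces over $B'$). By the functoriality of generalized traces under symmetric monoidal functors (\Cref{rmk:GeneralizedTraceFunctorial}), this yields $\beta^*(f^{!,\pre}) \simeq (f')^{!,\pre}$ in $\cC^{B'}$. To pass to the global transfer, I would apply $B_!$ after whiskering with the counit $\beta_!\beta^* \to \id$ of the $\beta_! \dashv \beta^*$ adjunction: the resulting natural transformation $B'_!\beta^* \Rightarrow B_!$ evaluates at $\unit_B$ to the map $\beta_!\colon \unit[B'] \to \unit[B]$ and at $\unit_B[A]$ to the map $\alpha_!\colon \unit[A'] \to \unit[A]$, producing precisely the desired commutative square.

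For (3), the key observation is that $(A \times A') \times_{B \times B'} (A \times A') \simeq (A \times_B A) \times (A' \times_{B'} A')$, so the relative diagonal of $A \times A'$ over $B \times B'$ decomposes as the external product of the relative diagonals of $A/B$ and $A'/B'$. Under the symmetric monoidal equivalence $\cC^B \otimes_\cC \cC^{B'} \simeq \cC^{B \times B'}$, the dualizable object $\unit_{B \times B'}[A \times A']$ identifies with the tensor product of $\unit_B[A]$ and $\unit_{B'}[A']$, so the symmetric monoidality of the generalized trace functor yields $(f \times f')^{!,\pre} \simeq f^{!,\pre} \boxtimes (f')^{!,\pre}$. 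Finally, the symmetric monoidality of $\unit[-]\colon \Spc \to \cC$ gives $(B \times B')_!(X \boxtimes Y) \simeq B_!X \otimes B'_!Y$, so passing to the global colimit produces $(f \times f')^! \simeq f^! \otimes (f')^!$.

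None of the three parts should be conceptually difficult; the main challenge is purely bookkeeping, namely verifying that the various base change, monoidality, and counit-whiskering equivalences remain compatible with the relative diagonal maps throughout. This is cleanest when the pretransfer is treated functorially via the corepresentability of the trace map in \Cref{def:trace}, reducing all identifications to the naturality of a single universal map.
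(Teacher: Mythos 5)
Your proposal is correct and follows essentially the same route as the paper: for (2) you first pull back the pretransfer along $\beta$ using that $\beta^*$ is symmetric monoidal and hence preserves generalized traces, then pass to the global transfer via the counit of $\beta_! \dashv \beta^*$; for (3) you use the external tensor product to decompose the relative diagonal and invoke symmetric monoidality of the generalized trace. The only cosmetic difference is in (1), where the paper simply declares the claim obvious whereas you spell out that the relative diagonal becomes the unitor of $\unit_B$.
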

\begin{proof}
Part (1) is obvious. For part (2), we will first relate the pretransfer of $f'$ with the pretransfer of $f$. Since $A'$ is a pullback of $A$ along $\beta\colon B' \to B$, we get an equivalence $\unit_{B'}[A'] \simeq \beta^*\unit_B[A]$ in $\Cc^{B'}$. Under this equivalence, the diagonal of $\unit_{B'}[A']$ is pulled back from the diagonal of $\unit_B[A]$:
\begin{equation*}
\begin{tikzcd}
    \unit_{B'}[A'] \dar{\simeq} \ar{r}{\Delta_{A'}} & \unit_{B'}[A'] \tensor \unit_{B'}[A'] \dar{\simeq} \\
    \beta^*\unit_B[A] \rar{\beta^*(\Delta_D)} & \beta^*(\unit_B[A] \tensor \unit_B[A]).
\end{tikzcd}
\end{equation*}
By symmetric monoidality, the functor $\beta^*\colon \Cc^B \to \Cc^{B'}$ preserves generalized traces, and thus we get an equivalence $\preBG{f'} \simeq \beta^*\preBG{f}$. 

Next, consider the following diagram:
\[
\begin{tikzcd}
    \unit[A'] \ar[bend left=15]{rrr}{\alpha} \rar[equal] & B_!\beta_!\beta^*\unit_B[A] \rar{c^!_{\beta}} & B_!\unit_B[A] \rar[equal] & \unit[A] \\
    \unit[B'] \ar[bend right=12, swap]{rrr}{\beta} \rar[equal] \uar{{f'}^!} & B_!\beta_!\beta^*\unit_B[B] \rar{c^!_{\beta}} \uar{B_!\beta_!\beta^*\preBG{f}} & B_!\unit_B[B] \rar[equal] \uar{B_!\preBG{f}} & \unit[B] \uar[swap]{f^!}.
\end{tikzcd}
\]
Here $c^!_{\beta}\colon \beta_!\beta^* \to \id$ denotes the counit of the adjunction $\beta_! \dashv \beta^*$. The right square commutes by definition, while the left square commutes by the description of the pretransfer of $f'$ in terms of the pretransfer of $f$. The middle square commutes by naturality. One checks that the top and bottom composites of the diagram are induced by $\alpha$ and $\beta$, respectively. This proves the claim.

For part (3), we will again first describe the pretransfer of $f \times f'$ in terms of the pretransfers of $f$ and $f'$. Consider the \textit{external tensor product} $- \boxtimes -\colon \cC^B \times \cC^{B'} \to \cC^{B \times B'}$, defined as the composite
\[
    \cC^B \times \cC^{B'} \xrightarrow{- \times -} (\Cc \times \Cc)^{B \times B'} \xrightarrow{- \tensor -} \Cc^{B \times B'}.
\]
Informally, it sends a pair $(X,Y)$ to the family $\{X_b \tensor Y_{b'}\}_{(b,b') \in B \times B'}$. Observe that this is a symmetric monoidal functor, and that it preserves colimits in both variables. Furthermore, it is compatible with left Kan extension, in the sense that the following square commutes:
\[
\begin{tikzcd}
    \Cc^A \times \Cc^{A'} \dar[swap]{f_! \times f'_!} \rar{\boxtimes} & \Cc^{A \times A'} \dar{(f \times f')_!} \\
    \Cc^B \times \Cc^{B'} \rar{\boxtimes} & \Cc^{B \times B'}.
\end{tikzcd}
\]
It follows that the object $\unit_{B \times B'}[A \times A'] \in \cC^{B \times B'}$ is equivalent to the external tensor product of the objects $\unit_B[A] \in \cC^B$ and $\unit_{B'}[A'] \in \cC^{B'}$, and that the diagonal of $\unit_{B \times B'}[A \times A']$ is obtained by taking the external tensor product of the diagonals of $\unit_B[A]$ and $\unit_{B'}[A']$. Since symmetric monoidal functors preserve generalized traces, see \Cref{rmk:GeneralizedTraceFunctorial}, we get that the pretransfer $(f \times f')^{!,\pre}$ is equivalent to $f^{!,\pre} \boxtimes {f'}^{!,\pre}$. The claim about Becker-Gottlieb transfers now follows from applying $(B \times B')_!\colon \cC^{B \times B'} \to \cC$, using the above commutative diagram for the map $B \times B' \to \pt \times \pt$.
\end{proof}

From the additivity of generalized traces, we may deduce additivity of Becker-Gottlieb transfers.

\begin{lem}[Additivity of Becker-Gottlieb transfers] \label{lem:Additivity_BG_Transfers}
Let $f\colon A \to B$ be a map of spaces.
\begin{enumerate}[(1)]
    \item Assume that $\cC$ is a semiadditive $\infty$-category, and assume that $A$ can be decomposed as a disjoint union $A = A_1 \sqcup A_2$. For $i = 1,2$, let $j_i\colon A_i \hookrightarrow A$ denote the inclusion and let $f_i := f \circ j_i\colon A_i \to B$. If the maps $f_1$ and $f_2$ are $\cC$-adjointable, then $f$ is also $\cC$-adjointable, and there is an equivalence
    \[
        f^! \simeq j_1 \circ f_1^! + j_2 \circ f_2^! \qin \Map_{\Cc}(\unit[B],\unit[A]).
    \]
    \item Assume that $\cC$ is a stable $\infty$-category, and assume that $A$ is given as a pushout of the form
    \[
    \begin{tikzcd}
        A_3 \rar{\alpha} \dar[swap]{\beta} \drar[pushout] & A_1 \dar{j_1} \\
        A_2 \rar{j_2} & A.
    \end{tikzcd}
    \]
    Set $j_3\colon A_3\to A$ to be the diagonal composite, and let $f_i := f \circ j_i\colon A_i \to B$ for $i = 1,2,3$. If each $f_i$ is $\cC$-adjointable, then $f$ is also $\cC$-adjointable, and there is an equivalence
    \[
        f^! \simeq j_1 \circ f_1^! + j_2 \circ f_2^! - j_3 \circ f_3^! \qin \Map_{\cC}(\unit[B],\unit[A]).
    \]
\end{enumerate}
\end{lem}
\begin{proof}
In part (1), the fact that $f$ is $\cC$-adjointable follows from the fact that the diagonal functor $\Delta\colon \Cc \to \Cc \times \Cc$ is an internal left adjoint, as semiadditivity implies that the left and right adjoint of $\Delta$ are equivalent. For the second part, we we will prove the stronger claim that there is an equivalence
\[
    \preBG{f} \simeq j_! \circ \preBG{f}_1 + j_2 \circ \preBG{f}_2
\]
in $\Map_{\Cc^B}(\unit_B,\unit_B[A])$; the desired claim will follow by applying $B_!\colon \cC^B \to \cC$. For $i = 1,2$, we get from \Cref{Trace_Functorial} that the map $j_i \circ \preBG{f}_i\colon \unit_B \to \unit_B[A]$ is the generalized trace in $\Cc^B$ of the composite
\[
    (1,j_i)\colon \unit_B[A_i] \xrightarrow{\Delta_{A_i}} \unit_B[A_i] \otimes \unit_B[A_i] \xrightarrow{1 \otimes j_i} \unit_B[A_i] \otimes \unit_B[A],
\]
induced by the map $(1,j_i)\colon A_i \to A_i \times_B A$ in $\Spc_{/B}$. The direct sum of these two maps is the diagonal of $\unit_B[A]$, hence the claim follows from \Cref{lem:Additivity_Traces}.

In part (2), the fact that $f$ is $\cC$-adjointable follows from \Cref{lem:Stable_Ambi_Pushouts}. For the second part, we will prove the stronger claim that there is an equivalence
\[
    \preBG{f} \simeq j_1 \circ \preBG{f_1} + j_2 \circ \preBG{f_2} - j_3 \circ \preBG{f_3}
\]
in $\Map_{\Cc^B}(\unit_B,\unit_B[A])$. As before, the desired claim follows by applying $B_!\colon \cC^B \to \cC$. We follow the proof of \cite[IV.2.9, p.184]{LMS}. Since the functor $\unit_B[-]\colon \Spc_{/B} \to \cC^B$ preserves colimits and $\cC^B$ is an additive $\infty$-category, we obtain a cofiber sequence
\[
    \unit_B[A_3] \xrightarrow{(\alpha,-\beta)} \unit_B[A_1] \oplus \unit_B[A_2] \xrightarrow{(j_1,j_2)} \unit_B[A]
\]
in $\Cc^B$. Tensoring with $\unit_B[A]$ gives rise to a second cofiber sequence. Now consider the following morphism of cofiber sequences:
\[\begin{tikzcd}
	{\unit_B[A_3]} && {\unit_B[A_1] \oplus \unit_B[A_2]} && {\unit_B[A]} \\
	{\unit_B[A_3] \otimes \unit_B[A]} && {(\unit_B[A_1] \oplus \unit_B[A_2]) \otimes \unit_B[A]} && {\unit_B[A] \otimes \unit_B[A].}
	\arrow["{(1,j_3)}", from=1-1, to=2-1]
	\arrow["{(1,j_1) \oplus (1,j_2)}", from=1-3, to=2-3]
	\arrow["\Delta_A", from=1-5, to=2-5]
	\arrow["{(\alpha,-\beta)}", from=1-1, to=1-3]
	\arrow["{(\alpha,-\beta) \otimes 1}", from=2-1, to=2-3]
	\arrow["{(j_1,j_2)}", from=1-3, to=1-5]
	\arrow["{(j_1,j_2) \otimes 1}", from=2-3, to=2-5]
\end{tikzcd}\]
As in part (1), the generalized trace of the maps 
\[
    (1,j_i)\colon \unit_B[A_i] \to \unit_B[A_i] \otimes \unit_B[A]
\] 
is $j_i \circ \preBG{f}_i\colon \unit \to \unit_B[A]$, and thus by \Cref{lem:Additivity_Traces} the generalized trace of the middle vertical map is the sum $j_1 \circ \preBG{f}_1 + j_2 \circ \preBG{f}_1$. The claim thus follows from additivity of generalized traces, \Cref{thm:Additivity_Traces}.
\end{proof}

\begin{rem}
When $\cC = \Sp$ is the $\infty$-category of spectra, the four properties of the Becker-Gottlieb transfer listed above (naturality, identity, product and additivity) can be used to uniquely characterize the Becker-Gottlieb transfers on certain fiber bundles with compact fibers, see \cite{beckerSchultz1998axioms}. In similar spirit is the result of \cite{lewis1983uniqueness}, where axioms are introduced to characterize the Becker-Gottlieb transfers on the level of cohomology.
\end{rem}

\subsection{Description in terms of the free loop transfer}
\label{subsec:LM}
Let $f\colon A \to B$ be a map of spaces with compact fibers. It was proved by Lind and Malkiewich \cite[Theorem 1.2]{LM} that the Becker-Gottlieb transfer $f^!\colon \SS[B] \to \SS[A]$ can be obtained from the free loop transfer $\TrMod_{\cC}(f^*)\colon \SS[LB] \to \SS[LA]$ by precomposing with the constant loop map $c\colon \SS[B] \to \SS[LB]$ and postcomposing with the evaluation map $e\colon \SS[LA] \to \SS[A]$. A proof of this statement for a smaller class of fibrations was given in \cite{DorabialaJohnson} using the
Becker-Schultz axiomatization of the Becker-Gottlieb transfer. The equivalence of these maps after composing with the projection $\SS[A] \to \SS$ from $A$ to the point was obtained by \cite{douglas2005trace}. 

The result by Lind and Malkiewich admits a quick and elegant proof using the methods developed in this article, which works in the generality of an arbitrary presentably symmetric monoidal $\infty$-category $\cC$. The main input is the simple but crucial observation that we can relate the Becker-Gottlieb transfer with the generalized character of a well-chosen map $\delta(f)$ in $\Cc^B$.

\begin{defn}
Let $f\colon A \to B$ be a map of spaces. We define the map
\[
    \delta(f)\colon \unit_B[A] \to \unit_B[A] \otimes B^*\unit[A] \qin \cC^B
\]
as the image under $\unit_B[-]\colon \Spc_{/B} \to \cC^B$ of the diagonal map
\[
    A \xrightarrow{\Delta_A} A \times A \simeq A \times_B (A \times B) \qin \Spc_{/B},
\]
where we regard $A \times A$ as living over $B$ via its first factor. Over a point $b \in B$, it is given by the generalized endomorphism
\[
    \one[A_b] \oto{\:\Delta_{A_b}\:} 
    \one[A_b]\otimes \one[A_b] \oto{1\otimes \iota_b}
    \one[A_b] \otimes \one[A],
\]
where $\iota_b\colon A_b \to A$ is the inclusion of the fiber.
\end{defn}

If $f\colon A \to B$ is $\cC$-adjointable, then $\unit_B[A]$ is dualizable in $\Cc^B$ 
(\Cref{prop:ColimIsDbl}), and thus we obtain a generalized character map
\[
    \chi_{\delta(f)}\colon \unit[LB] \to \unit[A].
\]
The Becker-Gottlieb transfer of $f$ can be recovered from this character map as follows.
\begin{prop}\label{BG_Char}
    For a $\cC$-adjointable map $f\colon A\to B$, the following diagram commutes:
    \[\begin{tikzcd}
    	{\one[LB]} \\
    	{\one[B]} && {\one[A].}
    	\arrow["c", from=2-1, to=1-1]
    	\arrow["f^!"', from=2-1, to=2-3]
    	\arrow["\chi_{\delta(f)}", from=1-1, to=2-3]
    \end{tikzcd}\]
\end{prop}
\begin{proof}
Observe that the map $\delta(f)$ can be written as the composite
\[
    \delta(f)\colon \unit_B[A] \xrightarrow{\Delta_A} \unit_B[A] \tensor \unit_B[A] \xrightarrow{1 \otimes u_B} \unit_B[A] \tensor B^*\unit[A] \qin \Cc^B,
\]
where $u_B$ denotes the map $\unit_B[A] \to \unit_B[A \times B] \simeq B^*\unit[A]$ induced by the map $(1,f)\colon A \to A \times B$ over $B$. By \Cref{Trace_Functorial}, it follows that the generalized trace of $\delta(f)$ in $\cC^B$ is given by the composite
\[
    \unit_B \xrightarrow{\preBG{f}} \unit_B[A] \xrightarrow{u_B} B^*\unit[A], 
\]
which adjoints over to the map $f^! \colon \unit[B] \to \unit[A]$. The claim thus follows from \Cref{Char_Trace}.
\end{proof}

Our next aim is a better understanding of the character of $\delta(f)$. We start with the case where $f$ is the identity on a space $A$.

\begin{prop}
\label{prop:Character_Of_Identity}
    The character $\chi_{\delta(\id_A)}\colon \unit[LA] \to \unit[A]$ is the evaluation map $e\colon \unit[LA] \to \unit[A]$.
\end{prop}
\begin{proof}
The map $\delta(\id_A)$ is given by the $A$-indexed family of generalized traces $\unit[\{a\}] \to \unit[\{a\}] \tensor \unit[A]$ induced by the inclusions $\iota_a\colon \{a\} \hookrightarrow A$. By \Cref{ex:Generalized_Trace_Unit}, the generalized character of this map is simply the map $a\colon \unit \to \unit[A]$. It thus follows from the coherent character formula, \Cref{prop:Character_Formula_Coherent}, that the character $\chi_{\delta(\id_A)}\colon \unit[LA] \to \one[A]$ is the mate of the map $LA \to \Map_{\Cc}(\unit,\unit[A])$ which sends a free loop $\gamma \in LA$ to the inclusion $\gamma(\pt)\colon \unit \to \unit[A]$ of the basepoint of $\gamma$. The claim follows by adjunction.
\end{proof}

To understand the character $\chi_{\delta(f)}$ for general maps $f\colon A \to B$, we observe that for a second map $g\colon B \to C$ the generalized endomorphisms $\delta(f)$ and $\delta(gf)$ are closely related.

\begin{prop}
\label{prop:Delta_Induction}
Consider two composable maps of spaces $f \colon A \to B$ and $g\colon B \to C$. Then there is an equivalence of generalized endomorphisms
\[
    \delta(gf) \simeq \Ind_g(\delta(f))
    \qin \Map_{\Cc^C}(\unit_C[A],\unit_C[A] \otimes C^*\unit[A]).
\]
\end{prop}
\begin{proof}
Spelling out the definitions, one observes that both maps are given by applying the functor $\unit_C[-]\colon \Spc_{/C} \to \Cc^C$ to the diagonal map $A \xrightarrow{\Delta_A} A \times A \simeq A \times_C (A \times C)$.
\end{proof}

As a consequence, we obtain a description of the character $\chi_{\delta(gf)}$ in terms of the character $\chi_{\delta(f)}$ and the free loop transfer of $g$:

\begin{cor}
\label{cor:Character_Induction}
    Let $f\colon A\to B$ and $g\colon B \to C$ be $\cC$-adjointable maps of spaces. Then the following diagram commutes:
    \[\begin{tikzcd}
    	{\one[LC]} && {\one[LB]} \\
    	&& {\one[A].}
    	\arrow["\chi_{\delta(gf)}"', from=1-1, to=2-3]
    	\arrow["\chi_{\delta(f)}", from=1-3, to=2-3]
    	\arrow["{\TrMod_{\cC}(g^*)}", from=1-1, to=1-3]
    \end{tikzcd}\]
\end{cor}
\begin{proof}
By \Cref{prop:Delta_Induction}, this is immediate from the induced character formula (\Cref{Ind_Char}).
\end{proof}

In particular, we obtain a description of the character $\chi_{\delta(f)}$ in terms of the free loop transfer of $f$:
\begin{cor}
\label{cor:Character_Diagonal}
    For a $\cC$-adjointable map of spaces $f\colon A\to B$, the following diagram commutes:
    \[\begin{tikzcd}
    	{\one[LB]} && {\one[LA]} \\
    	&& {\one[A].}
    	\arrow["\chi_{\delta(f)}"', from=1-1, to=2-3]
    	\arrow["e", from=1-3, to=2-3]
    	\arrow["{\TrMod_{\cC}(f^*)}", from=1-1, to=1-3]
    \end{tikzcd}\]
\end{cor}
\begin{proof}
Applying \Cref{cor:Character_Induction} to the maps $\id_A\colon A \to A$ and $f\colon A \to B$, this is immediate from the equivalence $\chi_{\delta(\id_A)} \simeq e$ of \Cref{prop:Character_Of_Identity}.
\end{proof}

We thus obtain the main theorem of this section.
\begin{thm}[cf.\ Lind-Malkiewich {\cite[Theorem 1.2]{LM}}]
\label{thm:LM}
For a $\cC$-adjointable map of spaces $f\colon A \to B$, the following diagram commutes:
    \[\begin{tikzcd}
    	{\one[LB]} && {\one[LA]} \\
    	{\one[B]} && {\one[A].}
    	\arrow["\chi_{\delta(f)}"{description}, dashed,from=1-1, to=2-3]
    	\arrow["e", from=1-3, to=2-3]
    	\arrow["\TrMod_{\cC}(f^*)", from=1-1, to=1-3]
    	\arrow["c", from=2-1, to=1-1]
    	\arrow["{f^!}"', from=2-1, to=2-3]
    \end{tikzcd}\]
\end{thm}
\begin{proof}
Combine \Cref{BG_Char} and \Cref{cor:Character_Diagonal}.
\end{proof}

\subsection{Composability of Becker-Gottlieb transfers}
\label{sec:composability_BG}
Consider two composable maps of spaces $f\colon A \to B$ and $g\colon B \to C$ and assume both $f$ and $g$ are $\cC$-adjointable. Since the composite $gf\colon A \to C$ is again $\cC$-adjointable, we may form the Becker-Gottlieb transfers $f^!\colon \one[B] \to \one[A]$, $g^!\colon \one[C] \to \one[B]$ and $(gf)^!\colon \one[C] \to \one[A]$ in $\cC$. It is natural to wonder how the map $(gf)^!$ relates to the composite $f^! \circ g^!$.

\begin{defn}
We will say that $f$ and $g$ have \textit{composable Becker-Gottlieb transfers}\footnote{This is sometimes referred to as \textit{functoriality} of the Becker-Gottlieb transfer.}  if the following triangle commutes in $\cC$:
\[
    \begin{tikzcd}
        & \unit[B] \drar{f^!} \\
        \unit[C] \urar{g^!} \ar{rr}{(gf)^!} && \unit[A].
    \end{tikzcd}
\]
\end{defn}

The naive guess that Becker-Gottlieb transfers are composable in full generality turns out to be incorrect.

\begin{counterexample}
Let $\cC = \Vect_{\QQ}$ be the category of rational vector spaces, and consider the maps $f\colon \pt \to BH$ and $g\colon BH \to \pt$, where $H$ is a non-trivial finite group. By 1-semiadditivity of $\Vect_{\QQ}$, the maps $f$ and $g$ are $\cC$-adjointable, see \Cref{ex:AdjointableFromAmbi}. We claim that the composition $f^! \circ g^!\colon \QQ \to \QQ$ is not given by the identity but rather by multiplication by the group order $\lvert H \rvert$ of $H$. To see this, observe that the map $g$ induces an equivalence $\QQ[BH] \simeq \QQ_H \iso \QQ$. Since this equivalence is compatible with the diagonal and with the evaluation and coevaluation, it follows that the composite $\QQ \oto{g^!} \QQ_H \oto{\QQ[g]} \QQ$ is the identity. But note that the $H$-equivariant pretransfer $\QQ \to \QQ[H]$ of $f$ is given by $1 \mapsto \sum_{h \in H}h$, so that passing to $H$-orbits gives a map $f^!\colon \QQ_H \to \QQ$ which is $\lvert H \rvert$ times the map $\QQ_H \oto{\QQ[g]} \QQ$, proving the claim.
\end{counterexample}

\begin{war}
It is claimed in \cite{kleinMalkiewich2018transfer} that when $\cC$ is the $\infty$-category of spectra and the maps $f$ and $g$ have compact fibers,\footnote{In \cite{kleinMalkiewich2018transfer}, such maps $f$ and $g$ are called `finitely dominated'.} the Becker-Gottlieb transfers of $f$ and $g$ are composable. Unfortunately, the proof in \cite{kleinMalkiewich2018transfer} contains a mistake: the diagram (13) on page 1135 does not commute, which renders the proof invalid. As this mistake appeared to be unfixable, Klein and Malkiewich have published a corrigendum \cite{kleinMalkiewich2022Corrigendum} retracting the main theorem of \cite{kleinMalkiewich2018transfer}. To the best of our knowledge it is currently an open problem whether or not the Becker-Gottlieb transfers in spectra of maps with compact fibers are composable in full generality.
\end{war}

To see the subtlety of this problem, we may use the description of Becker-Gottlieb transfers established in the previous section. Consider the following diagram:
\[\begin{tikzcd}
	{\one[LC]} && {\one[LB]} && {\one[LB]} && {\one[LA]} \\
	{\one[C]} &&& {\one[B]} &&& {\one[A].}
	\arrow["c", from=2-1, to=1-1]
	\arrow["{\TrMod_{\cC}(g^*)}"', from=1-1, to=1-3]
	\arrow["{\TrMod_{\cC}(f^*)}"', from=1-5, to=1-7]
	\arrow["e", from=1-3, to=2-4]
	\arrow["c", from=2-4, to=1-5]
	\arrow["e", from=1-7, to=2-7]
	\arrow["{g^!}"', from=2-1, to=2-4]
	\arrow["{f^!}"', from=2-4, to=2-7]
	\arrow["{\TrMod_{\cC}((gf)^*)}"{description}, curve={height=-30pt}, from=1-1, to=1-7]
	\arrow[Rightarrow, no head, from=1-3, to=1-5]
\end{tikzcd}\]
From \Cref{thm:LM} we obtain that the two trapezoids commute and that the composite along the top of the diagram is the Becker-Gottlieb transfer $(gf)^!\colon \unit[C] \to \unit[A]$. The top part of the diagram commutes by functoriality of the $\Cc$-linear trace. However, the middle triangle of the diagram \textit{does not commute}: it replaces each free loop in $B$ by the constant loop on its basepoint. Since the diagonal of the right square is the character $\chi_{\delta(f)}\colon \unit[LB] \to \unit[A]$, we obtain the following two descriptions of $(gf)^!$ and $f^!g^!$:

\begin{cor}
\label{cor:ObstructionComposability}
Let $f\colon A\to B$ and $g\colon B\to C$ be $\cC$-adjointable maps of spaces. The Becker-Gottlieb transfer $(gf)^!$ of the composite $gf$ is homotopic to the following composite: 
\[ 
\one[C]\xrightarrow{c}\one[LC] \xrightarrow{\TrMod_\cC(g^*)} \begin{tikzcd}[column sep = 44pt] \!\!\!\one[LB] \rar[equal] & \one[LB]\!\!\! \end{tikzcd} \xrightarrow{\chi_{\delta(f)}} \one[A].
\]
On the other hand, the composite of the Becker-Gottlieb transfers $f^!$ and $g^!$ is given by the composite
\[
    \one[C]\xrightarrow{c}\one[LC] \xrightarrow{\TrMod_\cC(g^*)} \one[LB]\xrightarrow{e} \one[B] \xrightarrow{c} \one[LB] \xrightarrow{\chi_{\delta(f)}} \one[A].
\]
\end{cor}

\Cref{cor:ObstructionComposability} gives us a concrete obstruction for composability of Becker-Gottlieb transfers. This has been used for instance by \cite{KMR} to prove composability for finitely dominated maps at the level of $\pi_0$:
\begin{prop}[{\cite[Theorem~B]{KMR}}]
Let $f\colon A \to B$ and $g\colon B \to C$ be maps of spaces with compact fibers. Then the Becker-Gottlieb transfers compose on $\pi_0$: the diagram
\[
\begin{tikzcd}
    \pi_0(\SS[C]) \ar[bend left = 20]{rr}{(gf)^!} \rar[swap]{g^!} & \pi_0(\SS[B]) \rar[swap]{f^!} & \pi_0(\SS[A])
\end{tikzcd}
\]
commutes.
\end{prop}

In certain situations, the Becker-Gottlieb transfers of two maps are composable even before passing to $\pi_0$. In the remainder of this subsection, we will give an overview of such results that were previously known in the literature, and provide various generalizations. The results are summarized in the following theorem:

\begin{thm}[cf.\ \cite{LMS}, \cite{williams2000bivariantRR}, \cite{LM}, \cite{kleinMalkiewich2022Corrigendum}]\label{thm:compBG}
Let $f\colon A \to B$ and $g\colon B \to C$ be $\cC$-adjointable morphisms of spaces. In the following situations, there is a homotopy $f^! \circ g^! \simeq (g \circ f)^!$ of maps $\unit[C] \to \unit[A]$:
\begin{enumerate}
    \item[(1)] The map $f\colon A \to B$ is a base change of some map $\cC$-adjointable map $B' \to C$ along $g\colon B \to C$ (\Cref{lem:Becker_Gottlieb_Vs_Pullbacks});
    \item[(1a)] The map $f\colon A \to B$ is a projection $A_0 \times B \to B$ (\Cref{cor:Becker_Gottlieb_Projection});
    \item[(2)] The free loop transfer of $g$ restricts to constant loops, in the sense of \Cref{def:RestrictToConstantLoops} (\Cref{prop:Criterion_Composability_BG});
    \item[(2a)] The $\infty$-category $\cC$ is stable and the map $g\colon B \to C$ is a smooth fiber bundle with closed manifold fibers (\Cref{cor:Composability_BG_Smooth_Bundle});
    \item[(2b)] The $\infty$-category $\cC$ is semidadditive and the map $g\colon B \to C$ is a finite covering map (\Cref{cor:Composability_BG_Finite_Covering});
    \item[(3)] The space $C$ is the classifying space of a compact Lie group $G$, and the map $g\colon B \to C$ is of the form $B'_{hG} \to \pt_{hG} \simeq BG$ for some finite $G$-CW-complex $B'$ (\Cref{cor:LMS_Equivariant_Transfers}).
\end{enumerate}
\end{thm}

Let us comment about the history of these results. The case (1) seems to be a new observation. Its consequence (1a) has been obtained using different methods by \cite{kleinMalkiewich2022Corrigendum}. The case (2) was already observed by \cite{LM}. The special case (2a) requires geometric input, which can be decuded from \cite{williams2000bivariantRR} but was reproved using different methods by \cite{LM}. When $\Cc$ is stable, the special case (2b) follows from (2a), and was also proved using different methods by \cite{kleinMalkiewich2022Corrigendum}. The case in (2b) where $\Cc$ is only assumed to be semiadditive seems to be new. The case (3) is a variant of a result proved by \cite{LMS}, who worked in the setting of genuine $G$-spectra instead. 

\subsubsection{Trivial bundles}

As a first example, we show that Becker-Gottlieb transfers compose when the map $f\colon A \to B$ is equivalent to a trivial bundle $B \times A_0 \to B$ for some space $A_0$. More generally, we will show composability of Becker-Gottlieb transfers whenever $f$ is obtained as a base change of some map along $g$. This seems to be a new observation.

\begin{lem}
\label{lem:Becker_Gottlieb_Vs_Pullbacks}
Consider a pullback diagram of spaces
\[
\begin{tikzcd}
    A \dar[swap]{f} \drar[pullback] \rar{k} & D \dar{h} \\
    B \rar{g} & C
\end{tikzcd}
\]
and assume that the maps $f$, $g$ and $h$ are $\cC$-adjointable. Then the Becker-Gottlieb transfers for $f$ and $g$ are composable: $f^! \circ g^! \simeq (g \circ f)^!$
\end{lem}

We may think of this lemma as an enhanced version of \Cref{lem:Basic_Properties_BG_Transfer}(\ref{it:BGProduct}), parameterized over the space $C$. Indeed, for $C = \pt$ we have an equivalence $A \simeq B \times D$, and the claim follows from \Cref{lem:Basic_Properties_BG_Transfer}(\ref{it:BGProduct}). By naturality of Becker-Gottlieb transfers, we see that for general $C$ the equivalence $f^!(g^!(c)) \simeq (g \circ f)^!(c)$ holds pointwise for all $c \in C$. The claim of \Cref{lem:Becker_Gottlieb_Vs_Pullbacks} is that this works naturally in $c\in C$.

\begin{proof}
Spelling out the definition of the Becker-Gottlieb transfer in terms of the pretransfer, it will suffice to show that the following two maps in $\cC^C$ are homotopic:
\begin{enumerate}
    \item[(1)] the pretransfer $\preBG{(gf)}\colon \unit_{C} \to \unit_{C}[A]$ of the composite $gf\colon A \to C$;
    \item[(2)] the composite $\unit_C \xrightarrow{\preBG{g}} \unit_{C}[B] \xrightarrow{g_!\preBG{f}} \unit_C[A]$ of the pretransfers of $g$ and $f$.
\end{enumerate}
We will show they are both homotopic to the following map:
\begin{enumerate}
    \item[(3)] the tensor product $\unit_C \simeq \unit_C \tensor \unit_C \xrightarrow{\preBG{h} \tensor \preBG{g}} \unit_{C}[D] \tensor \unit_{C}[B] \simeq \unit_C[D \times_C B] = \unit_C[A]$ of the pretransfers of $h$ and $g$.
\end{enumerate}

We start by proving that (1) and (3) are homotopic. Since the objects $\unit_C[D]$ and $\unit_C[B]$ are dualizable in $\Cc^C$, so is their tensor product $\unit_C[D] \tensor \unit_C[B] \simeq \unit_C[A]$, and the duality data may be chosen to be the tensor product of the duality data for $\unit_C[D]$ and $\unit_C[B]$. It thus remains to show that the following square commutes:
\begin{equation*}
\begin{tikzcd}
    \unit_C[A] \rar{\Delta_{A}} \dar{\simeq} & \unit_C[A] \tensor \unit_C[A] \dar{\simeq}\\
    \unit_C[D] \otimes \unit_C[B] \rar{\Delta_D \tensor \Delta_B} & \unit_C[D] \otimes \unit_C[D] \otimes \unit_C[B] \tensor \unit_C[B].
\end{tikzcd}
\end{equation*}
But this is clear, as both horizontal maps are induced by the diagonal of $A \simeq D \times_C B$ in $\Spc_{/C}$.

Next we show that (2) and (3) are homotopic. For this, we rewrite (3) as a composite
\begin{align*}
    \unit_C \xrightarrow{\preBG{g}} \unit_C[B] \simeq \unit_C \tensor \unit_C[B] \xrightarrow{\preBG{h} \tensor 1 } \unit_C[D] \tensor \unit_C[B] \simeq \unit_C[A].
\end{align*}
As we saw in the proof of \Cref{lem:Basic_Properties_BG_Transfer}(\ref{it:BGNaturality}), under the equivalence $\unit_B[A] \simeq g^*\unit_C[D]$ the pretransfer of $f$ is pulled back along $g$ from the pretransfer of $h$, in the sense that the following diagram commutes:
\[
\begin{tikzcd}
    \unit_B \rar{\preBG{f}} \dar{\simeq} & \unit_B[A] \dar{\simeq} \\
    g^*\unit_C \rar{g^*\preBG{h}} & g^*\unit_C[D].
\end{tikzcd}
\]
Identifying $\unit_C[B]$ with $g_!g^*\unit_C$, the claim thus follows from the naturality of the projection formula:
\begin{equation*}
\begin{tikzcd}
    g_!g^*\unit_C \rar{g_!g^*\preBG{h}} \dar{\simeq}[swap]{p.f.} & g_!g^*\unit_C[D] \dar{\simeq}[swap]{p.f.} \\
    \unit_C \tensor \unit_C[B] \rar{\preBG{h} \tensor 1} & \unit_C[D] \tensor \unit_C[B].
\end{tikzcd}
\end{equation*}
It follows that also the maps (1) and (2) are homotopic, finishing the proof.
\end{proof}

From \Cref{lem:Becker_Gottlieb_Vs_Pullbacks}, we immediately obtain composability of Becker-Gottlieb transfers for trivial bundles.

\begin{cor}
\label{cor:Becker_Gottlieb_Projection}
Let $f\colon A \to B$ and $g\colon B \to C$ be $\cC$-adjointable maps of spaces. Assume that $f$ is equivalent to the trivial bundle $B \times A_0 \to B$ for some $\cC$-adjointable space $A_0$. Then the Becker-Gottlieb transfers for $f$ and $g$ are composable: $f^! \circ g^! \simeq (g \circ f)^!$.
\end{cor}
\begin{proof}
The map $f$ fits in a pullback diagram of the form
\[
\begin{tikzcd}
    B \times A_0 \dar{f} \rar \drar[pullback] & C \times A_0 \dar{\pr_C} \rar \drar[pullback] & A_0 \dar \\
    B \rar{g} & C \rar & \pt,
\end{tikzcd}
\]
and thus this is a special case of \Cref{lem:Becker_Gottlieb_Vs_Pullbacks}.
\end{proof}

\begin{rem}
    When $\Cc$ is the $\infty$-category of spectra, the result of \Cref{cor:Becker_Gottlieb_Projection} has also been obtained by \cite{kleinMalkiewich2022Corrigendum} via different methods. 
\end{rem}

\subsubsection{The free loop transfer restricts to constant loops}

As noted in \cite[Remark 8.10]{LM}, the Becker-Gottlieb transfers of two $\cC$-adjointable maps $f\colon A \to B$ and $g\colon B \to C$ will always compose if $g$ satisfies the property that its free loop transfer $\TrMod_{\Cc}(g^*)\colon \unit[LC] \to \unit[LB]$ \textit{restricts to constant loops}, the sense of the following definition:

\begin{defn}
\label{def:RestrictToConstantLoops}
Let $g\colon B \to C$ be a $\cC$-adjointable map of spaces. We say that the \textit{free loop transfer of $g$ restricts to constant loops} if there exists a map $\alpha\colon \unit[C] \to \unit[B]$ making the following diagram commute:
\begin{equation*}
\begin{tikzcd}
	{\one[LC]} && {\one[LB]} \\
	{\one[C]} && {\one[B].}
	\arrow["c", from=2-1, to=1-1]
	\arrow["{\TrMod_{\cC}(g^*)}", from=1-1, to=1-3]
	\arrow[dashed, "\alpha", from=2-1, to=2-3]
	\arrow["c"', from=2-3, to=1-3]
\end{tikzcd}
\end{equation*}
\end{defn}

\begin{rem}
From functoriality and symmetric monoidality of the $\Cc$-linear trace functor, maps of spaces whose free loop transfer restricts to constant loops are closed under composition and under cartesian products.
\end{rem}

Although the map $\alpha\colon \unit[C] \to \unit[B]$ in the above definition can a priori be any map, it follows a posteriori that it must be the Becker-Gottlieb transfer of $g$.

\begin{lem}
\label{lem:Strong_LM}
Let $g\colon B \to C$ be a $\cC$-adjointable map of spaces, and assume that the free loop transfer of $g$ restricts to some map $\alpha\colon \unit[C] \to \unit[B]$ on constant loops. Then there is a homotopy $\alpha \simeq g^!$ of maps $\unit[B] \to \unit[C]$. In particular, the following diagram commutes:
\begin{equation}
\label{eq:Strong_LM}
\begin{tikzcd}
	{\one[LC]} && {\one[LB]} \\
	{\one[C]} && {\one[B].}
	\arrow["c", from=2-1, to=1-1]
	\arrow["{\TrMod_{\cC}(g^*)}", from=1-1, to=1-3]
	\arrow["g^!", from=2-1, to=2-3]
	\arrow["c"', from=2-3, to=1-3]
\end{tikzcd}
\end{equation}
\end{lem}
\begin{proof}
As the evaluation map $e\colon LB \to B$ is a retraction of the map $c\colon B \to LB$, the assumption implies
\[
    \alpha = e \circ c \circ \alpha \simeq e \circ \TrMod_{\Cc}(g^*) \circ c \simeq g^!,
\]
where the last equivalence is \Cref{thm:LM}. The claim follows.
\end{proof}

If the free loop transfer of $g$ restricts to constant loops, composability of Becker-Gottlieb transfers is automatic.

\begin{prop}
\label{prop:Criterion_Composability_BG}
Let $f\colon A \to B$ and $g\colon B \to C$ be a $\cC$-adjointable maps of spaces. Assume that the free loop transfer of $g$ restricts to constant loops. Then the Becker-Gottlieb transfers for $f$ and $g$ are composable: $f^! \circ g^! \simeq (g \circ f)^!$.
\end{prop}
\begin{proof}
Consider the following diagram:
\[\begin{tikzcd}
	& {\one[LC]} && {\one[LB]} & {\one[LA]} \\
	{\one[C]} && {\one[B]} &&& {\one[A].}
	\arrow["c", from=2-1, to=1-2]
	\arrow["{\TrMod_{\Cc}(f^*)}"', from=1-4, to=1-5]
	\arrow["c", from=2-3, to=1-4]
	\arrow["e", from=1-5, to=2-6]
	\arrow["{g^!}"', from=2-1, to=2-3]
	\arrow["{f^!}"', from=2-3, to=2-6]
	\arrow["{\TrMod_{\Cc}((gf)^*)}"{description}, curve={height=-18pt}, from=1-2, to=1-5]
	\arrow["{\TrMod_{\Cc}(g^*)}"', from=1-2, to=1-4]
\end{tikzcd}\]
From \Cref{thm:LM} we obtain that the right trapezoid commutes and that the composite along the top of the diagram is the Becker-Gottlieb transfer $(gf)^!\colon \unit[C] \to \unit[A]$. The left parallelogram commutes by \Cref{lem:Strong_LM}. The top part of the diagram commutes by functoriality of the $\Cc$-linear trace. Going around the diagram then gives the claim.
\end{proof}

Because of \Cref{prop:Criterion_Composability_BG}, we are interested in maps of spaces $g\colon B \to C$ whose free loop transfer restricts to constant loops. When $\Cc$ is stable, it follows from the topological Riemann-Roch theorem \cite[2.7]{williams2000bivariantRR} that this holds whenever $g$ is a smooth fiber bundle which has compact manifold fibers. This will be discussed below, see \Cref{prop:SmoothBundlesConstantLoops}. The special case where $g$ is a finite covering map also admits a completely formal proof, which will be given in \Cref{subsec:Relative_Free_Loops}. 

The question of whether or not the free loop transfer of an arbitrary map $g$ restricts to constant loops is surprisingly subtle: as we will explain in the next remark, it is closely related to the Bass trace conjecture \cite[Strong conjecture]{bass}, an open conjecture in geometric group theory.

\begin{rmk}[Relation to Bass' trace conjecture]
\label{rem:Bass}
Let $\cC = \Sp$ be the $\infty$-category of spectra and let $C = \pt$ be the point. We may identify the free loop transfer $\Tr_{\Sp}(B^*)\colon \SS \to \SS[LB]$ of $B$ with a class in the 0-th stable homotopy group of $LB$: $\Tr_{\Sp}(B^*) \in \pi_0(\SS[LB])$. The condition that the free loop transfer restricts to constant loops is saying that this class is in the image of the constant loop map
\[
    c_B\colon \pi_0(\SS[B]) \hookrightarrow \pi_0(\SS[LB]).
\]
This condition is closely related to the Bass trace conjecture \cite[Strong conjecture]{bass}, which states that for any group $G$ the Hattori-Stallings trace $$K_0(\ZZ[G]) \to \THH_0(\ZZ[G]) \simeq \bigoplus_{G/\operatorname{cong}} \ZZ$$ lands in the direct summand indexed by the neutral element $e$ of $G$. Concretely, the following two conditions are equivalent for a given finitely presented group $G$:
\begin{enumerate}[(1)]
    \item The Bass trace conjecture holds for $G$;
    \item For every connected compact space $B$ with $\pi_1(B)\simeq G$, the free loop transfer $\Tr_{\Sp}(B^*) \in \pi_0(\SS[LB])$ lies in the image of the constant loop map $c_B\colon \pi_0(\SS[B]) \hookrightarrow \pi_0(\SS[LB])$.
\end{enumerate}
See \cite[1.1]{kleinMalkiewich2018transfer} for a related discussion. 
\end{rmk}

\subsubsection{Smooth fiber bundles}
An important instance of a situation in which the commutativity of (\ref{eq:Strong_LM}) is known is when $\cC$ is the $\infty$-category of spectra and $g\colon B \to C$ is a smooth fiber bundle with compact manifold fibers. This is a consequence of the topological Riemann-Roch theorem \cite[2.7]{williams2000bivariantRR}, using the natural map from $A$-theory to topological Hochschild homology. A proof for \textit{closed} manifold fibers, directly for topological Hochschild homology, was given by \cite[Corollary~1.9]{LM}. As the result for spectra in fact implies the result for an arbitrary stable presentably symmetric monoidal $\infty$-category $\cC$, we will phrase the result in this generality.

\begin{prop}[{\cite[2.7]{williams2000bivariantRR}, \cite[Corollary~1.9]{LM}}]
\label{prop:SmoothBundlesConstantLoops}
Let $\cC$ be stable, and let $g\colon B \to C$ be a smooth fiber bundle with compact manifold fibers, so that $g$ is $\cC$-adjointable by \Cref{ex:CompactSpacesAdjointable}. Then the square 
\begin{equation*}
\begin{tikzcd}
	{\one[LC]} && {\one[LB]} \\
	{\one[C]} && {\one[B].}
	\arrow["c", from=2-1, to=1-1]
	\arrow["{\TrMod_{\cC}(g^*)}", from=1-1, to=1-3]
	\arrow["g^!", from=2-1, to=2-3]
	\arrow["c"', from=2-3, to=1-3]
\end{tikzcd}
\end{equation*}
commutes.
\end{prop}
\begin{proof}
The case $\cC = \Sp$ is \cite[Corollary~1.9]{LM}. The case for arbitrary $\cC$ follows, as the unique colimit-preserving symmetric monoidal functor $F\colon \Sp \to \cC$ preserves traces and categorical traces.
\end{proof}

\begin{cor}
\label{cor:Composability_BG_Smooth_Bundle}
Assume $\cC$ is stable and let $g\colon B \to C$ be a smooth fiber bundle with closed manifold fibers. Then for any other $\cC$-adjointable map $f\colon A \to B$, the Becker-Gottlieb transfers for $f$ and $g$ are composable: $f^! \circ g^! \simeq (g \circ f)^!$.\qed
\end{cor}

\subsubsection{Via additivity of Becker-Gottlieb transfers}
We may use the additivity of Becker-Gottlieb transfers (\Cref{lem:Additivity_BG_Transfers}) to recursively obtain more cases of composability of Becker-Gottlieb transfers. Roughly speaking, the next two lemmas say that the collection of pairs of maps $f\colon A \to B$ and $g\colon B \to C$ with composable Becker-Gottlieb transfers is closed under pushouts in $A$ and $B$.

\begin{lem}
\label{lem:ComposabilityViaAdditivity1}
Let $\Cc$ be stable and let $f\colon A \to B$ and $g\colon B \to C$ be $\cC$-adjointable maps of spaces. Assume that $A$ is given as a pushout of the form
\[
\begin{tikzcd}
    A_3 \rar{\alpha} \dar[swap]{\beta} \drar[pushout] & A_1 \dar{j_1} \\
    A_2 \rar{j_2} & A,
\end{tikzcd}
\]
and let $j_3\colon A_3 \to A$ be the diagonal map. Assume further that each of the maps $f_i := f \circ j_i\colon A_i \to B$ is $\cC$-adjointable. If the Becker-Gottlieb transfer of $g$ composes with that of $f_i$ for every $i = 1,2,3$, then also the Becker-Gottlieb transfers of $g$ and $f$ compose.
\end{lem}
\begin{proof}
This follows immediately from additivity of Becker-Gottlieb transfers:
\begin{align*}
    (gf)^! &\simeq j_1 \circ (gf_1)^! + j_2 \circ (gf_2)^! - j_3 \circ (gf_3)^! \\
    &\simeq j_1 \circ f_1^! \circ g^! + j_2 \circ f_2^! \circ g^! - j_3 \circ f_3^! \circ g^! \\
    &\simeq (j_1 \circ f_1^! + j_2 \circ f_2^! - j_3 \circ f_3^!) \circ g^! \\
    &\simeq f^! \circ g^!.
\end{align*}
\end{proof}

\begin{lem}
\label{lem:ComposabilityViaAdditivity2}
Let $\Cc$ be stable and let $f\colon A \to B$ and $g\colon B \to C$ be $\cC$-adjointable maps of spaces. Assume that $B$ is given as a pushout of the form
\[
\begin{tikzcd}
    B_3 \rar{\alpha} \dar[swap]{\beta} \drar[pushout] & B_1 \dar{h_1} \\
    B_2 \rar{h_2} & B,
\end{tikzcd}
\]
and let $h_3\colon B_3 \to B$ be the diagonal map. For every $i=1,2,3$, define the map $f_i\colon A_i \to B_i$ via the following pullback square:
\[
\begin{tikzcd}
    A_i \rar{j_i} \dar[swap]{f_i} \drar[pullback] & A \dar{f} \\
    B_i \rar{h_i} & B.
\end{tikzcd}
\]
Assume further that each of the maps $g_i := g \circ h_i\colon B_i \to C$ is $\cC$-adjointable. If the Becker-Gottlieb transfer of $g_i$ composes with that of $f_i$ for every $i = 1,2,3$, then also the Becker-Gottlieb transfers of $g$ and $f$ compose.
\end{lem}
\begin{proof}
This follows immediately from additivity and naturality of Becker-Gottlieb transfers:
\begin{align*}
    (gf)^! &\simeq h_1 \circ (g_!f_1)^! + h_2 \circ (g_2f_2)^! - h_3 \circ (g_3f_3)^! \\
    &\simeq h_1 \circ f_1^! \circ g_1^! + h_2 \circ f_2^! \circ g_2^! - h_3 \circ f_3^! \circ g_3^! \\
    &\simeq f^! \circ j_1 \circ g_1^! + f^! \circ j_2 \circ g_2^! - f^! \circ j_3 \circ g_3^! \\
    &\simeq f^! \circ (j_1 \circ g_1^! + j_2 \circ g_2^! - j_3 \circ g_3^!) \\
    &\simeq f^! \circ g^!.
\end{align*}
\end{proof}

\Cref{lem:ComposabilityViaAdditivity2} has the following immediate corollary:

\begin{corollary}
\label{cor:ComposabilityClosedUnderFiniteColimits}
Assume $\Cc$ is stable. Let $C$ be a space, and let $\Spc^{\mathrm{comp}}_{/C}$ the subcategory of $\Spc_{/C}$ spanned by those $\cC$-adjointable morphisms $g\colon B \to C$ such that for every other $\cC$-adjointable morphism $f\colon A \to B$ the Becker-Gottlieb transfers of $f$ and $g$ are composable. Then $\Spc^{\mathrm{comp}}_{/C}$ is closed under finite colimits in $\Spc_{/C}$.
\end{corollary}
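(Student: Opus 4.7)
The strategy is to verify that $\Spc^{\mathrm{comp}}_{/C}$ contains the initial object $\varnothing \to C$ and is closed under pushouts in $\Spc_{/C}$; since these generate all finite colimits, this will suffice.

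For the initial object: $\varnothing \to C$ is $\cC$-adjointable because $\cC$ is pointed by stability. Any $\cC$-adjointable $f\colon A \to \varnothing$ forces $A = \varnothing$, in which case both $f^! \circ g^!$ and $(gf)^!$ are the zero map and composability is trivial.

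For closure under pushouts, consider a pushout square in $\Spc_{/C}$ of the shape appearing in \Cref{lem:ComposabilityViaAdditivity2}, with each $B_i \to C$ in $\Spc^{\mathrm{comp}}_{/C}$. The map $g\colon B \to C$ is then $\cC$-adjointable by \Cref{lem:Stable_Ambi_Pushouts}. Given an arbitrary $\cC$-adjointable $f\colon A \to B$, I would pull back along each leg $h_i\colon B_i \to B$ to obtain $f_i\colon A_i \to B_i$ and apply \Cref{lem:ComposabilityViaAdditivity2}. The hypotheses of that lemma reduce to: (i) each $g_i := g \circ h_i$ is $\cC$-adjointable, which is immediate from $g_i \in \Spc^{\mathrm{comp}}_{/C}$; and (ii) the Becker-Gottlieb transfers of $g_i$ and $f_i$ compose, which follows from $g_i \in \Spc^{\mathrm{comp}}_{/C}$ provided each $f_i$ is itself $\cC$-adjointable.

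The crux is therefore verifying $\cC$-adjointability of the pullback $f_i$, which I expect to be the main obstacle. Using the pointwise criterion of \Cref{lem:Criterion_Internal_Left_Adjoint} together with the fact that the fiber of $f_i$ above $b_i \in B_i$ coincides with the fiber of $f$ above $h_i(b_i) \in B$, this reduces to the claim that in the stable setting $\cC$-adjointability of a map descends to $\cC$-adjointability of each of its fibers---a converse to \Cref{lem:FiberwiseAdjointableImpliesAdjointable} that fails in general (witness the warning following that lemma) but that I expect to hold when $\cC$ is stable, since in the stable case right adjoints between stable presentable $\infty$-categories are automatically exact so that the only obstruction to colimit preservation of $(A_b)_*$ comes from filtered colimits, which can be detected pointwise through the Beck-Chevalley identification $b^* \circ f_* \simeq (A_b)_* \circ \iota_b^*$.
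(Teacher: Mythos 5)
Your overall route is the same as the paper's: check the initial object, then reduce closure under pushouts to \Cref{lem:ComposabilityViaAdditivity2}. You have, however, put your finger on a genuine subtlety that the paper's proof (which simply says the result ``follows directly from \Cref{lem:ComposabilityViaAdditivity2}'') does not address: to invoke that lemma one must know that the pullbacks $f_i\colon A_i \to B_i$ of the given $\cC$-adjointable $f$ along the legs $h_i\colon B_i \to B$ are themselves $\cC$-adjointable, for otherwise the hypothesis ``the Becker--Gottlieb transfers of $g_i$ and $f_i$ are composable'' is not even well-posed, and it cannot be extracted from $g_i \in \Spc^{\mathrm{comp}}_{/C}$. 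Your reduction of this to $\cC$-adjointability of the fibers of $f$, via \Cref{lem:Criterion_Internal_Left_Adjoint} and Beck--Chevalley, is correct.

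Where the proposal goes wrong is in the claimed patch. The Beck--Chevalley identification $b^* \circ f_* \simeq (A_b)_* \circ \iota_b^*$ shows that the \emph{composite} $(A_b)_* \circ \iota_b^*$ preserves colimits (both $b^*$ and $f_*$ do), but this does not yield colimit preservation for $(A_b)_*$ alone: you would have to cancel $\iota_b^*$, and $\iota_b\colon A_b \to A$ is not a monomorphism (its fibers are copies of $\Omega_b B$), so $(\iota_b)_!$ is not fully faithful and the unit $\id \to \iota_b^* (\iota_b)_!$ is not an equivalence. There is no evident sense in which the identity $b^*f_* \simeq (A_b)_* \iota_b^*$ lets one ``detect'' colimit preservation of $(A_b)_*$. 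The assertion that $\cC$-adjointability descends to fibers when $\cC$ is stable---a converse to \Cref{lem:FiberwiseAdjointableImpliesAdjointable}---is something you would need to \emph{prove}, and the argument sketched does not do so. A clean way to make everything go through is to replace ``$\cC$-adjointable'' with ``fiberwise $\cC$-adjointable'' throughout: then the fibers of each $f_i$ are literally among the fibers of $f$, so $f_i$ is again fiberwise $\cC$-adjointable, hence $\cC$-adjointable by \Cref{lem:FiberwiseAdjointableImpliesAdjointable}, and \Cref{lem:ComposabilityViaAdditivity2} applies without further argument.
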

\begin{proof}
It is clear that $\Spc^{\mathrm{comp}}_{/C}$ contains the initial object $\emptyset \to C$, as any map $A \to \emptyset$ of spaces is an equivalence. It thus remains to show $\Spc^{\mathrm{comp}}_{/C}$ is closed under pushouts. By \Cref{lem:Stable_Ambi_Pushouts}, the $\cC$-adjointable morphisms $B \to C$ are closed under finite colimits in $\Spc_{/C}$. The result now follows directly from \Cref{lem:ComposabilityViaAdditivity2}.
\end{proof}

For example, \Cref{cor:Composability_BG_Smooth_Bundle} shows that any smooth fiber bundle $g\colon B \to C$ with closed manifold fibers belongs to $\Spc^{\mathrm{comp}}_{/C}$.

\subsubsection{Equivariant transfers}

In \cite[Theorem~IV.7.1]{LMS}, Lewis, May and Steinberger prove a result about composability of Becker-Gottlieb transfers in the setting of genuine equivariant homotopy theory for a compact Lie group $G$. This result has an analogue in the setting of spectra with $G$-action, which we will now discuss.

\begin{defn}
     Let $G$ be a compact Lie group and let $N \leqslant G$ be a closed normal subgroup. An orbit $G/H$ of $G$ is called \textit{$N$-free} if the action of $N$ on $G/H$ is free, or equivalently if $N \cap H = e$. A space with $G$-action $X \in \Spc^{BG}$ is called an \textit{$N$-free finite $G$-CW-complex} if it is contained in the smallest subcategory of $\Spc^{BG}$ which contains the $N$-free orbits $G/H$ and is closed under finite colimits.
\end{defn}

\begin{prop}[{cf.\ \cite[Theorem~IV.7.1]{LMS}}]
\label{prop:LMS_Equivariant_Transfers}
Let $\Cc$ be a stable presentably symmetric monoidal $\infty$-category. Let $G$ be a compact Lie group, let $N \leqslant G$ be a closed normal subgroup, and let $J := G/N$ denote the quotient group. Let $B$ be an $N$-free finite $G$-CW-complex, and let $g$ be the composite $g\colon B_{hG} \to BG \to BJ$. 
\begin{enumerate}[(1)]
    \item For any $\cC$-adjointable map $f\colon A \to B_{hG}$, the Becker-Gottlieb transfers of $f$ and $g$ compose.
    \item If $f'\colon E \to B$ is a $G$-map with compact fibers, the induced map $f := f'_{hG}\colon E_{hG} \to B_{hG}$ on homotopy orbits is $\cC$-adjointable.
\end{enumerate}
\end{prop}
\begin{proof}
For part (1), we have to show that the map $g\colon B_{hG} \to BJ$ is contained in the subcategory $(\Spc_{/BJ})^{\mathrm{comp}} \subseteq \Spc_{/BJ}$ defined in \Cref{cor:ComposabilityClosedUnderFiniteColimits}. We will first prove the case where $B = G/H$ is an $N$-free orbit, i.e.\ $H \cap N = e$. In this case, we have $B_{hG} = (G/H)_{hG} \simeq BH$. Note that the assumption on $H$ guarantees that the composite $H \hookrightarrow G \twoheadrightarrow J$ is injective, allowing us to regard $H$ as a subgroup of $J$. It follows that the map $BH \to BJ$ is a smooth fiber bundle with fiber given by the compact manifold $J/H$. Consequently we deduce from \Cref{cor:Composability_BG_Smooth_Bundle} that the map $BH \to BJ$ is in $(\Spc_{/BJ})^{\mathrm{comp}}$.

The claim now follows for an arbitrary $N$-free finite $G$-CW-complex $B$, using \Cref{cor:ComposabilityClosedUnderFiniteColimits} and the fact that the functor $(-)_{hG}\colon \Spc^{BG} \to \Spc$ preserves colimits. This finishes the proof of (1).

For part (2), it follows by descent that the map $f$ fits in a pullback square
\[
\begin{tikzcd}
    E \dar[swap]{f'} \rar \drar[pullback] & E_{hG} \dar{f} \\
    B \rar & B_{hG}.
\end{tikzcd}
\]
Since the fibers of $f'$ are compact, so are the fibers of $f$, and thus $f$ is $\cC$-adjointable by \Cref{ex:CompactSpacesAdjointable}.
\end{proof}

By letting $N$ be the trivial subgroup of $G$, we obtain the following two immediate corollaries:

\begin{cor}
\label{cor:LMS_Equivariant_Transfers}
Let $G$ be a compact Lie group, let $B$ be a finite $G$-CW-complex, and let $f'\colon E \to B$ be a $G$-map with compact fibers. Then the Becker-Gottlieb transfers for the maps
\[
    E_{hG} \xrightarrow{f} B_{hG} \xrightarrow{g} BG
\]
compose.
\end{cor}

\begin{cor}
Let $G$ be a compact Lie group and let $K \subseteq H \subseteq G$ be nested closed subgroups. Then the Becker-Gottlieb transfers for the maps
\[
    BK \to BH \to BG
\]
compose.
\end{cor}

\subsection{The free loop transfer and relative free loops}
\label{subsec:Relative_Free_Loops}
Let $f\colon A \to B$ be a $\cC$-adjointable map of spaces. Because of \Cref{prop:Criterion_Composability_BG}, we are interested in a better understanding of the composite
\[
    \eta_f\colon \unit[B] \xrightarrow{c_B} \unit[LB] \xrightarrow{\TrMod_{\cC}(f^*)} \unit[LA],
\]
and in the question of whether or not it factors through the map $c_A\colon \unit[A] \to \unit[LA]$. The goal of this subsection is to show that, at least under the stronger assumption that all fibers of $f$ are $\cC$-adjointable, cf.\ \Cref{lem:FiberwiseAdjointableImpliesAdjointable}, this composite always canonically factors through the map $\one[LA \times_{LB} B] \to \one[LA]$. Here $LA \times_{LB} B$ is the space of `relative free loops', i.e.\ those free loops in $A$ which live in a single fiber of $f$. We will deduce this factorization formally from the functoriality of the free-loop space transfer in the fiberwise $\cC$-adjointable map $f$. To set up this functoriality, we first need to assemble the fiberwise $\cC$-adjointable maps into a  suitable $\infty$-category.

\begin{defn}
Let $\Spc^{\pb}$ denote the (non-full) wide subcategory of the arrow category $\Spc^{[1]}$ containing all arrows as objects but only the pullback squares as morphisms. 
With $\cC$ implicit in the notation, we let $\Spc^{\pb}_\fadj$ denote the full subcategory of $\Spc^\pb$ spanned by arrows whose fibers are $\cC$-adjointable spaces.
For $f\in \Spc^{\pb}$, we denote its source by $A_f$ and its target by $B_f$. 
\end{defn}

Observe that the constructions $f\mapsto A_f$ and $f\mapsto B_f$ assemble into functors $A_{(-)},B_{(-)}\colon \Spc^{\pb} \to \Spc$, given by the following two compositions:
\[
A_{(-)}\colon \Spc^{\pb}_\fadj \hookrightarrow \Spc^{[1]} \xrightarrow{\ev_0} \Spc, \qquad \qquad B_{(-)}\colon \Spc^{\pb}_\fadj \hookrightarrow \Spc^{[1]} \xrightarrow{\ev_1} \Spc.
\]
In particular, we obtain functors $\unit[B_{(-)}]\colon \Spc^{\pb} \to \Cc$ and $\unit[LA_{(-)}]\colon \Spc^{\pb} \to \Cc$. We can now state and prove the naturality of the map $\eta_f\colon \unit[B_f] \to \unit[LA_f]$ in $f$.

\begin{prop}
\label{cons:Functoriality_THH_Transfer}
There is a natural transformation $\eta \colon \one[B_{(-)}] \to \one[LA_{(-)}]$ of functors $\Spc^\pb_{\fadj} \to \cC$ whose component at $f \in \Spc^{\pb}_{\fadj}$ is the composition
\[
 \eta_f\colon    \unit[B_f] \xrightarrow{c_{B_f}} \unit[LB_f] \xrightarrow{\TrMod_{\cC}(f^*)} \unit[LA_f].
\]
\end{prop}
\begin{proof}
Observe first that the individual maps $c_{B_f} \colon \one[B_f]\to \one[LB_f]$ are the $f$-components of a natural transformation $\one[B_{(-)}]\to \one[LB_{(-)}]$. Indeed, this natural transformation is obtained from the natural transformation $c\colon \id_\Spc \to L$ by precomposing with $B_{(-)}\colon \Spc^\pb_\fadj \to \Spc$ and post-composing with $\one[-] \colon \Spc \to \cC$. Hence,  it remains to assemble the free loop transfers $\TrMod_{\cC}(f^*)\colon \unit[LB_f] \to \unit[LA_f]$ into a natural transformation 
$\unit[LB_{(-)}] \to \unit[LA_{(-)}]$. 

The functor $\cC[-]\colon \Spc \to \Mod_{\cC}$ induces a functor $\Spc^{[1]} \to \Mod_{\cC}^{[1]}$ on arrow categories. By adjunction, we may regard the latter as a morphism in $\Fun(\Spc^{[1]},\Mod_{\cC})$ from $f \mapsto \cC[A_f]$ to $f\mapsto \cC[B_f]$ which is pointwise given by $f_!\colon \cC[A_f] \to \cC[B_f]$.
If we restrict this natural transformation to $\Spc^\pb$, then the naturality squares of this transformation are right adjointable by \Cref{prop:FreeCofreeCLinearCategories}(\ref{it:FreeCLinearCategoryBivariant}), and it follows from \cite[Theorem~4.6]{Haugseng2021Lax} that this morphism admits a right adjoint in the $(\infty,2)$-category $\Fun(\Spc^\pb,\Mod_{\cC})$. Furthermore, this right adjoint is given pointwise by the adjoints $f^*\colon \cC[B] \to \cC[A]$.
If we further restrict to $\Spc^\pb_\fadj$, this natural transformation is valued in the subcategory $\Mod_\cC^{\dbl} \subseteq \Mod_{\cC}$. Indeed,  by definition, the functor $f^*\colon \cC[B] \to \cC[A]$ is an internal left adjoint for every $\cC$-adjointable map $f\colon A \to B$. Composing with $\TrMod_\cC\colon \Mod_{\cC}^{\dbl} \to \cC$ then gives the desired natural transformation $\TrMod_\cC(f^*)\colon \one[LB]\to \one[LA]$. 
\end{proof}

Having established the functoriality of the map $\eta_f = \TrMod_{\Cc}(f^*) \circ c_B\colon \unit[B] \to \unit[LA]$, we turn to deduce the main result of this section: the factorization of this map through the relative constant loops $\unit[LA \times_{LB} B]$.
We shall deduce this factorization from the fact that, as we shall show next, the functor $\one[B_{(-)}]$ is colimit preserving.
First, we show that its source admits all small colimits.

\begin{lem}
\label{lem:ColimitsOfCartesianDiagrams}
The $\infty$-category $\Spc^\pb_\fadj$ admits all colimits, and the inclusion $\Spc^\pb_\fadj \subseteq \Spc^{[1]}$ preserves colimits.
\end{lem}
\begin{proof}
By definition, a morphism of spaces is fiberwise $\cC$-adjointable if and only if each of its fibers are $\cC$-adjointable, and thus the class of fiberwise $\cC$-adjointable morphisms of spaces is a local class in the sense of \cite[Definition~6.1.3.8]{htt}. The result then follows from \cite[Theorem 6.1.3.5(3)]{htt}.
\end{proof}

\begin{cor}
The functor $\one[B_{(-)}]\colon \Spc_{\fadj}^\pb \to \cC$ is colimit preserving. 
\end{cor}
\begin{proof}
The functor $\Spc_{\fadj}^\pb \to \Spc^{[1]}$ is colimit preserving by \Cref{lem:ColimitsOfCartesianDiagrams} above. Furthermore, the evaluation functor $\ev_1 \colon \Spc^{[1]} \to \Spc$ and the functor $\one[-]\colon \Spc \to \cC$ are both colimit preserving. Composing these three functors we deduce that $\one[B_{(-)}]\colon \Spc^\pb_\fadj \to \cC$ is colimit preserving as well. 
\end{proof}

\begin{prop}
\label{lem:Assembly_LM}
    Let $f\colon A \to B$ be a fiberwise $\cC$-adjointable map of spaces. There exists a map $\alpha\colon \one[B] \to \one[LA\times_{LB} B]$ making the following diagram commute:    
    \[\begin{tikzcd}
    	{\one[LB]} && {\one[LA]} \\
    	{\one[B]} && {\one[LA\times_{LB} B].}
    	\arrow["c_B", from=2-1, to=1-1]
    	\arrow["{\TrMod_{\cC}(f^*)}", from=1-1, to=1-3]
    	\arrow["\alpha", dashed, from=2-1, to=2-3]
    	\arrow["p_{LA}"', from=2-3, to=1-3]
    \end{tikzcd}\]
\end{prop}
\begin{proof}
By descent, the object $(f\colon A \to B)$ of $\Spc_{\fadj}^{\pb}$ may be written as a colimit over $B$ of the individual maps $f_b\colon A_b \to \{b\}$, where $A_b$ denotes the fiber of $f$ over $b \in B$. As the construction $f \mapsto \eta_f$ is natural in $f$, we may consider the resulting assembly diagram
\[
\begin{tikzcd}
    \colim_{b \in B} \unit[\{b\}] \dar[swap]{\as} \ar{rr}{\colim_{b \in B}\eta_{f_b}} && \colim_{b \in B} \unit[LA_b] \dar{\as}\\
    \unit[B] \ar{rr}{\eta_f} && \unit[LA].
\end{tikzcd}
\]
Since the functor $f \mapsto \unit[B_f]$ preserves colimits, the left vertical map is an equivalence, and we deduce that the map $\eta_f\colon \unit[B] \to \unit[LA]$ factors through the assembly map $\colim_{b \in B} \unit[LA_b] \to \unit[LA]$. The claim thus follows from the observation that there is an equivalence $\colim_{b \in B} LA_b \simeq LA\times_{LB}B$ in $\Spc_{/LA}$.
\end{proof}

Using \Cref{lem:Assembly_LM}, we obtain quick formal proofs of two strengthenings of \Cref{thm:LM}, both already appearing in \cite{LM} when $\Cc = \Sp$. The first one shows that finite covering maps satisfy the criterion from \Cref{prop:Criterion_Composability_BG}, and thus always satisfy composability of Becker-Gottlieb transfers. In the stable setting, this is a special case of the more general statement for smooth fiber bundle with closed manifold fibers, see \Cref{prop:SmoothBundlesConstantLoops}, but the proof for this special case is much simpler.

\begin{cor}
\label{cor:Finite_Cover_Strong_LM}
    Let $\cC$ be semiadditive, and let $f\colon A \to B$ be a finite covering map, so that $f$ is fiberwise $\cC$-adjointable by \Cref{ex:AdjointableFromAmbi}. Then the following diagram commutes:
    \[\begin{tikzcd}
    	{\one[LB]} && {\one[LA]} \\
    	{\one[B]} && {\one[A].}
    	\arrow["c", from=2-1, to=1-1]
    	\arrow["{\TrMod_{\cC}(f^*)}", from=1-1, to=1-3]
    	\arrow["f^!", from=2-1, to=2-3]
    	\arrow["c"', from=2-3, to=1-3]
    \end{tikzcd}\]
\end{cor}
\begin{proof}
We will start by showing that there exists an equivalence $A \simeq LA \times_{LB} B$. Indeed, we claim that the square
\[
\begin{tikzcd}
    A & B \\
    LA & LB
    \arrow["f", from=1-1, to=1-2]
    \arrow["Lf", from=2-1, to=2-2]
    \arrow["c", from=1-2, to=2-2]
    \arrow["c"', from=1-1, to=2-1]
\end{tikzcd}
\]
is a pullback square of spaces. To see this, it suffices to check that for every point $b \in B$ the induced map on horizontal fibers over $b$ is an equivalence. This induced map is given by the inclusion $c\colon A_b \to L(A_b)$ of the constant loops for the fiber $A_b$. But since the map $f\colon A \to B$ is a finite covering, its fibers are discrete, and thus the map $c\colon A_b \to L(A_b)$ is an equivalence. This proves that $A \simeq LA \times_{LB} B$. Note that under this equivalence, the projection map $LA \times_{LB} B \to LA$ corresponds to the inclusion $c\colon A \to LA$ of constant loops.

It thus follows from \Cref{lem:Assembly_LM} that there exists a map $\alpha\colon \unit[B] \to \unit[A]$ making the following diagram commute:
    \[\begin{tikzcd}
    	{\one[LB]} && {\one[LA]} \\
    	{\one[B]} && {\one[A].}
    	\arrow["c", from=2-1, to=1-1]
    	\arrow["{\TrMod_{\cC}(f^*)}", from=1-1, to=1-3]
    	\arrow["\alpha", dashed, from=2-1, to=2-3]
    	\arrow["c"', from=2-3, to=1-3]
    \end{tikzcd}\]
The desired claim is now an instance of \Cref{lem:Strong_LM}.
\end{proof}

\begin{cor}
\label{cor:Composability_BG_Finite_Covering}
Assume $\cC$ is semiadditive and let $g\colon B \to C$ be a finite covering map. Then for any other $\cC$-adjointable map $f\colon A \to B$, the Becker-Gottlieb transfers for $f$ and $g$ are composable: $f^! \circ g^! \simeq (g \circ f)^!$.
\end{cor}
\begin{proof}
This is immediate from \Cref{cor:Finite_Cover_Strong_LM} and \Cref{prop:Criterion_Composability_BG}.
\end{proof}

For a fiberwise $\cC$-adjointable map $f\colon A \to B$, the map $\TrMod_{\cC}(f^*) \circ c$ might not necessarily factor through $c\colon \one[A] \to \one[LA]$. The following result, which appears as \cite[Theorem 7.10]{LM}, says that at least it will factor through the map $c\colon \one[A] = \one[B \times_B A] \xrightarrow{c \times_B A} \one[LB \times_B A]$:

\begin{lem}[Lind-Malkiewich {\cite[Theorem 7.10]{LM}}]
Let $f\colon A\to B$ be a fiberwise $\cC$-adjointable map of spaces. Then the following diagram commutes:
\[\begin{tikzcd}
	{\one[LB]} & {\one[LA]} & {\one[LB \times_B A]} \\
	{\one[B]} && {\one[A]}
	\arrow["{\TrMod_{\cC}(f^*)}", from=1-1, to=1-2]
	\arrow["c", from=2-1, to=1-1]
	\arrow["c", from=2-3, to=1-3]
	\arrow["{(Lf,e)}", from=1-2, to=1-3]
	\arrow["{f^!}"', from=2-1, to=2-3]
\end{tikzcd}\]
where the right vertical map is induced by the composite $A = B \times_B A \xrightarrow{c \times_B A} LB\times_B A\to A$.
\end{lem}
\begin{proof}
Consider the following commutative diagram:
\[\begin{tikzcd}
	{\one[LB]} & {\one[LA]} & {\one[LB \times_B A]} \\
	{\one[B]} & {\one[LA \times_{LB} B]} & {\one[A].}
	\arrow["{\TrMod_{\cC}(f^*)}", from=1-1, to=1-2]
	\arrow["{c_B}", from=2-1, to=1-1]
	\arrow["c"', from=2-3, to=1-3]
	\arrow["{(Lf,e)}", from=1-2, to=1-3]
	\arrow["{p_{LA}}"', from=2-2, to=1-2]
	\arrow["e \circ p_{LA}", from=2-2, to=2-3]
	\arrow["\alpha", from=2-1, to=2-2]
\end{tikzcd}\]
Here the left square is the commutative square from \Cref{lem:Assembly_LM}, and the right square commutes as it is induced from a commuting square on the level of spaces. (Note that $LA \times_{LB} B$ is in fact the pullback of the two maps $(Lf,e)\colon LA \to LB \times_{B} A$ and $c\colon A \to LB \times_B A$.) 

Since the map $c \times_B A\colon A \to LB \times_B A$ admits a retraction $p_A\colon LB \times_B A \to A$, it follows from \Cref{thm:LM} that the bottom composite of this diagram must be $f^!\colon \unit[B] \to \unit[A]$, finishing the proof.
\end{proof}

\section{Thom spectra}
\label{sec:ThomSpectra}

Given a presentably symmetric monoidal $\infty$-category $\cC$, a pointed space $A$ and a pointed map $\xi\colon A \to \cC$, the \textit{Thom object} $\Th(\xi) \in \cC$ is, from a modern perspective, simply the colimit of $\xi$. When $A$ is connected, one can think of $\xi$ as encoding an action of the group $\Omega A$ on the monoidal unit $\one \in \cC$, and of $\Th(\xi)$ as the homotopy orbits of this action.
The interest in this construction stems from the fact that if $A = G$ is an $\EE_n$-group\footnote{It is convenient to define an \textit{$\EE_0$-group} to be a pointed \textit{connected} space.} in $\Spc$ (i.e.\ an $n$-fold loop space) and $\xi$ is an $\EE_n$-map with respect to the tensor product on $\cC$, then the object $\Th(\xi)$ inherits a canonical structure of an $\EE_n$-algebra in $\cC$. Several fundamental examples of $\EE_n$-ring spectra, including the cobordism rings $MG$ (for $G = O,\, SO,\, U,\, \mathit{Sp},\, \mathit{Spin},...$),  arise in this way.  

\begin{rem}
    for every $n\ge 0$, an $\EE_n$-map $G \to \cC$ factors uniquely through an $\EE_n$-map $G \to \Pic(\cC)$, where the latter denotes the $\EE_\infty$-group of invertible objects in $\cC$. Hence, it is usually the latter that is taken as the input to the Thom object construction. We shall use both perspectives interchangeably. 
\end{rem}

For $G$ an $\EE_1$-group, we may consider the Hochschild homology of the $\EE_1$-algebra $\Th(\xi)$, 
\[
    \THH_\cC(\Th(\xi)) := 
    \TrMod_{\Cc}(\RMod(\Th(\xi)))
    \qin \cC.
\]
In the case where $\cC = \Sp$ is the $\infty$-category of spectra, Blumberg, Cohen and Schlichtkrull \cite{BCS2010THHofThomSpectra} computed this object: they showed that it is itself the Thom spectrum (i.e.\ colimit) of a certain map $LBG \too \Sp$ associated with $\xi$. The goal of this section is to give an alternative proof of this theorem, generalizing it to an arbitrary presentably symmetric monoidal $\infty$-category $\cC$.

\begin{thm}[cf.\ {\cite[Theorem~1]{BCS2010THHofThomSpectra}}]
\label{thm:BCS_THH_of_Thom}
    Let $G$ be an $\EE_1$-group in $\Spc$, and $\xi\colon G\to \Pic(\cC)$ be an $\EE_1$-group map. Then $\THH_\cC(\Th(\xi)) \in \cC$ is the Thom object of the following composite: 
    \[
        LBG \oto{LB\xi} 
        LB\Pic(\cC) \simeq 
        B\Pic(\cC)\times \Pic(\cC) \oto{\eta + \id} \Pic(\cC),
    \]
    where $\eta \colon B\Pic(\cC) \to \Pic(\cC)$ is the Hopf map.
\end{thm}

\begin{rem}
    The other main results of \cite{BCS2010THHofThomSpectra}, namely Theorems 2 and 3, give a simplified formula for the Thom spectrum $\Th(\xi)$, when $\xi$ is a map of $\EE_2$- and $\EE_3$-groups respectively. These follow easily from \cite[Theorem~1]{BCS2010THHofThomSpectra} by inspecting the decomposition $LBG \simeq BG\times G$, and in particular generalize verbatim to an arbitrary symmetric monoidal $\infty$-category $\cC$.  
\end{rem}   

Our proof differs from the one in  \cite{BCS2010THHofThomSpectra}, which relies on the cyclic bar construction model for Hochschild Homology. Instead, we show in \Cref{thm:Thom_as_colim} that the module category $\RMod_{\Th(\xi)}(\cC)$ is the colimit of the composition
\[
    BG \oto{\:\:B\xi\:\:} 
    B\Pic(\cC) \longhookrightarrow 
    \Mod_\cC,
\]
and apply the formula for the dimension of a colimit from \Cref{cor:Trace_Dimension_Colimit}. In other words, by treating Hochschild Homology as (categorified) monoidal dimension, it becomes amenable to (categorified) character theory. 

There is, however, a mild technical difficulty arising from this categorification process, as it is convenient to work with \textit{presentable} $\infty$-categories throughout, but $\PrL$ and hence $\Mod_\cC(\PrL)$ are not themselves presentable. We overcome this by adopting the solution of \cite[\S 5.3.2]{HA}, replacing `presentable' with `$\kappa$-compactly generated' for a sufficiently large regular cardinal $\kappa$. To make this transparent for the casual reader who does not wish to be bothered by set-theoretical technicalities, we employ the following notational convention:

\begin{convention}\label{Convention}
    For every presentable $\infty$-category $\cC$, there exists an uncountable regular cardinal $\kappa$ such that $\cC$ is $\kappa$-compactly generated. We shall always implicitly choose such $\kappa$ and treat $\cC$ as an object of the $\infty$-category $\PrL_\kappa$ of $\kappa$-compactly generated $\infty$-categories, which is a presentably symmetric monoidal subcategory of $\PrL$ by \cite[Lemma~5.3.2.9 and Lemma~5.3.2.12]{HA}. If needed, we shall allow ourselves to implicitly replace $\kappa$ by some larger $\kappa'$ using the canonical (non-full) inclusion $\PrL_\kappa \into \PrL_{\kappa'}.$
    If $\cC$ is furthermore presentably $\EE_n$-monoidal, we let $\Mod_\cC$ be the presentably  $\EE_{n-1}$-monoidal $\infty$-category $\Mod_\cC(\PrL_\kappa)$, for $\kappa$ as in \cite[Lemma 5.3.2.12]{HA}. 
\end{convention}

Philosophically speaking, while $\PrL$ is not itself presentable, it is a (large) small-filtered colimit of the presentable $\infty$-categories $\PrL_\kappa$ in the $\infty$-category of large small-cocomplete $\infty$-categories and small colimit preserving functors.
Hence, every construction in $\PrL$ (resp.\ property thereof) involving only a small amount of data, can be already carried out (resp.\ witnessed) in $\PrL_\kappa$ for some, and hence all sufficiently large, regular cardinals $\kappa$.

This section is organized as follows: in \Cref{subsec:thomobj}, we define Thom objects with their multiplicative structures, and prove that categories of modules over them can themselves be understood as (categorified) Thom objects; and in \Cref{subsec:HHThom} we combine this with our formula for the trace of colimits to recover and generalize the result of \cite{BCS2010THHofThomSpectra} regarding the topological Hochschild homology of Thom spectra.

\subsection{Thom objects and categorical group algebras}\label{subsec:thomobj}

Let $\cC$ be a presentably symmetric monoidal $\infty$-category. For every $\EE_n$-group $G$ in $\Spc$ (i.e.\ an $n$-fold loop space), the $\cC$-linear $\infty$-category $\cC[G]$ inherits an $\EE_n$-monoidal structure by symmetric monoidality of the functor $\cC[-]\colon \Spc \to \Mod_{\cC}$. We refer to $\cC[G]$ as the \textit{categorical group algebra}. In this preliminary subsection, we collect some facts about $\cC[G]$ and use them to describe the $\EE_n$-algebra structure on the Thom object $\Th(\xi)$ associated with an $\EE_n$-map $G \to \cC$. 

\subsubsection{Thom objects}
Given an $\EE_n$-group $G$, the $\EE_n$-group map $G \to \pt$ induces an adjunction
\[
    G_! \colon \cC[G] \adj \cC \colon G^*,
\]
in which $G_!$ is $\EE_n$-monoidal and hence $G^*$ is lax $\EE_n$-monoidal.

\begin{war}
    While the underlying $\infty$-category of $\cC[G]$ is equivalent to the functor category $\cC^G$, the $\EE_n$-monoidal structure on $\cC[G]$ does not identify with the \textit{pointwise} one on $\cC^G$ (for which it is $G^*\colon \cC \to \cC^G$ which is strong $\EE_n$-monoidal while $G_!\colon \cC^G \to \cC$ is merely oplax $\EE_n$-monoidal). In fact, the $\EE_n$-monoidal structure on $\cC[G]$ corresponds to the \textit{Day convolution} on $\cC^G = \Fun(G,\cC)$, though we shall not use this fact. 
\end{war}

The $\EE_n$-monoidal adjunction $G_! \dashv G^*$ allows for a simple description of the multiplicative structure on Thom objects. 

\begin{defn}\label{def:Th}
     For $n\ge 0$, the symmetric monoidal adjunction between $\Cc[-]\colon \Spc \to \Mod_{\Cc}$ and the forgetful functor induces an adjunction on $\EE_n$-algebras, hence any $\EE_n$-map $\xi\colon G \to \cC$ induces an $\EE_n$-monoidal functor $\xi_\cC \colon \cC[G] \to \cC$. The composite lax $\EE_n$-monoidal functor
    \[
        \cC \oto{\:\:G^*\:\:} 
        \cC[G] \oto{\:\:\xi_\cC\:\:} 
        \cC
    \]
    induces in turn a functor on $\EE_n$-algebras. We define 
    \[
        \Th_\cC(\xi) := \xi_\cC(G^*\one) 
        \qin \Alg_n(\cC).
    \]
    This construction naturally promotes to a functor
    \[
        \Th_\cC \colon
        \Fun_{\EE_n}(G, \cC) \too 
        \Alg_n(\cC).
    \]
\end{defn}
We will drop the subscript $\cC$ and write simply $\Th$ when it is harmless to do so. 

Note that by \Cref{cor:RestrictionInduction}, the underlying object of $\Th_\cC(\xi)$ is indeed simply $\colim_G \xi \in \cC$. Furthermore, the definition of $\Th_\cC(\xi)$ is functorial in $\cC$ in the following sense:

\begin{lem}\label{Th_Functorial}
    Given a map $F\colon \cC \to \cD$ in $\CAlg(\PrL)$. For every $n$-monoidal $\xi \colon G \to \cC$, we have a natural isomorphism 
    \[
        F(\Th_\cC(\xi)) \simeq \Th_\cD(F(\xi)) 
        \qin \Alg_n(\cD).
    \] 
\end{lem}
\begin{proof}
    Consider the diagram of lax $n$-monoidal functors
    \[\begin{tikzcd}
    	\cC && {\cC[G]} && \cC \\
    	\cD && {\cD[G]} && \cD
    	\arrow["F", from=1-5, to=2-5]
    	\arrow["F", from=1-1, to=2-1]
    	\arrow["{F[G]}", from=1-3, to=2-3]
    	\arrow["{G^*}", from=1-1, to=1-3]
    	\arrow["{G^*}", from=2-1, to=2-3]
    	\arrow["{\xi_\cC}", from=1-3, to=1-5]
    	\arrow["{F(\xi)_\cD}", from=2-3, to=2-5]
    \end{tikzcd}\]
    The left square commutes by naturality. For the right square, note that $F(\xi)_{\cD}$ is the unique $\cD$-linear functor whose restriction to $G$ is given by $F(\xi)\colon G \to \cD$. Restricting it to $\Cc[G]$ will thus give a $\Cc$-linear functor whose restriction to $G$ is $F(\xi)$. Since $F \circ \xi_{\cD}$ is another such functor, we see that also the right square commutes. Comparing the two paths in the resulting diagram from the left upper corner to the bottom right corner gives the desired isomorphism.
\end{proof}

It is not immediately clear that our definition of the $\EE_n$-algebra structure on $\Th_\cC(\xi)$ agrees with other definitions in the literature, such as  \cite{ando2018parametrized}, \cite{ACB2018ThomSpectra} and \cite{beardsley2017relative}. We address this point by showing that every definition which is functorial in $\cC$, in the sense of the preceding lemma, is isomorphic to our definition.

\begin{prop}\label{Th_Unique}
    Let $G$ be an $\EE_n$-group and suppose that for every $\cC \in \CAlg(\PrL)$ we have a functor 
    \[
        \Th'_\cC \colon 
        \Fun_{\EE_n}(G, \cC) \too
        \Alg_n(\cC),
    \]
    lifting $\colim_G$ along $\Alg_n(\cC) \to \cC$, and such that for every $n$-monoidal functor $\cC \to \cD$ we have a natural isomorphism
    \[
        F(\Th'_\cC(\xi)) \simeq \Th'_\cD(F(\xi)).
    \]
    Then for every $\cC$ we have an isomorphism of functors $\Th'_\cC \simeq \Th_\cC$.
\end{prop}

\begin{proof}
    We first use the functoriality in $\cC$ to reduce to the universal case. Namely, every $\xi \colon G \to \cC$ factors essentially uniquely as 
    \[
        G \oto{\:u\:} 
        \Spc[G] \oto{\:\xi_\Spc \:}
        \cC,
    \]
    where $u$ is the adjunction unit map. Since $\xi_{\Spc}$ is an $n$-monoidal functor, we have a natural isomorphism 
    \[
        \Th'_\cC(\xi) = 
        \Th'_\cC(\xi_\Spc(u)) \simeq 
        \xi_\Spc(\Th'_{\Spc[G]}(u)).
    \]
    Since by \Cref{Th_Functorial} the same holds for $\Th$, it remains to show that there is an isomorphism 
    \[
        \Th'_{\Spc[G]}(u) \simeq 
        \Th_{\Spc[G]}(u)
        \qin \Alg_n(\Spc[G]).
    \] 
    This in turn follows from the fact that the common underlying object 
    \[
        \colim\nolimits_G u \simeq
        G^*\pt 
        \qin \Spc[G]
    \] 
    is \textit{terminal}, and hence admits an essentially unique $\EE_n$-algebra structure. 
\end{proof}

\begin{rem}
\label{rem:EnhancementThomFunctor}
    One can show that the individual functors $\Th_\cC$ assemble into
    \[
        \Th \colon 
        \CAlg(\PrL)_{G/} \too 
        \mathrm{Pr}^{\mathrm{L},\Alg_n},
    \] 
    where the target is the $\infty$-category of pairs $(\cC,R)$ with $\cC \in \Alg_n(\PrL)$ and $R\in\Alg_n(\cC)$. The global functor $\Th$ assigns to $\xi \colon G \to \cC$ the pair $(\cC, \Th_\cC(\xi))$. The functoriality of this formula is a coherent version of \Cref{Th_Functorial}. Furthermore, the argument of \Cref{Th_Unique} can be adopted to show that $\Th$ is essentially uniquely characterized as a lift of the functor taking $\xi\colon G \to \cC$ to $(\cC, \colim_G \xi)$ along $\mathrm{Pr}{}^{\mathrm{L},\Alg_n} \to \mathrm{Pr}{}^{\mathrm{L},\mathrm{Triv}}$. We shall not prove nor use this.
\end{rem}

\subsubsection{Projection formula}

Considering the underlying $\EE_1$-monoidal structures, the monoidal adjunction $G_! \dashv G^*$ is furthermore $\cC[G]$-linear in the following sense:

\begin{lem}\label{Proj_Formula_Day}
    The monoidal adjunction
    \[
        G_! \colon \cC[G] \adj \cC \colon G^*
    \]
    satisfies the projection formula. Namely, for every $X \in \cC[G]$ and $Y\in \cC$, the canonical map
    \[
        X \otimes G^*Y \too G^*(G_! X \otimes Y)
        \qin \cC[G]
    \]
    is an isomorphism. 
\end{lem}
\begin{proof}
    The $\infty$-category $\cC[G] \simeq \cC^G$ is generated under colimits by the objects of the form $g_! Z$ where $Z\in\cC$ and $g\colon \pt \to G$ (see \cite[Lemma~4.3.8]{HLAmbiKn}). Both the source and target of the projection formula map are colimit preserving and $\cC$-linear in the variable $X$. By $\cC$-linearity of the functor $g_!$, it thus suffices to show that the projection formula map is an isomorphism for $X = g_! \one$ for every $g \in G$. The counit map $G \to \cC[G]^\times$ is a group homomorphism taking $g \in G$ to $g_!\one$. Thus, $g_! \one$ is invertible and hence dualizable. Finally, by \cite[Proposition~3.12]{FHM2003}, the projection formula map is always an isomorphism when $X$ is dualizable. 
\end{proof}

Since $G^* \colon \cC \to \cC[G]$ is lax monoidal, $G^*\one \in \cC[G]$ admits a canonical algebra structure. This is however \textit{not} the unit of $\cC[G]$. In fact, a $G^*\one$-module structure on an object of $\cC[G] \simeq \cC^G$ exhibits it as ``constant'', i.e.\ as lying in the image of the functor $G^*$. More formally,

\begin{lemma}\label{Day_Constant}
    The functor $G^*\colon \cC \to \cC[G]$ induces an equivalence
    \[
        \cC \simeq \RMod_{G^* \unit} (\cC[G])
        \qin \LMod_{\cC[G]},
    \]
    where $\cC$ is a $\Cc[G]$-linear $\infty$-category via the symmetric monoidal left adjoint $G_!\colon \cC[G] \to \cC$.
\end{lemma}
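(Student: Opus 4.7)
The plan is to apply the Barr--Beck--Lurie monadicity theorem \cite[Theorem~4.7.3.5]{HA} to the adjunction
\[
    G_! \colon \cC[G] \adj \cC \colon G^*.
\]
First, I would verify that $G^*$ is conservative: since $G$ is an $\EE_1$-group, and hence pointed, the basepoint inclusion $e\colon \pt \to G$ induces a functor $e^*\colon \cC[G] \to \cC$ with $e^* \circ G^* \simeq \id_\cC$, so $G^*$ is a retract in $\widehat{\Cat}_\infty$. Second, I would observe that $G^*$ preserves all small colimits: via the equivalence $\cC[G]\simeq \cC^G$ of \Cref{cor:Free_VS_Cofree_CLinear_Category}, the functor $G^*$ corresponds to the constant-diagram functor $\cC\to\cC^G$, which preserves colimits since colimits in $\cC^G$ are computed pointwise. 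Monadicity then yields an equivalence
\[
    \cC \iso \RMod_T(\cC[G]),
\]
where $T = G^*G_!$ is the monad on $\cC[G]$ induced by the adjunction.

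Next, I would identify the monad $T$ with the monad $(-)\otimes G^*\one$. Setting $Y = \one$ in the projection formula of \Cref{Proj_Formula_Day} produces a natural equivalence $G^*G_!X \simeq X\otimes G^*\one$ of endofunctors of $\cC[G]$. The standard formalism of monoidal adjunctions satisfying the projection formula (using that $G_!$ is $\EE_1$-monoidal and $G^*$ is lax $\EE_1$-monoidal) upgrades this to an equivalence of monads, so that on algebras one obtains
\[
    \RMod_T(\cC[G]) \simeq \RMod_{G^*\one}(\cC[G]).
\]
Concatenating with the monadicity equivalence above gives the desired equivalence of underlying $\infty$-categories $\cC \simeq \RMod_{G^*\one}(\cC[G])$.

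Finally, to upgrade this to an equivalence in $\LMod_{\cC[G]}$, I would use that $G^*$ is itself $\cC[G]$-linear: for $\alpha \in \cC[G]$ and $X \in \cC$, the projection formula equivalence $\alpha \otimes G^*X \simeq G^*(G_!\alpha \otimes X)$ is precisely the compatibility of $G^*$ with the $\cC[G]$-action on $\cC$ (via the monoidal functor $G_!$) and on $\cC[G]$ (by left multiplication), which descends to $\RMod_{G^*\one}(\cC[G])$. The main technical subtlety will be to make this $\cC[G]$-linearity coherently compatible with the monad identification; this can be handled by carrying out the entire monadicity argument internally to the $(\infty,2)$-category $\LMod_{\cC[G]}$, using that by the projection formula the adjunction $G_! \dashv G^*$ lifts to a $\cC[G]$-linear adjunction and that conservativity and colimit-preservation of $G^*$ can both be checked after forgetting the $\cC[G]$-action.
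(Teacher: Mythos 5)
Your proof is correct in outline and rests on the same two pillars as the paper's: monadicity of $G^*$ and the projection formula from \Cref{Proj_Formula_Day}. The difference is in how monadicity is packaged. You first invoke Barr--Beck--Lurie to get $\cC \simeq \RMod_T(\cC[G])$ for the abstract monad $T = G^*G_!$, and then separately argue that $T \simeq (-)\otimes G^*\one$ as a \emph{monad}. The paper instead first factors the lax monoidal adjunction canonically through $\RMod_{G^*\one}(\cC[G])$ and then applies the comparison form of Barr--Beck, \cite[Corollary~4.7.3.16]{HA}: given the commuting triangle of right adjoints, it suffices to check that the induced map $UF \to G^*G_!$ of endofunctors is an equivalence, which is literally the projection formula map. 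This bypasses the step you flag as the main subtlety: the corollary only asks for an equivalence of underlying functors, not of monads, because the algebra $G^*\one$ and the forgetful functor from $\RMod_{G^*\one}(\cC[G])$ are already in place before the comparison is run. The same factorization also handles the $\LMod_{\cC[G]}$ enhancement more directly, since the comparison functor $\cC \to \RMod_{G^*\one}(\cC[G])$ is extracted from a factorization of $\cC[G]$-linear functors rather than re-assembled afterward. Your alternative justifications of the Barr--Beck hypotheses are fine (conservativity via $e^* G^* \simeq \id_\cC$ is a nice elementary argument, and colimit preservation via pointwise colimits in $\cC^G$ agrees with the paper's remark that $G^*$ has a further right adjoint $G_*$), but if you want to avoid the hand-waving about ``standard formalism of monoidal adjunctions'' and the coherence of the $\cC[G]$-linear structure, you should reorganize around the comparison corollary and the canonical factorization rather than the plain monadicity theorem.
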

\begin{proof}
    The adjunction 
    \[
        G_!\colon \cC[G] \rightleftarrows 
        \cC\noloc G^*
    \] 
    factors canonically as a composition of adjunctions
    \[
        \cC[G] \rightleftarrows 
        \RMod_{G^*\one}(\cC[G]) \rightleftarrows 
        \cC.
    \]
    We shall show that the right adjunction is in fact an equivalence. The right adjoints in the above adjunctions fit into a commutative triangle:
    \[\begin{tikzcd}
    	{\quad \cC\quad } && {\RMod_{G^*\one}(\cC[G])} \\
    	& {\cC[G].}
    	\arrow["U", from=1-3, to=2-2]
    	\arrow["G^*"', from=1-1, to=2-2]
    	\arrow[from=1-1, to=1-3]
    \end{tikzcd}\]
    
    The functor $U$ is manifestly monadic with left adjoint given by $F(X) =  X\otimes G^*\one $. The functor $G^*$ is conservative and preserves all colimits, as it admits a further right adjoint $G_*$. By \cite[Corollary 4.7.3.16]{HA}, it suffices to check that the induced map of functors 
    \(
        UF \to G^*G_!
    \)
    is an isomorphism. Unwinding the definitions, for every $X \in \cC[G]$, this map is the projection formula map
    \[
        UF(X) = 
        X\otimes G^* \one  \too 
        G^*( G_!X\otimes\one) \simeq
        G^*G_!(X).
    \]
    Thus, we are done by \Cref{Proj_Formula_Day}.
\end{proof}

\subsection{Hochschild Homology of Thom objects}\label{subsec:HHThom}

In this subsection, we give the proof of \Cref{thm:BCS_THH_of_Thom}, the computation of the Hochschild homology of the Thom object $\Th(\xi)$ of an $\EE_1$-group map $\xi\colon G \to \cC$. We shift the perspective slightly in that we start with the pointed connected space $A := BG$ and a pointed map $\zeta := B\xi \colon A \to \Mod_{\cC}$, and identify the Thom object (i.e.\ colimit) $\Th(\zeta) \in \Mod_\cC$ with the $\infty$-category of modules over the Thom object $\Th(\Omega \zeta) \in \Alg(\cC)$.

\subsubsection{Modules over Thom objects}
\label{subsec:ModulesOverThomSpectra}

The first part of our strategy is to show that the module category of a Thom object in $\cC$ admits a description as a certain colimit in $\Mod_\cC$. 
Let us denote by $\cC_{\Omega \zeta}$ the $\infty$-category $\cC$ as an object of $(\Alg_\cC)_{\cC[\Omega A]/}$
via the algebra map $\cC[\Omega A] \to \cC$ associated with $\Omega \zeta$. When $\zeta$ is the trivial map, we shall write simply $\cC$. As an auxiliary step we establish the following formula for $\Th(\zeta)$:

\begin{prop}\label{Th_Zeta_Tensor_Formula}
     For every pointed map $\zeta\colon A \to \Mod_{\cC}$, there is an isomorphism
     \[
        \Th(\zeta) \simeq 
        \cC_{\Omega \zeta} \otimes_{\cC[\Omega A]} \cC
        \qin \Mod_\cC.
     \]
\end{prop}
\begin{proof}
    By definition, the object $\Th(\zeta)  \in \Mod_{\cC}$ is given by the evaluation at $\cC \in \Mod_\cC$ of the composition
    \[
        \Mod_\cC \oto{\:A^*\:} 
        \Mod_\cC[A] \oto{\:\:\zeta_{\Mod_\cC}\:\:} 
        \Mod_\cC.
    \]
   Further, we have the following commutative diagram:
    \[\begin{tikzcd}
    	{\Mod_\cC} &&& {\Mod_\cC[A]} &&& {\Mod_\cC} \\
    	{\Mod_\cC} &&& {\Mod_{\cC[\Omega A]}} &&& {\Mod_\cC.}
    	\arrow[Rightarrow, no head, from=1-1, to=2-1]
    	\arrow[Rightarrow, no head, from=1-7, to=2-7]
    	\arrow["{A^*}", from=1-1, to=1-4]
    	\arrow["\zeta_{\Mod_\cC}", from=1-4, to=1-7]
    	\arrow["{\cC_{\Omega \zeta}\otimes_{\cC[\Omega A]} (-)}", from=2-4, to=2-7]
    	\arrow["{\Res_{(\Omega A)_!}}", from=2-1, to=2-4]
    	\arrow["\wr", from=1-4, to=2-4]
    \end{tikzcd}\]
    The middle vertical equivalence is the equivalence from \Cref{Cat_Grp_Alg_Mod}, applied to the $\infty$-category $\Mod_{\cC}$. The commutativity of the right square follows from \Cref{Aug_Base_Change} applied to the $\infty$-category $\Mod_\cC$ and the unit commutative algebra  $\cC \in \Mod_\cC$. The commutativity of the left square follows, by passing to right adjoints, from the commutativity of the right square in the special case where $\zeta$ is trivial. The result follows by evaluating the composites of the two paths in the outer rectangle at the object $\cC\in \Mod_\cC$.
\end{proof}

\begin{thm}
\label{thm:Thom_as_colim}
    Let $A$ be a pointed connected space and let $\zeta\colon A \to \Mod_\cC$ be a pointed map. There is an equivalence
    \[
        \RMod_{\Th(\Omega \zeta)} \:\simeq\:
        \Th(\zeta) 
        \qin \Mod_\cC.
    \]
\end{thm}

\begin{proof}
    By \Cref{Th_Zeta_Tensor_Formula} we have
    \[
        \Th(\zeta) \simeq 
        \cC_{\Omega \zeta} \otimes_{\cC[\Omega A]} \cC
        \qin \Mod_\cC.
    \]
    Recall that by \Cref{Day_Constant} we have 
    \[
        \cC \simeq 
        \RMod_{(\Omega A)^*\one}(\cC[\Omega A])
        \qin \LMod_{\cC[\Omega A]}.
    \]
    Let 
    $(\Omega \zeta)_\cC \colon \cC[\Omega A] \to \cC$
    be the monoidal functor corresponding to 
    $\Omega \zeta \colon \Omega A \to \cC.$
    By \cite[Theorem~4.8.4.6]{HA}, we get
    \[
        \cC_{\Omega \zeta} \otimes_{\cC[\Omega A]} \RMod_{(\Omega A)^*\one}(\cC[\Omega A]) \simeq
        \RMod_{(\Omega \zeta)_\cC((\Omega A)^*\one)}(\cC).
    \]
    By definition, we have an equality
    $(\Omega \zeta)_\cC((\Omega A)^*\one) = \Th(\Omega \zeta)$ of algebra objects in $\cC$. Altogether, we obtain an equivalence
    \[
        \Th(\zeta) \simeq 
        \RMod_{\Th(\Omega \zeta)}
        \qin \Mod_\cC,
    \]
    finishing the proof.
\end{proof}
\subsubsection{Categorical character formula}
\label{subsec:THHofThomSpectra}

Using the colimit presentation of \Cref{thm:Thom_as_colim}, we have
\[
    \THH_\cC(\Th(\xi)) \simeq
    \TrMod_{\Cc}(\RMod_{\Th(\xi)}) \simeq 
    \TrMod_{\Cc}(\colim\nolimits_A \zeta)
    \qin \cC,
\]
where the first equivalence holds by \Cref{prop:THH_As_Categorical_Trace}. The map $\zeta$ is pointwise dualizable, as its value at each point is the unit $\cC \in \Mod_\cC$. By \Cref{cor:Trace_Dimension_Colimit} (cf.\ also \Cref{ex:ambiPrL} and \Cref{rem:kappaambi}), we can compute the trace (i.e.\ dimension) of its colimit via the composition
\[
    \cC \oto{\Tr_{\Mod_\cC}(A^*)}
    \cC[LA] \oto{\quad \chi_\zeta \quad}
    \cC
    \qin \Mod_\cC.
\]

We begin by analyzing the (categorified) free loop transfer $\Tr_{\Mod_\cC}(A^*) \colon \cC \to \cC[LA]$. 
\begin{prop}
    For every space $A$, we have
    \[
        \Tr_{\Mod_\cC}(A^*) \simeq (LA)^*
        \qin \Fun(\cC, \cC[LA]).
    \]
\end{prop}
\begin{proof}
    We shall make use of the fact that $\Mod_{\Mod_\cC}$ is an $(\infty,3)$-category, and hence the trace functor
    \[
        \Tr_{\Mod_\cC} \colon 
        \Mod_{\Mod_\cC}^\dbl \too
        \Mod_\cC
    \] 
    is an $(\infty,2)$-functor. In particular it preserves the adjunction $A_! \dashv A^*$. By \Cref{thm:THHFreeLoops} we have
    \[
        \Tr_{\Mod_\cC}(A_!) \simeq (LA)_!
        \qin \Fun(\cC, \cC[LA]),
    \]
    and thus the result follows from passing to right adjoints.
\end{proof}

This implies a formula for $\TrMod_{\Cc}(\colim_A \zeta)$ for a general pointwise dualizable $\zeta \colon A\to \Mod_\cC$.

\begin{cor}\label{Cat_Colim_Dim_Formula}
    For every space $A$ and a pointwise dualizable map $\zeta \colon A \to \Mod_\cC$ we get 
    \[
        \TrMod_{\Cc}(\colim\nolimits_A \zeta) \:\simeq\:
        \colim\nolimits_{LA} (\chi_\zeta)
        \qin \cC.
    \]
\end{cor} 

Next, to analyze the (categorified) character map $\chi_\zeta \colon LA \to \cC$, we consider the universal case. 

\begin{prop}\label{Univ_Cat_Char}
    Let $\zeta \colon B\Pic(\cC) \into \Mod_\cC$ be the inclusion of the connected component of $\cC$. The character map $\chi_\zeta \colon LB\Pic(\cC) \to \cC$ identifies with 
    \[
        LB\Pic(\cC) \simeq 
        \Pic(\cC) \times B\Pic(\cC) \oto{1+\eta}
        \Pic(\cC) \sseq 
        \cC,
    \]
    where $\eta$ is the Hopf map. 
\end{prop}
\begin{proof}
    Under the inclusion $B\Pic(\cC) \into \Mod_\cC$, the isomorphism 
    \[
        \Pic(\cC) \times B\Pic(\cC) \iso LB\Pic(\cC)
    \]
    is given by
    \[
        (X, \cD) \longmapsto (\cD \oto{X} \cD).
    \]
    Since $\Cc$-linear trace functor is symmetric monoidal (\Cref{rem:CLinearTraceSymmetricMonoidal}), we have
    \[
        \TrMod_{\Cc}(\cD \oto{X} \cD) \simeq 
        \TrMod_{\Cc}(\cC \oto{X} \cC) \otimes
        \TrMod_{\Cc}(\cD \oto{\mathrm{Id}} \cD) \qin \cC
    \]
    Thus, it suffices to observe that for $X \in \Pic(\cC)$ we have 
    \[
        \TrMod_{\Cc}(\cC \oto{X} \cC) = X
        \qin \Pic(\cC),
    \] 
    and that $\TrMod_{\Cc} \colon B\Pic(\cC) \to \Pic(\cC)$ is given by applying the Hopf map $\eta$ \cite[Proposition~3.20]{CSY2021Cyclotomic}. 
\end{proof}

By combining the above results we obtain the desired description of the Hochschild homology of a Thom object. 

\begin{proof}[Proof of \Cref{thm:BCS_THH_of_Thom}]

By \Cref{thm:Thom_as_colim} and \Cref{Cat_Colim_Dim_Formula}, we have 
\[
    \THH_\cC(\Th(\Omega \zeta)) \simeq
    \TrMod_{\Cc}(\colim\nolimits_{BG} \zeta) \simeq 
    \colim\nolimits_{LBG} (\chi_\zeta)
    \qin \cC.
\]

Now, by \Cref{Univ_Cat_Char}, the map $\chi_\zeta$ identifies with the composition
\[
    LBG \oto{\:L\zeta\:} LB\Pic(\cC) \simeq
    \Pic(\cC) \times B\Pic(\cC) \oto{1+\eta} 
    \Pic(\cC)
\]
and the claim follows.
\end{proof}

\section*{Declarations}

\textbf{Data availability:} Data sharing not applicable to this article as no datasets were generated or analysed during
the current study.

\textbf{Conflict of interest:} On behalf of all authors, the corresponding author states that there is no conflict of
interest.

\bibliographystyle{alpha}
\bibliography{references}

\addcontentsline{toc}{section}{References}

\end{document}